\documentclass[letterarticle,11pt]{article}
\usepackage[margin=1in]{geometry}
\usepackage{color, xcolor, graphicx, appendix}
\usepackage[bookmarks, colorlinks=true, plainpages = false, citecolor= darkblue, linkcolor = chicago-maroon, anchorcolor = red, urlcolor = chicago-maroon]{hyperref}
\definecolor{chicago-maroon}{RGB}{128,0,0}
\usepackage{url}
\usepackage{amsmath, amsfonts, amsthm, amssymb, bm, bbm, verbatim, dsfont, mathtools, mathrsfs}
\definecolor{darkblue}{rgb}{0.0, 0.0, 0.55}
\usepackage{etoolbox}
\usepackage{almendra}
\usepackage{authblk}
\usepackage{array}
\usepackage{multirow}
\usepackage{multicol}

\usepackage{float}
\usepackage{pgf,tikz}
\usetikzlibrary{arrows,shapes.arrows,shapes.geometric,shapes.multipart,fit,automata,decorations.pathmorphing,positioning}
\tikzset{
	>=stealth',
	true/.style={
		rectangle,
		draw=black, very thick,
		text width=6.5em,
		minimum height=2em,
		text centered,
		fill=gray, opacity = 0.5},
	punkt/.style={
		rectangle,
		rounded corners,
		draw=black, very thick,
		text width=6.5em,
		minimum height=2em,
		text centered},
	est/.style={
		circle,
		draw=black, very thick,
		text centered},
	shade/.style={
		circle,
		draw=black, very thick, fill=gray!50,
		text centered},
	weight/.style={
		circle,
		draw=black, very thick,
		text width=6.5em,
		minimum height=2em,
		text centered},
	pil/.style={
		->,
		thick,
		shorten <=2pt,
		shorten >=2pt,},
	double/.style={
		<->,
		thick,
		shorten <=2pt,
		shorten >=2pt,},
	dash/.style={
		dashed,
		thick,
		shorten <=2pt,
		shorten >=2pt,},
	dashdouble/.style={
		<->,
		dashed,
		thick,
		shorten <=2pt,
		shorten >=2pt,}
}

\usepackage{tikz-cd}
\makeatletter 
\newcolumntype{C}[1]{>{\centering\arraybackslash}p{#1}}

\usepackage{epstopdf}
\usepackage{fullpage}
\usepackage{algorithm}
\usepackage{algorithmicx}
\usepackage{subcaption}

\usepackage[nameinlink]{cleveref}
\usepackage[round]{natbib}

\usepackage[T1]{fontenc}    
\usepackage{url}            
\usepackage{booktabs}     
\usepackage{nicefrac}
\usepackage{xcolor}
\usepackage{pdfpages}
\usepackage{scalerel}
\usepackage{dashrule}
\usepackage{lmodern}
\usepackage{fancyhdr}
\usepackage{tabu}
\usepackage{enumitem}

\def\IIFF{\mathbb{IF}}
\def\IF{\mathsf{IF}}

\def\var{\mathrm{var}}
\def\cov{\mathrm{cov}}

\def\Holder{\text{H\"{o}lder}}

\def\op{\mathrm{op}}

\renewcommand{\[}{\left[}

\renewcommand{\hat}{\widehat}
\renewcommand{\tilde}{\widetilde}

\newcommand{\myreferences}{Master.bib}

\usepackage{xr,xspace}
\usepackage{todonotes}

\usepackage{caption,subcaption,soul}
\usepackage{algpseudocode}
\makeatletter
\usepackage{romanbar}

\theoremstyle{plain}
\newtheorem{theorem}{Theorem}
\newtheorem{lemma}{Lemma}
\newtheorem{proposition}{Proposition}
\newtheorem{corollary}{Corollary}

\theoremstyle{definition}

\newtheorem{example}{Example}

\newtheorem{assumption}{Assumption}

\newtheorem{problem}{Problem}

\newtheorem{remark}{Remark}

\newtheorem*{remark*}{Remark}

\usepackage{algorithm}
\usepackage{algpseudocode}

\usepackage{xspace, prettyref}

\newcommand{\diff}{{\mathrm d}}

\newrefformat{eq}{(\ref{#1})}
\newrefformat{chap}{Chapter~\ref{#1}}
\newrefformat{sec}{Section~\ref{#1}}
\newrefformat{alg}{Algorithm~\ref{#1}}
\newrefformat{fig}{Fig.~\ref{#1}}
\newrefformat{tab}{Table~\ref{#1}}
\newrefformat{rmk}{Remark~\ref{#1}}
\newrefformat{clm}{Claim~\ref{#1}}
\newrefformat{def}{Definition~\ref{#1}}
\newrefformat{cor}{Corollary~\ref{#1}}
\newrefformat{lmm}{Lemma~\ref{#1}}
\newrefformat{prop}{Proposition~\ref{#1}}
\newrefformat{prob}{Problem~\ref{#1}}
\newrefformat{app}{Appendix~\ref{#1}}
\newrefformat{hyp}{Hypothesis~\ref{#1}}
\newrefformat{thm}{Theorem~\ref{#1}}

\newcommand{\sfc}{{\mathsf{c}}}

\newcommand{\sfp}{{\mathsf{p}}}

\newcommand{\sfD}{{\mathsf{D}}}

\newcommand{\sfR}{{\mathsf{R}}}

\newcommand{\calA}{{\mathcal{A}}}
\newcommand{\calB}{{\mathcal{B}}}
\newcommand{\calC}{{\mathcal{C}}}
\newcommand{\calD}{{\mathcal{D}}}
\newcommand{\calE}{{\mathcal{E}}}

\newcommand{\calG}{{\mathcal{G}}}

\newcommand{\calI}{{\mathcal{I}}}

\newcommand{\calM}{{\mathcal{M}}}
\newcommand{\calN}{{\mathcal{N}}}
\newcommand{\calO}{{\mathcal{O}}}
\newcommand{\calP}{{\mathcal{P}}}

\newcommand{\calR}{{\mathcal{R}}}
\newcommand{\calS}{{\mathcal{S}}}
\newcommand{\calT}{{\mathcal{T}}}

\newcommand{\calV}{{\mathcal{V}}}
\newcommand{\calW}{{\mathcal{W}}}
\newcommand{\calX}{{\mathcal{X}}}
\newcommand{\calY}{{\mathcal{Y}}}
\newcommand{\calZ}{{\mathcal{Z}}}

\renewcommand{\tilde}{\widetilde}
\renewcommand{\hat}{\widehat}

\newcommand{\bfi}{{\mathbf{i}}}

\newcommand{\bbB}{{\mathbb{B}}}

\newcommand{\bbE}{{\mathbb{E}}}

\newcommand{\bbP}{{\mathbb{P}}}

\newcommand{\bbR}{{\mathbb{R}}}

\newcommand{\bbU}{{\mathbb{U}}}

\usepackage{tcolorbox}

\makeatletter
\def\ubar#1{\underline{\sbox\tw@{$#1$}\dp\tw@\z@\box\tw@}}
\makeatother

\def\leftarrowCirc{\hbox{$\leftarrow$}\kern-1.5pt\hbox{$\circ$}}
\def\Circrightarrow{\hbox{$\circ$}\kern-1.5pt\hbox{$\rightarrow$}}
\def\Circleftarrow{\hbox{$\circ$}\kern-1.5pt\hbox{$\leftarrow$}}
\def\rightarrowCirc{\hbox{$\rightarrow$}\kern-1.5pt\hbox{$\circ$}}

\usepackage{namedtensor}

\usepackage{orcidlink}
\usepackage[normalem]{ulem}
\usepackage{setspace}
\usepackage{bbold}
\usepackage{bibunits}

\makeatletter
\newcounter{mybibunit}
\AtBeginEnvironment{bibunit}{\stepcounter{mybibunit}}
\renewcommand{\hyper@natlinkstart}[1]{%
  \Hy@backout{#1}%
  \hyper@linkstart{cite}{cite.\themybibunit @#1}%
  \def\hyper@nat@current{#1}%
}
\renewcommand{\hyper@natlinkbreak}[2]{%
  \hyper@linkend#1\hyper@linkstart{cite}{cite.\themybibunit @#2}%
}
\renewcommand{\hyper@natanchorstart}[1]{%
  \Hy@raisedlink{\hyper@anchorstart{cite.\themybibunit @#1}}%
}
\makeatother

\usepackage{placeins}
\usetikzlibrary{arrows.meta,positioning,calc}

\def\Mobius{M\"{o}bius}

\def\nuis{\mathrm{nuis}}

\def\frakr{\mathfrak{r}}
\def\frakm{\mathfrak{m}}
\def\nt{\textrm{non-tree}}

\def\mytitle{Stabilized Higher-Order Influence Functions: \\
Statistical Theory of a Class of Bilinear Forms}

\begin{document}

\begin{bibunit}[plainnat]

\title{\mytitle}

\author[1]{Na Liu\thanks{\href{ln101213@sjtu.edu.cn}{
ln101213@sjtu.edu.cn}}}

\author[2]{Chang Li\thanks{\href{xue6bq@virginia.edu}{xue6bq@virginia.edu}}}

\author[3]{Yujia Gu\thanks{\href{guyj@mail.tsinghua.edu.cn}{guyj@mail.tsinghua.edu.cn}}}

\author[1,4]{Lin Liu\orcidlink{0000-0002-9883-7962}\thanks{\href{linliu@sjtu.edu.cn}{linliu@sjtu.edu.cn} Yujia Gu and Lin Liu are co-corresponding authors that are alphabetically ordered. This manuscript improves, and therefore, supersedes our previous draft \citet{liu2023hoif}.}}

\affil[1]{School of Mathematical Sciences, Shanghai Jiao Tong University}

\affil[2]{Department of Statistics, University of Virginia}

\affil[3]{Department of Statistics and Data Science, Tsinghua University}

\affil[4]{Institute of Natural Sciences, MOE--LSC, CMA--Shanghai, SJTU--Yale Joint Center for Biostatistics and Data Science, Shanghai Jiao Tong University}
    
\date{\today}
    
\maketitle

\vspace{-1em}
    
\begin{abstract}
Higher-order influence functions, introduced in a series of articles \citep{robins2008higher, robins2009quadratic, van2014higher, robins2016technical, robins2023minimax, liu2017semiparametric}, are a unified framework for constructing rate-optimal point estimates of a class of statistical functionals under various complexity-reducing assumptions on the posited statistical model that generates the observed data. Although higher-order (influence functions) estimators are theoretically appealing, they have very limited practical uptake compared to their first-order counterparts. The original higher-order estimators proposed in \citet{robins2008higher} and \citet{robins2017minimax} involve nonparametric density estimation of multi-dimensional covariates, a highly nontrivial statistical and computational problem on its own. The density estimator is, in turn, used in the evaluation of the inverse population Gram matrix $\Omega$ of a set of $k$-dimensional basis transformations of covariates. There, $k$ is allowed to be as large as $o (n^{2})$. To partially address this potential shortcoming, \citet{liu2017semiparametric} restrict $k$ to $o (n)$ and instead estimates $\Omega$ directly using the inverse sample Gram matrix estimator, but computed from an independent sample often obtained by sample-splitting. \citet{liu2017semiparametric} refer to this alternative estimator as the empirical higher-order estimator. Although the empirical higher-order estimator bypasses density estimation, it suffers from numerical instability due to potentially inverting a large-dimensional sample Gram matrix. In this article, for a class of bilinear forms/functionals that often appear in substantive fields such as economics, epidemiology, and clinical medicine, we propose a new stabilized higher-order estimator without sample splitting, which exhibits more stable finite-sample performance compared to the empirical higher-order estimator. More importantly, we prove that this new class of higher-order estimators enjoys similar statistical guarantees to those of \citet{liu2017semiparametric}.
\end{abstract}
    
{\footnotesize \textbf{Keywords:} Causal Inference, Functional Estimation, Higher-Order Influence Functions, \Mobius{} Inversion, Enumerative Combinatorics}

\onehalfspacing

\allowdisplaybreaks

\newpage

\section{Introduction}
\label{sec:intro}

One of the unique features of modern statistics, which distinguishes itself from other related areas such as machine learning or AI, is the enormous interest in learning about smooth (statistical) functionals of the possibly infinite-dimensional probabilistic model that generates the observed data, instead of the model itself \citep{bickel1988estimating, ritov1990achieving, van1991differentiable, bickel1998efficient, robins2008higher}. In this article, a functional is a mapping $\psi: \calP \to \bbR$, from the underlying statistical model, denoted by $\calP$, to the reals $\bbR$. A statistical model $\calP$ contains all possible observed-data-generating probability distributions, posited by a statistician. 

A functional $\psi$ is said to be smooth in the sense of \citet{van1991differentiable}, that is, the pathwise derivative of $\psi (\bbP)$, along any parametric submodel $\{\bbP_{t}: \bbP_{0} = \bbP\} \subseteq \calP$, allows the following representation:
\begin{align*}
\left. \frac{\diff}{\diff t} \right|_{t = 0} \psi (\bbP_{t}) = \bbE \{\IF_{\psi} \cdot g (O)\},
\end{align*}
where $g$ is the score function associated with the parametric submodel $\bbP_{t}$, and $\IF_{\psi} \equiv \IF_{\psi, \bbP}$ is the (first-order) efficient influence function (IF) (or canonical gradient) of $\psi$ locally at $\bbP \in \calP$ \citep{fisher2021visually, hines2022demystifying}. It is also required that $\IF_{\psi}$ has mean zero at $\bbP$. Examples of smooth functionals abound: in causal inference, common target parameters of interest, such as the average treatment effect, the average treatment effect on the treated, and the quantile treatment effect, are all smooth functionals under standard causal identification conditions (consistency, positivity, and ignorability) \citep{robins1994estimation, hahn1998role, hahn2004functional, van2006targeted, abadie2018econometric}; in (conditional) independence testing, dependence measures such as the generalized covariance measure \citep{shah2020hardness, niu2024reconciling} and $f$-divergence \citep{kandasamy2015nonparametric}, are also smooth functionals. This article specifically tackles the problem of constructing ``good'' estimators for smooth functionals, which we abbreviate as the problem of functional estimation.

A natural attempt to estimate $\psi$ is to start with the ``plug-in'' estimator $\hat{\psi}_{0} = \psi (\hat{\bbP})$, where $\hat{\bbP}$ is some estimator of $\bbP$. However, a common theme in the functional estimation literature tells us that the plug-in estimator $\hat{\psi}_{0}$ has a sub-optimal convergence rate in many settings \citep{robins2009semiparametric, balakrishnan2026fundamental}. The sub-optimality of the plug-in estimator is often resulting from its large bias. A popular (and almost dominating) paradigm in the current statistics literature is to use the IF of $\psi$, $\IF_{\psi}$, to de-bias the plug-in estimator $\hat{\psi}_{0}$ \citep{scharfstein1999adjusting, van2006targeted, chernozhukov2018double, ray2020semiparametric, breunig2025double}. We refer to these debiased estimators based solely on $\IF_{\psi}$ as first-order estimators \citep{liu2026asymptotic}, which include popular methods in applications such as \emph{double machine learning/Neyman orthogonal scores} \citep{chernozhukov2018double} and \emph{targeted maximum likelihood estimation (TMLE)} \citep{van2006targeted}. In many settings, however, first-order estimators are still sub-optimal in terms of convergence rates \citep{liu2024assumption, bonvini2024doubly, liu2023root}. To resolve the potential sub-optimality of $\hat{\psi}_{1}$, building upon von Mises functional expansions and higher-order scores \citep{mises1947asymptotic, pfanzagl1983asymptotic, pfanzagl1990estimation, pfanzagl2011parametric, small1989projection, waterman1996projected, bobkov2024fisher, villani2025fisher}, \citet{robins2008higher, robins2009quadratic, robins2016technical} develop a general framework called higher-order influence functions (HOIFs) that generalize the concept of IF from first-order to higher-orders, for constructing (nearly) rate-optimal estimators in various settings. We also refer to \citet{bonhomme2026higher} for related development in higher-order Neyman orthogonal scores and to \citet{diaz2016second, van2021higher} for related development in higher-order TMLE (HOTMLE). TMLE-related methodologies generally enjoy favorable finite sample performance. The HOIF framework has also been used to construct estimators in related infinite-dimensional problems \citep{kennedy2024minimax, bonvini2022fast} and to understand the statistical properties of irregular estimators of causal parameters \citep{bonvini2024doubly}.

One key insight of \citet{robins2008higher, robins2016technical, robins2023minimax} is to find an approximation of the target functional $\psi$ by a particular bilinear form $\tilde{\psi}_{k} = \mu^{\top} \Sigma^{-1} \eta$, where $\Sigma = \bbE (X X^{\top})$ is the $k \times k$ population Gram matrix of some random vector $X$, and $\mu$ and $\eta$ are two $k$-dimensional vectors that can be written respectively as $\mu = \bbE (X A)$ and $\eta = \bbE (X Y)$ for some random variables $A$ and $Y$ (see Section~\ref{sec:setup} for details). Once this step is accomplished, HOIFs offer a unified scheme of constructing rate-optimal estimators of the bilinear form $\tilde{\psi}_{k}$, and the resulting estimators are higher-order $U$-statistics. Fortunately, many of the aforementioned examples of smooth functionals indeed admit such a bilinear form approximation; again, see Section~\ref{sec:setup} for concrete examples (Examples~\ref{eg:quad}--\ref{eg:gcm}). As will be clear in Section~\ref{sec:setup}, in this article, we will directly take the bilinear form $\tilde{\psi}_{k}$ as the target parameter $\psi$ without worrying about the bias due to this bilinear approximation. The HOIF estimators proposed in \citet{robins2008higher, robins2016technical, robins2023minimax} allow the dimension $k$ to be as large as of order $o (n^{2})$, but require a nonparametric density estimation step when estimating $\Sigma$ from data. Given the difficulty of nonparametric density estimation even in moderate dimensions, the original HOIF estimators have not been routinely deployed in practice.

When the dimension $k$ is of order $o (n)$ so $\Sigma^{-1}$ can be consistently estimated by the inverse of the sample Gram matrix $\hat{\Sigma}^{-1}$, \citet{liu2017semiparametric} proposed the so-called empirical HOIF estimators, simply estimating $\Sigma^{-1}$ by $\hat{\Sigma}^{-1}$ from a separate sample independent of the main sample used to estimate $\psi$. To our knowledge, the empirical HOIF estimator remains the only $\sqrt{n}$-consistent and asymptotic normal ($\sqrt{n}$-CAN) estimator of $\psi$ when $k = o (n)$, without imposing any assumption on the covariate density. \citet{zhang2026higher} extend both versions of HOIF estimators to parameters defined implicitly via $Z$/$M$-estimation problems, such as quantile treatment effects and expected shortfalls. More recently, \citet{newey2018cross} initiated the research program on constructing estimators motivated by but much simpler than HOIFs, with follow-up work in various directions \citep{kennedy2023towards, mcgrath2026nuisance, mcclean2026double}. Finally, we also mention in passing that similar bias correction ideas have also been independently developed in the econometric and general mathematical statistics literature \citep{newey2004twicing, cattaneo2018kernel, cattaneo2018inference, cattaneo2019two, breunig2024adaptive, cavaliere2024bootstrap, koltchinskii2022bootstrap, koltchinskii2025estimation}.

Although empirical HOIF estimators neither estimate nor impose any complexity-reducing assumptions on the density of $X$, inverting the sample Gram matrix $\hat{\Sigma}$ may easily lead to numerical instability when $k$ is relatively large compared to $n$. This potential instability has been documented in the simulation studies conducted in \citet{liu2020nearly, liu2017semiparametric, liu2024assumption, zhang2026higher}, being a primary reason for the limited practical uptake of empirical HOIF estimators. However, it is less well known that \citet{liu2020nearly} also proposed alternative empirical HOIF estimators (at orders $2$ and $3$, in retrospect) that still estimate the population Gram matrix $\Sigma$ by its sample analog $\hat{\Sigma}$ but from the \emph{same} sample used to compute the final $U$-statistic estimator. Since sample splitting is not used, \citet{liu2020nearly} did not prove that this new alternative HOIF estimator works in theory; interestingly, for the same reason, these alternative HOIF estimators exhibit much improved finite-sample performance compared to the original ones proposed in \citet{liu2017semiparametric}, in particular in terms of their numerical stability, even allowing practitioners to choose $k$ very close to $n$ (see Remark~\ref{rem:stability} for further explanations). For the sake of completeness, this is demonstrated in Figure~\ref{fig:sim} in Section~\ref{sec:theory}, which display the numerical results of a simple simulation study, the setup of which is described in Appendix~\ref{app:sim}.

\subsection{Our contributions}
\label{sec:contributions}

The main contribution of this article is to offer theoretical guarantees for the aforementioned alternative HOIF estimators, which we refer to as numerically stable HOIF estimators. The main technical difficulty arises from the dependence of the $U$-statistic kernel on the entire sample through $\hat{\Sigma}^{-1}$ when sample splitting is not employed. To overcome this challenge, we have to deviate from the analysis strategy for the original empirical HOIF estimators taken in \citet{liu2017semiparametric} and instead perform a more meticulous analysis that involves various complex expansions and nontrivial counting \citep{stanley2011enumerative}.  We obtain results similar to those for the empirical HOIF estimators of \citet{liu2017semiparametric}, in the sense that the new HOIF estimators are also $\sqrt{n}$-CAN for the bilinear forms $\psi$, as long as $k = o (n)$ without any further complexity-reducing assumptions on the density of $X$.

Specifically, we bring in tools from enumerative combinatorics and graph theory \citep{lauritzen1996graphical, chen2010mobius, stanley2011enumerative, shpitser2011efficient, richardson2023nested} to prove the bias and variance bounds for this new class of HOIF estimators. These tools were recently exploited in \citet{chen2025computing} to design efficient algorithms for the exact computation of higher-order $U$-statistics. In addition, \citet{schafer2026mobius} also uses these tools to give a new combinatorial interpretation of the iterative bootstrap procedure. However, to our knowledge, these tools have not been used to establish statistical properties for estimators that involve higher-order $U$-statistics. The second article of this series will further delineate the connection between our new stabilized HOIF estimators and various other higher-order bias correction schemes in mathematical statistics at large, together with a more comprehensive set of simulation studies to benchmark the finite-sample performance of different higher-order bias correction methods.


\subsection{Notation}
\label{sec:notation}

Throughout the article, $\bbU_{n,j}$ denotes the $j$-th order $U$-statistic operator: for any measurable $h: \calO_{1} \times \cdots \times \calO_{j} \to \bbR$,
\begin{align*}
\bbU_{n, j} \{h (O_{1}, \cdots, O_{j})\} \coloneqq \frac{(n - j)!}{n!} \sum_{1 \leq i_{1} \neq \cdots \neq i_{j} \leq n} h (O_{i_{1}}, \cdots, O_{i_{j}}).
\end{align*}
We reserve $\Sigma$ and $\hat{\Sigma}$ for the population and sample Gram matrices of $X$, and write $\Omega \coloneqq \Sigma^{-1}$ and $\hat{\Omega} \coloneqq \hat{\Sigma}^{-1}$ for their inverses whenever these exist ($\hat{\Sigma}$ being invertible almost surely under our assumptions). The identity matrix is denoted by $I$. For a random variable $W$ and $p \geq 1$, $\Vert W \Vert_{p} \coloneqq \{\bbE (|W|^{p})\}^{1/p}$ denotes the $L^{p}(\bbP)$-norm of $W$. To lighten notation, for any sample-index subset $S \subseteq [n]$, we
write $O_{S} \coloneqq \{O_{i} : i \in S\}$, and given any positive integer $\ell$, we let $[\ell] \coloneqq \{1, \cdots, \ell\}$. We write $\{i_1, \cdots, i_k\}$ as a set including elements $i_1, \cdots, i_k$ and write $(i_1, \cdots, i_k)$ as an ordered tuple, in which all elements are distinct and are assigned a particular ordering (mostly a canonical ordering).


\subsection{Organizations}
\label{sec:organization}

The remainder of this article is structured as follows. Section~\ref{sec:setup} sets the stage by describing the problem setting, regularity assumptions, and providing a brief review of the empirical HOIF estimator of \citet{liu2017semiparametric}. In Section~\ref{sec:HOIF}, we present the main result of this article, in which we first introduce the new numerically stable HOIF estimators and then characterize their bias, variance, and asymptotic distribution. The theoretical results are all encapsulated in Theorem~\ref{thm:main}, the main theorem in our article. Section~\ref{sec:proof} provides a proof sketch of Theorem~\ref{thm:main}, with technical details deferred to the Appendix. 
Section~\ref{sec:conclusions} concludes the article with a discussion of future topics.
    
\section{Problem Setting and A Brief Review of Existing HOIF Estimators}
\label{sec:setup}

Let $O \coloneqq (X, A, Y)$ denote a triple of the observed random vector, where $X \in \calX \subseteq \bbR^{k}$ is a $k$-dimensional vector, $A \in \calA \subset \bbR$ and $Y \in \calY \subseteq \bbR$ denote some outcomes of interest. We assume access to $n$ i.i.d. observations $\calD \coloneqq (O_{1}, \cdots, O_{n})$, drawn from a common data-generating distribution $\bbP \in \calP$, where $\calP$ denotes the statistical model restricted by the following regularity conditions.

\begin{assumption}
\label{as:cov}
The distribution of $X$ satisfies the following:
\begin{align}
& \bbE (X^{\top} X) = O (k), \\
& \Vert X^{\top} X \Vert_{\infty} = O (k),
\end{align}
and the eigenvalues of $\Sigma$ are strictly bounded away from $0$ and $\infty$.
\end{assumption}

In addition, in this article, we restrict to the case $k = o (n)$. But we will state the more precise condition on $k$ in the statement of related theoretical claims. We also need to impose the following $L_{\infty}$-stability assumption on the projection on the span of $X$, as commonly done in previous work on HOIFs \citep{robins2008higher, robins2016technical, robins2017minimax, robins2023minimax, liu2017semiparametric, liu2024assumption}.

\begin{assumption}
\label{as:kernel-stability}
For every bounded measurable function $h: \calX \to \bbR$, define the following integral operator:
\begin{equation*}
(\Pi h) (x) \coloneqq x^{\top} \Omega \bbE \{X h(X)\}, \ \forall \ x \in \calX.
\end{equation*} 
We assume that $\Pi$ is uniformly bounded as an operator on $L_\infty (\calX)$: there exists a strictly bounded constant $C_{\Pi} < \infty$, independent of $k$ and $n$, such that
\begin{equation}
\label{eq:kernel-linfty-stability}
\|\Pi h\|_{\infty} \leq C_\Pi \|h\|_{\infty}.
\end{equation}
\end{assumption}

Finally, for convenience, we further impose the following condition on $A$ and $Y$.
\begin{assumption}
\label{as:outcomes}
Both $A$ and $Y$ are bounded almost surely.
\end{assumption}

\begin{remark}
\label{rem:regularity assumptions}
The above assumptions are made for technical convenience. For example, if we relax Assumption~\ref{as:outcomes} from boundedness to light-tailed assumptions, we need to further develop exponential and moment inequalities for higher-order $U$-statistics with unbounded kernels, which is an important research topic in applied probability on its own \citep{chakrabortty2025tail}.
\end{remark}

For ease of exposition, throughout the article we consider the following functional of $\bbP$ as the target parameter:
\begin{equation}
\label{target}
\psi \equiv \psi (\bbP) \coloneqq \mu^{\top} \Omega \eta, \quad \text{where } \mu \coloneqq \bbE (X A), \eta \coloneqq \bbE (X Y), \Sigma \coloneqq \bbE (X X^{\top}) \text{ and } \Omega \coloneqq \Sigma^{-1}.
\end{equation}
Although $\psi$ takes a very simple bilinear form, it encapsulates many substantively important smooth functionals that appear in the literature. We use several examples to demonstrate the ubiquity of $\psi$.

\begin{example}[Quadratic functional of a density]
\label{eg:quad}
Suppose that $Y \sim \bbP$ with $p$ being the probability density function of $\bbP$, the target functional is $\psi = \int_{\calY} p (y)^{2} \diff y$, and $p$ can be represented as a linear combination of $\bar{\phi}$, assumed to be orthonormal with respect to the Lebesgue measure over $\calY$. Thus, there exists $\eta \in \bbR^{k}$ such that $p (\cdot) = \eta^{\top} \bar{\phi} (\cdot)$. We identify $X \coloneqq \bar{\phi} (Y)$ and $A \equiv Y$. Then given $O = (X, A, Y)$, $\psi = \eta^{\top} \eta$ with $\Sigma = I$. This quadratic functional of a density is one of the most well-studied smooth functionals in the statistics literature \citep{bickel1988estimating}.
\end{example}

\begin{example}[Signal-to-noise ratio]
\label{eg:SNR}
Suppose that $(X, Y) \sim \bbP$, and the target functional is $\psi = \bbE \{b (X)^{2}\}$ where $b (\cdot) \coloneqq \bbE (Y \mid X = \cdot)$. We further assume that $b (\cdot) = \beta^{\top} (\cdot)$ for some $\beta \in \bbR^{k}$. Then $\psi = \beta^{\top} \Sigma \beta = \eta^{\top} \Omega \eta$, with $\beta = \Omega \eta$. Similar parameters have been extensively studied in the past decade in the context of high-dimensional (generalized) linear models  \citep{verzelen2018adaptive, chen2024method}.
\end{example}

\begin{example}[Treatment-specific counterfactual mean]
\label{eg:ate}
Suppose that $(Z, A, Y) \sim \bbP$ constitutes the observed data of an unconfounded observational study, in which $A$ is the binary treatment variable, $Y$ is an outcome of interest, and $Z$ is the baseline covariates that contain all confounders between $A$ and $Y$. The target parameter is the treatment-specific counterfactual mean $\psi = \bbE Y (1) = \bbE \{A a (Z) Y\} = \bbE \{b (Z)\}$, where $a (\cdot) \coloneqq \bbE^{-1} (A \mid Z = \cdot)$ and $b (\cdot) \coloneqq \bbE (Y \mid Z = \cdot, A = 1)$. Let $X = A \bar{\phi} (Z)$. As shown in \citet{robins2007comment, liu2017semiparametric, bruns2026augmented}, if we posit that $a (\cdot) \equiv \alpha^{\top} \bar{\phi} (\cdot)$ and $b (\cdot) \equiv \beta^{\top} \bar{\phi} (\cdot)$, where $\alpha, \beta \in \bbR^{k}$, then $\psi = \alpha^{\top} \Sigma \beta = \mu^{\top} \Omega \eta$, where $\alpha = \Omega \mu$ and $\beta = \Omega \eta$ with $\mu = \bbE (X a (X))$ and $\eta = \bbE (X A Y)$. For implicitly defined parameters such as the quantile treatment effect and the $\alpha$-expected shortfall, \citet{zhang2026higher} also showed how to represent the estimating equation of the parameter of interest in this bilinear form.
\end{example}

\begin{example}[Generalized covariance measure]
\label{eg:gcm}
When testing the conditional independence between $A$ and $Y$ given $Z$, \citet{shah2020hardness} proposed to construct test statistics based on the generalized covariance measure $\tau = \bbE \{(A - a (Z)) (Y - b (Z))\}$, where $a (\cdot) \coloneqq \bbE (A \mid Z = \cdot)$ and $b (\cdot) \coloneqq \bbE (Y \mid Z = \cdot)$. To estimate $\tau$, the most difficult component is $\psi = \bbE \{a (Z) b (Z)\}$. In \citet{liu2020nearly}, it was shown that if both $a (\cdot) = \alpha^{\top} \bar{\phi} (\cdot)$ and $b (\cdot) = \beta^{\top} \bar{\phi} (\cdot)$ are linear combinations of $\bar{\phi}$, then by identifying $X = \bar{\phi} (Z)$, $\psi = \alpha^{\top} \Sigma \beta = \mu^{\top} \Omega \eta$, once we set $\alpha = \Omega \mu$ and $\beta = \Omega \eta$.
\end{example}


More related examples can also be found in \citet{robins2008higher, rotnitzky2021characterization, chernozhukov2022locally, rotnitzky2026note}. For all of the above examples, when $\Omega$ is known (referred to as the oracle case in \citet{liu2020nearly}), $\psi$ can be unbiasedly estimated by its oracle second-order influence function, which is the following second-order $U$-statistic:
\begin{equation}
\label{oracle}
\hat{\psi}_{2, k} (\Omega) \coloneqq \hat{\IIFF}_{2, 2, k} (\Omega) = \bbU_{n, 2} \{\hat{\IF}_{2, 2, k} (\Omega)\}.
\end{equation}
In contrast to the settings of \citet{robins2008higher} and \citet{liu2017semiparametric}, we consider a slightly more simplified setting in which the first-order estimator $\hat{\psi}_{1} = 0$; otherwise $\hat{\psi}_{2, k} (\Omega) = \hat{\psi}_{1} + \hat{\IIFF}_{2, 2, k} (\Omega)$. 

When $\Omega$ is unknown, one can construct the so-called empirical HOIF estimators taking the following form \citep{liu2017semiparametric}:
\begin{align}
& \hat{\psi}_{m, k} (\hat{\Omega}_{\nuis}) \coloneqq \sum_{j = 2}^{m} \hat{\IIFF}_{j, j, k} (\hat{\Omega}_{\nuis}), \nonumber \\
& \text{where } \hat{\IIFF}_{2, 2, k} (\tilde{\Omega}) \coloneqq \bbU_{n, 2} \{\hat{\IF}_{2, 2, k} (\tilde{\Omega})\} \text{ with } \hat{\IF}_{2, 2, k} (\tilde{\Omega}) \coloneqq A_{1} X_{1}^{\top} \tilde{\Omega} X_{2} Y_{2}, \text{ and for $j = 3, 4, \cdots$} \label{empHOIF} \\
& \hat{\IIFF}_{j, j, k} (\tilde{\Omega}) \coloneqq (-1)^{j} \bbU_{n, j} \{\hat{\IF}_{j, j, k} (\tilde{\Omega})\} \text{ with } \hat{\IF}_{j, j, k} (\tilde{\Omega}) \coloneqq A_{1} X_{1}^{\top} \tilde{\Omega} \Big\{ \prod_{s = 3}^{j} (X_{s} X_{s}^{\top} - \tilde{\Sigma}) \tilde{\Omega} \Big\} X_{2} Y_{2}. \nonumber
\end{align}
Here, $\tilde{\Sigma}$ and $\tilde{\Omega}$ denote, respectively, some generic estimators of $\Sigma$ and $\Omega$. Furthermore, $\hat{\Omega}_{\nuis} = \hat{\Sigma}_{\nuis}^{-1}$, with $\hat{\Sigma}_{\nuis}$ the sample Gram matrix estimator computed from a separate sample $\calD_{\nuis}$ independent of our main sample $\calD$.

\begin{remark}
\label{rem:notation}
We choose the above notation convention to strictly follow earlier works on HOIFs \citep{robins2008higher, robins2016technical, robins2023minimax, liu2017semiparametric, liu2024assumption}. For example, \citet{robins2008higher} reserves the notation $\hat{\IIFF}_{j, k} (\tilde{\Omega})$ for $\hat{\IIFF}_{j, k} (\tilde{\Omega}) \coloneqq \sum_{l = 2}^{j} \hat{\IIFF}_{l, l, k} (\tilde{\Omega})$. We also choose to use $\hat{\IF}$ and $\hat{\IIFF}$ instead of $\IF$ and $\IIFF$ throughout to keep the notation more aligned with the scenario in which all $A, Y, X$ may in fact depend on some first-step nuisance estimates.
\end{remark}

In particular, \citet{liu2017semiparametric} established the following results on $\hat{\psi}_{m, k} (\hat{\Omega}_{\nuis})$. Here, we only provide the simplified version of their results and \citet{liu2017semiparametric} in fact provide more comprehensive characterizations of both the bias and variance bounds of $\hat{\psi}_{m, k} (\hat{\Omega}_{\nuis})$.

\begin{proposition}
\label{prop:emp HOIF}
Under Assumptions~\ref{as:cov}--\ref{as:outcomes}, the following results hold.
\begin{enumerate}[label = (\arabic*)]
\item The bias of $\hat{\psi}_{m, k} (\hat{\Omega}_{\nuis})$ can be bounded as follows:
\begin{align*}
|\bbE \{\hat{\psi}_{m, k} (\hat{\Omega}_{\nuis}) - \psi\}| \lesssim \Vert A \Vert_{2} \cdot \Vert Y \Vert_{2} \cdot \Big( \frac{k}{n} \Big)^{m / 2}.
\end{align*}

\item The variance of $\hat{\psi}_{m, k} (\hat{\Omega}_{\nuis})$ can be bounded as follows if $k \lesssim \frac{n}{\log^{3} n}$ and $m \asymp \log n$:
\begin{align*}
\var \{\hat{\psi}_{m, k} (\hat{\Omega}_{\nuis})\} \lesssim \frac{1}{n} + \frac{k}{n^{2}}.
\end{align*}

\item Under the same additional conditions in (2), let $\nu_{\nuis}^{2} \coloneqq \lim_{n \rightarrow \infty} n \var \{\hat{\psi}_{m, k} (\hat{\Omega}_{\nuis})\}$. $\hat{\psi}_{m, k} (\hat{\Omega}_{\nuis})$ is $\sqrt{n}$-CAN, that is:
\begin{align*}
\sqrt{n} \{\hat{\psi}_{m, k} (\hat{\Omega}_{\nuis}) - \psi\} \rightsquigarrow_{\bbP} \calN (0, \nu^{2}_{\nuis}).
\end{align*}
\end{enumerate}
\end{proposition}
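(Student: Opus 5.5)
Conditioning on the nuisance sample $\calD_{\nuis}$ reduces the problem to analysing a sum of ordinary $U$-statistics with fixed kernels. Given $\calD_{\nuis}$, $\hat{\Omega}_{\nuis}$ is deterministic, and each $\hat{\IIFF}_{j,j,k}(\hat{\Omega}_{\nuis})$ is a $U$-statistic in the i.i.d. main sample $\calD$. Conditional unbiasedness of $U$-statistics then gives
\begin{align*}
\bbE\{\hat{\IIFF}_{j,j,k}(\hat{\Omega}_{\nuis}) \mid \calD_{\nuis}\} = (-1)^{j}\mu^{\top}\hat{\Omega}_{\nuis}\{(\Sigma-\hat{\Sigma}_{\nuis})\hat{\Omega}_{\nuis}\}^{j-2}\eta ,
\end{align*}
with the sign convention absorbed. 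Summing over $j=2,\dots,m$ gives a truncated Neumann series for $\Omega=\{\hat{\Sigma}_{\nuis}+(\Sigma-\hat{\Sigma}_{\nuis})\}^{-1}$. The conditional bias is therefore exactly the remainder
\begin{align*}
\pm\,\mu^{\top}\big\{(\hat{\Omega}_{\nuis}-\Omega)\Sigma\big\}^{m-1}\Omega\eta .
\end{align*}
Write $\mu^{\top}\Omega=\bbE\{A\,X^{\top}\}\Omega$, which is the coefficient vector of the $L_2$-projection $\Pi A$, so that $\Vert\Sigma^{1/2}\Omega\mu\Vert\le\Vert A\Vert_2$, and likewise for $\eta$. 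The remainder is then bounded by $\Vert A\Vert_2\Vert Y\Vert_2\,\Vert\Sigma^{1/2}\hat{\Omega}_{\nuis}\Sigma^{1/2}-I\Vert_{\op}^{m-1}$. Matrix concentration for sample Gram matrices (matrix Bernstein/Rudelson, using $\Vert X^{\top}X\Vert_\infty=O(k)$ and the eigenvalue bounds in Assumption~\ref{as:cov}) gives $\Vert\Omega^{1/2}\hat{\Sigma}_{\nuis}\Omega^{1/2}-I\Vert_{\op}\lesssim\sqrt{k\log k/n}$ with high probability. On the complementary bad event, one can truncate, or use Assumption~\ref{as:cov} to bound the contribution, which is exponentially small. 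Taking expectations over $\calD_{\nuis}$ yields part~(1); the logarithm can be removed by a moment bound on $\Vert\cdot\Vert_{\op}^{m-1}$, or else part~(1) is stated up to it. I expect the careful moment control of this power at $m\asymp\log n$ to be a detail rather than the main difficulty.

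For part~(2), use $\var=\bbE\{\var(\cdot\mid\calD_{\nuis})\}+\var\{\bbE(\cdot\mid\calD_{\nuis})\}$. The second term is the variance of the remainder above, which is of order $(k/n)^{m-1}=o(1/n)$ once $m\asymp\log n$ and $k\lesssim n/\log^{3}n$. For the first term, Hoeffding-decompose each conditional $U$-statistic of order $j$ and bound its degree-$d$ projections. The first-order projection involves $\Pi$-type objects such as $A X^{\top}\hat{\Omega}_{\nuis}\{\cdots\}\eta$. These are bounded in $L_2$, using Assumptions~\ref{as:kernel-stability} and~\ref{as:outcomes} to control $L_\infty$ norms of projections, and yield the $1/n$ term. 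The degree-$d\ge2$ projections contribute of order $\binom{j}{d}^2 d!\,n^{-d}\,k^{d-1}C^{j}\,\Vert\Sigma^{1/2}\hat{\Omega}_{\nuis}\Sigma^{1/2}-I\Vert^{2(j-2)}$-type quantities. Each factor $(X_sX_s^{\top}-\Sigma)$ contributes at most $k/n$ in variance, so the contributions form a geometric series in $\sqrt{k\log n/n}$ times combinatorial factors in $j\le m\asymp\log n$. The main obstacle is keeping these combinatorial constants ($j^{2d}$, $C^{j}$) under control when summing over $j\le\log n$. This is exactly where the condition $k\lesssim n/\log^3 n$ is spent: it makes the ratio per extra order $\lesssim m^{O(1)}k/n\to0$. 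The total is therefore dominated by $1/n$ from $d=1$ and $k/n^2$ from the $j=2$, $d=2$ term.

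For part~(3), write $\hat{\psi}_{m,k}(\hat{\Omega}_{\nuis})-\psi$ as the conditional bias, which is $o_{\bbP}(n^{-1/2})$, plus the conditional first-order Hoeffding term, plus higher-order degenerate terms. The degenerate terms have variance $O(k/n^2)=o(1/n)$ when $k=o(n)$, which holds here. The linear term is an i.i.d. average conditional on $\calD_{\nuis}$, and its summands are bounded by Assumptions~\ref{as:kernel-stability}--\ref{as:outcomes}. Its conditional variance converges in probability to $\nu^2_{\nuis}$, since $\hat{\Omega}_{\nuis}\to\Omega$ in the relevant norm. The conditional Lindeberg CLT, followed by dominated convergence of the conditional characteristic functions, then gives the unconditional limit $\calN(0,\nu^2_{\nuis})$ by Slutsky.
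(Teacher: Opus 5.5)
This proposition is quoted from Liu et al.\ (2017) and is not proved in the paper. Your route is the natural one: condition on $\calD_{\nuis}$, identify the conditional bias as the exact Neumann remainder $(-1)^m\mu^\top\{(\hat\Omega_{\nuis}-\Omega)\Sigma\}^{m-1}\Omega\eta$, Hoeffding-decompose, and apply a conditional CLT. The paper's remark in Section~\ref{sec:variance}, that the Hoeffding decomposition is available for the split estimator, points to the same route. For (2) and (3), your in-probability bound $(k\log k/n)^{(m-1)/2}$ on the remainder is enough, since it is $o(n^{-1/2})$ when $m\asymp\log n$ and $k\lesssim n/\log^3 n$.

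\textbf{The gap is in part (1).} Bounding the conditional bias $R$ in absolute value before averaging, $|\bbE R|\le\bbE|R|\le\Vert A\Vert_2\Vert Y\Vert_2\,\bbE\Vert\Sigma^{1/2}\hat\Omega_{\nuis}\Sigma^{1/2}-I\Vert_{\op}^{m-1}$, discards the cancellation that makes the unconditional bias small.
\begin{itemize}
\item The right-hand side is of order at least $(k/n)^{(m-1)/2}$. At $m=2$ you therefore get $\sqrt{k/n}$ or worse, whereas $\bbE\{\mu^\top(\hat\Omega_{\nuis}-\Omega)\eta\}=O(k/n)$, because the linear term of $\hat\Omega_{\nuis}-\Omega$ has mean zero.
\item The logarithm cannot be removed by moment bounds either. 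Take the normalized histogram design $X=\sqrt{k}\,(\mathbf{1}\{Z\in I_1\},\dots,\mathbf{1}\{Z\in I_k\})^\top$, with $Z$ uniform and $k$ equal bins; then $\Sigma=I$, $X^\top X\equiv k$ and $\Pi h=h$. Here $\Vert\hat\Sigma_{\nuis}-I\Vert_{\op}=\max_j|kN_j/n-1|$ is of order $\sqrt{k\log k/n}$, where $N_j$ is the number of nuisance points in bin $j$.
\end{itemize}

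\textbf{The missing step} is to average over $\calD_{\nuis}$ before taking any norm. With $\Sigma=I$, the remainder is a series of terms $\alpha^\top E^d\beta$ with $d\ge m-1$ and $E=\hat\Sigma_{\nuis}-I=n^{-1}\sum_i(X_iX_i^\top-I)$. In $n^{-d}\sum_{i_1,\dots,i_d}\bbE\{\alpha^\top(X_{i_1}X_{i_1}^\top-I)\cdots(X_{i_d}X_{i_d}^\top-I)\beta\}$, any index that appears only once gives zero. So at most $\lfloor d/2\rfloor$ distinct indices survive, and the pairing-plus-graph-counting argument of Lemma~\ref{lem:bias-averaged-centered-product-bound} gives $(Ck/n)^{\lceil d/2\rceil}$ with no logarithm. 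Summing over $d$, after truncating the series as in Remark~\ref{rem:J}, gives $(Ck/n)^{\lfloor m/2\rfloor}$. This is the stated rate when $m$ is even.

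\textbf{For odd $m$, do not try to reach $(k/n)^{m/2}$: it is false.} There your exponent $(m-1)/2$ is the correct one, up to the logarithm.
\begin{itemize}
\item Counterexample: the histogram design with $A=Y\equiv1$, so $\mu=\eta=k^{-1/2}\mathbf{1}$, $\psi=1$, and Assumptions~\ref{as:cov}--\ref{as:outcomes} all hold. At $m=3$ the conditional bias is $-\mu^\top(\hat\Omega_{\nuis}-I)^2\eta=-k^{-1}\sum_j\{n/(kN_j)-1\}^2$. Its mean on the event that no bin is empty is $-(k-1)n^{-1}\{1+o(1)\}$, not $O((k/n)^{3/2})$.
\item The same example shows $\hat\Sigma_{\nuis}$ is singular with positive probability. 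Unconditional moments therefore only make sense after redefining or truncating $\hat\Omega_{\nuis}$ off a high-probability event. Assumption~\ref{as:cov} cannot control the bad-event contribution, so the second option you offer (bounding it via Assumption~\ref{as:cov}) does not work; truncation does.
\end{itemize}
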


\section{The New HOIF Estimators, Statistical Guarantees, and M\"obius Inversion}
\label{sec:HOIF}

\subsection{The new HOIF estimators and statistical guarantees}
\label{sec:theory}

As alluded to in the Introduction, although the empirical HOIF estimator $\hat{\psi}_{m, k} (\hat{\Omega}_{\nuis})$ dispenses with the need of a (nonparametric) density estimator $\hat{g}$ of $g$, it can be numerically unstable when the dimension $k$ is large compared to the sample size $n$. As demonstrated in simulation studies shown in recent work \citep{liu2017semiparametric, zhang2026higher}, the finite-sample performance of $\hat{\psi}_{m, k} (\hat{\Omega}_{\nuis})$ indeed degrades as the condition number $\rho = \rho (n) \coloneqq k / n$ increases with $k$. 

To resolve the numerical instability of $\hat{\psi}_{m, k} (\hat{\Omega}_{\nuis})$, we instead construct the following HOIF estimator:
\begin{equation}
\label{new HOIF}
\hat{\psi}_{m, k} (\hat{\Omega}) \coloneqq \sum_{j = 2}^{m} \hat{\IIFF}_{j, j, k} (\hat{\Omega}).
\end{equation}
As mentioned, the 2nd- and 3rd-order versions of $\hat{\psi}_{m, k} (\hat{\Omega})$ have appeared in the previous work of the last author of this article \citep{liu2020nearly}, but there was no theoretical proof. The sole difference between our new HOIF estimator $\hat{\psi}_{m, k} (\hat{\Omega})$ and the empirical HOIF estimator $\hat{\psi}_{m, k} (\hat{\Omega}_{\nuis})$ is that we now estimate $\Omega = \Sigma^{-1}$ by the inverse sample Gram matrix estimator $\hat{\Omega}$ not from another independent sample $\calD_{\nuis}$, but from the same sample $\calD$ used to construct the HOIF estimator. Due to the correlation induced by $\hat{\Omega}$, it is more challenging to analyze the statistical properties of $\hat{\psi}_{m, k} (\hat{\Omega})$, compared to $\hat{\psi}_{m, k} (\hat{\Omega}_{\nuis})$ in \citet{liu2017semiparametric}. Overcoming this technical challenge to obtain theoretical guarantees parallel to those in Proposition~\ref{prop:emp HOIF} is the main contribution of this article.

\begin{remark}
\label{rem:stability}
We explain why $\hat{\psi}_{m, k} (\hat{\Omega})$ has improved stability compared to $\hat{\psi}_{m, k} (\hat{\Omega}_{\nuis})$. Intuitively, since $\hat{\Sigma}$ contains the same sample $\calD$ and enters $\hat{\psi}_{m, k} (\hat{\Omega})$ as a ``denominator'', it exhibits a self-normalization phenomenon not shared by $\hat{\psi}_{m, k} (\hat{\Omega}_{\nuis})$, as $\hat{\Omega}_{\nuis} = \hat{\Sigma}_{\nuis}^{-1}$ is computed from a different sample. We refer readers to Section~S4.3 of \citet{liu2020nearly} for further explanations.
\end{remark}

\begin{remark}
\label{rem:comp}
\citet{chen2025computing} develop an algorithm for the exact computation of $\hat{\psi}_{m, k} (\hat{\Omega})$. In particular, they showed that the exact time complexity \citep{arora2009computational} of computing $\hat{\psi}_{m, k} (\hat{\Omega})$ is $O (n^{\kappa})$, where $\kappa$ is the treewidth of an undirected graph associated with the $U$-statistic kernel of $\hat{\psi}_{m, k} (\hat{\Omega})$. If one is willing to sacrifice some efficiency, it is entirely possible to compute each $\hat{\IIFF}_{j, j, k} (\hat{\Omega})$ as an incomplete higher-order $U$-statistic with almost the same complexity as $j$ matrix multiplications \citep{kong2018estimating}.
\end{remark}

Next, we present Theorem~\ref{thm:main}, the main and most advanced result of this article.
\begin{theorem}
\label{thm:main}
Under Assumptions~\ref{as:cov}--\ref{as:outcomes}, the following results hold.
\begin{enumerate}[label = (\arabic*)]
\item The bias of $\hat{\psi}_{m, k} (\hat{\Omega})$ can be characterized as follows:
\begin{align*}
|\bbE \{\hat{\psi}_{m, k} (\hat{\Omega}) - \psi\}| \lesssim (\Vert A \Vert_{2} \cdot \Vert Y \Vert_{2} + \Vert A \Vert_{\infty} \cdot \Vert Y \Vert_{2} + \Vert A \Vert_{2} \cdot \Vert Y \Vert_{\infty}) \Big( \frac{k m}{n} \Big)^{ \lceil \frac{m - 1}{4} \rceil \vee 1}.
\end{align*}

\item The variance of $\hat{\psi}_{m, k} (\hat{\Omega})$ can be bounded as follows if $k \lesssim \frac{n}{\log^{3} n}$ and $m \asymp \log n$:
\begin{align*}
\var \{\hat{\psi}_{m, k} (\hat{\Omega})\} \lesssim \frac{1}{n} + \frac{k}{n^{2}}.
\end{align*}

\item If $m \lesssim \log n$ and $k \lesssim \frac{n}{\log^{3} n}$, let $\nu^{2} \coloneqq \lim_{n \rightarrow \infty} n \var \{\hat{\psi}_{m, k} (\hat{\Omega})\}$. $\hat{\psi}_{m, k} (\hat{\Omega})$ is $\sqrt{n}$-CAN, that is:
\begin{align*}
\sqrt{n} \{\hat{\psi}_{m, k} (\hat{\Omega}) - \psi\} \rightsquigarrow_{\bbP} \calN (0, \nu^{2}).
\end{align*}
\end{enumerate}
\end{theorem}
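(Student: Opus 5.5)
We plan to reduce the analysis of $\hat{\psi}_{m, k} (\hat{\Omega})$ to that of \emph{oracle} $U$-statistics with deterministic kernels in $\Omega$ and $\Sigma$, after which the bounds of Proposition~\ref{prop:emp HOIF} and standard Hoeffding-decomposition arguments apply. The first step is algebraic. Write $\hat{\Omega} = \Omega (I + (\hat{\Sigma} - \Sigma)\Omega)^{-1}$ and expand as a Neumann series $\hat{\Omega} = \sum_{r \ge 0} (-1)^{r} \Omega \{(\hat{\Sigma} - \Sigma)\Omega\}^{r}$. This series is valid on the event $\calE \coloneqq \{\Vert \Omega^{1/2}(\hat{\Sigma} - \Sigma)\Omega^{1/2}\Vert_{\op} \le C\sqrt{k\log n / n}\}$, which has probability $1 - n^{-c}$ by matrix Bernstein under Assumption~\ref{as:cov}. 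Off $\calE$ we only need crude polynomial bounds, so its contribution is negligible.

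Next, substitute $\hat{\Sigma} - \Sigma = n^{-1}\sum_{i}(X_{i}X_{i}^{\top} - \Sigma)$ into every factor $\hat{\Omega}$ and $(X_{s}X_{s}^{\top} - \hat{\Sigma})$ in each $\hat{\IIFF}_{j,j,k}(\hat{\Omega})$. Each $\hat{\IIFF}_{j,j,k}(\hat{\Omega})$ then becomes a weighted sum over index sequences of chain products of the form
\[ A_{i_{0}} X_{i_{0}}^{\top}\Omega (X_{i_{1}}X_{i_{1}}^{\top} - \Sigma)\Omega \cdots \Omega X_{i_{L}} Y_{i_{L}} . \]
In these sums the indices coming from $\hat{\Sigma}$ are free and may coincide with each other or with the $U$-statistic indices. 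We organize the sum according to the set partition of positions induced by index coincidences. On the lattice of set partitions, \Mobius{} inversion converts the ``free'' sums into sums over distinct indices, i.e.\ into genuine $U$-statistics $\bbU_{n,\ell}$ with deterministic kernels, with explicit coefficients.

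The key combinatorial claim, and the main obstacle, is cancellation: after collecting terms, the all-distinct chains of length $\ell$ should have total coefficient matching the oracle HOIF $(-1)^{\ell}\bbU_{n,\ell}\{\hat{\IF}_{\ell,\ell,k}(\Omega)\}$ up to $\ell \le m$. The leading residual chains should then involve at least roughly $(m-1)/4$ extra $n^{-1}$-weighted factors. I would first verify this for $m=2,3$ by hand to calibrate, then prove it generally via a generating-function identity for the coefficients: the inverse of $\hat{\Sigma}$ within a sum of $\bbU$'s telescopes when summed over $j$.

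The bias bound follows because oracle chains have mean exactly $\psi$ for $\ell = 2$ and mean $0$ for $\ell \ge 3$ (the centered factors $X_{s}X_{s}^{\top} - \Sigma$ are independent). Only terms with repeated indices (diagonal terms) and the untruncated tail contribute. A repeated index produces factors like $X_{i}^{\top}\Omega X_{i} \le Ck$ paired with an $n^{-1}$, so each collision costs $k/n$, with combinatorial multiplicities $\lesssim m$. Terms in which a repeated index touches $A$ or $Y$ are controlled using Assumption~\ref{as:kernel-stability} and the $L_\infty$ norms; this is what generates the $\Vert A\Vert_{\infty}\Vert Y\Vert_{2}$ cross terms. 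Counting the minimal number of collisions compatible with the surviving coefficient structure gives the exponent $\lceil (m-1)/4 \rceil \vee 1$.

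For the variance, we Hoeffding-decompose each deterministic-kernel $U$-statistic. Its variance at order $\ell$ is bounded, as in \citet{liu2017semiparametric}, by $\sum_{r}\binom{\ell}{r}^{2}n^{-r}$ times moments bounded by $(Ck)^{r-1}$ (via the chain structure and Assumption~\ref{as:kernel-stability}). Summing over $\ell \le m \asymp \log n$ with $k \lesssim n/\log^{3} n$ makes the geometric factors $(m^{2}k/n)$ summable, so the variance is at most $1/n + k/n^{2}$.

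For part (3), the estimator equals its linear Hoeffding projection, a sum of i.i.d.\ terms, plus a remainder of order $o_{\bbP}(n^{-1/2})$ when $k = o(n)$. The second-order degenerate part has variance $k/n^{2} = o(1/n)$, and the bias is $o(n^{-1/2})$ for $m \asymp \log n$. The Lindeberg CLT then gives the limit $\calN(0,\nu^{2})$.
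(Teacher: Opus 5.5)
\textbf{The gap is in the bias argument, at exactly the step you flag as the main obstacle, and the claim you propose to prove there is not the one the bias needs.} It is true that all-distinct chains of length $\ell\le m$ carry the oracle coefficients $(-1)^{\ell}$; this follows from the finite-difference argument below. But it is irrelevant for the bias, because an all-distinct chain with $\ell\ge 3$ has mean zero whatever its coefficient. The entire bias sits in the collision words, and these do not vanish one at a time.

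For example, take $\Sigma=I$ and look at the first-order Neumann term of $\hat{\IIFF}_{2,2,k}(\hat{\Omega})$ in which the $\hat{\Sigma}$-index equals $i_1$. It is $-n^{-1}\bbU_{n,2}\{A_1X_1^{\top}(X_1X_1^{\top}-I)X_2Y_2\}$, with mean $-n^{-1}\bbE\{A(X^{\top}X-1)X^{\top}\}\eta$. This equals $-(k-1)\psi/n$ when $X^{\top}X\equiv k$.

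So ``each collision costs $k/n$'' only yields the exponent $1$. That suffices for $m\le 5$. It does not suffice for the theorem when $m\ge 6$, nor for the $o(n^{-1/2})$ bias you invoke in part (3) when $k\gtrsim\sqrt{n}$. To reach $\lceil (m-1)/4\rceil$ you must show that every collision word of Neumann degree $c<\lceil (m-1)/2\rceil$ cancels \emph{exactly} across the orders $j$. ``A generating-function identity'' and ``the surviving coefficient structure'' do not say what survives or why the threshold is $(m-1)/2$ in the Neumann degree.

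\textbf{The missing mechanism is a finite-difference one.} By the hockey-stick identity,
$\hat{\psi}_{m,k}(\hat{\Omega})-\hat{\psi}_{2,k}(I)=\sum_{j=1}^{m-1}(-1)^{j+1}\binom{m-1}{j}\bbU_{n,j+1}[A_{1}X_{1}^{\top}\{\hat{\Omega}\prod_{s=3}^{j+1}(X_{s}X_{s}^{\top}\hat{\Omega})-I\}X_{2}Y_{2}]$.
\begin{itemize}
\item Expand $\hat{\Omega}-I$ in powers of $\Delta_n=I-\hat{\Sigma}$ and write $X_sX_s^{\top}=I-W_s$. The coefficient of any fixed centered word, with its equality pattern, is then $\sum_{j=0}^{m-1}(-1)^{j+1}\binom{m-1}{j}p(j)$.
\item Here $p$ counts placements, satisfies $p(0)=0$ for $c\ge 1$, and has degree at most $c+c^{\dagger}$. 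The word has $c$ indices from $\Delta_n$ and $c^{\dagger}$ factors $W_s$ from $U$-statistic positions.
\item A nonzero mean forces every $W_s$ to be matched by a $\Delta_n$-index, so $c^{\dagger}\le c$. Hence $\deg p\le 2c<m-1$, and the $(m-1)$-th difference vanishes.
\item The surviving levels $c\ge\lceil (m-1)/2\rceil$ are $O((k/n)^{\lceil c/2\rceil})$, because unmatched $\Delta_n$-indices must pair up (the Betti-number count of Lemma~\ref{lem:graph-counting}). This is where $\lceil (m-1)/4\rceil$ comes from.
\end{itemize}

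\textbf{Variance.} Your plan (full expansion followed by Hoeffding) is a legitimately different route from the paper's. The paper keeps $\hat{\Omega}$ inside the kernel, lowers the $U$-statistic order via $\sum_i H_i=0$ and M\"obius inversion, and then uses a leave-*-out expansion, graph counting and Efron--Stein. As written, however, your version is incomplete in three places.
\begin{itemize}
\item You sum only over $\ell\le m$, but the Neumann series produces chains of length up to order $mJ$, with combinatorially large coefficients.
\item Collision kernels do not obey a $(Ck)^{r-1}$ moment bound unless their $n^{-1}$ weights are tracked through a Betti-number count.
\item The truncation remainder still contains $\hat{\Omega}$. It is therefore not a deterministic-kernel $U$-statistic and needs its own bound.
\end{itemize}
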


In Section~\ref{sec:proof} below, we will provide a proof sketch of the above theorem, to illustrate the main steps. The details of the proof are delegated to the Appendix.

\begin{remark}
\label{rem:clt}
In fact, once the bias of $\hat{\psi}_{m, k} (\hat{\Omega})$ can be shown to be $o (n^{-1 / 2})$, it is straightforward to establish the $\sqrt{n}$-CAN of $\hat{\psi}_{m, k} (\hat{\Omega})$ because $\hat{\psi}_{2, k} (\Omega)$ is an unbiased and $\sqrt{n}$-CAN estimator of $\psi$, following \citet{bhattacharya1992class}; see \citet{liu2020nearly} for a proof and \citet{bobkov2019higher, gotze1984expansions, dobler2022functional, chakrabortty2025tail} for some recent related progress on the probability theory side.
\end{remark}

To demonstrate the better finite-sample performance of $\hat{\psi}_{m, k} (\hat{\Omega})$ compared to $\hat{\psi}_{m, k} (\hat{\Omega}_{\nuis})$, a simple simulation study is conducted, with the setup described in Appendix~\ref{app:sim}. Specifically, Figure~\ref{fig:sim} compares the performance between $\hat{\psi}_{m, k} (\hat{\Omega})$ and $\hat{\psi}_{m, k} (\hat{\Omega}_{\nuis})$ when $m = 3$, varying $\rho = k / n$. All summary statistics are computed based on 250 Monte Carlo runs. It is evident that the performance of $\hat{\psi}_{m, k} (\hat{\Omega}_{\nuis})$ starts to break down as $\rho$ increases, whereas $\hat{\psi}_{m, k} (\hat{\Omega})$ maintains a very stable performance even when $\rho$ is near $1$. In particular, based on Figure~\ref{fig:sim}(a), the RMSEs of $\hat{\psi}_{m, k} (\hat{\Omega})$ track those of $\hat{\psi}_{m, k} (I)$ quite well even when $\rho$ is as large as $0.7$. In a follow-up paper, we will report numerical results from a set of more comprehensive simulation studies.

\begin{figure}
\centering
\includegraphics[width=0.9\linewidth]{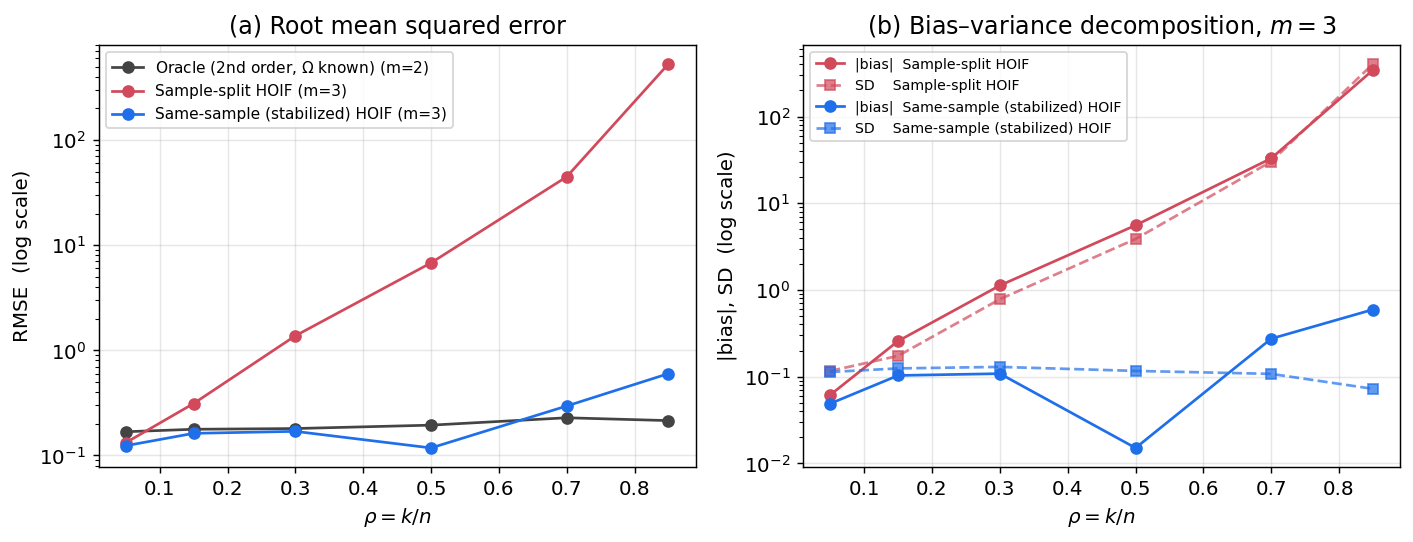}
\caption{Finite-sample comparison between the sample-split empirical HOIF estimator
$\hat{\psi}_{m, k} (\hat{\Omega}_{\nuis})$ and the same-sample stabilized HOIF estimator $\hat{\psi}_{m, k} (\hat{\Omega})$ at order $m = 3$.
Panel (a) reports the RMSE on a logarithmic scale as
$\rho = k / n$ varies. Panel (b) decomposes the error into absolute bias and standard deviation. The sample-split estimator becomes unstable as $\rho$ increases, whereas the stabilized estimator remains numerically stable.}
\label{fig:sim}
\end{figure}

\subsection{The \Mobius{} inversion decomposition}
\label{sec:mobius}

Before proving our main theorem, we record an (interesting) observation regarding $\hat{\IIFF}_{j, j, k} (\hat{\Omega})$.

\begin{lemma}
\label{lem:empirical centering}
Write $H_{i} \coloneqq (X_{i} X_{i}^{\top} - \hat{\Sigma}) \hat{\Omega} = X_{i} X_{i}^{\top} \hat{\Omega} - I$ for $i \in [n]$ (note that $H_{i}$'s appear repeatedly in the $U$-statistic kernel of $\hat{\IIFF}_{j, j, k} (\hat{\Omega})$). Then the following elementary identity holds.
\begin{equation}
\label{empirical centering}
\sum_{i = 1}^{n} H_{i} \equiv 0.
\end{equation}
\end{lemma}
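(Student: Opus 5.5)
The identity follows directly from the definition of the sample Gram matrix. The plan is to fix the convention $\hat{\Sigma} = \frac{1}{n} \sum_{i = 1}^{n} X_{i} X_{i}^{\top}$, the sample analog of $\Sigma = \bbE (X X^{\top})$ built from the same sample $\calD$, and to work on the event where $\hat{\Sigma}$ is invertible. As noted in Section~\ref{sec:notation}, this event has probability one under our assumptions. On this event $\hat{\Omega} = \hat{\Sigma}^{-1}$ is well defined.

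First, I will verify the two expressions for $H_{i}$ given in the statement. Expanding the product gives
\begin{align*}
(X_{i} X_{i}^{\top} - \hat{\Sigma}) \hat{\Omega} = X_{i} X_{i}^{\top} \hat{\Omega} - \hat{\Sigma} \hat{\Sigma}^{-1} = X_{i} X_{i}^{\top} \hat{\Omega} - I .
\end{align*}

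Second, I will sum over $i \in [n]$ and use linearity of right-multiplication by $\hat{\Omega}$:
\begin{align*}
\sum_{i = 1}^{n} H_{i} = \Big( \sum_{i = 1}^{n} X_{i} X_{i}^{\top} - n \hat{\Sigma} \Big) \hat{\Omega} = (n \hat{\Sigma} - n \hat{\Sigma}) \hat{\Omega} = 0 .
\end{align*}
Equivalently, in the second form, $\sum_{i} X_{i} X_{i}^{\top} \hat{\Omega} - n I = n \hat{\Sigma} \hat{\Omega} - n I = 0$.

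There is no real obstacle. The only points needing care are the $1/n$ normalization of $\hat{\Sigma}$ and the almost-sure invertibility of $\hat{\Sigma}$. If $\hat{\Sigma}$ were instead normalized differently, for example by $1/(n-1)$, the identity would fail. Any such convention would therefore have to be matched to the one used in the definition of $\hat{\Sigma}$.

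The lemma matters for the later analysis because it is a self-normalization identity: the empirically centered factors $H_{i}$ sum exactly to zero. This should allow sums over indices in the $U$-statistic kernels that collide with other indices to be rewritten through M\"obius inversion over set partitions.
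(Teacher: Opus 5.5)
Your proof is correct and is exactly the direct computation the paper leaves implicit when it calls this identity elementary. Your $1/n$ normalization of $\hat{\Sigma}$ is the one the paper uses elsewhere (e.g., $\Delta_{n} = I - \hat{\Sigma} = \frac{1}{n}\sum_{i=1}^{n}(I - X_{i}X_{i}^{\top})$), so $\sum_{i} H_{i} = (n\hat{\Sigma} - n\hat{\Sigma})\hat{\Omega} = 0$ holds as you wrote.
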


With Lemma~\ref{lem:empirical centering}, by exploiting a classical tool in enumerative combinatorics, \emph{M\"obius inversion} on partition lattices \citep{lauritzen1996graphical, stanley2011enumerative, mccullagh2018tensor}, we can then decompose $\hat{\IIFF}_{j, j, k} (\hat{\Omega})$ into a finite sum of lower-order $U$-statistics, which will not only be useful in the proof of Theorem~\ref{thm:main} to be presented in Section~\ref{sec:proof}, but also shed some light on more detailed bias reduction mechanisms of each $\hat{\IIFF}_{j, j, k} (\hat{\Omega})$ for $j \in [m]$. 

Before presenting this \emph{\Mobius{} inversion decomposition}, we introduce some additional notation. Fix any $j \geq 3$. Let $\iota = j - 2$ and $\calR_{i_{1} i_{2}} \coloneqq H_{i_{1}} + H_{i_{2}}$. Let $\bbB_{\iota}$ consist of all finite collections $\calB = \{B_{1}, \cdots, B_{r}\}$ of pairwise disjoint subsets of $[\iota]$ such that $|B_{\nu}| \ge 2$ for every $\nu \in [r]$. The collection $\calB$ is allowed to be empty and is not required to cover $[\iota]$. For any $\calB \in \bbB_{\iota}$, order its elements according to their smallest elements and define
\begin{align}
& K_{\calB} (i_{1}, i_{2}; a_{1}, \cdots, a_{r}) \coloneqq A_{i_{1}} X_{i_{1}}^{\top} \hat{\Omega} \Big\{ \prod_{l = 1}^{\iota}
G_{\calB, l}^{i_{1} i_{2}} (a_{1}, \cdots, a_{r}) \Big\} X_{i_{2}} Y_{i_{2}}, \label{kernel} \\
& \text{ where } G_{\calB, l}^{i_{1} i_{2}} (a_{1}, \cdots, a_{r}) \coloneqq
\begin{cases}
H_{a_{\nu}}, & l \in \bigcup_{\nu = 1}^{r} B_{\nu},\\
\calR_{i_{1} i_{2}}, & l \notin \bigcup_{\nu = 1}^{r} B_{\nu}. \nonumber
\end{cases}
\end{align}
When $\calB = \emptyset$, we let
$K_{\emptyset} (i_{1}, i_{2}) \coloneqq A_{i_1} X_{i_1}^\top \hat{\Omega} \calR_{i_{1} i_{2}}^{\iota} X_{i_{2}} Y_{i_{2}}$. 

We are now ready to present the following lemma, a proof of which is deferred to Appendix~\ref{app:mobius}.

\begin{lemma}
\label{lem:mobius}
$\hat{\IIFF}_{j, j, k} (\hat{\Omega})$ can be decomposed as follows:
\begin{equation}
\label{Mobius}
\hat{\IIFF}_{j, j, k} (\hat{\Omega}) = \sum_{\calB \in \bbB_{\iota}} c_{\calB, n} \bbU_{n, 2 + |\calB|} (K_{\calB}), \ \text{where} \ c_{\calB, n} \coloneqq (-1)^{|\calB|} \Big\{ \prod_{B \in \calB}(|B| - 1) \Big\} \frac{(n - j) !}{(n - 2 - |\calB|) !}.
\end{equation}
The coefficients $c_{\calB,n}$ are the so-called Möbius coefficients. In particular, every term in the expansion is a $U$-statistic of order at most
$2 + \lfloor \frac{\iota}{2} \rfloor = 2 + \lfloor \frac{j - 2}{2} \rfloor$.
\end{lemma}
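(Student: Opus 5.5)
The plan is to work directly with the definition \eqref{empHOIF} evaluated at $\tilde{\Omega}=\hat{\Omega}$ and to remove the distinctness constraints on the ``middle'' indices one at a time, using Lemma~\ref{lem:empirical centering}. Since $(-1)^{j}=(-1)^{\iota}$, we have
\begin{align*}
\hat{\IIFF}_{j,j,k}(\hat{\Omega}) = (-1)^{\iota}\frac{(n-j)!}{n!}\sum_{i_1\neq i_2}\ \sum_{\substack{s_1,\dots,s_\iota \text{ distinct}\\ \notin\{i_1,i_2\}}} A_{i_1}X_{i_1}^{\top}\hat{\Omega}\,H_{s_1}\cdots H_{s_\iota}\,X_{i_2}Y_{i_2}.
\end{align*}
The one identity driving everything follows from Lemma~\ref{lem:empirical centering}. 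If an index $s$ occupies a single position $l$, and the other indices already in use are $i_1,i_2,t_1,\dots,t_p$, then
\begin{align*}
\sum_{s\notin\{i_1,i_2,t_1,\dots,t_p\}} H_s = -\calR_{i_1i_2}-\sum_{q=1}^{p}H_{t_q}.
\end{align*}
Because the matrix product is linear in each factor, this substitution can be made inside the noncommutative product at position $l$. The position order of the $H$'s is therefore preserved automatically; this is why $G^{i_1i_2}_{\calB,l}$ depends on the position $l$ only through the block containing it.

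I would organise the elimination as a deterministic recursion. Repeatedly pick a summation index that currently occupies exactly one position, for instance the one at the largest such position, and apply the identity above. Each step produces one of two outcomes.
\begin{itemize}
\item The position is turned into $-\calR_{i_1i_2}$ and is finished.
\item The position is merged into another live index, which then occupies two or more positions. Such an index is never eliminated again, since the identity only applies to indices of multiplicity one.
\end{itemize}
The recursion stops when every remaining summation index occupies at least two positions. The terminal configurations are therefore exactly pairs consisting of a collection $\calB\in\bbB_\iota$ of disjoint blocks, each of size at least $2$, together with distinct indices $a_1,\dots,a_r\notin\{i_1,i_2\}$ attached to the blocks; the uncovered positions all carry $\calR_{i_1i_2}$. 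Such a terminal term equals $K_\calB(i_1,i_2;a_1,\dots,a_r)$, summed over distinct $(i_1,i_2,a_1,\dots,a_r)$. That sum is $\frac{n!}{(n-2-|\calB|)!}\,\bbU_{n,2+|\calB|}(K_\calB)$, so the normalisation becomes $(n-j)!/(n-2-|\calB|)!$ as in \eqref{Mobius}.

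For the sign, each $\calR$-position and each merge contributes a factor $-1$. A block of size $b$ needs $b-1$ merges, so the total sign is
\[
(-1)^{\iota}\,(-1)^{\iota-\sum_{B\in\calB}|B|}\,(-1)^{\sum_{B\in\calB}(|B|-1)}=(-1)^{|\calB|},
\]
which matches $c_{\calB,n}$.

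The main obstacle is the multiplicity: I must show that the number of elimination histories ending in a given $\calB$ is exactly $\prod_{B\in\calB}(|B|-1)$, and that no history is double counted. I would first try to fix the elimination order canonically so that histories factor over blocks. Within a block $B$ of size $b$ whose positions are processed in the fixed order, the surviving index is determined by the process. I would then check that each merge step contributes a choice count such that the product over the block telescopes to $b-1$, and verify this by induction on $\iota$, conditioning on the fate of position $\iota$. If the direct counting proves awkward, the fallback is to phrase it as Möbius inversion on the partition lattice of $[\iota]$ restricted to blocks of size at least $2$: express the distinct-index sum through unrestricted block sums, and check the coefficient identity by induction on $\iota$, comparing both sides on small cases ($\iota=2,3$) first.

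Finally, the order bound is immediate from the constraint on $\calB$. Blocks are disjoint subsets of $[\iota]$ of size at least $2$, so $|\calB|\le\lfloor\iota/2\rfloor$, and every term in \eqref{Mobius} is a $U$-statistic of order at most $2+\lfloor (j-2)/2\rfloor$.
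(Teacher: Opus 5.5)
Your route is different from the paper's and workable. The bookkeeping is right: the centering identity with the other live indices removed, the sign $(-1)^{\iota}(-1)^{\iota-|\calB|}=(-1)^{|\calB|}$ from the $\iota-|\calB|$ eliminations, the normalisation $(n-j)!/(n-2-|\calB|)!$, and the order bound. However, the proposal stops at the one step that carries the content of the lemma. You never prove that exactly $\prod_{B\in\calB}(|B|-1)$ histories end in a given $\calB$.

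Your sketch of that step is also misleading. Under a fixed elimination rule, the surviving index of a block is \emph{not} determined by the process. The freedom in choosing it is precisely where $|B|-1$ comes from, and nothing telescopes: the first merge inside $B$ has $|B|-1$ admissible targets, and all later merges are forced.

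Here is how to close the step with your largest-position rule.
\begin{itemize}
\item The pool of live multiplicity-one indices only shrinks. So eliminations occur at strictly decreasing positions, and every pooled index still sits at its original position.
\item A merge target acquires multiplicity at least two and is never eliminated. Hence each block is a star: one survivor $s_p$ at an original position $p\in B$, with every other position of $B$ merged directly into it.
\item The survivor must be merged into before position $p$ is reached. Only positions of $B$ above $p$ are processed earlier, so this forces $p\neq\max B$.
\item Conversely, for any non-maximal $p\in B$, the elimination at $\max B$ can merge into $s_p$, which is still live. After that, every remaining position of $B$ must merge into $s_p$.
\end{itemize}
Thus histories are in bijection with the data (uncovered positions, blocks, a non-maximal survivor in each block). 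This gives exactly $\prod_{B}(|B|-1)$ histories per $\calB$, all with the same sign. For example, when $\iota=3$ the block $\{1,2,3\}$ arises from the two histories with survivor at position $1$ or $2$, giving coefficient $-2$.

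The paper proceeds differently. It applies M\"obius inversion on the full partition lattice $\Pi_\iota$ to express the distinct-index sum through unrestricted block sums, and applies centering only to singleton blocks. It then re-expands the unrestricted non-singleton sums into distinct sums via an equality partition $\tau$ of $\frakm_{\ge 2}$ and merges. The resulting coefficient is $\prod_{B}S_{|B|}$ with $S_d=\sum_{\pi\in\Pi_{\ge 2}(B)}(-1)^{|\pi|}\prod_{A\in\pi}(|A|-1)!=1-d$, read off from the generating function $e^{z}(1-z)$.

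Once you add the counting above, your sieve is more elementary: no lattice and no exponential formula. It also gives $|B|-1$ a direct meaning as the number of admissible survivors. The paper's argument is more mechanical and does not depend on choosing an elimination order.
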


\begin{remark}
\label{rem:decomposition}
We illustrate Lemma~\ref{lem:mobius} with the cases $j = 3$ and $j = 4$.
\begin{itemize}
\item When $j = 3$ , we have $\iota = 1$. Since no non-singleton element can be formed from the singleton set $\{1\}$, the only element family is $\calB = \emptyset$. Hence,
\begin{align*}
\hat{\IIFF}_{3, 3, k} (\hat{\Omega}) & = \frac{1}{n - 2} \bbU_{n, 2} (A_{1} X_{1}^{\top} \hat{\Omega} \calR_{1 2} X_{2} Y_{2}) = \frac{1}{n - 2} \bbU_{n,2} (A_{1} X_{1}^{\top} \hat{\Omega} (H_{1} + H_{2}) X_{2} Y_{2}) \\
& = \frac{1}{n - 2} \bbU_{n, 2} (A_{1} X_{1}^{\top} \hat{\Omega} X_{1} X_{1}^{\top} \hat{\Omega} X_{2} Y_{2}) + \frac{1}{n - 2} \bbU_{n, 2} (A_{1} X_{1}^{\top} \hat{\Omega} X_{2} X_{2}^{\top} \hat{\Omega} X_{2} Y_{2}) \\
& \quad - \frac{2}{n - 2} \underbrace{\bbU_{n, 2} (A_{1} X_{1}^{\top} \hat{\Omega} X_{2} Y_{2})}_{\equiv \, \hat{\IIFF}_{2, 2, k} (\hat{\Omega})}.
\end{align*}
In particular, it is not difficult to see that the dominating terms in $\hat{\IIFF}_{3, 3, k} (\hat{\Omega})$, corresponding to the first two terms in the last equality of the above display, match the dominating bias terms of $\hat{\psi}_{2, k} (\hat{\Omega}) = \hat{\IIFF}_{2, 2, k} (\hat{\Omega})$, except that $\Omega$ is replaced by $\hat{\Omega}$. It is also worth noting that the monomials of the leverage scores (terms of the form $X_{i}^{\top} \hat{\Omega} X_{j}$) up to degree $2$ appear in $\hat{\IIFF}_{3, 3, k} (\hat{\Omega})$.

\item When $j = 4$, we have $\iota = 2$. There are two possible element families: $\calB = \emptyset$ and  $\calB = \{\{1, 2\}\}$. Hence,
\begin{align*}
\hat{\IIFF}_{4, 4, k}(\hat{\Omega}) = \frac{1}{(n - 2) (n - 3)} \bbU_{n, 2} (A_{1} X_{1}^{\top} \hat{\Omega} \calR_{1 2}^{2} X_{2} Y_{2}) - \frac{1}{n - 3} \bbU_{n, 3} (A_{1} X_{1}^{\top} \hat{\Omega} H_{3}^{2} X_{2} Y_{2}).
\end{align*}
By elementary algebra, we have the following:
\begin{align*}
\calR_{1 2}^{2} & = (X_{1} X_{1}^{\top} \hat{\Omega} + X_{2} X_{2}^{\top} \hat{\Omega} - 2 I)^2 \\
& = X_{1} X_{1}^{\top} \hat{\Omega} X_{1} X_{1}^{\top} \hat{\Omega} + X_{1} X_{1}^{\top} \hat{\Omega} X_{2} X_{2}^{\top} \hat{\Omega} + X_{2} X_{2}^{\top} \hat{\Omega} X_{1} X_{1}^{\top} \hat{\Omega} \\
& \quad + X_{2} X_{2}^{\top} \hat{\Omega} X_{2} X_{2}^{\top} \hat{\Omega} - 4 X_{1} X_{1}^{\top} \hat{\Omega} - 4 X_{2} X_{2}^{\top} \hat{\Omega} + 4 I,
\end{align*}
and 
\begin{align*}
H_{3}^{2} = (X_{3} X_{3}^{\top} \hat{\Omega} - I)^{2} = X_{3} X_{3}^{\top} \hat{\Omega} X_{3} X_{3}^{\top} \hat{\Omega} - 2 X_{3} X_{3}^{\top} \hat{\Omega} + I.
\end{align*}
Therefore, $\hat{\IIFF}_{4, 4, k} (\hat{\Omega})$ reads as follows:
\begin{align*}
& \ \hat{\IIFF}_{4, 4, k} (\hat{\Omega}) \\ 
= & \ \frac{1}{(n - 2)(n - 3)} \Big\{ \bbU_{n,2} (A_{1} (X_{1}^{\top} \hat{\Omega} X_{1})^{2} X_{1}^{\top} \hat{\Omega} X_{2} Y_{2}) + \bbU_{n, 2} (A_{1} X_{1}^{\top} \hat{\Omega} X_{2} (X_{2}^{\top} \hat{\Omega} X_{2})^{2} Y_{2}) \Big\} \\
&  +  \frac{1}{(n - 2) (n - 3)} \Big\{ \bbU_{n,2} (A_{1} (X_{1}^{\top} \hat{\Omega} X_{2})^{3} Y_{2}) + \bbU_{n,2} (A_{1} X_{1}^{\top} \hat{\Omega} X_{1} X_{1}^{\top} \hat{\Omega} X_{2} X_{2}^{\top} \hat{\Omega} X_{2} Y_{2}) \Big\} \\
& - \frac{6}{(n - 2)(n - 3)} \Big\{ \bbU_{n,2} (A_{1} X_{1}^{\top} \hat{\Omega} X_{1} X_{1}^{\top} \hat{\Omega} X_{2} Y_{2}) + \bbU_{n,2} (A_{1} X_{1}^{\top} \hat{\Omega} X_{2} X_{2}^{\top} \hat{\Omega} X_{2} Y_{2}) \Big\} \\
& + \frac{n + 6}{(n - 2) (n - 3)} \underbrace{\bbU_{n,2} (A_{1} X_{1}^{\top} \hat{\Omega} X_{2} Y_{2})}_{\equiv \, \hat{\IIFF}_{2, 2, k} (\hat{\Omega})} - \frac{1}{n - 3} \bbU_{n, 3} (A_{1} X_{1}^{\top} \hat{\Omega} X_{3} X_{3}^{\top} \hat{\Omega} X_{3} X_{3}^{\top} \hat{\Omega} X_{2} Y_{2}).
\end{align*}
Similarly, $\hat{\IIFF}_{4, 4, k} (\hat{\Omega})$ matches the dominating bias terms of $\hat{\psi}_{3, k} (\hat{\Omega}) = \hat{\IIFF}_{2, 2, k} (\hat{\Omega}) + \hat{\IIFF}_{3, 3, k} (\hat{\Omega})$, except that $\Omega$ is replaced by $\hat{\Omega}$. It is also straightforward to see that the monomials of the leverage scores up to degree $3$ appear in $\hat{\IIFF}_{4, 4, k} (\hat{\Omega})$.
\end{itemize}
\end{remark}

\section{Proof Sketch of Theorem~\ref{thm:main}}
\label{sec:proof}

In this section, we sketch the proof of Theorem~\ref{thm:main}. We focus only on the first two statements of Theorem~\ref{thm:main}, as we have argued in Remark~\ref{rem:clt} how to prove that $\hat{\psi}_{m, k} (\hat{\Omega})$ is $\sqrt{n}$-CAN. Specifically, Section~\ref{sec:bias} below provides a sketch of the bias analysis establishing part (1) of Theorem~\ref{thm:main}, whereas Section~\ref{sec:variance} sketches the proof of variance bound in Theorem~\ref{thm:main}. Before embarking on the proof sketch, in Section~\ref{sec:graph}, we first introduce a useful proof device, which we refer to as the \emph{graph-counting lemma} (Lemma~\ref{lem:graph-counting}). Lemma~\ref{lem:graph-counting} turns the problem of controlling moment bounds of certain $U$-statistic kernels into an enumerative combinatorics problem on graphs, drastically simplifying the proof. Throughout the bias and variance analyses, we impose $\Sigma = \Omega = I$ without loss of generality by Assumption~\ref{as:cov}.

\subsection{A graph-counting lemma}
\label{sec:graph}

The following graph-counting lemma gives the required bound in
terms of the first Betti number (or equivalently, the circuit rank) of $G$ \citep{stanley2011enumerative}.

\begin{lemma}
\label{lem:graph-counting}
Let $G = (V, E)$ be a fixed undirected graph, where $V$ is a collection of observation labels and each edge $e = (u,v) \in E$ represents a \emph{bilinear structure} $X_{u}^{\top} B_{e} X_{v}$, in the sense that two vertices $u$ and $v$ are contracted by an edge induced by this bilinear structure. Self-loops are admissible and each self-loop contributes two half-edges at the same vertex. Let
\begin{equation*}
v(G) \coloneqq|V|, \ e(G) \coloneqq|E|, \
\kappa(G) \coloneqq\text{the number of connected components in }G,
\end{equation*}
and $\frakr (G) \coloneqq e (G) + \kappa (G) - v (G)$ is the first Betti number of $G$. Assume that the matrices $\{B_{e}: e\in E\}$ are independent of the vectors $\{X_{v} : v\in V\}$ and satisfy
\begin{equation*}
\max_{e \in E} \|B_{e}\|_{\mathrm{op}}\le C,
\end{equation*}
almost surely. Suppose that Assumptions~\ref{as:cov} and
\ref{as:kernel-stability} hold, we have
\begin{equation}
\label{kernel moment}
\Big| \bbE \Big( \prod_{e = (u, v) \in E} X_{u}^{\top} B_{e} X_{v} \Big) \Big| \lesssim k^{\frakr (G)}.
\end{equation}
The implicit constant depends only on the fixed graph $G$, moments of the observed data $O$, and the uniform operator-norm bound, but not on $n$ or $k$.
\end{lemma}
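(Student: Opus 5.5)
The plan is to condition on the matrices $\{B_{e}\}$, which is allowed because they are independent of the $X_{v}$'s and bounded in operator norm by $C$ almost surely; from then on they are treated as fixed. We work throughout with $\Sigma = \Omega = I$, as in the rest of this section. Since distinct vertices carry independent observations, the expectation factorizes over the connected components of $G$. Both sides of \eqref{kernel moment} are multiplicative over components: the left side by independence, and the right side because $\frakr(G) = \sum_{\text{components}} \{e(G_{c}) - v(G_{c}) + 1\}$. So it suffices to prove $|\bbE \prod_{e} X_{u}^{\top} B_{e} X_{v}| \lesssim k^{e(G) - v(G) + 1}$ for connected $G$. The argument will be an induction on $v(G)$ that integrates out one vertex at a time and tracks how $e - v + 1$ changes.

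\textbf{Degree-two vertices.} The vertex-elimination step is cleanest here, and these vertices are the backbone of the argument. Suppose $v$ has exactly two half-edges, belonging to edges $(u,v)$ and $(v,w)$ with $u, w \neq v$. Then $\bbE_{v}\{X_{u}^{\top} B X_{v} X_{v}^{\top} B' X_{w}\} = X_{u}^{\top} B B' X_{w}$. This replaces two edges by the single edge $(u,w)$ carrying $B B'$, with $\|BB'\|_{\op} \le C^{2}$. Both $e$ and $v$ drop by one, so $\frakr$ is unchanged, and the induction proceeds.

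If instead the two half-edges form a self-loop at $v$, then $\bbE X_{v}^{\top} B X_{v} = \mathrm{tr}(B) \le Ck$. This removes one vertex and one edge, which is one isolated cycle, so the bound contributes exactly the factor $k^{\frakr}$ with $\frakr = 1$. More generally, whenever contractions close a cycle, it collapses to a trace of a product of bounded operators, costing at most $Ck$. This is the mechanism by which each independent cycle, and hence each unit of the Betti number, pays one factor of $k$.

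\textbf{Degree-one vertices.} A leaf $v$ attached to $u$ gives $\bbE_{v} X_{u}^{\top} B X_{v} = X_{u}^{\top} B\, \bbE X$. By Assumption~\ref{as:kernel-stability} with $h \equiv 1$, we have $\|\bbE X\|_{2}^{2} = \bbE\{(\Pi 1)(X)\} \le C_{\Pi}$. The leaf is therefore replaced by a bounded deterministic vector attached to $u$; I will call it a ``half-edge with a bounded endpoint''. The induction hypothesis will be stated for graphs decorated with such dangling bounded vectors, so this step is legitimate. A dangling vector pairs with another half-edge or dangling vector at $u$ without producing a trace, and so costs no factor of $k$. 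This is consistent with the fact that removing a leaf leaves $\frakr$ unchanged.

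\textbf{Vertices of degree $d \ge 3$ (main obstacle).} These are where I expect the real difficulty. After the contractions above, such a vertex $v$ contributes the moment tensor $T_{d} = \bbE X^{\otimes d}$, contracted against the operators on its incident edges. The planned bound comes from $\|X\|_{2}^{2} \le Ck$ almost surely: for unit vectors $a_{1}, \ldots, a_{d}$,
\begin{equation*}
\Big| \bbE \prod_{i=1}^{d} a_{i}^{\top} X \Big| \le (Ck)^{(d-2)/2}\, \bbE\{|a_{1}^{\top} X|\,|a_{2}^{\top} X|\} \le (Ck)^{(d-2)/2}.
\end{equation*}
Summing $(d_{v} - 2)/2$ over the vertices gives $e - v$. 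Together with the final trace in each component, this would produce exactly $k^{e - v + 1}$.

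The hard part is that injective-norm bounds on tensors do not multiply cleanly through a tensor-network contraction. My first attempt would be to choose a spanning tree of $G$ and bound each of the $\frakr$ non-tree edges by its almost-sure bound $|X_{u}^{\top} B X_{v}| \le Ck$, using $\|X^{\top} X\|_{\infty} = O(k)$. The issue is that taking absolute values in this way destroys the cancellations that the tree edges need. So I would instead absorb each non-tree edge into a bounded scalar weight $h(X_{u}, X_{v})$ with $\|h\|_{\infty} \lesssim k$. The remaining tree would then be peeled from its leaves inward using the following consequence of Assumption~\ref{as:kernel-stability}:
\begin{equation*}
\big| x^{\top} B\, \bbE\{X h(X)\} \big| \le C \big\|\bbE\{X h(X)\}\big\|_{2} \lesssim \|h\|_{\infty}.
\end{equation*}
The right-hand inequality holds because $\|\bbE\{Xh(X)\}\|_{2}^{2} = \bbE\{h(X)(\Pi h)(X)\} \le C_{\Pi} \|h\|_{\infty}^{2}$. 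This way every tree edge costs $O(1)$ and every non-tree edge costs $O(k)$, giving $k^{\frakr(G)}$ with a constant depending only on $G$, $C$, $C_{\Pi}$ and the moment bounds.
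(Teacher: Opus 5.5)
Your final route (a spanning forest, the $\frakr(G)$ non-tree edges absorbed into a weight of sup-norm at most $(C_K k)^{\frakr(G)}$, tree edges peeled from the leaves) is the paper's route (Lemma~\ref{lem:forest}). There is a gap, though, in the step that makes a tree edge cost $O(1)$. Cauchy--Schwarz gives only
\begin{equation*}
|x^{\top} B\, \bbE\{X h(X)\}| \le \|x\|_{2}\, \|B\|_{\op}\, \|\bbE\{X h(X)\}\|_{2} \le C \sqrt{C_{K} k}\, \sqrt{C_{\Pi}}\, \|h\|_{\infty}.
\end{equation*}
You dropped the factor $\|x\|_{2}$, which is typically of order $\sqrt{k}$ since $\bbE\|X\|_2^2 = \mathrm{tr}\,\Sigma \asymp k$. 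Assumption~\ref{as:kernel-stability} controls $\sup_{x}|x^{\top}\bbE\{Xh(X)\}| = \|\Pi h\|_{\infty}$ (with $\Omega = I$) only when no matrix sits between $x$ and $\bbE\{Xh(X)\}$. With a general $B$, each tree edge therefore costs $\sqrt{k}$, and you get only $k^{\frakr(G) + \{v(G) - \kappa(G)\}/2}$. Your degree-one step has the same problem: a bounded dangling vector $B\,\bbE X$ at a vertex of degree at least three is not free.

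This cannot be patched, because the statement is false for general operator-norm-bounded $B_e$. Take the histogram-type design $X = \sqrt{k}\, e_{J}$ with $J$ uniform on $[k]$. Then $\Sigma = I$, $X^{\top}X \equiv k$, and $(\Pi h)(\sqrt{k} e_{i}) = h(\sqrt{k} e_{i})$, so Assumptions~\ref{as:cov} and~\ref{as:kernel-stability} hold with $C_{\Pi} = 1$. Let $G$ be the star with center $c$ and leaves $u_1, u_2, u_3$, and put $B_e = e_1 \mu^{\top}$ on every edge, where $\mu = \bbE X = k^{-1/2}\mathbf{1}$, so $\|B_e\|_{\op} = 1$. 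Then $\bbE \prod_{i=1}^{3} X_c^{\top} B_e X_{u_i} = \bbE (e_1^{\top} X_c)^{3} = k^{3/2} \cdot k^{-1} = k^{1/2}$, whereas $\frakr(G) = 0$.

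Since this example meets every stated hypothesis, the paper's one-line reduction from the case $B_e = I$ via ``$X \mapsto B_e^{1/2} X$'' cannot be right either. The leaf identity $\bbE\{K(x, X) h(X)\} = (\Pi h)(x)$ needs the edge kernel to be exactly $x^{\top}\Omega x'$, while the $B_e$ vary across edges and need not be symmetric positive semi-definite.

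What your argument, and the paper's, actually proves is the case $B_e \equiv I$ after normalizing $\Omega = I$, where the leaf step is literally $(\Pi h)(x_u)$. For general $B_e$ you need an extra hypothesis, such as $\sup_{x \in \calX} |x^{\top} B\, \bbE\{X h(X)\}| \le C' \|h\|_{\infty}$ for $B \in \{B_e, B_e^{\top}\}$ and all bounded $h$. Under that hypothesis your peeling goes through verbatim.
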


A proof of this result can be found in Appendix~\ref{app:graph-counting}. Lemma~\ref{lem:graph-counting} associates $U$-statistic kernels only involving products in the form of $\prod_{e = (u,v) \in E} X_{u}^{\top} B_{e} X_{v}$ (the integrand in \eqref{kernel moment}), which we refer to as \emph{multiplicative-kernels}, with an (undirected) graph $G$, with which controlling moment bounds in the form of \eqref{kernel moment} can be conveniently translated into counting the first Betti number $\frakr (G)$ of the graph $G$.

\subsection{Bias analysis}
\label{sec:bias}

Since $\hat{\psi}_{2, k} (\Omega)$, as defined in \eqref{oracle}, is unbiased for $\psi$, we can represent the bias of $\hat{\psi}_{m, k} (\hat{\Omega})$ as:
\begin{equation}
\label{bias}
\calB_{m, k} \coloneqq  \bbE \{\hat{\psi}_{m, k} (\hat{\Omega}) - \psi\} = \bbE \{\hat{\psi}_{m, k} (\hat{\Omega}) - \hat{\psi}_{2, k} (\Omega)\} = \bbE \{\hat{\psi}_{m, k} (\hat{\Omega}) - \hat{\psi}_{2, k} (I)\}.
\end{equation}

We divide the bias analysis into the following steps. The detailed proofs can be found in Appendix~\ref{app:bias}.

\begin{enumerate}[label = \roman*.]
\item The first step rewrites $\calB_{m, k}$ by applying Lemma~\ref{lem:bias-binomial-representation} and Lemma~\ref{lem:bias-degree-decomposition} presented later in this subsection in a sequence, up to the point that $\calB_{m, k}$ can be decomposed into a remainder $\calR_{m, k, J}$ of the form in \eqref{eq:bias-remainder} and a summation of terms $\calM_{c}^{(J)}$ defined in \eqref{eq:bias-degree-component}. The essential idea is to ``linearize'' $\hat{\Omega}$ by the Neumann series expansion (Lemma~\ref{lem:Neumann series} in Appendix~\ref{app:matrix lemma}).

 
\item In the second step, we further refine the representation of $\calB_{m, k}$ obtained in \textbf{Step i}. Specifically, Lemma~\ref{lem:bias-low-degree-cancellation}, to be presented later in this subsection, demonstrates that many $\calM_{c}^{(J)}$'s obtained in \textbf{Step i} are zero when $c$ is sufficiently small in the decomposition. This critical observation results from a couple of intermediate results (Lemma~\ref{lem:bias-centered-word-expansion} and Lemma~\ref{lem:bias-centered-word-bound}), which we detail in the proof of Lemma~\ref{lem:bias-low-degree-cancellation} in Appendix~\ref{app:bias ii}. As will be clear in the proof, these intermediate results are used to show that the terms in $\calM_{c}^{(J)}$ cancel each other meticulously when $c$ is below a certain threshold (denoted by $\sfc_{m} \coloneqq \lceil (m - 1) / 2 \rceil$). 

\item We next bound all relevant terms from \textbf{Step~ii} by applying the graph-counting lemma (Lemma~\ref{lem:graph-counting}) introduced in Section~\ref{sec:graph}, culminating in Lemma~\ref{lem:bias-surviving-levels}. Finally, the remainder term $\calR_{m, k, J}$ is controlled by Lemma~\ref{lem:bias-neumann-remainder}, which completes the analysis of the bias bound.
\end{enumerate}

\paragraph{Step i.}

We first represent $\calB_{m, k}$ in a particular form as stated in the following lemma; see its proof at the beginning of Appendix~\ref{app:bias i}.

\begin{lemma}
\label{lem:bias-binomial-representation}
$\calB_{m, k}$ admits the following alternative representations:
\begin{align}
\label{eq:bias-binomial-representation}
\calB_{m, k} & = \sum_{j = 1}^{m - 1} (-1)^{j + 1} \binom{m - 1}{j} \bbE \Big\{ A_{m - 1} X_{m - 1}^{\top} \Big( \prod_{s = 0}^{j - 1} X_{s} X_{s}^{\top} \hat{\Omega} - I \Big) X_{m} Y_{m} \Big\} \notag \\
& = \sum_{j = 1}^{m - 1} (-1)^{j + 1} \binom{m - 1}{j} \sum_{\emptyset \neq S \subseteq [j - 1] \cup \{0\}} \bbE \Big\{ A_{m - 1} X_{m - 1}^{\top} \prod_{s = 0}^{j - 1} \Big( X_{s} X_{s}^{\top} (\hat{\Omega} - I)^{\mathbbm{1} \{s \in S\}} \Big) X_{m} Y_{m} \Big\}.
\end{align}
Here, we use the convention that $s = 0$ corresponds to the identity matrix $I$.
\end{lemma}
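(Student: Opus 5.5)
The proof is pure bookkeeping: exchangeability, a binomial expansion, and a hockey-stick identity. No probabilistic estimate is needed.

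Throughout I use the normalization $\Sigma=\Omega=I$, so that $\psi=\bbE(A_{1}X_{1}^{\top}X_{2}Y_{2})$. I read the convention ``$s=0$ corresponds to $I$'' as saying that the factor with $s=0$ is $X_{0}X_{0}^{\top}\equiv I$. Then $\prod_{s=0}^{j-1}X_{s}X_{s}^{\top}\hat{\Omega}=\hat{\Omega}\prod_{s=1}^{j-1}X_{s}X_{s}^{\top}\hat{\Omega}$, which is exactly the shape of the kernels in \eqref{empHOIF}.

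\textbf{Step 1: reduce to fixed indices.} $\hat{\Omega}=\hat{\Sigma}^{-1}$ is a symmetric function of the whole sample $\calD$, so the joint law of $(O_{1},\dots,O_{n},\hat{\Omega})$ is invariant under permutations of the indices. Hence the expectation of every $U$-statistic in \eqref{new HOIF} equals the expectation of its kernel evaluated at any fixed tuple of distinct indices.

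Expand $\prod_{s=3}^{j}(X_{s}X_{s}^{\top}\hat{\Omega}-I)$ binomially over subsets $T\subseteq\{3,\dots,j\}$. By the same exchangeability, the expectation of each resulting term depends only on $t=|T|$. Define
\[
e_{t}\coloneqq\bbE\Big\{A_{m-1}X_{m-1}^{\top}\hat{\Omega}\Big(\prod_{s=1}^{t}X_{s}X_{s}^{\top}\hat{\Omega}\Big)X_{m}Y_{m}\Big\}.
\]
This uses the labels $m-1,m$ for the $A$- and $Y$-observations, which is legitimate since $t\le m-2$ and $n\ge m$. Then
\[
\bbE\,\hat{\IIFF}_{j,j,k}(\hat{\Omega})=(-1)^{j}\sum_{t=0}^{j-2}\binom{j-2}{t}(-1)^{j-2-t}e_{t}=\sum_{t=0}^{j-2}(-1)^{t}\binom{j-2}{t}e_{t}.
\]
I also have to check integrability, since $\hat{\Omega}$ is invertible only almost surely. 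I would take this as implicit in the paper's standing assumptions. It can be made rigorous by working on the event $\|\hat{\Omega}\|_{\op}\le 2$, as is done later in the paper.

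\textbf{Step 2: sum over $j$.} Summing over $j=2,\dots,m$ and exchanging the order of summation, the hockey-stick identity $\sum_{j=t+2}^{m}\binom{j-2}{t}=\binom{m-1}{t+1}$ gives
\[
\bbE\,\hat{\psi}_{m,k}(\hat{\Omega})=\sum_{t=0}^{m-2}(-1)^{t}\binom{m-1}{t+1}e_{t}=\sum_{j=1}^{m-1}(-1)^{j+1}\binom{m-1}{j}e_{j-1}.
\]
Now use $\sum_{j=1}^{m-1}(-1)^{j+1}\binom{m-1}{j}=1$ and $\bbE\,\hat{\psi}_{2,k}(I)=\psi$ from \eqref{bias}. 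Writing $\psi$ as this alternating sum and subtracting it term by term yields the first line of \eqref{eq:bias-binomial-representation}, where $\psi$ is absorbed as the ``$-I$'' inside the parentheses.

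To justify that absorption, observe that for each $j$,
\[
\bbE\Big\{A_{m-1}X_{m-1}^{\top}\Big(\prod_{s=1}^{j-1}X_{s}X_{s}^{\top}\Big)X_{m}Y_{m}\Big\}=\psi.
\]
This follows from independence and $\bbE(X_{s}X_{s}^{\top})=I$, and it coincides with $\bbE(A_{m-1}X_{m-1}^{\top}X_{m}Y_{m})$. So the ``$I$'' may be read either as the identity or as the $\hat{\Omega}$-free product; both give the same expectation.

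\textbf{Step 3: the second line.} Write each $\hat{\Omega}=I+(\hat{\Omega}-I)$ and expand $\prod_{s=0}^{j-1}X_{s}X_{s}^{\top}\{I+(\hat{\Omega}-I)\}$ over subsets $S\subseteq\{0\}\cup[j-1]$. The term with $S=\emptyset$ is $\prod_{s=1}^{j-1}X_{s}X_{s}^{\top}$, whose expectation is $\psi$ by the observation at the end of Step 2. It therefore cancels the subtracted $\psi$, and only the nonempty $S$ survive, which is the second representation.

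The only delicate point, and the step I expect to need the most care, is Step 1. The exchangeability claim must be made carefully because $\hat{\Omega}$ couples all $n$ observations, including those outside the kernel's index set. What saves it is that $\hat{\Omega}$ is a symmetric function of all of $\calD$, so relabelling the indices leaves the joint law unchanged. The rest is the hockey-stick and binomial identities.
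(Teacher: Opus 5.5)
Your proof is correct and follows the paper's argument: you expand $\prod_{s=3}^{j}(X_sX_s^\top\hat{\Omega}-I)$ binomially, collect the coefficients $\binom{m-1}{j}$ by the hockey-stick identity, absorb $\psi$ as the $-I$ term via an alternating binomial sum, and then expand $\hat{\Omega}=I+(\hat{\Omega}-I)$, killing the $S=\emptyset$ term by independence and $\bbE(XX^\top)=I$. The only difference is the order of operations: the paper does the rearrangement as an exact identity between $U$-statistics, subtracting $G_0=\hat{\psi}_{2,k}(I)$ pathwise using $\sum_{j=0}^{m-1}(-1)^j\binom{m-1}{j}=0$, and takes expectations only at the end, whereas you pass to fixed-index expectations first by exchangeability, which is enough for this lemma.
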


By Lemma~\ref{lem:bias-binomial-representation},
$\calB_{m, k}$ can be expressed as a binomially weighted sum of ordered product expectations indexed by nonempty subsets of positions at which the factor $\hat{\Omega} - I$ is inserted. The Neumann series expansion (Lemma~\ref{lem:Neumann series} in Appendix~\ref{app:matrix lemma}) gives:
\begin{equation}
\label{main Neumann}
\hat{\Omega} - I = \sum_{j = 1}^{J} \Delta_{n}^{j} + \sfR_{J}, \quad \Delta_{n} \coloneqq I - \hat{\Sigma}, \quad \sfD_{J} \coloneqq \sum_{j = 1}^{J} \Delta_{n}^{j}, \quad \sfR_{J} \coloneqq \Delta_{n}^{J + 1} \hat{\Omega}.
\end{equation}

\begin{remark}
\label{rem:J}
The identity \eqref{main Neumann} is exact for every $J$. However, in the proof, to avoid the last term $\sfR_{J}$ as it involves the nonlinear $\hat{\Omega}$, we take $J = J (n) = \lceil C_{0} \log n \rceil$ for some sufficiently large constant $C_{0}$. This choice of $J$ makes $\sfR_J$ negligible: on the event $\|\Delta_{n}\|_{\op} \le r_{n}$ and $\|\hat{\Omega}\|_{\op} \le C$ for some large enough constant $C > 0$, $\|\sfR_{J}\|_{\op} \le \|\Delta_{n}\|_{\op}^{J + 1} \|\hat{\Omega}\|_{\op} \lesssim r_{n}^{J + 1}$. At the same time, under the regime  $m \asymp \log n$ and $k \lesssim n / \log^{3} n$, this choice satisfies
\begin{equation}
\label{J require}
\frac{m J k}{n} \lesssim \frac{\log^{2} n}{\log^{3} n} = o(1).
\end{equation}
The condition \eqref{J require} is needed in various places in the proof details; e.g., Lemma~\ref{lem:bias-surviving-levels} in Appendix~\ref{app:bias ii}.
\end{remark}

We next state a lemma that further decomposes $\calB_{m, k}$ into components that share the same multiplicity of $\Delta_{n} = I - \hat{\Sigma}$, after the Neumann series expansion of $\hat{\Omega} - I$. The proof is delegated to Appendix~\ref{app:bias i}.

\begin{lemma}
\label{lem:bias-degree-decomposition}
For an integer $J \geq 1$,
\begin{equation}
\label{eq:bias-degree-decomposition}
\calB_{m, k} = \sum_{c = 1}^{(m - 1) J} \calM_{c}^{(J)} + \calR_{m, k, J},
\end{equation}
where $\calR_{m, k, J}$ is the collection of all terms containing at least one occurrence of $\sfR_{J}$, namely 
\begin{align}
& \calR_{m, k, J} \coloneqq \label{eq:bias-remainder} \\
& \sum_{j = 1}^{m - 1} (-1)^{j + 1} \binom{m - 1}{j} \sum_{\emptyset \neq S \subseteq [j - 1] \cup \{0\}} \sum_{\emptyset \neq T \subseteq S} \bbE \Big\{ A_{m - 1} X_{m - 1}^{\top} \prod_{s = 0}^{j - 1} (X_{s} X_{s}^{\top} \sfD_{J}^{\mathbbm{1}\{s \in S \setminus T\}} \sfR_{J}^{\mathbbm{1} \{s \in T\}}) X_{m} Y_{m} \Big\}, \nonumber
\end{align}
and $\calM_{c}^{(J)}$ is defined as:
\begin{align}
& \calM_{c}^{(J)} \coloneqq \label{eq:bias-degree-component} \\
& \sum_{j = 1}^{m - 1}(-1)^{j + 1} \binom{m - 1}{j} \sum_{r = 1}^{c \wedge j} \sum_{\substack{S \subseteq [j - 1] \cup \{0\} \\ |S| = r}}  \sum_{\substack{(\ell_{s'})_{s' \in S} \in [J]^{r} \\ \sum_{s' \in S} \ell_{s'} = c}} \bbE \Big\{ A_{m - 1} X_{m - 1}^{\top} \prod_{s = 0}^{j - 1} (X_{s} X_{s}^{\top} \Delta_{n}^{\ell_{s} \mathbbm{1} \{s \in S\}}) X_{m} Y_{m} \Big\}. \nonumber
\end{align}
Here, we use the convention that $\ell_{s} = 0$ for $s \notin S$.
\end{lemma}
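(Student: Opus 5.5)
The statement to prove is Lemma~\ref{lem:bias-degree-decomposition}. It is a purely algebraic identity, so I would start from the second line of \eqref{eq:bias-binomial-representation} in Lemma~\ref{lem:bias-binomial-representation} and substitute the exact Neumann identity \eqref{main Neumann}, $\hat{\Omega} - I = \sfD_{J} + \sfR_{J}$, at every position $s \in S$.

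Fix $j$ and a nonempty $S \subseteq [j-1]\cup\{0\}$. The ordered product $\prod_{s=0}^{j-1} X_s X_s^\top (\hat\Omega - I)^{\mathbbm{1}\{s\in S\}}$ is multilinear in the factors $(\hat{\Omega}-I)$ at the positions of $S$. Expanding each such factor as $\sfD_J + \sfR_J$ therefore gives a sum over subsets $T \subseteq S$. Here $T$ records the positions where $\sfR_J$ is chosen, and $\sfD_J$ is taken on $S\setminus T$. Expectation is linear, and all terms are integrable because $\hat{\Omega}$ exists almost surely and $A, Y$ are bounded. So the expectation splits accordingly.

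The terms with $T\neq\emptyset$, after reinserting the weights $(-1)^{j+1}\binom{m-1}{j}$ and summing over $j$ and $S$, are exactly $\calR_{m,k,J}$ in \eqref{eq:bias-remainder}.

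For the term $T=\emptyset$, every position $s\in S$ carries $\sfD_J=\sum_{\ell=1}^J \Delta_n^{\ell}$. Expanding by multilinearity once more gives
\[
\sum_{(\ell_s)_{s\in S}\in[J]^{|S|}} \bbE\Big\{A_{m-1}X_{m-1}^\top \prod_{s=0}^{j-1}\big(X_sX_s^\top \Delta_n^{\ell_s\mathbbm{1}\{s\in S\}}\big) X_m Y_m\Big\}.
\]
Next I group these terms by the total degree $c=\sum_{s\in S}\ell_s$. Since each $\ell_s\ge1$ and $|S|=r$, we have $r\le c\le rJ$, with $r\le j\le m-1$. Hence $c$ ranges over $1,\dots,(m-1)J$.

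I then interchange the order of summation: put the sum over $c$ outermost, and for fixed $c$ sum over $j$, over $r=|S|$, over $S$ with $|S|=r$, and over compositions $(\ell_s)$ with sum $c$. The constraint $r\le c$, together with $r\le |[j-1]\cup\{0\}|=j$, gives exactly the range $r\in[1,c\wedge j]$ in \eqref{eq:bias-degree-component}. This interchange produces $\sum_{c}\calM_c^{(J)}$.

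The main point needing care is only bookkeeping. One must check the following.
\begin{itemize}
\item The convention $\ell_s=0$ off $S$, together with $s=0$ corresponding to $I$, matches the product notation.
\item No term is double-counted: a choice of $(S,T,(\ell_s))$ determines the term uniquely.
\item Ranges that are empty (for instance $c<r$) contribute zero.
\end{itemize}
No analytic estimate is required, because \eqref{main Neumann} is exact for every $J$ and all sums are finite.
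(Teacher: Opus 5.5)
Your proposal is correct and follows the paper's proof essentially step for step: substitute $\hat{\Omega} - I = \sfD_{J} + \sfR_{J}$ into the second line of Lemma~\ref{lem:bias-binomial-representation}, split each ordered product by the subset $T \subseteq S$ carrying $\sfR_{J}$ (the $T \neq \emptyset$ part being $\calR_{m, k, J}$), expand $\sfD_{J}$ on the $T = \emptyset$ part, and regroup by $c = \sum_{s \in S} \ell_{s}$ with $1 \leq r \leq c \wedge j$. One small caveat: almost-sure existence of $\hat{\Omega}$ does not by itself make the individual expectations finite, since each term needs its $\hat{\Omega}$-containing factors to be integrable; this is the same implicit assumption the paper makes when it defines $\calB_{m, k}$ in the first place.
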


Equivalently, $\calM_{c}^{(J)}$ sums up all terms for which the multiplicity $\Delta_{n} = I - \hat{\Sigma}$ equals $c$. When the truncation level $J$ is fixed, we write $\calM_{c}$ for $\calM_{c}^{(J)}$ to simplify the notation. This step reduces the analysis to each term $\calM_{c}^{(J)}$ for $c \in [(m - 1) J]$ and the remainder $\calR_{m, k, J}$.

\paragraph{Step ii.}

Recall that, by Lemma~\ref{lem:bias-degree-decomposition}, we have $\sum_{s = 0}^{j - 1} \ell_{s} = c$. We further refine $\calM_{c}^{(J)}$ for $c \in [(m - 1) J]$ by showing that $\calM_{c}^{(J)} = 0$ when $c$ is sufficiently small. More concretely, we establish Lemma~\ref{lem:bias-low-degree-cancellation} below.

\begin{lemma}
\label{lem:bias-low-degree-cancellation}
Under the notation of Lemma~\ref{lem:bias-degree-decomposition}, let $ \sfc_{m} \coloneqq \left \lceil \frac{m - 1}{2} \right \rceil$. Then, for every integer $J \ge 1$,
\begin{equation}
\label{eq:bias-step-ii-cancellation}
\calM_{c}^{(J)} = 0, \quad 1 \leq c < \sfc_{m}.
\end{equation}
Equivalently, when $J$ is fixed and we write $\calM_{c}$ for $\calM_{c}^{(J)}$, one has $\calM_{c} = 0$ for all $1 \le c < \sfc_{m}$.
\end{lemma}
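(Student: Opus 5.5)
The plan is to show that, for each fixed $c < \sfc_{m}$, the inner quantity in \eqref{eq:bias-degree-component},
\begin{equation*}
P_{c}(j) \coloneqq \sum_{r = 1}^{c \wedge j} \sum_{\substack{S \subseteq [j - 1] \cup \{0\} \\ |S| = r}} \sum_{\substack{(\ell_{s'})_{s' \in S} \in [J]^{r} \\ \sum_{s' \in S} \ell_{s'} = c}} \bbE \Big\{ A_{m - 1} X_{m - 1}^{\top} \prod_{s = 0}^{j - 1} (X_{s} X_{s}^{\top} \Delta_{n}^{\ell_{s} \mathbbm{1} \{s \in S\}}) X_{m} Y_{m} \Big\},
\end{equation*}
agrees for $j = 1, \dots, m-1$ with a polynomial $Q_{c}$ in $j$ of degree at most $2c$ satisfying $Q_{c}(0) = 0$. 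Its coefficients may depend on $n$, $k$ and $\bbP$, but not on $j$. Given this, the standard finite-difference identity $\sum_{j=0}^{m-1} (-1)^{j} \binom{m-1}{j} Q(j) = 0$ for $\deg Q < m-1$ yields
\begin{equation*}
\calM_{c}^{(J)} = -\sum_{j=1}^{m-1} (-1)^{j} \binom{m-1}{j} Q_{c}(j) = Q_{c}(0) = 0 .
\end{equation*}
The degree condition $2c < m - 1$ is exactly equivalent to $c < \lceil (m-1)/2 \rceil$ (check the cases $m - 1$ even and odd separately), which matches the threshold $\sfc_{m}$. So the whole lemma reduces to a degree count.

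\textbf{Expanding the $\Delta_{n}$ factors.} I will write $\Delta_{n} = -\frac{1}{n} \sum_{i=1}^{n} W_{i}$ with $W_{i} \coloneqq X_{i} X_{i}^{\top} - I$ centered, using $\Sigma = I$. Then I expand every $\Delta_{n}^{\ell_{s}}$ into a sum over index tuples, which contributes $c$ indices in total. Each index either coincides with one of the labelled observations $\{0, \dots, j-1, m-1, m\}$ appearing in the product, or is a \emph{fresh} index.

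\textbf{Classifying by pattern.} I group the terms by their pattern:
\begin{itemize}
\item the set $S$ together with the $\ell$'s;
\item the set $T \subseteq \{0, \dots, j-1\}$ of positions whose observation is hit by some $\Delta$-index;
\item the coincidence partition among the fresh indices.
\end{itemize}
The key claim is that, for a fixed pattern described up to order-isomorphism, the expectation does not depend on $j$. The reason is that any position $s \notin S \cup T$ carries a factor $X_{s} X_{s}^{\top}$ with $X_{s}$ independent of everything else. Integrating it out replaces the factor by $\Sigma = I$, so untouched positions simply disappear. Likewise, a fresh index appearing only once has $\bbE W_{i} = 0$ and kills the term. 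Hence the surviving fresh indices form blocks of size at least $2$.

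\textbf{Counting occurrences of each pattern.} With $t = |T \setminus S|$ and $f$ the number of distinct fresh indices, the number of index assignments realizing a given pattern is:
\begin{itemize}
\item $\binom{j}{|S \cup T|}$, times a $j$-free combinatorial factor, for placing $S \cup T$ among the $j$ positions;
\item a falling factorial such as $(n - j - 2)_{f}$ for choosing the fresh indices, which is a polynomial in $j$ of degree $f$.
\end{itemize}
Since every $\Delta$-index is either a touched label or a fresh index, and fresh indices are at least doubled, we get $t + 2f \le c$. Therefore
\begin{equation*}
\deg_{j} \le |S \cup T| + f \le r + t + f \le c + c = 2c,
\end{equation*}
using $r \le c$. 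Because $S \neq \emptyset$ forces $|S \cup T| \ge 1$, every contributing binomial $\binom{j}{a}$ has $a \ge 1$ and vanishes at $j = 0$. This gives $Q_{c}(0) = 0$. The constraint $r \le c \wedge j$ is harmless, since $\binom{j}{a} = 0$ automatically when $a > j$.

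\textbf{Main obstacle.} I expect the hard part to be the justification that, for a fixed pattern, the expectation is genuinely $j$-free. One must handle $\Delta$-indices hitting the fixed labels $m-1$ and $m$, and indices hitting positions in $S$ themselves. One must also check that collapsing an untouched $X_{s} X_{s}^{\top}$ to $I$ is legitimate inside a product containing $\Delta_{n}$'s, whose other summands involve $X_{s}$ only through $W_{s}$ — those are precisely the terms where $s \in T$. I would first write each term as an explicit sum over index maps into $[n]$ and use exchangeability of the i.i.d.\ sample to reduce to canonical representatives. Only then would I integrate out the untouched coordinates and the singleton fresh indices.
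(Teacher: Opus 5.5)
Your architecture matches the paper's. You show that the inner sum at fixed $j$ is a polynomial $Q_c$ in $j$ of degree at most $2c<m-1$ with $Q_c(0)=0$, and you kill it with $\sum_{j=0}^{m-1}(-1)^j\binom{m-1}{j}j^d=0$. Your touched/fresh bookkeeping (each touched position needs one $\Delta$-index, each fresh index needs two) is a valid substitute for Lemma~\ref{lem:bias-centered-word-bound} ($c^{\dag}\le c$ after writing $X_sX_s^\top=I-W_s$), and the degree count is fine.

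\textbf{The gap: the vanishing at $j=0$.} This step is not a formality. The finite-difference identity gives exactly $\calM_c^{(J)}=Q_c(0)$, so the entire cancellation rests on it. Your justification treats $0$ as an observation label: you list $\{0,\dots,j-1,m-1,m\}$ and use $(n-j-2)_f$. In the paper's convention, however, position $s=0$ carries the identity, so it can never be hit by a $\Delta$-index. A pattern whose leftmost position lies in $T$ therefore embeds only into $[j-1]$. Its count is $\binom{j-1}{a}$, which equals $(-1)^a\neq 0$ at $j=0$.

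This already happens for $c=1$. Take $S=\{s_0\}$ with the $\Delta$-index equal to a position $t$. The patterns $t=s_0$, $t<s_0$, $t>s_0$ have counts $j-1$, $\binom{j-1}{2}$, $\binom{j}{2}$, not $j,\binom{j}{2},\binom{j}{2}$ as you claim. Their values at $j=0$ are $-1,1,0$. The total vanishes at $0$ only because all three patterns share the same expectation $-\frac1n\mu^\top(\bbE[XX^\top XX^\top]-I)\eta$. So vanishing pattern by pattern is false, and you must exhibit cancellation across patterns.

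\textbf{How to repair it.} Group not by the order-isomorphism class of $S\cup T$, but by the ordered product that survives after integrating out untouched positions. This is what the paper's ``fixed ordered centered product'' does; its $I-W_s$ expansion additionally makes the fresh-index count independent of $j$.

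\begin{enumerate}
\item Such a word is determined by the touched letters $t_1<\dots<t_\tau$ in $[j-1]$ and the sizes $b_0,\dots,b_\tau$ of the $\Delta$-blocks in the gaps $\{0,\dots,t_1-1\},\{t_1,\dots,t_2-1\},\dots,\{t_\tau,\dots,j-1\}$.
\item The number of choices of $(t_i)$ and $(S,\ell)$ producing it is
\begin{equation*}
\sum_{g_0+\dots+g_\tau=j,\ g_i\ge 1}\ \prod_i h_{b_i}(g_i),
\end{equation*}
where $h_b(g)=\sum_r\sfp_J(b,r)\binom{g}{r}$ for $b\ge1$ and $h_0\equiv1$.
\item Its generating function is $\prod_i H_{b_i}(x)$, with $H_b(x)=\sum_r\sfp_J(b,r)x^r(1-x)^{-r-1}$ for $b\ge1$ and $H_0(x)=x/(1-x)$.
\item Over the common denominator $(1-x)^{c+\tau+1}$, the numerator is divisible by $x$. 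It has degree at most $c+\tau$, because some $b_i\ge1$.
\item Hence the count is a polynomial in $j$ of degree at most $c+\tau$, valid for every $j\ge0$, and it vanishes at $j=0$.
\item Multiply by the fresh-index factor, which is $(n-j-1)_f$ of degree $f$, and use $\tau+2f\le c$. This gives both $\deg Q_c\le 2c$ and $Q_c(0)=0$.
\end{enumerate}
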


The proof of Lemma~\ref{lem:bias-low-degree-cancellation} is deferred to Appendix~\ref{app:bias ii}. As mentioned, showing that $\calM_{c}^{(J)}$ is exactly zero demands a careful calculation to demonstrate that all terms involved in $\calM_{c}^{(J)}$ cancel each other out. To achieve this, in the proof, we first establish Lemma~\ref{lem:bias-centered-word-expansion} and Lemma~\ref{lem:bias-centered-word-bound}, based on which Lemma~\ref{lem:bias-low-degree-cancellation} can be proved.

\paragraph{Step iii.}

We now bound the remainder term $\calR_{m, k, J}$ and the non-zero $\calM_{c}^{(J)}$'s after \textbf{Step~ii}. Define
\begin{equation*}
s_{c} \coloneqq \Big\lceil \frac{c}{2} \Big\rceil \vee 1, \ \rho_{j} \coloneqq j \rho = \frac{j k}{n},\ \zeta_{A, Y}\coloneqq \|A\|_2 \|Y\|_2 + \|A\|_\infty \|Y\|_2
+ \|A\|_2 \|Y\|_\infty.
\end{equation*}

First, Lemma~\ref{lem:bias-surviving-levels} below exhibits the order of $\calM_{c}^{(J)}$ when it is not identically zero.

\begin{lemma}
\label{lem:bias-surviving-levels}
Let $ \sfc_{m} \coloneqq \left \lceil \frac{m - 1}{2} \right \rceil, \ s_{c} \coloneqq \left \lceil \frac{c}{2} \right \rceil \vee 1 $.
Suppose that $ \frac{C m J k}{n} \leq \eta < 1 $.
Then, for every $\sfc_{m} \leq c \leq (m - 1)J$,
\begin{equation}
\label{eq:bias-surviving-level-bound}
\left| \calM_{c}^{(J)} \right| \lesssim_{\eta}
\zeta_{A, Y} \Big( \frac{C (m \vee c) k}{n} \Big)^{s_{c}}.
\end{equation}
\end{lemma}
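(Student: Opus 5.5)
We bound each expectation in \eqref{eq:bias-degree-component} by expanding every $\Delta_{n} = I - \hat{\Sigma} = -\frac{1}{n}\sum_{i=1}^{n}(X_{i}X_{i}^{\top} - I)$ into a sum over sample indices. A fixed term with $|S| = r$ and exponents $(\ell_{s})$ summing to $c$ becomes $n^{-c}\sum_{i_{1},\dots,i_{c}}$ of expectations of products $A_{m-1}X_{m-1}^{\top}\prod(\cdots)X_{m}Y_{m}$. In each such product the centered factors $W_{i} \coloneqq X_{i}X_{i}^{\top} - I$ carry the labels $i_{1},\dots,i_{c}$, which may coincide with each other and with the labels $0,\dots,j-1,m-1,m$.

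We then group the $c$-tuples $(i_{1},\dots,i_{c})$ by their pattern, namely the set partition of the $c$ positions together with the record of which groups coincide with the distinguished labels. Two facts organize the count.
\begin{enumerate}[label=(\alph*)]
\item A block consisting of a single fresh label (one not equal to any other label) contributes zero, since $\bbE W_{i} = 0$ and it is independent of everything else. Hence every fresh block has size at least $2$.
\item A pattern with $q$ fresh blocks has $O(n^{q})$ index choices, so after the $n^{-c}$ normalization it carries weight $n^{q-c}$.
\end{enumerate}

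Each surviving pattern is a multiplicative kernel in the sense of Lemma~\ref{lem:graph-counting}. After expanding $W_{i} = X_{i}X_{i}^{\top} - I$, every term is a product of bilinear forms $X_{u}^{\top}X_{v}$ (with $B_{e} = I$), times $A$ and $Y$ factors. The $A, Y$ factors are handled by conditioning, using Assumption~\ref{as:kernel-stability}: the $A_{m-1}$ and $Y_{m}$ endpoints are replaced by the projections $\Pi$ of $\bbE(A\mid X)$ and $\bbE(Y\mid X)$, which are bounded in $L_{\infty}$. When an endpoint label collides with an interior label, we instead use $\|A\|_{\infty}\|Y\|_{2}$ or $\|A\|_{2}\|Y\|_{\infty}$. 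This case analysis is what produces $\zeta_{A,Y}$.

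For the resulting graph we compute the first Betti number. A fresh vertex of multiplicity $b \geq 2$ adds one vertex and $b$ edges, so it adds at most $b - 1$ to $\frakr$ along the chain. Tracking this shows that the total contribution of a pattern is at most
\begin{equation*}
n^{q-c}\, k^{\,c-q} = \Big(\frac{k}{n}\Big)^{c-q},
\end{equation*}
up to the baseline $k$-dependence that is absorbed by the chain structure. Because every fresh block has size at least $2$, we have $q \leq \lfloor c/2 \rfloor$, so $c - q \geq \lceil c/2 \rceil = s_{c}$. Collisions with the distinguished labels $0,\dots,j-1,m-1,m$ only reduce $q$ further. The extra factor of $k$ they can create is paid for by one fewer $n$, since a collision fixes the index to one of at most $m$ distinguished labels.

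For the combinatorial sum, the number of patterns with $c - q = t$ is at most $(C(m \vee c))^{t}$ times a factor controlled by the distinguished-label choices. We sum over $j$, over $S$ with $|S| = r \leq c \wedge j$, and over compositions $(\ell_{s})$ of $c$. The binomial weights $\binom{m-1}{j}$ and the compositions together give at most $C^{c}m^{r}$-type factors, which we fold into $(C(m\vee c)k/n)^{t}$. We then sum the geometric series over $t \geq s_{c}$. Convergence uses the hypothesis $CmJk/n \leq \eta < 1$, and the dependence on $\eta$ enters the implicit constant as $(1-\eta)^{-1}$. This yields \eqref{eq:bias-surviving-level-bound}.

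\textbf{Main obstacle.} The hardest step is to make the Betti-number accounting uniform over all patterns with constants of the form $C^{c}$, not $c!$. This requires showing that the number of set partitions with blocks of size at least $2$, weighted by their graph-counting constants, grows only like $(Cc)^{c-q}$. We plan to do this by encoding each pattern as a chain with back-pointers: each non-fresh occurrence points to an earlier occurrence, giving at most $(c+m)$ choices per repeated position, and there are $c - q$ such positions. We also need the implicit constant in Lemma~\ref{lem:graph-counting} to grow at most geometrically in $e(G)$, which we would verify from its proof via H\"older and $\|X^{\top}X\|_{\infty} = O(k)$.
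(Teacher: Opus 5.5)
Your per-term analysis is essentially the paper's Lemma~\ref{lem:bias-averaged-centered-product-bound}. The paper splits $\Delta_{n} = U_{\calA_{j}} + V_{\calA_{j}}$ into a displayed-label part and a fresh part. It then:
\begin{itemize}
\item kills singleton fresh blocks by centering;
\item charges at most $b - 1$ to the first Betti number for a fresh block of size $b$, and at most one per collision with a displayed label, each collision costing $Cj/n$;
\item counts partitions by $(C r_{F})^{d_{F}}$;
\item concludes with exponent $r_{A} + d_{F} \geq s_{c}$.
\end{itemize}
That is your $c - q \geq s_{c}$, and your pattern/back-pointer encoding is a repackaging of the same count.

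The gap is in your final summation over $j$, $S$ and $(\ell_{s})$, where you never use the hypothesis $c \geq \sfc_{m}$. This is the one place the lemma needs it, and it fails in three ways without it.
\begin{itemize}
\item The outer weights alone give $\sum_{j=1}^{m-1}\binom{m-1}{j} = 2^{m-1}-1$.
\item The geometric graph-counting constant $C^{e(G)}$ carries $C^{j}$ from the $j$ edges of the base path.
\end{itemize}
Neither of these is of the form $C^{c}$ unless $m \lesssim c$.
\begin{itemize}
\item Your claimed $m^{r}$ factor, with $r$ as large as $c$, cannot be folded into $(C(m \vee c)k/n)^{t}$. Here $t$ may be as small as $\lceil c/2 \rceil$, and the base already spends its one factor of $m \vee c$ per unit of $t$ on the pattern count.
\end{itemize}

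The paper handles this in three steps. First, it bounds the number of choices of $(S,(\ell_{s}))$ at fixed $j$ by $\sum_{r}\binom{j}{r}\binom{c-1}{r-1} = \binom{j+c-1}{c} \leq 2^{j+c-1}$, which is exponential in $j + c$ rather than $m^{r}$. Second, it uses $c \geq \sfc_{m}$, i.e.\ $m - 1 \leq 2c$, to get $\sum_{j}\binom{m-1}{j}\binom{j+c-1}{c} \leq 2^{c}3^{m-1} \leq C_{1}^{c}$; the same inequality gives $C^{j} \leq C^{2c}$. Third, it uses $c \leq 2 s_{c}$ to absorb $C_{1}^{c}$ into the base.

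This hypothesis is not cosmetic. For $c < \sfc_{m}$ with $m \asymp \log n$, the triangle-inequality bound carries a factor of order $2^{m}$, which is polynomial in $n$. Those levels are controlled only by the exact cancellation of Lemma~\ref{lem:bias-low-degree-cancellation}. As written, your absolute-value argument would apply equally to such $c$, so it cannot be correct without the step above.
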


Then, Lemma~\ref{lem:bias-neumann-remainder} below controls the order of the remainder term $\calR_{m, k, J}$.
\begin{lemma}
\label{lem:bias-neumann-remainder}
Let $J = \lceil C_{0} \log n \rceil$ for some sufficiently large universal constant $C_{0}$. Suppose that $m \asymp \log n$ and $\frac{C m J k}{n} \leq \eta < 1 $. Then
\begin{equation*}
\label{eq:bias-neumann-remainder-bound}
|\calR_{m, k, J}| \lesssim
\zeta_{A, Y} \Big( \frac{C m k}{n} \Big)^{s_{\sfc_{m}}} .
\end{equation*}
\end{lemma}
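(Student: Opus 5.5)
We need to bound $\calR_{m,k,J}$, the sum of all terms in \eqref{eq:bias-remainder} that contain at least one factor $\sfR_J=\Delta_n^{J+1}\hat\Omega$. The plan is to avoid any combinatorial cancellation and use crude norm bounds on a high-probability event. The point is that $\|\Delta_n\|_{\op}^{J+1}$ with $J=\lceil C_0\log n\rceil$ is polynomially small in $n$, so it swamps everything else.

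\textbf{Step 1: good and bad events.} Let $\calE_n\coloneqq\{\|\Delta_n\|_{\op}\le r_n,\ \|\hat\Omega\|_{\op}\le 2\}$, with $r_n\asymp\sqrt{k\log n/n}$. Because $\Sigma=I$ and $\|X^\top X\|_\infty=O(k)$ (Assumption~\ref{as:cov}), matrix Bernstein gives $\bbP(\calE_n^c)\le n^{-D}$ for any fixed $D$, using $k\lesssim n/\log^3 n$. In particular $r_n\lesssim (\log n)^{-1}$.

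We split each expectation in \eqref{eq:bias-remainder} into its parts on $\calE_n$ and on $\calE_n^c$.

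\textbf{Step 2: the bad event.} On $\calE_n^c$ we cannot control $\hat\Omega$ in operator norm. To handle this, define the estimator with the convention $\hat\Omega=\hat\Sigma^{-1}$, with a pseudo-inverse if needed. Each factor $X_s^\top M X_{s'}$ with $M\in\{I,\sfD_J,\sfR_J\}$ is then bounded by $\mathrm{poly}(n,k)$ up to the norm of $\hat\Omega$. Alternatively, we simply use that $\hat\Omega X_i$ has leverage $X_i^\top\hat\Omega X_i\le 1\cdot n$, which gives a deterministic polynomial bound.

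The bad-event contribution is therefore at most
\[
\zeta_{A,Y}\,2^{m}\,(2J)^{m}\,\mathrm{poly}(n)\,n^{-D}.
\]
Since $m\asymp\log n$ and $J\asymp\log n$, the factor $2^m(2J)^m$ is only $n^{O(\log\log n)}$. However, $D$ must then be allowed to grow, so I would instead use a tail bound $\bbP(\calE_n^c)\le e^{-cn/k}\le e^{-c\log^3 n}$, which beats $n^{O(\log\log n)}$. This step is routine.

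\textbf{Step 3: the good event.} On $\calE_n$ we have $\|\sfD_J\|_{\op}\le r_n/(1-r_n)\le 1$ and $\|\sfR_J\|_{\op}\le 2r_n^{J+1}$. A single kernel is bounded by
\[
|A_{m-1}|\,\|X_{m-1}\|\,\prod_s\|X_s\|^2\,\|M_s\|_{\op}\,\|X_m\|\,|Y_m|\le |A_{m-1}||Y_m|\,k^{j+1}\,2^{j}\,(2r_n^{J+1})^{|T|}.
\]
This crude bound loses powers of $k$ up to $k^{m}$, and that is the main obstacle: $r_n^{J+1}$ must dominate $k^{m}$. Choosing $C_0$ large makes $r_n^{J+1}\le(\log n)^{-C_0\log n}=n^{-C_0\log\log n}$, whereas $k^m\le n^{m}=n^{O(\log n)}$, so the crude approach fails.

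The fix is to keep the $X$'s paired. Each non-$\sfR_J$ factor $X_sX_s^\top M_s$ is replaced, via Cauchy–Schwarz, by the operator bound $\|X_sX_s^\top\|_{\op}\|M_s\|_{\op}$, and we view the product $X_{m-1}^\top(\prod_s P_s)X_m$ as a chain. In an orthonormal-like normalization, $\bbE\|X_sX_s^\top\|$ does not help directly. Instead, rewrite $X_sX_s^\top$ in terms of $\hat\Sigma$: averaging over the positions $s$ is exactly how \eqref{eq:bias-binomial-representation} arose. So I would undo the symmetrization within the expectation and replace $\prod_s X_sX_s^\top(\cdot)$ by products of $\hat\Sigma$, possibly at the cost of the Möbius-type corrections of Lemma~\ref{lem:mobius}. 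Under $\calE_n$, $\|\hat\Sigma\|_{\op}\le 2$, so the chain is bounded by
\[
|A_{m-1}|\,|Y_m|\,\|X_{m-1}\|\,\|X_m\|\,2^{m}(2r_n^{J+1})\lesssim k\,2^m r_n^{J+1}.
\]

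\textbf{Step 4: summing.} Summing over $j$, $S$ and $T$ gives combinatorial weight at most $\sum_j\binom{m-1}{j}4^j\le 5^m$. The total is then
\[
\zeta_{A,Y}\,k\,10^m\,r_n^{J+1},
\]
which is at most $\zeta_{A,Y}\,n^{-10\log\log n}$ once $C_0$ is large relative to the constant in $m\asymp\log n$. This is smaller than $\zeta_{A,Y}(Cmk/n)^{s_{\sfc_m}}$, because that target is at least $(1/\log^2 n)^{m}=n^{-O(\log\log n)}$ with a fixed constant. Choosing $C_0$ large enough completes the bound.

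The delicate step is Step 3, where the positions must be resymmetrized so that the $X_sX_s^\top$ factors become $\hat\Sigma$ and avoid the $k^m$ loss. I expect this to require the exchangeability of indices $0,\dots,j-1$ together with the fact that $\hat\Omega$ is symmetric in the sample.
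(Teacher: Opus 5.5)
\textbf{There is a genuine gap, and it is at Step 3.} You identify the crux correctly. Bounding each factor $X_s^\top M_s X_{s+1}$ by operator norms costs a factor of order $k$ per position. And $r_n^{J+1}$ with $J\asymp\log n$ cannot absorb $k^{m}=n^{O(\log n)}$ when $k$ is close to $n/\log^3 n$. But Step 3 does not get past this obstacle.

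Exchangeability does let you replace the fixed interior labels $1,\dots,j-1$ by an average over distinct tuples, since $\sfD_J$, $\sfR_J$ and $\mathbf{1}_{\calE_n}$ depend on the sample only through $\hat\Sigma$. However, that average equals $\prod_s \hat\Sigma M_s$ only after adding back every coincidence term: interior indices equal to each other or to the endpoints $m-1,m$. Your bound $\lesssim k\,2^m r_n^{J+1}$ keeps only the leading term. The coincidence terms are not products of $\hat\Sigma$:
\begin{itemize}
\item a pair block $\{s,t\}$ gives $\sum_i X_i (X_i^\top Q X_i) X_i^\top$, with $|X_i^\top Q X_i|$ of order $k$;
\item crossing blocks give Hadamard-type sums such as $\sum_{i,i'} X_i (X_i^\top Q_1 X_{i'})(X_{i'}^\top Q_2 X_i)(X_i^\top Q_3 X_{i'}) X_{i'}^\top$.
\end{itemize}
To close the argument you would need a deterministic operator-norm estimate on $\calE_n$. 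It must show that every coincidence pattern with $d$ merges, crossing ones included, is at most $(Ck/n)^d$ times the leading term. You would then sum over all set partitions with the weights $\prod_B (|B|-1)!$ of Lemma~\ref{lem:mobius-inversion expansion}. The weighted partition count is likely harmless when $m^2k/n$ is small. The per-pattern estimate, however, is exactly where the powers of $k$ re-enter, and you do not supply it. As written, Step 3 restates the obstacle rather than removing it.

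\textbf{How the paper handles this step.} On $\calG_n$ it treats $I$, $\sfD_J$, $\sfR_J$ as edge matrices of operator norm $\lesssim 1$, $r_n$, $r_n^{J+1}$ respectively. It then applies Lemma~\ref{lem:graph-counting} to the path $X_{m-1}^\top(\prod_s X_sX_s^\top)X_m$, whose first Betti number is zero. This gives $\zeta_{A,Y}C^j r_n^{|S|-|T|}(r_n^{J+1})^{|T|}$ with no loss in $k$. Summing over $S$, $T$, $j$ gives $\zeta_{A,Y}C^m m\,e^{Cmr_n}r_n^{J+1}$, which is absorbed using $r_n^2\lesssim k\log n/n\lesssim\rho_m$ and $J\gg m$. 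Strictly, Lemma~\ref{lem:graph-counting} assumes the edge matrices are independent of the $X$'s, and $\sfD_J$, $\sfR_J$ and $\mathbf{1}_{\calG_n}$ are not. A fully rigorous version therefore needs a decoupling step, for instance the leave-*-out expansion of Lemma~\ref{lem:leaveout-graph-encoding}. Your concern is well founded, but your proposal does not provide a substitute.

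\textbf{Two smaller errors.}
\begin{itemize}
\item In Step 2, $\|\hat\Omega\|_{\op}$ has no deterministic bound, and $\|\Delta_n\|_{\op}^{J+1}$ can reach $(Ck)^{J+1}=n^{O(\log n)}$. So there is no deterministic polynomial bound on the integrand. Also, the $e^{-cn/k}$ tail holds for $\{\|\Delta_n\|_{\op}>1/2\}$, not for the complement of your $\calE_n$ (threshold $r_n$). For that complement, matrix Bernstein gives only a polynomially small bound.
\item In Step 4, the target is not bounded below by $(\log n)^{-2m}$ when $k$ is small; for bounded $k$ it is about $n^{-c\log n}$. The comparison has to be made in powers of $\rho_m$ via $r_n^2\lesssim\rho_m$, as the paper does.
\end{itemize}
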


Again, we defer the proofs of the above two lemmas to Appendix~\ref{app:bias iii}. In particular, the proofs of both results rely on the graph-counting Lemma~\ref{lem:graph-counting} by associating $U$-statistic kernels emerged from rewriting $\calB_{m, k}$ with undirected graphs. Specifically, bounding the mean of these $U$-statistic kernels will be reduced to counting the first Betti number of the associated undirected graph.

By Lemma~\ref{lem:bias-surviving-levels},
\begin{equation*}
|\calM_{c}^{(J)}| \lesssim \zeta_{A, Y} \Big(\frac{C (m \vee c) k}{n} \Big)^{s_{c}},\ \sfc_{m} \leq c \leq (m - 1) J.
\end{equation*}
We then divide our analysis into two scenarios.
\begin{itemize}
\item For $\sfc_{m} \leq c \leq m$, we have $m \vee c = m,\ \frac{C (m \vee c) k}{n} = C \rho_{m}$. Therefore,
\begin{equation*}
\sum_{c = \sfc_{m}}^{m} |\calM_{c}^{(J)}|
\lesssim \zeta_{A, Y} \sum_{c = \sfc_{m}}^{m}
(C \rho_{m})^{s_{c}}.
\end{equation*}
Since $s_{c} = \lceil c / 2 \rceil \vee 1$, pairing adjacent values of $c$ shows that each exponent $s_{c}$ occurs at most twice. Thus, under $C \rho_{m} < 1$,
\begin{equation*}
\sum_{c = \sfc_{m}}^{m} (C \rho_{m})^{s_{c}} \leq 2 \sum_{\ell = s_{\sfc_{m}}}^{s_{m}} (C \rho_{m})^{\ell} \lesssim (C \rho_{m})^{s_{\sfc_{m}}}.
\end{equation*}
Consequently,
\begin{equation*}
\sum_{c = \sfc_{m}}^{m} |\calM_{c}^{(J)}| \lesssim \zeta_{A, Y} (C \rho_{m})^{s_{\sfc_{m}}}.
\end{equation*}

\item For $c > m$, set $g_{c} \coloneqq (C \rho_{c})^{c / 2}$. Since $C \rho_{c} < 1$ and $s_{c} \geq c / 2$,
\begin{equation*}
|\calM_{c}^{(J)}| \lesssim \zeta_{A, Y} g_{c}.
\end{equation*}
There exists $C > 0$ such that $\sqrt{C e \rho m J} \leq q < 1$. Then
\begin{equation*}
\frac{g_{c + 1}}{g_{c}} = \sqrt{C \rho_{c + 1}} \Big(1 + \frac{1}{c} \Big)^{c/2}
\leq \sqrt{C e\rho_{c + 1}} \leq q,
\end{equation*}
so the sequence $\{g_{c}\}_{c > m}$ decreases to zero at a geometric rate. Therefore,
\begin{equation*}
\sum_{c = m + 1}^{(m - 1) J} |\calM_{c}^{(J)}| \lesssim \zeta_{A, Y} g_{m} = \zeta_{A, Y} (C \rho_{m})^{m / 2} \leq \zeta_{A, Y} (C \rho_{m})^{s_{\sfc_{m}}}.
\end{equation*}
\end{itemize}
Integrating the above two scenarios has the following consequence:
\begin{equation*}
\sum_{c = \sfc_{m}}^{(m - 1) J} |\calM_{c}^{(J)}| \lesssim \zeta_{A, Y} \Big( \frac{C m k}{n} \Big)^{s_{\sfc_{m}}}.
\end{equation*}
Combining~\eqref{eq:bias-degree-decomposition}, \eqref{eq:bias-step-ii-cancellation} and Lemma~\ref{lem:bias-neumann-remainder} yields the following:
\begin{equation*}
|\calB_{m, k}| \lesssim \zeta_{A, Y} \Big( \frac{C m k}{n} \Big)^{s_{\sfc_{m}}}, \ \text{where} \ s_{\sfc_{m}} = \Big\lceil \frac{\sfc_{m}}{2} \Big\rceil = \Big\lceil \frac{m - 1}{4} \Big\rceil.
\end{equation*}
This completes of the proof of the bias bound.


\subsection{Variance analysis}
\label{sec:variance}

The variance analysis is much more complicated than that of $\hat{\psi}_{m, k} (\hat{\Omega}_{\nuis})$ in \citet{liu2017semiparametric}, because we can no longer use Hoeffding decomposition. We divide the variance analysis into the following steps:
\begin{enumerate}[label = \roman*.]
\item We first apply Minkowski's inequality to reduce the task of bounding the variance of $ \hat{\psi}_{m, k} (\hat{\Omega})$ to the task of bounding the variance of each $\hat{\IIFF}_{j, j, k} (\hat{\Omega})$ for $j = 2, \cdots, m$; see Lemma~\ref{lem:Minkowski}. We then invoke Lemma~\ref{lem:mobius} (through M\"obius inversion) to rewrite each $\hat{\IIFF}_{j, j, k} (\hat{\Omega})$ as a finite sum of lower-order  $U$-statistics.

\item Starting from the lower-order $U$-statistics obtained in \textbf{Step~i}, we further expand each $U$-statistic kernel into kernels involving only products of bilinear forms $X_{i}^{\top} M X_{j}$ for $i, j \in [n]$ (abbreviated as \emph{multiplicative-kernels}), with $M$ being some square matrix of size $k$. We then associate each multiplicative-kernel with an undirected graph, whose vertices correspond to all sample indices $i, j$ involved in the aforementioned bilinear forms $X_{i}^{\top} M X_{j}$ and whose edges describe whether a pair of indices $i, j$ are present in any of these bilinear forms. We then prove a generic variance bound for these $U$-statistics by combining several technical ingredients:
\begin{enumerate}[label = (\arabic*)]
\item a standard decomposition of the variance of a $U$-statistic into a sum of terms organized by the size of overlapped indices;
\item a counting argument based on the first Betti number of the graph associated with the kernel, as stated previously in Lemma~\ref{lem:graph-counting};
\item the Neumann series expansion of $\hat{\Omega}$ and leave-*-out analysis; and finally 
\item the Efron--Stein inequality \citep{efron1981jackknife, rajendran2023concentration}.
\end{enumerate}

\item Finally, we combine the expansion based on M\"obius inversion in Lemma~\ref{lem:mobius} in \textbf{Step~i} and the results in \textbf{Step~ii} to obtain the desired variance bound for $\hat{\IIFF}_{j, j, k}(\hat{\Omega})$ for each $j = 2, 3, \cdots, m$.
\end{enumerate}

The logical flow of the argument is summarized in Figure~\ref{fig:variance-proof-roadmap}.

\begin{figure}
\centering
\begin{tikzpicture}[
    >={Latex[length=2mm]},
    roadbox/.style={
        draw=black,
        rounded corners=2pt,
        align=center,
        text width=0.435\textwidth,
        inner xsep=6pt,
        inner ysep=5pt,
        font=\scriptsize
    },
    arr/.style={
        ->,
        line width=0.7pt
    },
    downmark/.style={
        font=\large
    }
]

\node[roadbox] (s1) {
\textbf{1. Minkowski inequality}\\[1mm]
$\displaystyle
\var^{1 / 2} \Big\{ \sum_{j = 2}^{m}\hat{\IIFF}_{j, j, k} (\hat{\Omega}) \Big\} \le \sum_{j = 2}^{m}
\var^{1 / 2} \Big\{ \hat{\IIFF}_{j, j, k} (\hat{\Omega}) \Big\}
$\\[1mm]
Lemma~\ref{lem:Minkowski}
};

\node[roadbox, right=0.55cm of s1] (s2) {
\textbf{2. M\"obius-inversion expansion}\\[1mm]
$\displaystyle
\hat{\IIFF}_{j, j, k} (\hat{\Omega})
= \sum_{\calB \in \bbB_{\iota}}
c_{\calB, n}\, \bbU_{n, 2 + |\calB|} (K_{\calB}),
\ \iota = j-2
$\\[1mm]
Lemma~\ref{lem:mobius}
};

\node[roadbox, below=0.55cm of s1] (s3) {
\textbf{3. Reduction to effective chains}\\[1mm]
$\displaystyle
\begin{aligned}
&\quad\qquad\bbU_{n, 2 + r} (K_{\calB})
= \sum_{\varepsilon \in \calE (\calB)}
d_{\calB, \varepsilon}
T_{a_{\varepsilon}, \ell_{\varepsilon}, \gamma_{\varepsilon}},\\[1mm]
&T_{a, \ell, \gamma}
= \bbU_{n, a} \Big\{ A_{i_{1}} X_{i_{1}}^{\top} \hat{\Omega} \Big( \prod_{s = 1}^{\ell} X_{i_{\gamma (s)}} X_{i_{\gamma (s)}}^{\top} \hat{\Omega} \Big) X_{i_2} Y_{i_2} \Big\}.
\end{aligned}
$\\[1mm]
Lemma~\ref{lem:mobius-block-to-chain}
};

\node[roadbox, right=0.55cm of s3] (s4) {
\textbf{4. Covariance decomposition}\\[1mm]
$\displaystyle
\var (T_{a, \ell, \gamma}) \le \sum_{\alpha = 1}^{a} V_{\alpha}
+ V_{0}^{\mathrm{cross}} + V_{0}^{\mathrm{loc}}
$\\[1mm]
$\displaystyle
\begin{aligned}
& V_{\alpha}
:\ \alpha\text{-overlap covariance terms},\quad 1 \le \alpha \le a,\\
& V_{0}^{\mathrm{cross}},\,V_{0}^{\mathrm{loc}}
:\ \text{two zero-overlap covariance terms}.
\end{aligned}
$\\[1mm]
Lemma~\ref{lem:chain-covariance-decomposition} in Appendix~\ref{app:mobius-block-to-chain}
};

\node[roadbox, below=0.55cm of s3] (s5) {
\textbf{5. Leave-*-out expansion and ``graph lemma''}\\[1mm]
$\begin{aligned}
\hat{\Omega} = B_{S}& - M_{S},
\ B_{S} = \hat{\Omega}_{-S},\\
k_{a, \ell, \gamma} (\bfi) k_{a, \ell, \gamma} (\bfi')
& = n^{-q} \prod_{e = (u, v) \in E(G)} X_{u}^{\top} B_{e,S} X_{v} .
\end{aligned}$\\[1mm]
$\begin{aligned}
| \bbE \prod_{e = (u, v) \in E(G)}
X_{u}^{\top} B_{e} X_{v} |
&\lesssim k^{\frakr (G)}.
\end{aligned}$\\[1mm]
Lemma~\ref{lem:graph-counting} (graph-counting lemma) and Lemma~\ref{lem:leaveout-graph-encoding} in Appendix~\ref{app:mobius-block-to-chain}
};

\node[roadbox, right=0.55cm of s5] (s6) {
\textbf{6. Counting and summation}\\[1mm]
$\displaystyle
\begin{aligned}
\var (T_{a, \ell, \gamma})
& \lesssim \frac{k^{2 \ell - 2 a + 4}}{n} \Gamma_{a, \ell, n},\\[1mm] \var \{\hat{\IIFF}_{j, j, k} (\hat{\Omega})\}
& \lesssim \frac{j^{2}}{n}
\frac{ \exp (C_{\eta} j^{2} k/n + C j^{2}/n)}
{(1 - C j k/n)^2}  \\
& \quad \times
\Big(C j \frac{k}{n}\Big)^{2\lfloor (j-1) / 2 \rfloor}.
\end{aligned}
$\\[1mm]
Lemmas~\ref{lem:mobius-r-variance}--\ref{lem:fixed-j-ifhat-variance}
};

\draw[arr] (s1.east) -- (s2.west);
\draw[arr] (s3.east) -- (s4.west);
\draw[arr] (s5.east) -- (s6.west);

\draw[arr] (s2.south) -- ++(0,-0.28) -| (s3.north);
\draw[arr] (s4.south) -- ++(0,-0.28) -| (s5.north);


\end{tikzpicture}

\caption{Schematic overview of the variance analysis. The diagram displays the main algebraic reductions, the covariance decomposition for the generic
multiplicative-kernel $U$-statistic, the leave-*-out expansion followed by graph-counting bounds, and the final summation
over collection levels and correction orders.}
\label{fig:variance-proof-roadmap}
\end{figure}

\paragraph{Step i.}

We have the following result, which is a direct consequence of Minkowski's inequality.

\begin{lemma}
\label{lem:Minkowski}
The following inequality holds.
\begin{align}
\label{Minkowski}
\var^{1 / 2} \Big\{ \sum_{j = 2}^{m} \hat{\IIFF}_{j, j, k} (\hat{\Omega}) \Big\} \le \sum_{j = 2}^{m} \var^{1 / 2} \{\hat{\IIFF}_{j, j, k} (\hat{\Omega})\}.
\end{align}
\end{lemma}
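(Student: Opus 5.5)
The statement to prove is Lemma~\ref{lem:Minkowski}, which is just the triangle inequality for the standard deviation. My plan is to work in the Hilbert space $L^{2}(\bbP)$ of square-integrable random variables. Write $Z_{j} \coloneqq \hat{\IIFF}_{j, j, k} (\hat{\Omega}) - \bbE \{\hat{\IIFF}_{j, j, k} (\hat{\Omega})\}$ for $j = 2, \cdots, m$. Then $\var^{1/2}(Z_{j}) = \Vert Z_{j} \Vert_{2}$, and by linearity of expectation
\begin{align*}
\sum_{j = 2}^{m} \hat{\IIFF}_{j, j, k} (\hat{\Omega}) - \bbE \Big\{ \sum_{j = 2}^{m} \hat{\IIFF}_{j, j, k} (\hat{\Omega}) \Big\} = \sum_{j = 2}^{m} Z_{j}.
\end{align*}
Hence the left-hand side of \eqref{Minkowski} equals $\Vert \sum_{j} Z_{j} \Vert_{2}$. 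Minkowski's inequality in $L^{2}(\bbP)$ then gives $\Vert \sum_{j} Z_{j} \Vert_{2} \le \sum_{j} \Vert Z_{j} \Vert_{2}$. This is exactly the right-hand side of \eqref{Minkowski}.

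Equivalently, one can expand the variance directly:
\begin{align*}
\var \Big( \sum_{j} Z_{j} \Big) = \sum_{j, j'} \cov (Z_{j}, Z_{j'}) \le \sum_{j, j'} \var^{1/2}(Z_{j}) \var^{1/2}(Z_{j'}) = \Big( \sum_{j} \var^{1/2}(Z_{j}) \Big)^{2},
\end{align*}
where the inequality is Cauchy--Schwarz applied to each covariance. Taking square roots yields \eqref{Minkowski}.

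The only point needing attention is well-definedness: each $\hat{\IIFF}_{j, j, k} (\hat{\Omega})$ must have finite second moment. If some $Z_{j}$ has infinite variance, the right-hand side is $+\infty$ and the inequality is trivial, so no integrability hypothesis is actually needed. Under Assumptions~\ref{as:cov}--\ref{as:outcomes}, finiteness on the event where $\hat{\Sigma}$ is well conditioned is addressed by the later variance lemmas anyway. There is no genuine obstacle; the substance of the variance analysis lies in the subsequent per-$j$ bounds.
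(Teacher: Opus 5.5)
Your proof is correct and takes the paper's route: the paper states this lemma as a direct consequence of Minkowski's inequality in $L^{2}(\bbP)$ applied to the centered summands, which is exactly your argument. Your Cauchy--Schwarz expansion of the covariances and your remark on the infinite-variance case are harmless supplements.
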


Thus, by Lemma~\ref{lem:Minkowski}, the variance analysis of $ \hat{\psi}_{m, k}(\hat{\Omega})$ reduces to bounding each fixed-order term $\hat{\IIFF}_{j, j, k}(\hat{\Omega})$, while keeping track of the dependence on $j$ (and eventually on $m$), $k$, and $n$.

\paragraph{Step ii.}

In this part, we recall all the notations defined in Section~\ref{sec:mobius}. We bound the variance of $\hat{\IIFF}_{j, j, k} (\hat{\Omega})$ by using the \Mobius{} inversion decomposition \eqref{Mobius} presented in Lemma~\ref{lem:mobius}:
\begin{align*}
\hat{\IIFF}_{j, j, k} (\hat{\Omega}) = \sum_{\calB \in \bbB_{\iota}} c_{\calB, n} \bbU_{n, 2 + |\calB|} (K_{\calB}),
\end{align*}
where the form of $K_{\calB}$ is recorded in \eqref{kernel}. For each $l \in [\iota]$ ($\iota = j - 2$), define
\begin{equation*}
\calE_{l} (\calB) \coloneqq
\begin{cases}
\{0, \nu\}, & l \in B_{\nu} \text{ for some } \nu \in \{1, \cdots, r\},\\
\{0, 1, 2\}, & l \notin \bigcup_{\nu = 1}^{r} B_{\nu}.
\end{cases}
\end{equation*}
Here, the value $0$ corresponds to $- I$ from an $H = X X^{\top} \hat{\Omega} - I$ or $- 2 I$ from a $\calR_{1 2} = X_{1} X_{1}^{\top} \hat{\Omega} + X_{2} X_{2}^{\top} \hat{\Omega} - 2 I$. The nonzero values correspond to terms that involve $X_{u} X_{u}^{\top} \hat{\Omega}$. Let $\calE (\calB) \coloneqq \prod_{l = 1}^{\iota} \calE_{l} (\calB)$. For $\varepsilon = (\varepsilon_{1}, \cdots, \varepsilon_{\iota}) \in \calE (\calB)$, define $d_{\calB, \varepsilon} \coloneqq (-1)^{N_{H} (\varepsilon)} (-2)^{N_{R} (\varepsilon)}$, where
\begin{equation*}
N_{H} (\varepsilon) \coloneqq \Big| \Big\{ l \in \bigcup_{\nu = 1}^{r} B_{\nu}: \varepsilon_{l} = 0 \Big\} \Big|, \quad N_{R} (\varepsilon) \coloneqq \Big| \Big\{ l \notin \bigcup_{\nu = 1}^{r} B_{\nu}: \varepsilon_{l} = 0 \Big\} \Big|.
\end{equation*}

Now let
\begin{equation*}
\calA_{\varepsilon} \coloneqq \left\{
\nu \in \{1, \cdots, r\}: \varepsilon_{l} = \nu
\text{ for at least one } l \in B_{\nu}\right\}.
\end{equation*}
Thus $\calA_{\varepsilon}$ records those $\nu$ for which the corresponding
sample index $a_{\nu}$ appears through a term $X_{a_{\nu}} X_{a_{\nu}}^{\top} \hat{\Omega}$. For
$l \in [\iota] \setminus \bigcup_{\nu = 1}^{r} B_{\nu}$, any term of the form $X_{u} X_{u}^{\top} \hat{\Omega}$ involves only $u = i_{1}$ or $u = i_{2}$. These two endpoint indices remain in the resulting kernel and are not included in $\calA_{\varepsilon}$.

Set $b_{\varepsilon} \coloneqq |\calA_{\varepsilon}|$. Write $ \calA_{\varepsilon} = \{\nu_{1}, \cdots, \nu_{b_{\varepsilon}}\}, \ \nu_{1} < \cdots < \nu_{b_{\varepsilon}}$. For every $\nu \notin \calA_{\varepsilon}$, the index $a_{\nu}$ does not appear in the displayed kernel and can therefore be summed out exactly. After this summation, the original $U$-statistic of order $2 + r$ reduces to a $U$-statistic of order $ a_{\varepsilon} = 2 + b_{\varepsilon}$, with remaining displayed indices ordered as $\bfi_{a_{\varepsilon}} = (i_{1}, i_{2}, a_{\nu_{1}}, \cdots, a_{\nu_{b_{\varepsilon}}})$.

Let $\ell_{\varepsilon} \coloneqq |\{l \in [\iota] : \varepsilon_{l} \neq 0\}|$. Writing the positions $l \in [\iota]$ with $\varepsilon_{l} \neq 0$ in increasing order defines an index assignment $\gamma_{\varepsilon} : \{1, \cdots, \ell_{\varepsilon} \} \to [a_{\varepsilon}]$: for the $h$-th non-identity position $l_{h}$, if $l_{h} \in B_{\nu_{t}}$ for some $t \in \{1, \cdots, b_{\varepsilon}\}$, then $\gamma_{\varepsilon} (h) = 2 + t$; otherwise, $\varepsilon_{l_{h}} \in \{1, 2\}$ and $\gamma_{\varepsilon} (h) = \varepsilon_{l_{h}}$. 

With the above preparation, we are ready to present the following lemma, which further decomposes $\bbU_{n, 2 + r} (K_{\calB})$ into $U$-statistics with multiplicative-kernels. A proof can be found in Appendix~\ref{app:mobius-block-to-chain}.

\begin{lemma}
\label{lem:mobius-block-to-chain}
Each summand in the \Mobius{} inversion decomposition of $\hat{\IIFF}_{j, j, k} (\hat{\Omega})$ in \eqref{Mobius} admits the following decomposition:
\begin{equation*}
\bbU_{n, 2 + r} (K_{\calB}) = \sum_{\varepsilon \in \calE(\calB)} d_{\calB, \varepsilon} T_{a_{\varepsilon}, \ell_{\varepsilon}, \gamma_{\varepsilon}},
\end{equation*}
where, for $a \ge 2$, $\ell \ge 0$, and
$\gamma : \{1, \cdots, \ell\} \to [a]$, 
\begin{equation*}
T_{a, \ell, \gamma} = \bbU_{n, a} \Big\{ A_{1} X_{1}^{\top} \hat{\Omega} \Big( \prod_{s = 1}^{\ell} X_{\gamma (s)} X_{\gamma (s)}^{\top} \hat{\Omega} \Big) X_{2} Y_{2} \Big\}.
\end{equation*}
In particular, in $T_{a, \ell, \gamma}$, the following constraint holds: $\{1, 2, \gamma (1), \cdots, \gamma (\ell)\} = [a]$. Moreover, for every $\varepsilon \in \calE (\calB)$, $\ell_{\varepsilon} - b_{\varepsilon} \le \iota - r$. For  $T_{2 + b, q, \gamma}$, we also have:
\begin{equation*}
q - b \le \iota - r.
\end{equation*}
\end{lemma}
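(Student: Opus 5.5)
The decomposition follows by expanding every matrix factor of the kernel $K_{\calB}$ into its elementary pieces and then summing out, exactly, the sample indices that no longer appear.

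\textbf{Expansion of each factor.} Fix $\calB = \{B_{1}, \cdots, B_{r}\} \in \bbB_{\iota}$. Each factor $G_{\calB, l}^{i_{1} i_{2}}$ in \eqref{kernel} is a sum over $\calE_{l}(\calB)$:
\begin{itemize}
\item if $l \in B_{\nu}$, then $H_{a_{\nu}} = X_{a_{\nu}} X_{a_{\nu}}^{\top} \hat{\Omega} + (-1) I$, labelled by $\varepsilon_{l} = \nu$ and $\varepsilon_{l} = 0$ respectively;
\item if $l \notin \bigcup_{\nu} B_{\nu}$, then $\calR_{i_{1} i_{2}} = X_{i_{1}} X_{i_{1}}^{\top} \hat{\Omega} + X_{i_{2}} X_{i_{2}}^{\top} \hat{\Omega} + (-2) I$, labelled by $\varepsilon_{l} = 1, 2, 0$ respectively.
\end{itemize}
Distributing the ordered product $\prod_{l=1}^{\iota} G_{\calB, l}^{i_{1} i_{2}}$ gives a sum over $\varepsilon \in \calE(\calB)$. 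The scalar attached to $\varepsilon$ is exactly $(-1)^{N_{H}(\varepsilon)} (-2)^{N_{R}(\varepsilon)} = d_{\calB, \varepsilon}$. Deleting the identity factors leaves the ordered product of $X_{u} X_{u}^{\top} \hat{\Omega}$ over the positions with $\varepsilon_{l} \neq 0$, where $u = i_{\varepsilon_{l}}$ for non-block positions and $u = a_{\nu}$ for $l \in B_{\nu}$. This is linear, so by linearity of $\bbU_{n, 2+r}$ we get $\bbU_{n, 2+r}(K_{\calB}) = \sum_{\varepsilon} d_{\calB, \varepsilon} \bbU_{n, 2+r}(K_{\calB, \varepsilon})$.

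\textbf{Summing out absent indices.} Fix $\varepsilon$. Note that $\hat{\Omega}$ is a symmetric function of the full sample, so it is a constant inside the $U$-statistic sum. For $\nu \notin \calA_{\varepsilon}$, the index $a_{\nu}$ does not appear in the kernel $K_{\calB, \varepsilon}$. The sum over ordered distinct tuples $(i_{1}, i_{2}, a_{1}, \cdots, a_{r})$ then collapses. Each tuple of the displayed indices $(i_{1}, i_{2}, a_{\nu_{1}}, \cdots, a_{\nu_{b}})$ is counted $(n - 2 - b)!/(n - 2 - r)!$ times. Combined with the normalisations $(n-2-r)!/n!$ and $(n-2-b)!/n!$, this gives exactly $\bbU_{n, 2+r}(K_{\calB, \varepsilon}) = \bbU_{n, 2+b}(\cdot)$ with no extra constant, which is why no extra factor appears in the statement. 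Relabelling $i_{1} \to 1$, $i_{2} \to 2$, $a_{\nu_{t}} \to 2 + t$ gives the kernel of $T_{a_{\varepsilon}, \ell_{\varepsilon}, \gamma_{\varepsilon}}$, with $\gamma_{\varepsilon}$ as defined before the lemma.

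\textbf{Covering constraint.} The identity $\{1, 2, \gamma(1), \cdots, \gamma(\ell)\} = [a]$ holds because every $\nu_{t} \in \calA_{\varepsilon}$ appears at least once by the definition of $\calA_{\varepsilon}$, and the indices $1, 2$ are always present through the endpoints.

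\textbf{Counting inequality.} Split $\ell_{\varepsilon} = \ell^{\mathrm{out}} + \ell^{\mathrm{in}}$, where $\ell^{\mathrm{out}}$ counts nonzero positions outside $\bigcup_{\nu} B_{\nu}$ and $\ell^{\mathrm{in}}$ counts those inside.
\begin{itemize}
\item Outside: $\ell^{\mathrm{out}} \le \iota - \sum_{\nu} |B_{\nu}|$.
\item Inside: a block $B_{\nu}$ with $\nu \notin \calA_{\varepsilon}$ contributes $0$; a block with $\nu \in \calA_{\varepsilon}$ contributes at most $|B_{\nu}|$. Hence $\ell^{\mathrm{in}} \le \sum_{\nu \in \calA_{\varepsilon}} |B_{\nu}|$.
\end{itemize}
Adding the two,
\[
\ell_{\varepsilon} - b_{\varepsilon} \le \iota - \sum_{\nu \notin \calA_{\varepsilon}} |B_{\nu}| - \sum_{\nu \in \calA_{\varepsilon}} 1 \le \iota - r,
\]
where the last step uses $|B_{\nu}| \ge 2 \ge 1$ for every $\nu$. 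The bound $q - b \le \iota - r$ for $T_{2+b, q, \gamma}$ is the same statement with $q = \ell_{\varepsilon}$ and $b = b_{\varepsilon}$.

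The argument is bookkeeping throughout. The one step requiring care is the normalisation in summing out absent indices, together with the observation that $\hat{\Omega}$ does not obstruct that reduction.
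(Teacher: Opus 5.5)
Your proposal is correct and follows essentially the same route as the paper. You expand each $H$- and $\calR$-factor over $\varepsilon \in \calE(\calB)$ to obtain the scalar $d_{\calB, \varepsilon}$, sum out the interior indices $a_{\nu}$ with $\nu \notin \calA_{\varepsilon}$ using the $(n - 2 - b)!/(n - 2 - r)!$ multiplicity count, and bound $\ell_{\varepsilon} - b_{\varepsilon}$ by splitting positions into covered and uncovered ones. Your in-block bound $\sum_{\nu \in \calA_{\varepsilon}} |B_{\nu}|$ is a repackaging of the paper's $\sum_{\nu \in \calA_{\varepsilon}} \{t_{\nu}(\varepsilon) - 1\} + u_{\varepsilon}$ estimate, and your remark that $\hat{\Omega}$ is the same matrix for every tuple makes explicit a point the paper leaves implicit.
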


By Lemma~\ref{lem:mobius-block-to-chain}, each summand $\bbU_{n, 2 + |\calB|} (K_{\calB})$ is a finite linear combination of $U$-statistics $T_{a, \ell, \gamma}$. It remains to control the variance of $T_{a, \ell, \gamma}$ uniformly in $(a, \ell, \gamma)$.

Throughout the variance analysis, we use
\begin{equation*}
\calI_{n, a} \coloneqq
\{(i_{1}, \cdots, i_{a}) \in [n]^{a}:
i_{s} \ne i_{t} \text{ for } s \ne t\}
\end{equation*}
to denote the set of ordered tuples of pairwise distinct sample indices.
For $\bfi = (i_{1}, \cdots, i_{a}) \in \calI_{n, a}$, write the corresponding kernel as
\begin{equation*}
k_{a, \ell, \gamma} (O_{\bfi})
\coloneqq A_{i_{1}} X_{i_{1}}^{\top} \hat{\Omega} \Big( \prod_{s = 1}^{\ell} X_{i_{\gamma (s)}} X_{i_{\gamma (s)}}^{\top} \hat{\Omega} \Big) X_{i_{2}} Y_{i_{2}}.
\end{equation*}
Here $\gamma : \{1, \cdots, \ell\} \to [a]$, and the condition $\{1, 2, \gamma (1), \cdots, \gamma (\ell)\} = [a]$ means that every entry of $\bfi = (i_{1}, \cdots, i_{a})$ appears in the kernel (when spelling out the $U$-statistic operator), either as one of the endpoint indices $i_{1}, i_{2}$ or through some $i_{\gamma (s)}$.

For set operations, we write $\operatorname{ind} (\bfi) \coloneqq \{i_{1}, \cdots, i_{a}\}$ for the unordered set of sample indices appearing in the tuple $\bfi$. To control the variance of $T_{a, \ell, \gamma}$, we analyze the covariance between the kernels of $T_{a, \ell, \gamma}$ indexed by the ordered tuples $\bfi = (i_{1}, \cdots, i_{a})$ and $\bfi' = (i'_{1}, \cdots, i'_{a})$. We group the covariances by the number of the shared sample indices:
\begin{equation*}
\alpha (\bfi, \bfi') \coloneqq |\operatorname{ind} (\bfi) \cap \operatorname{ind} (\bfi')|.
\end{equation*}

Thus, $\var (T_{a, \ell, \gamma})$ decomposes into a summation of covariances indexed by $\alpha = 0, 1, \cdots, a$. More precisely, Lemma~\ref{lem:chain-covariance-decomposition} in Appendix~\ref{app:variance} bounds $\var (T_{a, \ell, \gamma})$ as follows:
\begin{equation}
\label{var bound decomposition}
\var (T_{a, \ell, \gamma}) \le \sum_{\alpha = 1}^{a} V_{\alpha} + V_{0}^{\mathrm{cross}} + V_{0}^{\mathrm{loc}}.
\end{equation}
The term $V_{\alpha}$ collects all covariances between kernels that share exactly $\alpha$ indices with $\alpha \geq 1$. The terms $V_{0}^{\mathrm{cross}}$ and $V_{0}^{\mathrm{loc}}$ collect the cases with $\alpha = 0$, and the two different terms arise from leave-*-out expansion of $V_{0}$, which we describe next.


We next bound these terms by the graph-counting Lemma~\ref{lem:graph-counting}. To this end, we first record the following result, which is proved in Appendix~\ref{app:mobius-block-to-chain}.

\begin{lemma}
\label{lem:leaveout-graph-encoding}
Given any $S \subseteq [n]$, define
\begin{equation*}
\hat{\Sigma}_{-S}
\coloneqq \hat{\Sigma} - \frac{1}{n} \sum_{r \in S} X_{r} X_{r}^{\top}, \ B_{S} \coloneqq \hat{\Omega}_{-S} \coloneqq \hat{\Sigma}_{-S}^{-1}.
\end{equation*}
Then
\begin{equation*}
\hat{\Omega} = B_{S} - M_{S},\ \text{where} \ M_{S} \coloneqq \sum_{q = 1}^{\infty} \frac{(-1)^{q - 1}}{n^{q}} \sum_{r_{1}, \cdots, r_{q}\in S} B_{S} X_{r_{1}} X_{r_{1}}^{\top} B_{S} X_{r_{2}} X_{r_{2}}^{\top} B_{S} \cdots X_{r_{q}} X_{r_{q}}^{\top} B_{S}.
\end{equation*}
\end{lemma}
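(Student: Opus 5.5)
The plan is to treat Lemma~\ref{lem:leaveout-graph-encoding} as a rank-$|S|$ perturbation identity for the inverse, in the spirit of Woodbury/Neumann. By definition, $\hat{\Sigma} = \hat{\Sigma}_{-S} + \frac{1}{n} \sum_{r \in S} X_{r} X_{r}^{\top}$. Write $P_{S} \coloneqq \frac{1}{n} \sum_{r \in S} X_{r} X_{r}^{\top}$, so that
\begin{equation*}
\hat{\Sigma} = \hat{\Sigma}_{-S} (I + B_{S} P_{S}), \qquad \hat{\Omega} = (I + B_{S} P_{S})^{-1} B_{S}.
\end{equation*}
This presumes that $\hat{\Sigma}_{-S}$ is invertible. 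That holds almost surely under the standing assumptions whenever $n - |S| \geq k$, which is the only regime in which the lemma is used, since $|S|$ is bounded by a fixed multiple of $a$.

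The next step expands $(I + B_{S} P_{S})^{-1} = \sum_{q \geq 0} (-1)^{q} (B_{S} P_{S})^{q}$. This gives
\begin{equation*}
\hat{\Omega} = B_{S} + \sum_{q \geq 1} (-1)^{q} (B_{S} P_{S})^{q} B_{S}.
\end{equation*}
Expanding each $P_{S}$ as $\frac{1}{n} \sum_{r \in S} X_{r} X_{r}^{\top}$ and multiplying out the $q$-fold product, the $q$-th term becomes $\frac{(-1)^{q}}{n^{q}} \sum_{r_{1}, \cdots, r_{q} \in S} B_{S} X_{r_{1}} X_{r_{1}}^{\top} B_{S} \cdots X_{r_{q}} X_{r_{q}}^{\top} B_{S}$. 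Pulling out an overall minus sign then matches $-M_{S}$ exactly, with coefficient $(-1)^{q-1}/n^{q}$.

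\textbf{Main obstacle: convergence of the series.} The identity holds as a formal Neumann expansion, but the series converges only when the spectral radius of $B_{S} P_{S}$ is below one. The cleanest way to see this is to symmetrize: $B_{S} P_{S}$ is similar to $B_{S}^{1/2} P_{S} B_{S}^{1/2}$, which is positive semidefinite with operator norm at most $\|B_{S}\|_{\op} \|P_{S}\|_{\op}$.

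\begin{itemize}
\item \emph{Bound on $P_{S}$.} By Assumption~\ref{as:cov}, $\|X_{r}\|^{2} \lesssim k$ almost surely, so $\|P_{S}\|_{\op} \leq |S| \, C k / n$.
\item \emph{Bound on $B_{S}$.} On the high-probability event $\|\hat{\Sigma} - I\|_{\op} \leq r_{n}$ already used in Remark~\ref{rem:J}, we get $\lambda_{\min}(\hat{\Sigma}_{-S}) \geq 1 - r_{n} - |S| C k / n$. Hence $\|B_{S}\|_{\op} \leq 2$ for large $n$, because $k = o(n)$ and $|S|$ is bounded.
\end{itemize}

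Together these give $\|B_{S} P_{S}\|$ of order $|S| k / n = o(1)$, so the series converges absolutely in operator norm on that event.

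Two routes remain for the complementary (or general) case, and I would use either. The first is to read the identity as holding on the good event, which is all that the variance analysis needs, with the bad event handled by the moment truncation already used elsewhere. The second is to observe that because $B_{S}^{1/2} P_{S} B_{S}^{1/2} \succeq 0$, the factor $(I + B_{S} P_{S})^{-1}$ always exists. The series representation is then valid whenever the spectral radius is below one, and as a finite-order expansion plus remainder otherwise.
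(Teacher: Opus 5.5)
Your proof is correct and is essentially the paper's argument. The paper writes $\hat{\Sigma} = B_{S}^{-1} + U_{S}$ and uses the resolvent identity $\hat{\Omega} = B_{S} - B_{S} U_{S} (I + B_{S} U_{S})^{-1} B_{S}$, then Neumann-expands $(I + B_{S} U_{S})^{-1}$; this is the same computation as your factorization $\hat{\Omega} = (I + B_{S} P_{S})^{-1} B_{S}$.

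Your convergence argument is a genuine addition, since the paper never justifies the infinite series. It bounds the spectral radius of $B_{S} P_{S}$ through its similarity to $B_{S}^{1/2} P_{S} B_{S}^{1/2}$ on the good event. One correction there: $|S| \le 2a$ can grow like $\log n$, so $\|B_{S} P_{S}\|_{\op} < 1$ comes from the standing condition $C j k / n \le \eta$, not from $|S|$ being bounded.
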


Fix a covariance pair indexed by ordered tuples $\bfi$ and $\bfi'$, and set $S \coloneqq \operatorname{ind} (\bfi) \cup \operatorname{ind} (\bfi')$. By Lemma~\ref{lem:leaveout-graph-encoding}, expanding each occurrence of $\hat{\Omega}$ around the leave-*-out inverse $B_{S} = \hat{\Omega}_{-S}$ rewrites every expanded covariance term as
\begin{equation*}
n^{-q} \prod_{e = (u, v)\in E(G)} X_{u}^{\top} B_{e,S} X_{v},
\end{equation*}
up to endpoint factors ($A$ and $Y$), where $q$ is the number of ``inserted'' $X X^{\top}$. Each insertion contributes a factor $n^{-1}$ and adds an edge to the associated undirected graph $G$. We now apply Lemma~\ref{lem:graph-counting}, together with Lemma~\ref{lem:leaveout-graph-encoding}, to the three types of covariances in \eqref{var bound decomposition}. Figure~\ref{fig:variance-graphs} provides a graphical illustration of the three types of terms in \eqref{var bound decomposition}. The bounds for these three types of terms are proved in Lemma~\ref{lemma:generic-chain-variance-explicit} in Appendix~\ref{app:variance}, but we provide some heuristic explanations below.

\begin{figure}
\centering
\tikzset{
    vtx/.style={
        circle,
        draw=blue,
        line width=0.65pt,
        minimum size=6.8mm,
        inner sep=0pt,
        font=\small
    },
    sharedvtx/.style={
        vtx,
        line width=0.85pt,
        fill=gray!10
    },
    gedge/.style={
        draw=black!75,
        line width=0.65pt
    },
    dashededge/.style={
        draw=black,
        dashed,
        line width=0.75pt,
        dash pattern=on 3pt off 2pt
    },
    elab/.style={
        font=\scriptsize,
        fill=white,
        inner sep=1pt
    }
}

\captionsetup[subfigure]{justification=centering}

\begin{subfigure}[t]{0.31\textwidth}
\centering
\makebox[\linewidth][c]{%
\resizebox{0.78\linewidth}{!}{
\begin{tikzpicture}
    \path[use as bounding box] (-0.45,-1.55) rectangle (2.75,1.55);

    \node[vtx]       (a1) at (0.0,0.0) {$1$};
    \node[sharedvtx] (a3) at (1.2,0.0) {$3$};
    \node[vtx]       (a2) at (1.2,-1.2) {$2$};
    \node[vtx]       (a4) at (1.2,1.2) {$4$};
    \node[vtx]       (a5) at (2.4,1.2) {$5$};

    \draw[gedge] (a1) -- node[above, elab] {$a$} (a3);
    \draw[gedge] (a3) -- node[right, elab] {$b$} (a2);
    \draw[gedge] (a3) -- node[left, elab] {$c$} (a4);
    \draw[gedge] (a4) -- node[above, elab] {$d$} (a5);
\end{tikzpicture}
}}
\caption{$\alpha \geq 1$}
\vspace{0.25em}
\resizebox{\linewidth}{!}{%
\begin{tikzpicture}
\node[align=left, font=\scriptsize, inner sep=1pt] {$
\begin{aligned}
k_{a, \ell, \gamma} (O_{\bfi}) & =
A_{1} X_{1}^{\top} \hat{\Omega} (X_{3}
X_{3}^{\top} \hat{\Omega})X_{2} Y_{2}, \\
k_{a,\ell,\gamma'} (O_{\bfi'}) & =
A_{3} X_{3}^{\top} \hat{\Omega} (X_{4}
X_{4}^{\top} \hat{\Omega}) X_{5} Y_{5} .
\end{aligned}
$};
\end{tikzpicture}
}
\label{fig:vargraph-a}
\end{subfigure}
\hfill
\begin{subfigure}[t]{0.31\textwidth}
\centering
\makebox[\linewidth][c]{%
\resizebox{0.78\linewidth}{!}{
\begin{tikzpicture}
    \path[use as bounding box] (-0.45,-1.55) rectangle (2.75,1.55);

    \node[vtx] (b1) at (0.0,1.0) {$1$};
    \node[vtx] (b2) at (1.2,1.0) {$2$};
    \node[vtx] (b3) at (2.4,1.0) {$3$};

    \node[vtx] (b4) at (0.0,-1.0) {$4$};
    \node[vtx] (b5) at (1.2,-1.0) {$5$};
    \node[vtx] (b6) at (2.4,-1.0) {$6$};

    \draw[gedge] (b1) -- node[above, elab] {$a$} (b2);
    \draw[gedge] (b2) -- node[above, elab] {$b$} (b3);
    \draw[gedge] (b4) -- node[below, elab] {$c$} (b5);
    \draw[gedge] (b5) -- node[below, elab] {$d$} (b6);

    \draw[dashededge] (b2) -- node[right, elab] {$e$} (b5);
\end{tikzpicture}
}}
\caption{$V_{0}^{\mathrm{cross}}$}
\vspace{0.25em}
\resizebox{\linewidth}{!}{%
\begin{tikzpicture}
\node[align=left, font=\scriptsize, inner sep=1pt] {$
\begin{aligned}
k_{a, \ell, \gamma} (O_{\bfi}) & =
A_{1} X_{1}^{\top} \hat{\Omega} (X_{2}
X_{2}^{\top} \hat{\Omega}) X_{3} Y_{3}, \\
k_{a,\ell,\gamma'} (O_{\bfi'}) & =
A_{4} X_{4}^{\top} \hat{\Omega} (X_{5}
X_{5}^{\top} \hat{\Omega}) X_{6} Y_{6} .
\end{aligned}
$};
\end{tikzpicture}
}
\label{fig:vargraph-b}
\end{subfigure}
\hfill
\begin{subfigure}[t]{0.31\textwidth}
\centering
\makebox[\linewidth][c]{%
\resizebox{0.78\linewidth}{!}{
\begin{tikzpicture}
    \path[use as bounding box] (-0.45,-1.55) rectangle (2.75,1.55);

    \node[vtx] (c1) at (0.0,1.0) {$1$};
    \node[vtx] (c2) at (1.2,1.0) {$2$};
    \node[vtx] (c3) at (2.4,1.0) {$3$};

    \node[vtx] (c4) at (0.0,-1.0) {$4$};
    \node[vtx] (c5) at (1.2,-1.0) {$5$};
    \node[vtx] (c6) at (2.4,-1.0) {$6$};

    \node[vtx] (cr) at (1.2,0.0) {$r$};

    \draw[gedge] (c1) -- node[above, elab] {$a$} (c2);
    \draw[gedge] (c2) -- node[above, elab] {$b$} (c3);

    \draw[gedge] (c4) -- node[below, elab] {$c$} (c5);
    \draw[gedge] (c5) -- node[below, elab] {$d$} (c6);

    \draw[dashededge] (c2) -- (cr);
    \draw[dashededge] (cr) -- (c5);
\end{tikzpicture}
}}
\caption{$V_{0}^{\mathrm{loc}}$}
\vspace{0.25em}
\resizebox{\linewidth}{!}{%
\begin{tikzpicture}
\node[align=left, font=\scriptsize, inner sep=1pt] {$
\begin{aligned}
k_{a, \ell, \gamma} (O_{\bfi}) & =
A_{1} X_{1}^{\top} \hat{\Omega} (X_{2}
X_{2}^{\top} \hat{\Omega}) X_{3} Y_{3}, \\
k_{a,\ell,\gamma'} (O_{\bfi'}) & =
A_{4} X_{4}^{\top} \hat{\Omega} (X_{5}
X_{5}^{\top} \hat{\Omega}) X_{6} Y_{6} .
\end{aligned}
$};
\end{tikzpicture}
}
\label{fig:vargraph-c}
\end{subfigure}
\caption{Three graph structures in the covariance decomposition. In each panel, the digits on vertices denote the sample indices appearing in the two kernels $k_{a, \ell, \gamma} (O_{\bfi})$ and $k_{a, \ell, \gamma} (O_{\bfi'})$, and solid edges denote bilinear forms already present before leave-*-out expansion in these kernels. Panels (a)--(c) correspond respectively to $V_{\alpha}$ for $\alpha \geq 1$, $V_{0}^{\mathrm{cross}}$, and $V_{0}^{\mathrm{loc}}$. Dashed edges denote the additional edge introduced either by the leave-*-out expansion (for (b)) or by the introduction of an independent copy when applying the Efron--Stein inequality (for (c)). Below each panel, we exhibit the kernel-pair formulae corresponding to the undirected graphs.}
\label{fig:variance-graphs}
\end{figure}

\begin{enumerate}[label = (\roman*)]
\item \textit{$V_{\alpha}$ for $\alpha \ge 1$:} After replacing $\hat{\Omega}$ by its leave-*-out expansion as in Lemma~\ref{lem:leaveout-graph-encoding}, the shared indices ensure that the associated undirected graph is connected, as illustrated in Figure~\ref{fig:vargraph-a}. The pure leave-*-out term, in which every inverse is replaced by $B_{S}$, gives a connected graph. For this leading graph,
\begin{equation*}
e = 2 (\ell + 1), \ v = 2 a - \alpha,\ \kappa = 1.
\end{equation*}
Hence, Lemma~\ref{lem:graph-counting} gives the factor
\begin{equation*}
k^{\frakr} = k^{2 (\ell + 1) - (2 a - \alpha) + 1}.
\end{equation*}
The remaining terms in the leave-*-out expansion insert additional $X X^{\top}$'s. Each such insertion adds an edge to the graph and contributes one factor $1 / n$ from the expansion; and hence, it leads to an additional factor of order $k / n$ after graph counting. Summing all insertion patterns only changes the bound by the factor $\Gamma^{\mathrm{ov}}_{a, \ell, n}$ of $O (1)$ depending on $a, \ell, n$ (see Lemma~\ref{lemma:generic-chain-variance-explicit} in Appendix~\ref{app:mobius-block-to-chain} for its explicit form). Therefore,
\begin{equation*}
\sum_{\alpha = 1}^{a} V_{\alpha} \lesssim
\frac{k^{2 \ell- 2 a + 4}}{n} \Gamma^{\mathrm{ov}}_{a, \ell, n}.
\end{equation*}

\item  \textit{$V_{0}^{\mathrm{cross}}$:} When $\alpha = 0$, as in the leave-*-out expansion described in Lemma~\ref{lem:leaveout-graph-encoding}, some terms contain explicit $X X^{\top}$-insertions that connect the two undirected graphs associated with the two kernels in the covariance, as illustrated in Figure~\ref{fig:vargraph-b}. The graph simply adds a new edge between existing vertices, and Lemma~\ref{lem:graph-counting} applies in the same way as in the case with $\alpha \geq 1$ just discussed. Summing over all such insertion patterns gives
\begin{equation*}
V_{0}^{\mathrm{cross}} \lesssim \frac{k^{2 \ell - 2 a + 4}}{n} \Gamma^{\mathrm{cross}}_{a, \ell, n}.
\end{equation*}
Here $\Gamma^{\mathrm{cross}}_{a, \ell, n}$ collects the connected insertion patterns and the geometric summation over their insertion orders; its explicit form is given in Lemma~\ref{lemma:generic-chain-variance-explicit} in Appendix~\ref{app:variance}.

\item \textit{$V_{0}^{\mathrm{loc}}$:} $V_{0}^{\mathrm{loc}}$ collects the remaining terms in the case $\alpha = 0$ with the two graphs corresponding to the kernel pair not connected even after leave-*-out expansion. Conditional on $B_{S}$, the kernels are independent, so their covariance is reduced to the covariance of their conditional means. This term is controlled by the Efron--Stein inequality (Lemma~\ref{lem:Efron-Stein} in Appendix~\ref{app:concentration}). When applying the Efron--Stein inequality, observations not in $S$ will be replaced by an independent copy, introducing a shared vertex that connects the originally disconnected graphs corresponding to the two kernels. We then apply the graph-counting Lemma~\ref{lem:graph-counting} to the newly connected graph (see Figure~\ref{fig:vargraph-c} for an illustration). This yields
\begin{equation*}
V_{0}^{\mathrm{loc}} \lesssim
\frac{k^{2 \ell - 2 a + 4}}{n} \Gamma^{\mathrm{loc}}_{a, \ell, n},
\end{equation*}
where $\Gamma^{\mathrm{loc}}_{a, \ell, n}$ depends on $a, \ell$ and
$\rho = k / n$; its explicit form is given in Lemma~\ref{lemma:generic-chain-variance-explicit}.
\end{enumerate}

Combining the three contributions gives the generic multiplicative-kernel variance bound
\begin{equation*}
\var (T_{a, \ell, \gamma}) \lesssim \frac{k^{2 \ell - 2 a + 4}}{n} \Gamma_{a, \ell, n}, \quad \Gamma_{a, \ell, n} = \Gamma^{\mathrm{ov}}_{a, \ell, n} + \Gamma^{\mathrm{cross}}_{a, \ell, n} + \Gamma^{\mathrm{loc}}_{a, \ell, n}.
\end{equation*}
The exact form of $\Gamma_{a, \ell, n}$ is given in Lemma~\ref{lemma:generic-chain-variance-explicit} in Appendix~\ref{app:variance}.

\paragraph{Step iii.}
We now combine the \Mobius{}-inversion expansion in Lemma~\ref{lem:mobius} with the generic multiplicative-kernel bound obtained in \textbf{Step~ii}. Let $r_{\iota}^{*} \coloneqq \lfloor \frac{\iota}{2} \rfloor$. 

For $0 \le r \le r_{\iota}^{*}$, let $ \bbB_{\iota,r} \coloneqq \{\calB \in \bbB_{\iota}: |\calB| = r\}$. Equivalently, $r$ is the number of sets in the collection $\calB$. The M\"obius-inversion expansion of $\hat{\IIFF}_{j, j, k} (\hat{\Omega})$ can then be rewritten as
\begin{equation*}
\hat{\IIFF}_{j, j, k} (\hat{\Omega})
= \sum_{r = 0}^{r_{\iota}^{*}}
Z_{\iota, r}, \quad Z_{\iota, r} \coloneqq \sum_{\calB \in \bbB_{\iota, r}} c_{\calB, n} \bbU_{n, 2 + r} (K_{\calB})
\end{equation*}
The maximal possible value of $r$ is $r_{\iota}^{*}$ because every set $B_{\nu}$ in $\calB$ has cardinality of at least two. The next lemma first controls the contribution from a given $r$.

\begin{lemma}
\label{lem:mobius-r-variance}
Let $j \ge 3$, and for $0 \le r \le r_{\iota}^{*}$, let $\Gamma_{\iota, r, n} \coloneqq \max_{\substack{0 \le b \le r\\ 0 \le q \le \iota}} \Gamma_{2 + b, q, n}$. When $n \ge 2 (\iota + 2)$,
\begin{equation*}
\var (Z_{\iota, r}) \lesssim 2^{2 (\iota - r)} w_{\iota, r}^{2} \Gamma_{\iota, r, n} \frac{k^{2 (\iota - r)}}{n^{2 (\iota - r) + 1}}.  
\end{equation*}
$w_{\iota, r}$ is defined as follows. For $\calB \in \bbB_{\iota, r}$, define $D (\calB) \coloneqq \sum_{\nu = 1}^{r} |B_{\nu}|$, with the convention $D (\emptyset) = 0$. Then:
\begin{equation*}
w_{\iota, r} \coloneqq \sum_{\calB \in \bbB_{\iota, r}} \Big\{ \prod_{B \in \calB}(|B| - 1) \Big\} 2^{D (\calB)} 4^{\iota - D (\calB)}.
\end{equation*}
\end{lemma}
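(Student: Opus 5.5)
We will bound $\var (Z_{\iota, r})$ by expanding every $U$-statistic in $Z_{\iota,r}$ into the multiplicative-kernel statistics $T_{a,\ell,\gamma}$ of Lemma~\ref{lem:mobius-block-to-chain}. Then we apply Minkowski's inequality (in the form of Lemma~\ref{lem:Minkowski}, i.e.\ $\var^{1/2}$ is subadditive) and plug in the generic bound $\var (T_{a,\ell,\gamma}) \lesssim k^{2\ell - 2a + 4} n^{-1} \Gamma_{a,\ell,n}$ from \textbf{Step~ii}. Concretely, we write
\begin{equation*}
Z_{\iota, r} = \sum_{\calB \in \bbB_{\iota,r}} c_{\calB,n} \sum_{\varepsilon \in \calE(\calB)} d_{\calB,\varepsilon}\, T_{a_\varepsilon, \ell_\varepsilon, \gamma_\varepsilon},
\end{equation*}
so that
\begin{equation*}
\var^{1/2}(Z_{\iota,r}) \le \sum_{\calB} |c_{\calB,n}| \sum_{\varepsilon} |d_{\calB,\varepsilon}|\, \var^{1/2}(T_{a_\varepsilon,\ell_\varepsilon,\gamma_\varepsilon}).
\end{equation*}
Since $a_\varepsilon = 2 + b_\varepsilon$ with $b_\varepsilon \le r$ and $\ell_\varepsilon \le \iota$, every $\Gamma_{a_\varepsilon,\ell_\varepsilon,n}$ is bounded by $\Gamma_{\iota,r,n}$. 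This is why that maximum appears in the statement.

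Next we control the $k$- and $n$-powers uniformly in $\varepsilon$. For $a = 2+b$ and $\ell = q$, the per-term standard deviation is $k^{q-b} n^{-1/2}\Gamma^{1/2}$. By the last assertion of Lemma~\ref{lem:mobius-block-to-chain}, $q - b \le \iota - r$, and hence (for $k \ge 1$) $k^{q-b} \le k^{\iota - r}$. For the Möbius coefficient, $(n-j)!/(n-2-r)! = \prod_{t=r+3}^{j} (n-t+1)^{-1}$ has $j-2-r = \iota - r$ factors, each at least $n/2$ when $n \ge 2(\iota+2) = 2j$. Therefore
\begin{equation*}
|c_{\calB,n}| \le \Big\{\prod_{B\in\calB}(|B|-1)\Big\} (2/n)^{\iota-r}.
\end{equation*}
The factor $2^{\iota - r}$ here, squared, produces the $2^{2(\iota-r)}$ in the statement, and combining the two displays gives the advertised $k^{2(\iota-r)}/n^{2(\iota-r)+1}$ after squaring.

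It remains to count the combinatorial weight $\sum_{\varepsilon\in\calE(\calB)} |d_{\calB,\varepsilon}|$. This factorizes over positions $l\in[\iota]$. A position inside some $B_\nu$ contributes $|{-1}| + 1 = 2$ (the choices $\varepsilon_l \in \{0,\nu\}$). A position outside $\bigcup_\nu B_\nu$ contributes $|{-2}| + 1 + 1 = 4$ (the choices $\varepsilon_l \in \{0,1,2\}$). Hence the sum equals $2^{D(\calB)} 4^{\iota - D(\calB)}$. Summing over $\calB \in \bbB_{\iota,r}$ with the factor $\prod_{B}(|B|-1)$ gives exactly $w_{\iota,r}$. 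Squaring the resulting bound on $\var^{1/2}(Z_{\iota,r})$ yields the claim.

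The main obstacle is ensuring the uniform replacement of $\var(T_{a_\varepsilon,\ell_\varepsilon,\gamma_\varepsilon})$ by a single bound. One must check that the implicit constant in the Step~ii bound does not depend on $\gamma$ beyond $(a,\ell)$, and that the index bookkeeping in Lemma~\ref{lem:mobius-block-to-chain} is correct. In particular, summing out the unused indices $a_\nu$ for $\nu\notin\calA_\varepsilon$ leaves exactly a normalized $U$-statistic $T_{a_\varepsilon,\ell_\varepsilon,\gamma_\varepsilon}$, with no extra $n$-dependent multiplicity. If that summation instead produced a factor such as $(n-a_\varepsilon)\cdots$, it would have to be absorbed into the Möbius coefficient. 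So we would first verify that normalization carefully before running the Minkowski argument above.
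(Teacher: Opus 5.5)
Your proposal is correct and follows essentially the same route as the paper: you expand $Z_{\iota,r}$ into the statistics $T_{2+b_\varepsilon,q_\varepsilon,\gamma_\varepsilon}$ via Lemma~\ref{lem:mobius-block-to-chain}, combine $q_\varepsilon-b_\varepsilon\le\iota-r$ with the generic bound of Lemma~\ref{lemma:generic-chain-variance-explicit}, and apply Minkowski together with the coefficient bound $2^{\iota-r}n^{-(\iota-r)}w_{\iota,r}$, which the paper isolates as Lemma~\ref{lem:weighted-block-counting} and you derive inline. The normalization concern in your last paragraph is already settled in the proof of Lemma~\ref{lem:mobius-block-to-chain}: there, summing out the unused $a_\nu$ gives exactly the normalized $T_{a_\varepsilon,\ell_\varepsilon,\gamma_\varepsilon}$ with no extra $n$-dependent factor.
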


In Lemma~\ref{lem:mobius-r-variance}, $w_{\iota, r}$ bounds the absolute sum of the coefficients in the expansion of all level-$r$ terms $c_{\calB ,n} \bbU_{n, 2 + r} (K_{\calB})$, $\calB \in \bbB_{\iota, r}$, into multiplicative-kernel $U$-statistics, up to the common factor $(n - 2 - \iota) ! / (n - 2 - r) !$; see Lemma~\ref{lem:weighted-block-counting} in Appendix~\ref{app:var iii}. The M\"obius-inversion expansion of $\hat{\IIFF}_{j, j, k} (\hat{\Omega})$ can also be represented by $Z_{\iota, r}$'s: $\hat{\IIFF}_{j, j, k} (\hat{\Omega}) = \sum_{r = 0}^{r_{\iota}^{*}} Z_{\iota, r}$. The next lemma gives the variance bound of $\hat{\IIFF}_{j, j, k} (\hat{\Omega})$ after summing over $\var (Z_{\iota, r})$ for $r = 0, \cdots, r_{\iota}^{*}$.

\begin{lemma}
\label{lem:fixed-j-ifhat-variance}
Let $j \ge 3$. Suppose that $n \ge 2 j$ and $C j k/n \le \eta < 1$. Then
\begin{equation*}
\var \{\hat{\IIFF}_{j, j, k} (\hat{\Omega})\}
\lesssim \frac{j^{2}}{n} \frac{
\exp (C_{\eta} j^{2} k/n + C j^{2}/n)}
{(1 - C j k/n)^2} \Big(C j \frac{k}{n}\Big)^{2 \lfloor (j - 1) / 2\rfloor}.
\end{equation*}
\end{lemma}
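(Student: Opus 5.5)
The plan is to start from the decomposition $\hat{\IIFF}_{j, j, k} (\hat{\Omega}) = \sum_{r = 0}^{r_{\iota}^{*}} Z_{\iota, r}$ with $\iota = j - 2$, and apply Minkowski's inequality in the form $\var^{1/2}\{\hat{\IIFF}_{j, j, k} (\hat{\Omega})\} \le \sum_{r} \var^{1/2}(Z_{\iota, r})$. Each summand is then controlled by Lemma~\ref{lem:mobius-r-variance}:
\begin{equation*}
\var^{1/2}(Z_{\iota, r}) \lesssim 2^{\iota - r} w_{\iota, r} \Gamma_{\iota, r, n}^{1/2} \frac{k^{\iota - r}}{n^{\iota - r + 1/2}}.
\end{equation*}
Since $n \ge 2j = 2(\iota + 2)$, the hypothesis of that lemma holds. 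This leaves two ingredients: an explicit bound on $w_{\iota, r}$, and a uniform bound on $\Gamma_{\iota, r, n}$.

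\textbf{Bounding the weights.} The weight $w_{\iota, r}$ sums $\prod_{B}(|B| - 1)\, 2^{D} 4^{\iota - D}$ over families of $r$ disjoint blocks of size at least two. I would bound it by choosing the support of size $D$, then partitioning it into $r$ blocks of size at least two, with each block weighted by $(|B| - 1)$. An exponential generating function is convenient here: the block weight has EGF $\sum_{b \ge 2} (b - 1) 2^{b} x^{b} / b! = (2x - 1) e^{2x} + 1$, and unused points contribute $e^{4x}$. Crude counting should then give
\begin{equation*}
w_{\iota, r} \le C^{\iota} \frac{\iota^{2r}}{r!} \quad\text{or, more simply,}\quad w_{\iota, r} \lesssim (C\iota)^{\iota - r}\, C^{\iota}.
\end{equation*}
The point is that each block "consumes" at least two positions while costing only one factor of $\iota$ per lost order.

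\textbf{Bounding $\Gamma$.} From the explicit form in Lemma~\ref{lemma:generic-chain-variance-explicit}, $\Gamma_{2 + b, q, n}$ should be at most
\begin{equation*}
\frac{\exp(C_{\eta} q^{2} k / n + C q^{2} / n)}{(1 - C q k / n)^{2}}
\end{equation*}
times polynomial factors in $q$. These arise from geometric sums over insertion patterns, with $q \le \iota \le j$. This supplies the exponential and denominator factors appearing in the statement.

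\textbf{Combining.} Putting these together, the $r$-th term behaves like
\begin{equation*}
n^{-1/2} (C j k / n)^{\iota - r} \times (\text{stuff})^{1/2}.
\end{equation*}
Because $\iota - r \ge \iota - \lfloor \iota/2 \rfloor = \lceil \iota/2 \rceil = \lfloor (j - 1)/2 \rfloor$, the largest term is the one with $r = r_{\iota}^{*}$. The remaining terms form a geometric series in $C j k / n \le \eta < 1$, so summing over $r$ costs only a constant factor. Squaring then yields $\frac{1}{n}(C j k / n)^{2\lfloor (j - 1)/2 \rfloor}$. The prefactor $j^{2}$ absorbs the polynomial-in-$j$ factors from $\Gamma$ and from the number of $r$ values, at most $j$ of them, which enters squared.

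\textbf{Main obstacle.} The hardest step is making the weight bound sharp enough that the combinatorial growth is at most $(Cj)^{\iota - r}$ rather than something like $\iota!$. If $w_{\iota, r}$ instead grew like $(Cj)^{\iota}$, the final exponent of $j$ would be wrong. I would first try to prove $w_{\iota, r} \le C^{\iota} \binom{\iota}{2r} (2r)!/(2^{r} r!)$-type bounds, combined with $n^{-(\iota - r)}$ factors coming from $c_{\calB, n}$, and then check that $\iota^{2r}/r! \cdot n^{-?}$ is balanced. If this turns out to be too lossy, the fallback is to let the extra $\iota^{r}$ factors be absorbed into $\exp(C j^{2}/n)$, via $\sum_{r} (j^{2}/n)^{r}/r!$. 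This absorption is presumably exactly where the $C j^{2}/n$ term in the exponent originates.
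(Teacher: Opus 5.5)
Your outline is the paper's proof: Minkowski over the levels $Z_{\iota,r}$, then Lemma~\ref{lem:mobius-r-variance} on each level. Next comes a uniform bound $\Gamma_{\iota,r,n}\lesssim j^{2}\exp(C_{\eta}j^{2}k/n+Cj^{2}/n)$, which is the paper's Lemma~\ref{lem:uniform-chain-gamma}. It follows elementarily from the closed forms, using $(1-y)^{-A}\le e^{C_{\eta}Ay}$ on $[0,\eta]$ and the mean value theorem, uniformly in $a,q\le j$. Last is a sum over $r$ dominated by $r=r_{\iota}^{*}$ with a geometric tail (Lemma~\ref{lem:block-level-weighted-tail}).

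The step that still needs care is the weight bound, and you must commit to the right candidate. Your first candidate, $w_{\iota,r}\le C^{\iota}\iota^{2r}/r!$, is false for $r\asymp\iota$. So is the matching-type count $C^{\iota}\binom{\iota}{2r}(2r)!/(2^{r}r!)$ you say you would try first. The reason is that blocks of $\calB$ may have any size $\ge 2$:
\begin{itemize}
\item every summand of $w_{\iota,r}$ is at least $2^{\iota}$;
\item the partitions of $[\iota]$ into $r=\iota/4$ blocks of size four already number $\iota!/(24^{r}r!)$;
\item hence $w_{\iota,r}$ is at least $\iota^{3\iota/4}$ up to an exponential factor, while both candidates are $\iota^{\iota/4}$ up to an exponential factor.
\end{itemize}

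Your second candidate is correct and easy to prove:
\begin{itemize}
\item $(|B|-1)2^{|B|}4^{-|B|}\le 1$ gives $w_{\iota,r}\le 4^{\iota}|\bbB_{\iota,r}|$;
\item dropping the constraint $|B|\ge 2$ and using the multinomial theorem gives $|\bbB_{\iota,r}|\le(r+1)^{\iota}/r!$;
\item $r!\ge(r/e)^{r}$ together with $r+1\le\iota$ yields $w_{\iota,r}\le e(4e)^{\iota}\iota^{\iota-r}$.
\end{itemize}
Since $\iota\le 2(\iota-r)$, this gives $2^{\iota-r}w_{\iota,r}(k/n)^{\iota-r}\le e\,(32e^{2}\iota k/n)^{\iota-r}$, and your geometric summation goes through. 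The paper instead keeps $4^{\iota}(r+1)^{\iota}/r!$, shows that consecutive level terms have ratio at most $\iota k/n$, and applies Stirling only at $r=r_{\iota}^{*}$. The two bookkeepings are equivalent.

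Drop your fallback, because the mechanism it relies on does not exist. The only powers of $n$ in the level-$r$ coefficient are the $n^{-(\iota-r)}$ from $c_{\calB,n}$. Those are already spent on $(k/n)^{\iota-r}$, so surplus factors $\iota^{r}$ cannot be traded for $j^{2}/n$.

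The factor $\exp(Cj^{2}/n)$ actually comes from $\Gamma$. Specifically, it comes from $D_{\alpha,a,n}\le\prod_{s<a}(1-s/n)^{-1}\le e^{Ca^{2}/n}$ in the overlap counting. Similarly, the prefactor $j^{2}$ is the polynomial factor inside $\Gamma$, for example $\sum_{\alpha}\binom{a}{\alpha}^{2}\alpha!\rho^{\alpha-1}\le a^{2}e^{a^{2}\rho}$. It is not a count of $r$-values, which plays no role once you sum geometrically. Finally, $(1-Cjk/n)^{-2}$ is the square of that geometric sum, not part of $\Gamma$; this is harmless either way, since $Cjk/n\le\eta$.
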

The proofs of Lemmas~\ref{lem:mobius-r-variance} and \ref{lem:fixed-j-ifhat-variance} are deferred to Appendix~\ref{app:var iii}.
The term $j = 2$ is controlled by
Lemma~\ref{lem:second-order-correction-variance} in Appendix~\ref{app:var iii}, which gives
\begin{equation*}
\var^{1 / 2}\{\hat{\IIFF}_{2, 2, k} (\hat{\Omega})\} \lesssim \frac{1}{\sqrt n}
\Big(1 + \frac{k}{n}\Big)^{1 / 2}.
\end{equation*}
Therefore,
\begin{align*}
\var^{1 / 2} \Big\{ \sum_{j = 2}^{m} \hat{\IIFF}_{j, j, k} (\hat{\Omega}) \Big\} \lesssim \frac{1}{\sqrt n} \Big( 1 + \frac{k}{n} \Big)^{1 / 2} + \frac{1}{\sqrt n} \sum_{j = 3}^{m} j \frac{\exp (C_{\eta} j^{2} k/n + C j^{2}/n)}{1 - C j k/n} \Big( C j\frac{k}{n} \Big)^{\lfloor (j - 1) / 2 \rfloor}.
\end{align*}
Recall that $\rho = k / n$. Under $C m \rho < 1$, pairing adjacent orders $j = 2 \ell + 1$ and $j = 2 \ell + 2$ yields
\begin{align*}
\sum_{j = 3}^{m} j \frac{ \exp (C_{\eta} j^{2} \rho + C j^{2} / n)}{1 - C j \rho} (C j \rho)^{\lfloor (j - 1) / 2 \rfloor} & \lesssim \frac{\exp (C_{\eta} m^{2} \rho + C m^{2} / n)}{1 - C m \rho} \sum_{\ell = 1}^{\lfloor (m - 1) / 2 \rfloor} \ell (C \ell \rho)^{\ell} \\
& \lesssim \frac{\rho \exp (C_{\eta} m^{2} \rho + C m^{2} / n)}{(1 - C m \rho)^{4}}.
\end{align*}
Consequently,
\begin{equation*}
\var^{1 / 2} \Big\{ \sum_{j = 2}^{m} \hat{\IIFF}_{j, j, k} (\hat{\Omega}) \Big\} \lesssim \frac{1}{\sqrt{n}} \Big\{ (1 + \rho)^{1 / 2} + \rho \frac{\exp (C_{\eta} m^{2} \rho + C m^{2} / n)}{(1 - C m \rho)^{4}} \Big\},
\end{equation*}
which immediately implies that
\begin{equation*}
\var \Big\{ \sum_{j = 2}^{m} \hat{\IIFF}_{j, j, k} (\hat{\Omega}) \Big\} \lesssim \frac{1}{n} \Big\{ 1 + \frac{k}{n} \frac{\exp \{(C_{\eta} m^{2} k + C m^{2}) / n\}}{(1 - C m \rho)^{4}} \Big\}^2.
\end{equation*}
The proof of the variance bound is now complete.

\section{Concluding Remarks}
\label{sec:conclusions}
    
We conclude our article by mentioning several future research directions.
\begin{enumerate}[label = (\arabic*)]
\item It will be interesting to study if one can extend the idea developed in this article to the case $k \gtrsim n$ by, for instance, estimating $\Omega$ via shrinkage or regularized methods. As conjectured in \citet{robins2016technical}, the optimal convergence rate of the functionals studied in this article may depend on the regularity of the density of $X$. It is then reasonable to conjecture that the shrinkage or regularization also depends on the density of $X$. Simulation studies in \citet{liu2020nearly} suggest the nonlinear shrinkage covariance matrix estimators \citep{ledoit2012nonlinear, ledoit2020analytical} could be a viable option. It will also be interesting to investigate the statistical theoretical guarantees when $\Omega$ is estimated by the inverse of the ridge penalized estimator \citep{cheng2024dimension} in the proportional asymptotic regime ($k \asymp n$) \citep{chen2024method}.

\item We expect to see the analysis strategy developed here to be further generalized to more complex problems, such as assumption-lean estimands \citep{vansteelandt2022assumption, vansteelandt2025towards}, functionals of NPIV models \citep{breunig2024adaptive}, functionals beyond bilinear forms \citep{lin2024worthwhile, zhang2026higher}, moment-condition models \citetext{\citealp{bonhomme2026higher}; \citealp[Section~6]{robins2016technical}}, multi-index models \citep{damian2025generative, joshi2026learning}, and other related problems \citep{wein2019kikuchi, lasserre2024moment, liu2025quantum}.
\end{enumerate}

\section*{Acknowledgments}

Lin Liu thanks the Isaac Newton Institute (INI) of Mathematical Sciences at the University of Cambridge, the School of Mathematics and Statistics at the University College Dublin, and the Center of Data Science at Zhejiang University for hospitality during the completion of this work. The authors thank Rohit Bhattacharya, Kwun Chuen Gary Chan, Fengnan Gao, Zhenyu Liao, Rajarshi Mukherjee, Jamie Robins, Andrea Rotnitzky, Eric Tchetgen Tchetgen, Aad van der Vaart, Cheng Wang, and participants in the \href{https://www.newton.ac.uk/event/cifw04/}{Causality and Machine Learning Workshop} held at INI for helpful discussions. This research is supported by the National Key R\&D Program of China Project Number 2025YFA1016700, NSFC Grant No.12471274, and Science and Technology Talent and Platform Program of Yunnan Province Grant No.202605AF35007.

\putbib[\myreferences]
\end{bibunit}

\newpage

\begin{bibunit}[plainnat]

\appendix

\allowdisplaybreaks

\onehalfspacing

The Appendix of this article is divided into two parts. Appendix~\ref{app:sim} describes the setup of the simulation results exhibited in Section~\ref{sec:theory} of the main text. Appendix~\ref{app:theory} contains the proof of Lemma~\ref{lem:mobius} and Lemma~\ref{lem:graph-counting} and fills in the sketch of the proof of Theorem~\ref{thm:main} delineated in Section~\ref{sec:proof}. Appendix~\ref{app:lemma} further contains some technical results used in Appendix~\ref{app:theory}.

\renewcommand{\theHsection}{A\arabic{section}}

\section{Simulation Setup}
\label{app:sim}

In this section, we describe the simulation setup of Figure~\ref{fig:sim} reported in Section~\ref{sec:theory} of the main text. We consider a simple example of the bilinear form \eqref{target}. Let $X \sim \calN (0, I)$, so that $\Sigma = \Omega = I$ with changing dimensions ($k$). Let
$$A = X_{1} + \varepsilon_{A}, \quad Y = X_{1} + \varepsilon_{Y}, \quad \varepsilon_{A}, \varepsilon_{Y} \stackrel{\mathrm{iid}}{\sim} \calN (0,1),$$
where $X_{1}$ denotes the first coordinate of $X$. Then $\mu = \bbE (X A) = e_{1}$, $\eta = \bbE (X Y) = e_{1}$, and the target is $\psi = \mu^{\top} \Omega \eta = 1$, independent of $k$. We compare three estimators of $\psi$:
\begin{itemize}
\item \textbf{Oracle}: $\hat{\psi}_{2, k}(I)$ is exactly unbiased for $\psi$.

\item \textbf{Sample-split HOIF} at $m = 3$: $\hat{\psi}_{3, k} (\hat{\Omega}_{\nuis})$, with $\hat{\Omega}_{\nuis} = \hat \Sigma_{\nuis}^{-1}$ computed from an \emph{independent} nuisance sample $\calD_{\nuis}$ of the same size $n$.

\item \textbf{Same-sample (stabilized) HOIF} at $m = 3$: $\hat{\psi}_{3, k} (\hat{\Omega})$.
\end{itemize}

We fix $n = 300$ and vary $k$ so that $\rho \coloneqq k / n$ ranges over $\{0.05, 0.15, 0.30, 0.50, 0.70, 0.85\}$, using $B = 250$ Monte-Carlo replications per configuration. Table~\ref{tab:stability} and Figure~\ref{fig:sim} report the bias, standard deviation (SD), and root mean squared error (RMSE) of each estimator.

\begin{table}[H]
\centering
\small
\begin{tabular}{c c ccc ccc}
\toprule
 & Oracle & \multicolumn{3}{c}{Sample-split ($m{=}3$)}
       & \multicolumn{3}{c}{Same-sample ($m{=}3$)}\\
\cmidrule(lr){3-5}\cmidrule(lr){6-8}
$\rho$ & RMSE
       & bias & SD & RMSE & bias & SD & RMSE\\
\midrule
0.05   & 0.168 & $-0.062$ & 0.117 & 0.132 & \phantom{$-$}0.049 & 0.113 & 0.123\\
0.15  & 0.177 & $-0.258$ & 0.174 & 0.311 & \phantom{$-$}0.103 & 0.125 & 0.162\\
0.30  & 0.180 & $-1.127$ & 0.783 & 1.372 & \phantom{$-$}0.109 & 0.130 & 0.169\\
0.50  & 0.194 & $-5.566$ & 3.873 & 6.781 & \phantom{$-$}0.015 & 0.117 & 0.118\\
0.70 & 0.228 & $-32.9$  & 30.2  & 44.7  & $-0.274$           & 0.108 & 0.295\\
0.85 & 0.214 & $-343.2$ & 403.7 & 529.9 & $-0.595$           & 0.073 & 0.599\\
\bottomrule
\end{tabular}
\caption{Finite-sample performance at $n = 300$ based on $B=250$ Monte Carlo runs. As $\rho = k / n \to 1$, $\hat{\psi}_{3, k} (\hat{\Omega}_{\nuis})$ starts to diverge, while $\hat{\psi}_{3, k} (\hat{\Omega})$ remains comparable to the oracle $\hat{\psi}_{2, k} (I)$.}
\label{tab:stability}
\end{table}

From Table~\ref{tab:stability} and Figure~\ref{fig:sim}, it is quite evident that the finite-sample performance of $\hat{\psi}_{3, k} (\hat{\Omega})$ is superior to that of $\hat{\psi}_{3, k} (\hat{\Omega}_{\nuis})$, especially when $\rho$ gets larger.


\section{Technical Details of the Proof}
\label{app:theory}

\subsection{Proof of Lemma~\ref{lem:mobius}}
\label{app:mobius}

\begin{proof}
Fix $j \ge 3$ and write $\iota = j - 2$. Given any
$i_{1} \neq i_{2}$, let $C_{i_{1} i_{2}} \coloneqq [n] \setminus \{i_{1}, i_{2}\}$.
Define
\begin{equation*}
\calS_{\iota}^{i_{1} i_{2}} \coloneqq \frac{(n - j) !}{(n - 2) !}
\sum_{\substack{(\ell_{1}, \cdots, \ell_{\iota}) \in C_{i_{1} i_{2}}^{\iota}\\
\ell_{1} \neq \cdots \neq \ell_{\iota}}}
H_{\ell_{1}} \cdots H_{\ell_{\iota}}.
\end{equation*}
Then $\hat{\IIFF}_{j, j, k} (\hat{\Omega})$ admits the following representation:
\begin{equation*}
\hat{\IIFF}_{j, j, k} (\hat{\Omega}) = (-1)^{j} \bbU_{n, 2} (A_{1} X_{1}^{\top} \hat{\Omega} \calS_{\iota}^{1 2} X_{2} Y_{2}).
\end{equation*}

We first expand the summation in $\calS_{\iota}^{i_{1} i_{2}}$.
Let $\Pi_{\iota}$ denote the lattice of all partitions of
$\{1, \cdots, \iota\}$. For $\frakm \in \Pi_{\iota}$ (so $\frakm$ is a partition and contains non-overlapping subsets of $\{1, \cdots, \iota\}$), write $B_\frakm(l)$ as the element in the partition $\frakm$ containing $l$, and define $ \mu_{\iota} (\frakm) \coloneqq (-1)^{\iota - |\frakm|} \prod_{B \in \frakm} (|B| - 1) ! $.

By M\"obius inversion on the partition lattice \citep{lauritzen1996graphical, stanley2011enumerative} (see Lemma~\ref{lem:mobius-inversion expansion}),
\begin{equation}
\label{mobius summation}
\sum_{\substack{(\ell_{1}, \cdots, \ell_{\iota}) \in C_{i_{1} i_{2}}^{\iota}\\
\ell_{1} \neq \cdots \neq \ell_{\iota}}}
H_{\ell_{1}} \cdots H_{\ell_{\iota}} = \sum_{\frakm \in \Pi_{\iota}}
\mu_{\iota} (\frakm) \sum_{\{b_B\}_{B \in \frakm} \in C_{i_{1} i_{2}}^{|\frakm|}} \prod_{l = 1}^{\iota} H_{b_{B_{\frakm} (l)}},
\end{equation}
where in $\{b_{B}\}_{B \in \frakm}$, each $b_{B}$ takes values in $C_{i_{1}, i_{2}}$ and $b_{B_{1}}$ and $b_{B_{2}}$ can take the same value even if $B_{1} \neq B_{2} \in \frakm$.

For a partition $\frakm$, let $\frakm_{\ge 2} \coloneqq \{B \in \frakm: |B| \ge 2\}$ and $\frakm_{1} \coloneqq \{B \in \frakm: |B| = 1\}$.
Since $\sum_{i = 1}^{n} H_{i} = 0$ (Lemma~\ref{lem:empirical centering}), for any $i_{1} \ne i_{2}$, we have 
\begin{equation}
\label{empirical centering 2}
\sum_{\ell \in C_{i_{1} i_{2}}} H_{\ell} = -(H_{i_{1}} + H_{i_{2}}) = - \calR_{i_{1} i_{2}}. 
\end{equation}
We now separate the variables indexed by the singleton elements of $\frakm$ from those indexed by $\frakm_{\ge2}$. The second summation on the RHS of \eqref{mobius summation} can be written as
\begin{align*}
\sum_{\{b_{B}\}_{B \in \frakm} \in C_{i_{1} i_{2}}^{|\frakm|}}
\prod_{l = 1}^{\iota} H_{b_{B_{\frakm} (l)}} =
\sum_{\{b_{B}\}_{B \in \frakm_{\ge2}}\in C_{i_{1} i_{2}}^{|\frakm_{\ge 2}|}}
\sum_{\{b_{B}\}_{B \in \frakm_{1}}\in C_{i_{1} i_{2}}^{|\frakm_{1}|}}
\prod_{l = 1}^{\iota} H_{b_{B_\frakm (l)}}.
\end{align*}
Since each element of $\frakm_{1}$ is a singleton, each corresponding summation appears in exactly one factor of the ordered product. Using  $ \sum_{\ell\in C_{i_{1} i_{2}}} H_{\ell} = - \calR_{i_{1} i_{2}}$,
and preserving the original order of multiplication, we obtain
\begin{align*}
\sum_{\{b_{B}\}_{B \in \frakm} \in C_{i_{1} i_{2}}^{|\frakm|}}
\prod_{l = 1}^{\iota} H_{b_{B_{\frakm} (l)}} =
(-1)^{|\frakm_{1}|}\sum_{\{b_{B}\}_{B \in \frakm_{\ge 2}}\in C_{i_{1} i_{2}}^{|\frakm_{\ge 2}|}} \prod_{l = 1}^{\iota} N_{\frakm, l}^{i_{1} i_{2}},
\end{align*}
where the product is ordered in $l$, and
\begin{equation*}
N_{\frakm, l}^{i_{1} i_{2}} \coloneqq
\begin{cases}
\calR_{i_{1} i_{2}}, & B_{\frakm} (l) = \{l\}, \\
H_{b_{B_{\frakm} (l)}}, & |B_{\frakm} (l)| \ge 2.
\end{cases}
\end{equation*}
Consequently,
\begin{align*}
\calS_{\iota}^{i_{1} i_{2}} & = \frac{(n - j) !}{(n - 2) !} \sum_{\frakm \in \Pi_{\iota}} \mu_{\iota} (\frakm) (-1)^{|\frakm_{1}|} \sum_{\{b_{B}\}_{B \in \frakm_{\ge 2}} \in C_{i_{1} i_{2}}^{|\frakm_{\ge 2}|}} \prod_{l = 1}^{\iota} N_{\frakm, l}^{i_{1} i_{2}},
\end{align*}

Furthermore, let $\tau$ be a partition of $\frakm_{\ge 2}$, and write $\tau = \{D_1, \cdots, D_{|\tau|}\}$. 
Here, the element $D_{c}$, for $c \in \{1, \cdots, |\tau|\}$, collects all elements in $\frakm_{\geq 2}$ that share the same sample index $a_c$ in the subscript of $H$'s. Define
\begin{equation*}
M_{\frakm, \tau, l}^{i_{1} i_{2}} \coloneqq 
\begin{cases}
\calR_{i_{1} i_{2}}, & B_{\frakm} (l) = \{l\}, \\
H_{a_{c}}, & B_{\frakm} (l) \in D_{c}.
\end{cases}
\end{equation*}
Then:
\begin{equation}
\label{equality pattern}
\sum_{\{b_{B}\}_{B \in \frakm_{\ge 2}} \in C_{i_{1} i_{2}}^{|\frakm_{\ge 2}|}} \prod_{l = 1}^{\iota} N_{\frakm, l}^{i_{1} i_{2}} = \sum_{\tau \in \Pi (\frakm_{\ge 2})} \sum_{\substack{a_{1}, \cdots, a_{|\tau|} \in C_{i_{1} i_{2}} \\
a_{1} \neq \cdots \neq a_{|\tau|}}} \prod_{l = 1}^{\iota} M_{\frakm, \tau, l}^{i_{1} i_{2}}.
\end{equation}
For any kernel $K (i_{1}, i_{2}; a_{1}, \cdots, a_{r})$, the following identity holds
\begin{align*}
\bbU_{n, 2} \Big\{\sum_{\substack{a_{1}, \cdots, a_{r} \in C_{i_{1} i_{2}} \\
a_{1} \neq \cdots \neq a_{r}}}
K (i_{1}, i_{2}; a_{1}, \cdots, a_{r}) \Big\} = \frac{(n - 2)!}{(n - 2 - r)!} \bbU_{n, 2 + r} \{K (i_{1}, i_{2}; a_{1}, \cdots, a_{r})\}.
\end{align*}
Combining the preceding identities gives a two-level expansion indexed by $\frakm$ and $\tau$.

It remains to merge all terms that lead to the same non-singleton element family; see Remark~\ref{rem:merging} for an illustration on how the merging step is carried out. Recall the definition of $\calB = \{B_{1}, \cdots, B_{r}\}$ given in Lemma~\ref{lem:mobius}. The elements in $[\iota]$ not covered by $\calB$, $[\iota] \setminus \bigcup_{\nu = 1}^{r} B_{\nu}$, correspond exactly to all singleton elements and are therefore reduced to $\calR_{i_{1} i_{2}}$. Each set $B_{\nu}$ corresponds to one distinct remaining interior index $a_{\nu}$.

For the first-level partition $\frakm$, the sign and M\"obius factor appearing together with the original factor $(-1)^{j}$ are
\begin{align*}
(-1)^{j} \mu_{\iota} (\frakm) (-1)^{|\frakm_{1}|} = (-1)^{j + \iota -|\frakm| + |\frakm_{1}|} \prod_{A \in \frakm}(|A| - 1)!.
\end{align*}
Since $\iota = j - 2$, the integer $j + \iota = 2 j - 2$ is even. Hence $(-1)^{j + \iota -|\frakm| + |\frakm_{1}|} = (-1)^{|\frakm| - |\frakm_{1}|} = (-1)^{|\frakm_{\ge 2}|}$. Singleton elements contribute $0 ! = 1$, so the coefficient attached to the non-singleton elements of $\frakm$ is $(-1)^{|\frakm_{\ge 2}|} \prod_{A \in \frakm_{\ge 2}} (|A| - 1)!$.


Fix a collection $\calB = \{B_{1}, \cdots, B_{r}\} \in \bbB_{\iota}$ arising in the merging step, and collect all pairs $(\frakm, \tau)$ that lead to this same collection. For such a pair, the non-singleton part of $\frakm$ decomposes uniquely as
\begin{equation*}
\frakm_{\ge 2} = \pi_{1} \sqcup \cdots \sqcup \pi_{r},
\ \pi_{\nu} \in \Pi_{\ge 2} (B_{\nu}),
\end{equation*}
where $\Pi_{\ge 2} (B_{\nu})$ denotes the set of partitions of $B_{\nu}$ whose elements all have cardinality at least two. Here $\pi_{\nu}$ is the collection of non-singleton elements of $\frakm$ whose union is $B_{\nu}$. Hence, for each fixed choice $(\pi_{1}, \cdots, \pi_{r})$,
\begin{equation*}
(-1)^{|\frakm_{\ge 2}|} \prod_{A \in \frakm_{\ge 2}}(|A| - 1)! = \prod_{\nu = 1}^{r} \Big\{ (-1)^{|\pi_{\nu}|} \prod_{A \in \pi_{\nu}} (|A| - 1)! \Big\}.
\end{equation*}
Therefore the merged coefficient attached to $\calB$ is
\begin{align*}
C (\calB) & = \sum_{\pi_{1} \in \Pi_{\ge 2} (B_{1})} \cdots \sum_{\pi_{r} \in \Pi_{\ge 2} (B_{r})} \prod_{\nu = 1}^{r} \Big\{ (-1)^{|\pi_{\nu}|} \prod_{A \in \pi_{\nu}} (|A| - 1)! \Big\} \\
& = \prod_{\nu = 1}^{r} \Big\{ \sum_{\pi_{\nu} \in \Pi_{\ge 2} (B_{\nu})}(-1)^{|\pi_{\nu}|} \prod_{A \in \pi_{\nu}} (|A| - 1)! \Big\}.
\end{align*}

For a finite set $B$ with $|B| = d$, define $S_{d} \coloneqq \sum_{\pi \in \Pi_{\ge 2} (B)} (-1)^{|\pi|} \prod_{A \in \pi} (|A| - 1) !$, with the conventions $S_{0} = 1$ and $S_{1} = 0$. By the following sequence of equalities, we have
\begin{align*}
\sum_{d = 0}^{\infty} S_{d} \frac{z^{d}}{d !} = \exp \Big\{ \sum_{s = 2}^{\infty} - \frac{(s - 1) !}{s !} z^{s} \Big\} = \exp \Big\{ - \sum_{s = 2}^{\infty} \frac{z^{s}}{s} \Big\} = \exp \{\log (1 - z) + z\} = e^{z} (1 - z).
\end{align*}
Since $e^{z} (1 - z) = \sum_{d = 0}^{\infty} \frac{(1 - d) z^{d}}{d !}$, comparing coefficients yields $S_{d} = 1 - d$. Consequently,
\begin{equation*}
C (\calB) = \prod_{B \in \calB} (1 - |B|) = (-1)^{|\calB|} \prod_{B \in \calB} (|B| - 1).
\end{equation*}

Putting all the above calculations together yields the following identity:
\begin{align*}
\hat{\IIFF}_{j, j, k} (\hat{\Omega}) = \sum_{\calB \in \bbB_{\iota}} (-1)^{|\calB|} \Big\{ \prod_{B \in \calB} (|B| - 1) \Big\} \frac{(n - j)!}{(n - 2 - |\calB|)!} \bbU_{n, 2 + |\calB|} (K_{\calB}).
\end{align*}
Here $K_{\calB}$ is exactly the kernel defined in \eqref{kernel}. The factor $ \frac{(n - j)!}{(n - 2 - |\calB|) !}$ comes from the original normalization $(n - j)! / (n - 2)!$ in $\calS_{\iota}^{i_{1} i_{2}}$ and from converting the remaining distinct interior sum into an ordered $U$-statistic.

Finally, since every element in $\calB$ has cardinality at least two, one has $|\calB| \le \lfloor \frac{\iota}{2} \rfloor$. Hence, the maximal order of the $U$-statistics appearing in the expansion is $2 + \lfloor \frac{\iota}{2} \rfloor = 2 + \lfloor \frac{j - 2}{2} \rfloor$.
\end{proof}

\begin{remark}[Illustration of the merging step]
\label{rem:merging}
The merging step should be understood for one fixed collection $\calB$. The final expansion then sums over all possible such $\calB$. For example, take $\iota = 7$ and fix
\begin{equation*}
\calB = \{B_{1}, B_{2}\}, \ B_{1} = [4], \ B_2 = \{5, 6\}.
\end{equation*}
The position $7$ is not covered by this particular $\calB$, and therefore it is treated as a singleton position and is represented by $\calR_{i_{1} i_{2}}$ after applying Lemma~\ref{lem:empirical centering}.

For this fixed $\calB$, the possible internal partitions of $B_{1}$ into sets of cardinality at least two are
\begin{align*}
\pi_{1}^{(1)} & = \{\{1, 2, 3, 4\}\},\\
\pi_{1}^{(2)} & = \{\{1, 2\},\{3, 4\}\},\\
\pi_{1}^{(3)} & = \{\{1, 3\},\{2, 4\}\},\\
\pi_{1}^{(4)} & = \{\{1, 4\},\{2, 3\}\}.
\end{align*}
For $B_{2} = \{5, 6\}$, there is only one such partition,
$ \pi_{2} = \{\{5, 6\}\}$.

Thus all pairs $(\frakm, \tau)$ that generate this fixed $\calB$ are obtained by choosing one of the four possibilities for $\pi_{1}$ above and the unique choice of $\pi_{2}$.

For instance, if $\pi_{1} = \{\{1, 2\}, \{3, 4\}\}$ and $\pi_{2} = \{\{5, 6\}\}$, then the corresponding M\"obius partition has non-singletons $\frakm_{\ge 2} = \{\{1, 2\}, \{3, 4\}, \{5, 6\}\}$, together with the singleton element $\frakm_{1} = \{7\}$. The equality pattern $\tau$ places $\{1, 2\}$ and $\{3, 4\}$ in the same element, so that they share the same final index and induce
\begin{equation*}
\{1, 2\} \cup \{3, 4\} = \{1, 2, 3, 4\} = B_{1}.
\end{equation*}
The set $\{5, 6\}$ forms another element of $\tau$ and induces $B_{2}$.

The coefficient associated with $B_1$ is therefore
\begin{align*}
\sum_{\pi_{1} \in \Pi_{\ge 2} (B_{1})} (-1)^{|\pi_{1}|} \prod_{A \in \pi_{1}} (|A|-1) ! = - (4 - 1) ! + 3 \{(2 - 1) ! (2 - 1) !\} = - 3.
\end{align*}
The coefficient associated with $B_{2}$ is
\begin{equation*}
\sum_{\pi_{2} \in \Pi_{\ge 2} (B_{2})} (-1)^{|\pi_{2}|} \prod_{A \in \pi_{2}} (|A| - 1) ! = - (2 - 1) ! = -1.
\end{equation*}
Hence the merged coefficient for this retained collection is $ C(\calB) = (-3)(-1) = 3 $,
which agrees with the general formula
\begin{equation*}
C (\calB) = (-1)^{|\calB|}
\prod_{B \in \calB} (|B| - 1)
= (-1)^{2} (4 - 1) (2 - 1) = 3.
\end{equation*}
This example concerns only one fixed collection $\calB$; the full M\"obius-inversion expansion sums over all $\calB \in \bbB_{\iota}$.
\end{remark}

\subsection{Proof of Lemma~\ref{lem:graph-counting}}
\label{app:graph-counting}

We first prove a simplified version of Lemma~\ref{lem:graph-counting}, corresponding to the special case in which all edge weights are identity matrices. The more general Lemma~\ref{lem:graph-counting} is then a simple corollary of Lemma~\ref{lem: unweighted-graph-counting} below, by identifying $X$ in Lemma~\ref{lem: unweighted-graph-counting} as $B_{e}^{1 / 2} X$.

\begin{lemma}
\label{lem: unweighted-graph-counting}
Let $X_{1}, X_{2}, \cdots \in \bbR^{k}$ be i.i.d. random vectors. Assume that
$ \bbE [X X^{\top}] = I_{k} $.
Suppose that Assumptions~\ref{as:cov} and
\ref{as:kernel-stability} hold. Let $\calX$ denote the support of $X$. We write $C_{K}$ for a constant, independent of $n$ and $k$, such that
\begin{equation*}
\sup_{x \in \calX} \|x\|_{2}^{2} \leq C_{K} k,
\end{equation*}
whose existence is guaranteed by the uniform bound $\Vert X^{\top} X \Vert_{\infty} = O (k)$ in Assumption~\ref{as:cov}. We write
$C_{\Pi}$ for the operator-norm constant in
Assumption~\ref{as:kernel-stability}.

Let $G = (V, E)$ be a finite undirected graph, allowing self-loops and multiple edges between any pair of vertices.
Let\begin{equation*}
v (G) = |V|, \ e (G) = |E|, \ \kappa (G) = \text{the number of connected components of }G,
\end{equation*}
and denote the first Betti number of $G$ as $\frakr (G) = e(G) + \kappa(G) - v(G)$. Then
\begin{equation*}
\Big| \bbE \prod_{(u, v)\in E(G)} X_{u}^{\top} X_{v} \Big| \le C^{e (G)}k^{\frakr (G)},
\end{equation*}
where $C = \max\{C_{K}, C_{\Pi}, 1\}$. In particular, if $G$ is connected, then
\begin{equation*}
\Big| \bbE \prod_{(u, v) \in E (G)} X_{u}^{\top} X_{v} \Big| \le C^{e (G)} k^{e (G) + 1 - v (G)}.
\end{equation*}
\end{lemma}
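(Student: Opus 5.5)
The plan is to reduce to connected graphs and then integrate the vertices out one at a time along a spanning tree. For each tree edge, the signed integration will be controlled by the $L_\infty$-stability in Assumption~\ref{as:kernel-stability}. Every non-tree edge (including self-loops) will be bounded crudely by $\sup_{x}\|x\|_2^2\le C_K k$. Each tree edge then costs only $O(1)$ and each non-tree edge costs $O(k)$, which produces exactly $k^{\frakr(G)}$.

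\textbf{Step 1 (components).} Distinct connected components of $G$ share no vertices, so their factors involve disjoint sets of the i.i.d.\ vectors and are independent. The expectation therefore factorizes over components. Both $e(G)$ and $\frakr(G)=\sum_{\text{comp}}(e_i-v_i+1)$ are additive over components. Hence it suffices to prove $|\bbE\prod_{E}X_u^\top X_v|\le C^{e(G)}k^{e(G)-v(G)+1}$ for connected $G$.

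\textbf{Step 2 (ordering and bookkeeping).} Fix a spanning tree $T$ of the connected graph $G$, rooted at $v_1$. List the vertices in BFS order $v_1,\dots,v_V$, so that each $v_t$ with $t\ge2$ has its tree-parent $p(t)$ among $v_1,\dots,v_{t-1}$. Assign each edge (self-loops included) to its later endpoint in this order. Let $d_t$ be the number of edges assigned to $v_t$; then $\sum_t d_t=e(G)$, and $d_t\ge1$ for $t\ge2$ because the parent edge is assigned to $v_t$.

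Define $R_V\equiv1$. Going backwards for $t=V,\dots,2$, set
\[
R_{t-1}(x_{v_1},\dots,x_{v_{t-1}})\coloneqq\bbE_{X_{v_t}}\Big[\prod_{e\text{ assigned to }v_t}X_u^\top X_v\;R_t(x_{v_1},\dots,x_{v_{t-1}},X_{v_t})\Big].
\]
Because the $X$'s are i.i.d., Fubini gives $\bbE\prod_E X_u^\top X_v=\bbE_{X_{v_1}}\big[\prod_{e\text{ assigned to }v_1}(\cdot)\,R_1(X_{v_1})\big]$.

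\textbf{Step 3 (the key one-vertex estimate).} Fix $x_{v_1},\dots,x_{v_{t-1}}$ in the support $\calX$. Split off the parent edge $X_{p(t)}^\top X_{v_t}$ and let
\[
h(x_{v_t})\coloneqq\Big(\prod_{\text{other edges assigned to }v_t}X_u^\top X_v\Big)R_t(\cdots,x_{v_t}).
\]
Each of these other edges joins $v_t$ either to an earlier vertex or to itself. By Cauchy--Schwarz each such factor has absolute value at most $C_Kk$ on $\calX$. Hence $\|h\|_\infty\le (C_Kk)^{d_t-1}\sup|R_t|$, and $h$ is bounded and measurable in $x_{v_t}$.

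Since $\Omega=I$,
\[
\bbE_{X_{v_t}}\big[x_{p(t)}^\top X_{v_t}\,h(X_{v_t})\big]=x_{p(t)}^\top\bbE\{Xh(X)\}=(\Pi h)(x_{p(t)}).
\]
Assumption~\ref{as:kernel-stability} then gives $|(\Pi h)(x_{p(t)})|\le C_\Pi\|h\|_\infty$. Consequently
\[
\sup|R_{t-1}|\le C_\Pi (C_Kk)^{d_t-1}\sup|R_t|\le C^{d_t}k^{d_t-1}\sup|R_t|.
\]

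\textbf{Step 4 (root and conclusion).} All edges assigned to $v_1$ are self-loops at the root, so they are bounded by $(C_Kk)^{d_1}$. Multiplying the estimates over all vertices yields
\[
\Big|\bbE\prod_E X_u^\top X_v\Big|\le C^{\sum_t d_t}\,k^{d_1+\sum_{t\ge2}(d_t-1)}=C^{e(G)}k^{e(G)-v(G)+1},
\]
which is the claim. The general Lemma~\ref{lem:graph-counting} then follows by absorbing the $B_e$'s, as indicated in the text.

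The main obstacle is Step 3. We must never take absolute values inside the expectation before a tree edge has been integrated out: doing so would cost $\sqrt{k}$ per tree edge, e.g.\ $\bbE|x^\top X|\asymp\|x\|_2$. The remedy is to keep the parent factor signed and treat everything else, including the previously integrated remainder $R_t$, as a bounded test function $h$, so that the stability of $\Pi$ applies. I will also need to check measurability and boundedness of $R_t$ as a function on $\calX^{t-1}$, which follows inductively from the same sup bounds.
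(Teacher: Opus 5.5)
Your proof is correct and follows essentially the same route as the paper's. Every non-tree edge (self-loops and surplus parallel edges included) is bounded in sup norm by $C_K k$, while each tree edge is integrated out at a leaf via $\bbE\{x_u^\top X h(X)\} = (\Pi h)(x_u)$ and Assumption~\ref{as:kernel-stability}, costing only $C_\Pi$. The differences are cosmetic: you factorize over components and absorb the non-tree edges incrementally in reverse BFS order, whereas the paper first collects all non-tree edges into one bounded function $\Psi_{\nt}$ and then inducts on the number of edges of a spanning forest.
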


\begin{proof}
Define 
$K (x, x') \coloneqq x^{\top} x'$, for any $x, x' \in \calX$. By Assumptions \ref{as:cov}, \ref{as:kernel-stability} and Cauchy--Schwarz inequality, there exist some universal constant $C_{K} > 0$ such that
\begin{equation}
\label{eq:kernel-sup-bound}
|K (x, y)| = |x^{\top} y| \le \|x\|_{2} \|y\|_{2} \leq C_{K} k.
\end{equation}
Choose a spanning forest $F$ of $G$, that is, one spanning tree inside each connected component of $G$. Self-loops are not included in $F$. Hence,
\begin{equation*}
e (F) = v (G) -\kappa (G).
\end{equation*}
Let $E_{\nt} \coloneqq E (G) \setminus E (F)$ be the set of non-tree edges, namely the edges not selected in the spanning forest $F$. This set contains all self-loops. Moreover, if several parallel edges have the same pair of endpoints, the forest $F$ can contain at most one of them; otherwise $F$ would contain a cycle.
All unselected parallel copies are therefore included in $E_{\nt}$. Hence
\begin{equation*}
|E_{\nt}| = e (G) -v (G) + \kappa (G)  = \frakr (G).
\end{equation*}
We first separate the non-tree edges. Define
\begin{equation*}
\Psi_{\nt} (\{x_{w}\}_{w \in V}) \coloneqq \prod_{(u, v) \in E_{\nt}} K (x_{u}, x_{v}).
\end{equation*}
If $(u, v)$ is a self-loop, then the corresponding factor is $K (x_{u}, x_{u})$. By \eqref{eq:kernel-sup-bound},
\begin{equation}
\label{eq:psi-nt-bound}
\| \Psi_{\nt} \|_{\infty} \le (C_{K} k)^{|E_{\nt}|} = (C_{K} k)^{\frakr (G)}.
\end{equation}

The bound in \eqref{eq:psi-nt-bound} controls all non-tree edges by a uniform estimate. After this step, the original product over $E (G)$ is reduced to $\bbE \Big\{ \Psi_{\nt}(\{X_{w}\}_{w\in V}) \prod_{(u, v)\in E (F)} K (X_{u}, X_{v}) \Big\}$.
It remains to control the contribution of the forest edges. These terms are treated differently from the non-tree contribution because : $F$ is acyclic, its vertices can be integrated out one leaf at a time. Each leaf integration applies the  integral operator $\Pi$ to the current bounded function and introduces only the constant $C_{\Pi}$ in the bound. We prove the following auxiliary bound.

\begin{lemma}
\label{lem:forest}
For every forest $F$ on a finite vertex set $V$ and every bounded measurable function $\Psi (\{x_{w}\}_{w \in V})$,
\begin{equation}
\label{eq:forest-bound}
\Big| \bbE \Big\{ \Psi (\{X_{w}\}_{w \in V}) \prod_{(u, v) \in E (F)} K (X_{u}, X_{v}) \Big\} \Big| \le C_{\Pi}^{e (F)} \cdot \|\Psi\|_{\infty},
\end{equation}
where we recall the definition of $C_{\Pi}$ in Assumption~\ref{as:kernel-stability}.
\end{lemma}

We now apply Lemma~\ref{lem:forest} with $\Psi$ set to $\Psi_{\nt}$. Using \eqref{eq:psi-nt-bound}, we obtain
\begin{align*}
\Big| \bbE \prod_{(u, v) \in E (G)} K (X_{u}, X_{v}) \Big| & = \Big| \bbE \Big\{ \Psi_{\nt} (\{X_{w}\}_{w \in V}) \prod_{(u, v) \in E (F)} K (X_{u}, X_{v}) \Big\} \Big| \\
& \le C_{\Pi}^{e (F)} \|\Psi_{\nt}\|_{\infty} \\
& \le C_{\Pi}^{v (G) - \kappa (G)} (C_{K} k)^{\frakr (G)}.
\end{align*}
Since $e (G) = e (F) + \frakr (G) = v (G) - \kappa (G) + \frakr (G)$ and $C = \max \{C_{K}, C_{\Pi}, 1\}$, we have
\begin{equation*}
\Big| \bbE \prod_{(u, v) \in E (G)} K (X_{u}, X_{v}) \Big| \le   C^{e (G)} k^{\frakr (G)}.
\end{equation*}
Thus
\begin{align*}
\Big| \bbE \prod_{(u, v) \in E (G)} X_{u}^{\top} X_{v} \Big| \le   C^{e (G)} k^{\frakr (G)}.
\end{align*}
If $G$ is connected, then $\kappa (G) = 1$ and $\frakr (G) = e(G) + 1 - v(G)$, which gives the desired result when the graph is connected.
\end{proof}

Finally, we are left to prove Lemma~\ref{lem:forest}.

\begin{proof}[Proof of Lemma~\ref{lem:forest}]
We prove \eqref{eq:forest-bound} by induction on $e (F)$. If $e (F) = 0$, then $ |\bbE \Psi (\{X_{w}\}_{w \in V})| \le \|\Psi\|_{\infty}$, so the claim holds. Now assume $e (F) \ge 1$. 

Choose a leaf vertex $v$ of forest $F$, and let $u$ be its unique neighbor. Let $ V' \coloneqq V \setminus \{v\}$, and let $F'$ be the forest on $V'$ obtained by deleting the vertex $v$ and the edge $(u, v)$. Then $e (F') = e (F)-1$.

For fixed values $\{x_{w}\}_{w\in V'}$, define a function of one variable by 
\begin{align*}
h(t) \coloneqq \Psi \big(\{x_{w}\}_{w \in V'}, t\big)    
\end{align*}
where $t$ is assigned to the leaf vertex $v$, while $x_{w}$ is assigned to each vertex $w \in V'$. Then $\|h\|_{\infty} \le \|\Psi\|_{\infty}$. The edge deleted together with $v$ is $(u, v)$. Therefore integrating out
the variable at the leaf gives
\begin{equation*}
\bbE \{K (x_{u}, X)h(X)\}
= (\Pi h) (x_{u}),
\end{equation*}
where $X$ is an independent copy of the covariate vector. We define
\begin{equation*}
\Psi' (\{x_{w}\}_{w \in V'})
\coloneqq (\Pi h) (x_{u}).
\end{equation*}
By the $L_{\infty}$-stability assumption (Assumption~\ref{as:kernel-stability}),
\begin{equation*}
\left| \Psi' (\{x_{w}\}_{w \in V'}) \right|
= | (\Pi h) (x_{u}) | \le C_{\Pi} \| h \|_{\infty}
\le C_{\Pi} \| \Psi \|_{\infty}.
\end{equation*}
Therefore,
\begin{equation*}
\| \Psi' \|_{\infty}
\le C_{\Pi} \| \Psi \|_{\infty} .
\end{equation*}
Conditioning on the variables $\{X_{w} : w \in V'\}$ and using the independence of $X_{v}$ from these variables, we obtain
\begin{align*}
\bbE \Big\{ \Psi (\{ X_{w} \}_{w \in V}) \prod_{(a, b)\in E (F)} K (X_{a}, X_{b}) \Big\} = \bbE \Big\{ \Psi' (\{ X_{w} \}_{w \in V'}) \prod_{(a, b) \in E (F')} K (X_{a}, X_{b}) \Big\}.
\end{align*}
Applying the induction hypothesis to $F'$ and $\Psi'$ yields
\begin{align*}
\Big| \bbE \Big\{ \Psi (\{X_{w}\}_{w \in V}) \prod_{(a, b) \in E (F)} K (X_{a}, X_{b}) \Big\} \Big| \le C_{\Pi}^{e(F')} \|\Psi'\|_{\infty} \le C_{\Pi}^{e(F') + 1} \|\Psi\|_{\infty} = C_{\Pi}^{e(F)} \|\Psi\|_{\infty}.
\end{align*}
This proves \eqref{eq:forest-bound}.
\end{proof}

\subsection{Proof details of Section~\ref{sec:bias}}
\label{app:bias}

\subsubsection{Proof details related to \textbf{Step i}}
\label{app:bias i}

We first prove Lemma~\ref{lem:bias-binomial-representation}, an alternative representation of the bias $\calB_{m, k}$ defined in \eqref{bias} that facilitates analysis.

\begin{proof}[Proof of Lemma~\ref{lem:bias-binomial-representation}]
For $j \geq 1$, define
\begin{equation*}
G_{j} \coloneqq \bbU_{n, j + 1} \Big\{ A_{i_{1}} X_{i_{1}}^{\top} \hat{\Omega} \prod_{s = 3}^{j + 1} \Big( X_{i_{s}} X_{i_{s}}^{\top} \hat{\Omega} \Big) X_{i_{2}} Y_{i_{2}} \Big\},
\end{equation*}
where the product is interpreted as the identity operator when $j = 1$. Then
\begin{align*}
\sum_{r = 2}^{m} \hat{\IIFF}_{r, r, k} (\hat{\Omega}) & = \sum_{r = 2}^{m} (-1)^{r} \bbU_{n, r} \Big\{ A_{i_{1}} X_{i_{1}}^{\top} \hat{\Omega} \prod_{s = 3}^{r} \Big( X_{i_{s}} X_{i_{s}}^{\top} \hat{\Omega} - I \Big) X_{i_{2}} Y_{i_{2}} \Big\} \\
& = \sum_{r = 2}^{m} \sum_{j = 1}^{r - 1} (-1)^{r} (-1)^{r - j - 1} \binom{r - 2}{j - 1} G_{j} = \sum_{j = 1}^{m - 1} (-1)^{j - 1} \sum_{r = j + 1}^{m} \binom{r - 2}{j - 1} G_{j} \\
& = \sum_{j = 1}^{m - 1}(-1)^{j - 1} \binom{m - 1}{j} G_{j},
\end{align*}
where the last equality uses the hockey-stick identity. Let $G_{0} \coloneqq \hat{\psi}_{2, k} (I) = \bbU_{n, 2} (A_{i_{1}} X_{i_{1}}^{\top} X_{i_{2}} Y_{i_{2}})$, then
\begin{align*}
\hat{\psi}_{m, k} (\hat{\Omega}) - \hat{\psi}_{2, k} (I) = \sum_{j = 1}^{m - 1} (-1)^{j - 1} \binom{m - 1}{j} G_{j} - G_{0} = \sum_{j = 0}^{m - 1} (-1)^{j + 1} \binom{m - 1}{j} G_{j}.
\end{align*}

Since $\sum_{j = 0}^{m - 1} (-1)^{j} \binom{m - 1}{j} = 0$ by using a similar but simpler argument to the proofs of Lemma~\ref{lem:comb_pre} and Lemma~\ref{lem:combinatorial}, we may subtract the identity operator from every $G_{j}$ without changing the sum. The term $j = 0$ then vanishes. Thus
\begin{align*}
\hat{\psi}_{m, k} (\hat{\Omega}) - \hat{\psi}_{2, k} (I) = \sum_{j = 1}^{m - 1} (-1)^{j + 1} \binom{m - 1}{j} \bbU_{n, j + 1} \Big[ A_{i_{1}} X_{i_{1}}^{\top} \Big\{ \hat{\Omega} \prod_{s = 3}^{j + 1} X_{i_{s}} X_{i_{s}}^{\top} \hat{\Omega} - I \Big\} X_{i_{2}} Y_{i_{2}} \Big].
\end{align*}
We then rewrite $\calB_{m, k}$ as:
\begin{align*}
\calB_{m, k} & = \bbE \{\hat{\psi}_{m, k} (\hat{\Omega}) - \hat{\psi}_{2, k} (I)\} \\
& = \sum_{j = 1}^{m - 1} (-1)^{j + 1} \binom{m - 1}{j} \bbE \Big\{ A_{m - 1} X_{m - 1}^{\top} \Big( \prod_{s = 0}^{j - 1} X_{s} X_{s}^{\top} \hat{\Omega} - I \Big) X_{m} Y_{m} \Big\}.
\end{align*}
Here and below, $X_{0} X_{0}^{\top}$ is interpreted as the identity matrix. It remains to expand each occurrence of $\hat{\Omega}$ as $I + (\hat{\Omega} - I)$. For fixed $j$,
\begin{align*}
\prod_{s = 0}^{j - 1} X_{s} X_{s}^{\top} \hat{\Omega}
= \prod_{s = 0}^{j - 1} \{X_{s} X_{s}^{\top} + X_{s} X_{s}^{\top} (\hat{\Omega} - I)\} = \sum_{S \subseteq [j - 1] \cup \{0\}} \prod_{s = 0}^{j - 1} \{X_{s} X_{s}^{\top} (\hat{\Omega} - I)^{\mathbbm{1} \{s \in S\}}\}.
\end{align*}
The second equality in \eqref{eq:bias-binomial-representation} follows because, when $S = \emptyset$, $\bbE (\prod_{s = 0}^{j - 1} X_{s} X_{s}^{\top}) - I$ is the zero matrix and only the summands with $S \neq \emptyset$ survive after taking expectation.
\end{proof}

We next prove Lemma~\ref{lem:bias-degree-decomposition}.

\begin{proof}[Proof of Lemma~\ref{lem:bias-degree-decomposition}]
Let $\sfD_{J} \coloneqq \sum_{j = 1}^{J} \Delta_{n}^{j}$. By Lemma~\ref{lem:bias-binomial-representation} and the Neumann expansion $\hat{\Omega} - I = \sfD_{J} + \sfR_{J}$, we have
\begin{align*}
\calB_{m, k} = \sum_{j = 1}^{m - 1} (-1)^{j + 1} \binom{m - 1}{j} \sum_{\emptyset \neq S \subseteq [j - 1] \cup \{0\}} \bbE \Big[ A_{m - 1} X_{m - 1}^{\top} \prod_{s = 0}^{j - 1} \Big\{ X_{s} X_{s}^{\top} \Big( \sfD_{J} + \sfR_{J} \Big)^{\mathbbm{1}\{s \in S\}} \Big\} X_{m} Y_{m} \Big].
\end{align*}

For each fixed $j$ and $S$, expand the preceding ordered product according to the subset $T \subseteq S$ of positions at which
$\sfR_{J}$ is selected. The terms corresponding to $\emptyset \neq T \subseteq S$ contain at least one occurrence of
$\sfR_{J}$ and, by definition, their aggregate is $\calR_{m, k, J}$, namely
\begin{align*}
& \calR_{m, k, J} \coloneqq \\
& \sum_{j = 1}^{m - 1} (-1)^{j + 1} \binom{m - 1}{j} \sum_{\emptyset \neq S \subseteq [j - 1] \cup \{0\}} \sum_{\emptyset \neq T \subseteq S} \bbE \Big\{ A_{m - 1} X_{m - 1}^{\top} \prod_{s = 0}^{j - 1} \Big( X_{s} X_{s}^{\top} \sfD_{J}^{\mathbbm{1} \{s \in S \setminus T\}} \sfR_{J}^{\mathbbm{1} \{s \in T\}} \Big) X_{m} Y_{m} \Big\}.
\end{align*}
The remaining term, corresponding to $T = \emptyset$, is
\begin{equation*}
\bbE \Big\{ A_{m - 1} X_{m - 1}^{\top} \prod_{s = 0}^{j - 1} \Big( X_{s} X_{s}^{\top} \sfD_{J}^{\mathbbm{1} \{s \in S\}} \Big) X_{m} Y_{m} \Big\}.
\end{equation*}
For every $s\in S$, expand $\sfD_{J} = \sum_{\ell_{s} = 1}^{J} \Delta_{n}^{\ell_{s}}$. Since all matrix products retain their original order, this gives
\begin{align*}
& \bbE \Big\{ A_{m - 1} X_{m - 1}^{\top} \prod_{s = 0}^{j - 1} \Big( X_{s} X_{s}^{\top} \sfD_{J}^{\mathbbm{1} \{s \in S\}} \Big) X_{m} Y_{m} \Big\} \\
= & \sum_{(\ell_{s})_{s \in S} \in [J]^{|S|}} \bbE \Big\{ A_{m - 1} X_{m - 1}^{\top} \prod_{s = 0}^{j - 1} \Big( X_{s} X_{s}^{\top} \Delta_{n}^{\ell_{s} \mathbbm{1} \{s \in S\}} \Big) X_{m} Y_{m} \Big\},
\end{align*}
where we use the convention $\ell_{s} = 0$ for $s \notin S$.

We now group the non-remainder terms according to $c = \sum_{s \in S} \ell_{s}$. Since $S$ is nonempty and $1 \leq \ell_{s} \leq J$ for every $s \in S$, we have $c \geq 1$. Moreover,
\begin{equation*}
c = \sum_{s \in S} \ell_{s} \leq  |S| J \leq j J \leq (m - 1) J.
\end{equation*}
For a fixed total degree $c$, writing $r = |S|$, the positivity of the $\ell_{s}$ implies $1 \leq r \leq c \wedge j$. Therefore, the collection of all non-remainder terms having total degree $c$ is exactly $\calM_{c}^{(J)}$ in \eqref{eq:bias-degree-component}. Consequently,
\begin{equation*}
\calB_{m, k} = \sum_{c = 1}^{(m - 1) J} \calM_{c}^{(J)} + \calR_{m, k, J},
\end{equation*}
which proves \eqref{eq:bias-degree-decomposition}.
\end{proof}

\subsubsection{Proof details related to \textbf{Step ii}}
\label{app:bias ii}

This section is devoted to prove Lemma~\ref{lem:bias-low-degree-cancellation}. 

\begin{proof}[Proof of Lemma~\ref{lem:bias-low-degree-cancellation}]
As a first step toward proving Lemma~\ref{lem:bias-low-degree-cancellation}, we first record the following intermediate result, which rewrites the expectation in $\calM_{c}^{(J)}$ in a particular way.

\begin{lemma}
\label{lem:bias-centered-word-expansion}
Let $\ell_{0}, \cdots, \ell_{j - 1}$ be nonnegative integers and define $\bar\ell_{s} \coloneqq \sum_{r = 0}^{s}\ell_{r},
\ \bar\ell_{-1} \coloneqq 0,\ c \coloneqq \bar \ell_{j - 1} $.
Then
\begin{align}
\label{eq:bias-centered-word-expansion}
& \ \bbE \Big\{ A_{m - 1} X_{m - 1}^{\top} \prod_{s = 0}^{j - 1} (X_{s} X_{s}^{\top} \Delta_{n}^{\ell_{s}}) X_{m} Y_{m} \Big\} \notag \\
= & \ n^{- c} \sum_{i_{1} = 1}^{n} \cdots \sum_{i_{c} = 1}^{n} \bbE \Big[ A_{m - 1} X_{m - 1}^{\top} \prod_{s = 0}^{j - 1} \Big\{ X_{s} X_{s}^{\top} \prod_{l = \bar{\ell}_{s - 1} + 1}^{\bar{\ell}_{s}} (I - X_{i_{l}} X_{i_{l}}^{\top}) \Big\} X_{m} Y_{m} \Big].
\end{align}
If $\bar{\ell}_{s - 1} + 1 > \bar{\ell}_{s}$, the corresponding product is interpreted as the identity matrix. 
\end{lemma}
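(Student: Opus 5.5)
This identity is purely algebraic: it comes from expanding each power of $\Delta_{n}$ inside the expectation and then using linearity of expectation. No probabilistic input is needed beyond the finiteness of all moments involved, which follows from Assumptions~\ref{as:cov} and \ref{as:outcomes}. The starting point is the definition $\Delta_{n} = I - \hat{\Sigma}$ with $\hat{\Sigma} = n^{-1}\sum_{i=1}^{n} X_{i} X_{i}^{\top}$. Writing $I = n^{-1} \sum_{i = 1}^{n} I$ gives the key rewriting
\begin{equation*}
\Delta_{n} = \frac{1}{n} \sum_{i = 1}^{n} (I - X_{i} X_{i}^{\top}).
\end{equation*}
This expresses $\Delta_{n}$ as an empirical average of the centered rank-one perturbations $I - X_{i} X_{i}^{\top}$.

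Next, I fix the nonnegative integers $\ell_{0}, \dots, \ell_{j-1}$ and label the $c = \bar{\ell}_{j-1}$ copies of $\Delta_{n}$ appearing in the ordered product $\prod_{s=0}^{j-1} X_{s} X_{s}^{\top} \Delta_{n}^{\ell_{s}}$ consecutively by $l = 1, \dots, c$. The $s$-th block $\Delta_{n}^{\ell_{s}}$ then receives the labels $\bar{\ell}_{s-1}+1, \dots, \bar{\ell}_{s}$. Each copy $l$ is replaced by $n^{-1}\sum_{i_{l}=1}^{n}(I - X_{i_{l}} X_{i_{l}}^{\top})$. Because matrix multiplication is multilinear, the whole product expands into
\begin{equation*}
n^{-c}\sum_{i_{1},\dots,i_{c}=1}^{n} \prod_{s=0}^{j-1}\Big\{X_{s}X_{s}^{\top}\prod_{l=\bar{\ell}_{s-1}+1}^{\bar{\ell}_{s}}(I - X_{i_{l}}X_{i_{l}}^{\top})\Big\}.
\end{equation*}
Throughout, the left-to-right order of factors is preserved, and the inner product is read in increasing $l$. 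When $\ell_{s} = 0$, the inner product is empty and equals $I$, which matches the stated convention.

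Finally, I pre-multiply by $A_{m-1}X_{m-1}^{\top}$, post-multiply by $X_{m}Y_{m}$, and take expectations. The sum is finite, with $n^{c}$ terms. Each summand is a finite product of bounded quantities: $A$ and $Y$ are bounded by Assumption~\ref{as:outcomes}, and $\|X\|_{2}^{2} \lesssim k$ by Assumption~\ref{as:cov}. So every summand is integrable, and expectation can be interchanged with the finite sum, which gives \eqref{eq:bias-centered-word-expansion}.

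The only point requiring care is bookkeeping rather than analysis. The labels $i_{l}$ are unrestricted: they may coincide with each other and with the fixed labels $0, \dots, j-1, m-1, m$. This is exactly why the right-hand side is a full $n^{c}$-fold sum rather than a $U$-statistic. It is also why the cumulative indices $\bar{\ell}_{s}$ are needed to tie each inserted factor to its position in the noncommutative product. Since the expansion is just distributivity, I will verify the index ranges against the case $j = 1$, $\ell_{0} = c$ as a sanity check.
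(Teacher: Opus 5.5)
Your proposal is correct and follows the paper's proof: you rewrite $\Delta_n$ as the empirical average of $I - X_i X_i^\top$, expand each block $\Delta_n^{\ell_s}$ by multilinearity while preserving the order of the factors, and combine $\prod_s n^{-\ell_s} = n^{-c}$ with linearity of expectation. Your integrability remark (bounded $A$ and $Y$, and $\|X\|_2^2 \lesssim k$ almost surely, so every summand is bounded) is a harmless detail that the paper leaves implicit.
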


\begin{proof}[Proof of Lemma~\ref{lem:bias-centered-word-expansion}]
By definition, $\Delta_{n} = I - \hat{\Sigma} = \frac{1}{n} \sum_{i = 1}^{n} (I - X_{i} X_{i}^{\top})$. Therefore, for each $s = 0, \cdots, j - 1$,
\begin{equation*}
\Delta_{n}^{\ell_{s}} = n^{- \ell_{s}} \sum_{i_{\bar{\ell}_{s - 1} + 1}, \cdots, i_{\bar{\ell}_{s}} = 1}^{n} \prod_{h = \bar{\ell}_{s - 1} + 1}^{\bar{\ell}_{s}} \Big( I - X_{i_{h}} X_{i_{h}}^{\top} \Big),
\end{equation*}
where the product is interpreted as the identity matrix when $\ell_{s} = 0$. Substituting these expansions into the ordered product gives
\begin{align*}
& \bbE \Big\{ A_{m - 1} X_{m - 1}^{\top} \prod_{s = 0}^{j - 1} \Big( X_{s} X_{s}^{\top} \Delta_{n}^{\ell_{s}} \Big) X_{m} Y_{m} \Big\} \\
= & \Big( \prod_{s = 0}^{j - 1} n^{- \ell_{s}} \Big) \sum_{i_{1}, \cdots, i_{c} = 1}^{n} \bbE \Big[ A_{m - 1} X_{m - 1}^{\top} \prod_{s = 0}^{j - 1} \Big\{ X_{s} X_{s}^{\top} \prod_{h = \bar{\ell}_{s - 1} + 1}^{\bar{\ell}_{s}} \Big( I - X_{i_{h}} X_{i_{h}}^{\top} \Big) \Big\} X_{m} Y_{m} \Big].
\end{align*}
Since $\prod_{s = 0}^{j - 1} n^{- \ell_{s}} = n^{-\sum_{s = 0}^{j - 1} \ell_{s}} = n^{- c}$, the desired identity follows. All products keep the displayed order, so no commutation of matrix factors is used. If $c = 0$, then no index is
introduced and the multiple sum is understood as a single term.
\end{proof}

With the above lemma, we then have another intermediate result.

\begin{lemma}
\label{lem:bias-centered-word-bound}
Set $W_{i} \coloneqq I - X_{i} X_{i}^{\top}$. Consider an ordered product of the form
\begin{equation}
\label{eq:fixed-centered-word}
T \coloneqq \bbE \{A_{m - 1} X_{m - 1}^{\top} W_{a_{1}} \cdots W_{a_{c + c^{\dag}}} X_{m} Y_{m}\}.
\end{equation}
We further require that out of the indices $\{a_{1}, \cdots, a_{c + c^{\dag}}\}$, $c$ of them (referred to as Type-I indices) result from writing out the powers of $\Delta_{n}$ and the remaining $c^{\dag}$ of them (referred to as Type-II indices) come from expanding rewriting $X_{s} X_{s}^{\top}$ as $I - W_{s}$ and these indices are required to exclude $m - 1$ and $m$. If $T$ is not identically zero, then we must have
\begin{equation*}
c^{\dag} \leq c.
\end{equation*}
\end{lemma}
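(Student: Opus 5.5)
The plan is a vanishing argument based on the mean-zero property of the $W_i$'s. We work under the normalization $\Sigma = I$, so $\bbE (W_{i}) = \bbE (I - X_{i} X_{i}^{\top}) = 0$ for every sample index $i$. The endpoint observations $m - 1$ and $m$ enter $T$ only through the boundary factors $A_{m - 1} X_{m - 1}^{\top}$ and $X_{m} Y_{m}$. Every index not equal to $m - 1$ or $m$ enters only through the $W$'s in the word $W_{a_{1}} \cdots W_{a_{c + c^{\dag}}}$. Type-II indices are exactly such indices, since they exclude $m - 1$ and $m$ by assumption.

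\textbf{Step 1: singleton indices kill $T$.} Fix a realization of the index pattern $(a_{1}, \cdots, a_{c + c^{\dag}})$. Suppose some value $v \notin \{m - 1, m\}$ occurs exactly once among the $a_{h}$. Condition on all observations other than $O_{v}$. By independence, the conditional expectation of $T$'s integrand is linear in $W_{v}$, and it becomes zero once we use $\bbE (W_{v}) = 0$. So $T \equiv 0$. Consequently, if $T \neq 0$, every distinct value outside $\{m - 1, m\}$ must appear at least twice in the word.

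\textbf{Step 2: count the Type-II indices.} The $c^{\dag}$ Type-II indices come from distinct positions $s$ (the factors $X_{s} X_{s}^{\top}$ with $s \in [j - 1]$, one label per position). Their values are therefore pairwise distinct, and each is distinct from $m - 1$ and $m$. By Step 1, each such value must appear at least twice. Since it cannot be matched by another Type-II index, each Type-II value must coincide with at least one Type-I index. Different Type-II values need different Type-I partners. This gives an injection from the Type-II indices into the Type-I indices, and hence $c^{\dag} \le c$.

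\textbf{Main obstacle.} The delicate point is Step 2's claim that Type-II labels are pairwise distinct and never equal to $m - 1$ or $m$. This has to be read off from how the word arises in Lemma~\ref{lem:bias-centered-word-expansion}: the fixed observations $X_{0}, \cdots, X_{j - 1}$ are distinct from each other and from $X_{m - 1}, X_{m}$, with $s = 0$ corresponding to $I$, so it contributes no index. Type-I indices range over all of $[n]$ and may hit $m - 1$ or $m$. Such Type-I indices only use up Type-I budget, so they only make the inequality easier. Finally, the conditioning in Step 1 needs $A$, $Y$ and $X$ bounded so that the expectations are finite; Assumptions~\ref{as:cov} and~\ref{as:outcomes} guarantee this.
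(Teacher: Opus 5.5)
Your proposal is correct and follows essentially the paper's argument: a Type-II index matched by no Type-I index carries a lone mean-zero $W_{s}$, so $T = 0$ and each Type-II index needs a Type-I partner, which gives $c^{\dag} \leq c$. You also spell out something the paper leaves tacit: the Type-II labels are pairwise distinct and differ from $m - 1$ and $m$, which is what makes the matching an injection.
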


\begin{proof}[Proof of Lemma~\ref{lem:bias-centered-word-bound}]
Let $\calS$ be Type-II indices described in the statement of the lemma. If some $s \in \calS$ does not coincide with any Type-I index, then $W_{s}$ appears only once and hence does not include $m - 1$ and $m$. Conditioning on all variables except $X_{s}$ gives
\begin{equation*}
\bbE W_{s} = I - \bbE (X_{s} X_{s}^{\top}) = 0.
\end{equation*}
Hence, every nonzero term must match each of the $c^{\dag}$ Type-II indices with at least one Type-I index generated from the expansion of $\Delta_{n}$. Thus
\begin{equation*}
c^{\dag} \leq c.
\end{equation*}  
\end{proof}

Armed with Lemma~\ref{lem:bias-centered-word-expansion} and Lemma~\ref{lem:bias-centered-word-bound}, we continue the proof of Lemma~\ref{lem:bias-low-degree-cancellation}.

Fix $c$ such that $1 \leq c < \sfc_{m}$.
By Lemma~\ref{lem:bias-centered-word-expansion}, every summand in $\calM_{c}^{(J)}$ can be written as a linear combination of terms of the form
\begin{equation*}
n^{-c} \bbE \{A_{m - 1} X_{m - 1}^{\top}
W_{a_{1}} \cdots W_{a_{c + c^{\dag}}}
X_{m} Y_{m} \},
\end{equation*}
using the identity $X_{s} X_{s}^{\top} = I - W_{s}$. By Lemma~\ref{lem:bias-centered-word-bound}, the expectation of the above display is zero unless $c^{\dag} \leq c$.

We now fix one potentially nonzero ordered centered product of the above form, and consider its binomially weighted coefficient. For $1 \leq r \leq c$, define
\begin{equation*}
\sfp_{J} (c, r) \coloneqq \Big| \Big\{ (\ell_{1}, \cdots, \ell_{r}) \in [J]^{r}: \sum_{u = 1}^{r} \ell_{u} = c \Big\} \Big|.
\end{equation*}
For a fixed correction order $j$, choosing the $r$ positions carrying positive powers of $\Delta_{n}$ and assigning their powers gives
$\sum_{r = 1}^{c \wedge j}
\binom{j}{r} \sfp_{J} (c, r)$
possibilities. Define
\begin{equation*}
\Phi_{c, J} (j) \coloneqq
\sum_{r = 1}^{c} \binom{j}{r} \sfp_{J} (c,r),
\end{equation*}
where $\binom{j}{r} = 0$ for $r > j$. Since $\sfp_{J} (c, r)$ does not depend on $j$ and $r \leq c$, $\Phi_{c, J} (j)$ is a polynomial in $j$ of degree at most $c$.

The $c^{\dag}$ skeleton labels are obtained by selecting $c^{\dag}$ positions among the displayed positions $[j - 1]$. Expanding $X_{s} X_{s}^{\top} = I - W_{s}$ contributes the
factor $(-1)^{c^{\dag}} \binom{j - 1}{c^{\dag}}.
$ Combining this factor with the outer weight
$(-1)^{j + 1} \binom{m - 1}{j}$, the corresponding unrestricted coefficient has the form
\begin{align}
\label{eq:low-degree-coefficient}
\mathfrak C_{c, c^{\dag}, J}
= (-1)^{c^{\dag}} \sum_{j = 0}^{m - 1}
(-1)^{j + 1} \binom{m - 1}{j}
\Phi_{c, J} (j) \binom{j - 1}{c^{\dag}}.
\end{align}
The term $j = 0$ may be added because $c \geq 1$ implies $\Phi_{c, J} (0) = 0$.

Since $\Phi_{c, J} (j)$ has degree at most $c$ and
$\binom{j - 1}{c^{\dag}}$ has degree $c^{\dag}$, there exist constants
$\gamma_{0}, \cdots, \gamma_{c + c^{\dag}}$ such that
\begin{equation*}
\Phi_{c, J} (j) \binom{j - 1}{c^{\dag}}
= \sum_{\ell = 0}^{c + c^{\dag}}
\gamma_{\ell} j^{\ell} .
\end{equation*}
For the fixed ordered centered product under consideration, the admissible assignments form a finite union of relative-order/equality
patterns. For each such pattern, the number of embeddings into $\{0, \cdots, j - 1\}$ is a polynomial in $j$ whose degree is bounded by the
number of free positions, hence by $c + c^{\dag}$. Therefore the coefficient attached to this fixed ordered centered product has the form
\begin{equation}
\label{eq:low-degree-fixed-coefficient}
\mathfrak C_{\mathrm{fix}}
= \sum_{j = 0}^{m - 1} (-1)^{j + 1}
\binom{m - 1}{j} p_{c, c^{\dag}, J} (j),
\end{equation}
where $p_{c, c^{\dag}, J}$ is a polynomial satisfying $\deg p_{c, c^{\dag}, J}
    \leq c + c^{\dag} $.
Finally, for every integer $d < m - 1$,
\begin{equation*}
\sum_{j = 0}^{m - 1} (-1)^{j}
\binom{m - 1}{j} j^{d} = 0.
\end{equation*}

Thus the coefficient in
\eqref{eq:low-degree-fixed-coefficient} is zero whenever $c + c^{\dag} < m - 1$. By
$c < \sfc_{m} = \lceil (m - 1) / 2 \rceil$, we have
$c + c^{\dag} \leq 2 c < m - 1$.
Hence the coefficient of every potentially nonzero ordered centered product is zero. All remaining ordered centered products are already
zero by Lemma~\ref{lem:bias-centered-word-bound}. Therefore,
\begin{equation*}
\calM_{c}^{(J)} = 0, \quad 1 \leq c < \sfc_{m}.
\end{equation*}
This proves the lemma.    
\end{proof}

\subsubsection{Proof details related to \textbf{Step iii}}
\label{app:bias iii}

Finally, we are left to prove Lemma~\ref{lem:bias-surviving-levels} and~\ref{lem:bias-neumann-remainder}. We start with Lemma~\ref{lem:bias-surviving-levels}.

\begin{proof}[Proof of Lemma~\ref{lem:bias-surviving-levels}]

In the proof, we need to use the following preliminary result, similar to the graph-counting Lemma~\ref{lem:graph-counting}.

\begin{lemma}
\label{lem:bias-averaged-centered-product-bound}
Let $1 \leq j \leq m - 1$, and let
$\ell_{0}, \cdots, \ell_{j - 1}$ be nonnegative integers satisfying $ \sum_{s = 0}^{j - 1} \ell_{s} = c \geq 1 $. Define
\begin{equation}
\label{surplus}
\calS_{j, \bm \ell}
\coloneqq \bbE \Big\{ A_{m - 1} X_{m - 1}^{\top}
\prod_{s = 0}^{j - 1} (X_{s} X_{s}^{\top} \Delta_{n}^{\ell_{s}}) X_{m} Y_{m} \Big\},
\end{equation}
where the factor indexed with $s = 0$ is interpreted as the identity matrix. Let $s_{c} \coloneqq \lceil\frac{c}{2}\rceil \vee 1, \ \zeta_{A, Y} \coloneqq \|A\|_{2} \|Y\|_{2} + \|A\|_{\infty} \|Y\|_{2} + \|A\|_{2} \|Y\|_{\infty}$. If $\frac{C (j \vee c) k}{n} \leq \eta < 1$, then
\begin{equation}
\label{eq:bias-averaged-centered-product-bound}
|\calS_{j, \bm \ell}|
\lesssim_{\eta} \zeta_{A, Y}
\Big( \frac{C (j \vee c) k}{n} \Big)^{s_{c}}.
\end{equation}
\end{lemma}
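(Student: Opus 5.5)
We will bound $\calS_{j,\bm\ell}$ by expanding every power of $\Delta_n$ into sample indices, using Lemma~\ref{lem:bias-centered-word-expansion}, and then bounding each resulting expectation with the graph-counting Lemma~\ref{lem:graph-counting}. The expansion gives
\[
\calS_{j,\bm\ell} = n^{-c}\sum_{i_1,\dots,i_c=1}^n \bbE\big[A_{m-1}X_{m-1}^\top \textstyle\prod_s\{X_sX_s^\top \prod_l W_{i_l}\} X_mY_m\big],
\qquad W_i = I - X_iX_i^\top .
\]

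\textbf{Step 1: sort index tuples by coincidence pattern.} We group the tuples $(i_1,\dots,i_c)$ according to the partition $\pi$ of $[c]$ that records which $i_l$ coincide. We also record which blocks hit one of the displayed labels $\{1,\dots,j-1,m-1,m\}$; the index $s=0$ is the identity and is excluded.

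\textbf{Step 2: discard patterns with zero expectation.} Take a block that is a singleton and avoids every displayed label. Conditioning on everything else and using $\bbE W=0$, as in Lemma~\ref{lem:bias-centered-word-bound}, its expectation vanishes. So every surviving block either has size at least $2$ or hits a displayed label.

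\textbf{Step 3: count the patterns.} Let $b$ be the number of blocks that hit no displayed label. Each such block has size at least $2$, so $b \le \lfloor c/2\rfloor$. These blocks range over at most $n^{b}$ free index values. A block matched to a displayed label has at most $j+1$ choices. The total number of tuples in a pattern class is therefore at most $n^{b}(j+2)^{c}$, times the number of partitions.

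\textbf{Step 4: apply graph counting.} For a fixed pattern, expand each $W_i = I - X_iX_i^\top$, and expand each $X_sX_s^\top$ likewise if convenient. Each term is then a multiplicative kernel: a chain from $X_{m-1}$ to $X_m$ whose vertices are the distinct labels. The endpoint factors $A$ and $Y$ are treated as bounded functions absorbed into $\Psi$. In Lemma~\ref{lem:forest} the $A,Y$ factors ride along. Using $L_2$ norms for at least one of them produces the $\zeta_{A,Y}$ prefactor.

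For the chain, $e$ equals the number of retained $XX^\top$ insertions plus one, and $v$ equals the number of distinct labels. A new free label that appears $t\ge 2$ times as an inserted $XX^\top$ contributes $t$ edges and one vertex. Its net contribution to $\frakr$ is $t-1$, while it costs $n^{-t}$ in the prefactor and gains only $n^{1}$ from the sum. Each occurrence of $XX^\top$ at a coincident label costs $n^{-1}$ and adds at most one to $\frakr$, giving a factor of at most $k/n$ per occurrence. Taking $I$ instead of $XX^\top$ in $W$ removes the edge and contributes no factor of $k$.

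Combining these, each pattern should contribute $\lesssim (Ck/n)^{c-b}$ times combinatorial factors. Since $c-b \ge \lceil c/2\rceil = s_c$, this gives the target power. For patterns matched to displayed labels, the extra $j$-choices together with the partition counts give a factor at most $(C(j\vee c))^{c}$. These can be rebalanced against the powers of $k/n$, yielding $(C(j\vee c)k/n)^{s_c}$. The leftover excess powers form a geometric series, which is summable because $C(j\vee c)k/n\le\eta<1$; this accounts for the $\lesssim_\eta$.

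\textbf{Main obstacle.} The delicate step is the bookkeeping in Step 4. We must show that every term in the expansion has Betti number at most (number of $n^{-1}$ factors) $-$ (number of free summed labels) $+\,O(1)$, so that the count yields at least $s_c$ powers of $k/n$ rather than fewer. We must also control the number of partitions: the Bell-type growth $c^{c}$ has to be absorbed into $(C(j\vee c))^{s_c}$ plus a convergent tail. Our first approach would be an induction on $c$ that peels off the first $W$ factor. It splits into two cases: the factor is a new label, which is then forced to repeat by Step 2, or it is an old label, which costs a factor $(j+c)/n$ and adds at most one edge. Each peel should then cost either $k/n$ or $\sqrt{k/n}$ on average.
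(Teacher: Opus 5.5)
Your route is the paper's. Separating coincidence blocks that hit a displayed label in $\{1,\dots,j-1,m-1,m\}$ from free blocks is exactly the split $\Delta_n=U_{\calA_j}+V_{\calA_j}$. Free singletons vanish by centering in both arguments. Your Betti bookkeeping also matches: a hit on an existing label raises $\frakr$ by at most one, and a free label used $t$ times raises it by at most $t-1$. One correction: the additive term in your ``main obstacle'' paragraph must be exactly $0$, not $O(1)$. It is $0$ because the initial chain is a path, so $\frakr(\Gamma_0)=0$; any positive constant there would cost a power of $k$ that nothing absorbs.

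The genuine gap is the counting in Steps 3--4. You bound the tuples in a class by $n^{b}(j+2)^{c}$ times the number of partitions, which is a combinatorial factor $(C(j\vee c))^{c}$. You then claim it can be rebalanced against $(k/n)^{c-b}$ to give $(C(j\vee c)k/n)^{s_c}$. This fails when $c-b$ is near $c/2$. For $c$ even with all positions in $c/2$ free pairs, you gain only $(k/n)^{c/2}$ and end up with $(C(j\vee c)^{2}k/n)^{c/2}$. The extra factor $(j\vee c)^{c/2}$ is not absorbed by any constant, and this bound would also be too weak for the later summation over $c\le (m-1)J$.

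The missing point is that each combinatorial choice must be paired with its own factor $1/n$. The paper does this in two parts:
\begin{itemize}
\item Displayed hits are counted position by position, with at most $|\calA_j|\lesssim j$ choices per factor $1/n$, giving $(Cj/n)^{r_A}$.
\item Partitions of the $r_F$ free positions with exactly $d_F=r_F-|\pi|$ merges are counted by $\binom{r_F}{d_F}(r_F-d_F)^{d_F}\le (Cr_F)^{d_F}$.
\end{itemize}
So the combinatorial exponent equals the $k/n$ exponent $r_A+d_F\ge s_c$. What remains are the $2^{c}\cdot 2^{c}$ choices ($U$ versus $V$, and $I$ versus $-XX^\top$ in each $W$), which are absorbed via $c\le 2s_c$.

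Your peeling induction, done precisely, is an equivalent fix:
\begin{itemize}
\item A first occurrence of a free label costs $n\cdot n^{-1}=O(1)$ and does not raise $\frakr$.
\item Every other position costs at most $C(j\vee c)/n$ and raises $\frakr$ by at most one.
\item At most $\lfloor c/2\rfloor$ positions are first occurrences, and choosing which ones costs $\binom{c}{b}\le 2^{c}$.
\end{itemize}
As written, however, Steps 3--4 rely on the unmatched count, so the claimed bound does not follow from them.
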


\begin{proof}
Let $\calA_{j} \coloneqq
\{m - 1, m, 1, \cdots, j - 1\}$. Then $|\calA_{j}| \lesssim j$. Write 
\begin{align*}
\Delta_{n} = U_{\calA_{j}}
+ V_{\calA_{j}},\ \text{ where }\  U_{\calA_{j}} \coloneqq \frac{1}{n}
\sum_{i\in\calA_{j}}
(I - X_{i} X_{i}^{\top}), \ V_{\calA_{j}}
\coloneqq \frac{1}{n} \sum_{i\notin\calA_{j}} (I -X_{i} X_{i}^{\top}).
\end{align*}
Before expanding $\Delta_{n}$, the displayed bilinear part of the integrand in \eqref{surplus} has the path structure
\begin{equation*}
X_{m - 1}^{\top}
\prod_{s = 1}^{j - 1} (X_{s} X_{s}^{\top} ) X_{m},   
\end{equation*}
where the position $s = 0$ corresponds to the identity matrix. Let $\Gamma_{0}$ denote the associated graph. The scalar variables $A_{m-1}$ and $Y_{m}$ are attached to the two endpoint labels and do not create new edges. Recall that for any graph $\Gamma$ arising from a product of bilinear forms below, we write
\begin{equation*}
\frakr (\Gamma) \coloneqq e (\Gamma) - v (\Gamma) + \kappa (\Gamma)
\end{equation*}
for the first Betti number. For the initial graph $\Gamma_{0}$ associated with the displayed path above, $e (\Gamma_{0}) = j$, $v (\Gamma_{0}) = j + 1$, and $\kappa (\Gamma_{0}) = 1$. Therefore, $\frakr (\Gamma_{0}) = 0$.

Since the multiplicity of $\Delta_{n}$ is $c$, expanding $\Delta_{n}$ by $U_{\calA_{j}}$ and $V_{\calA_{j}}$ leads to summation of terms, each of which contains $c$ multiplications of $U_{\calA_{j}}$ or $V_{\calA_{j}}$. Given such a summand, let $r_{A}$ and $r_{F}$ be the multiplicities of $U_{\calA_{j}}$ and $V_{\calA_{j}}$, respectively. Then $r_{A} + r_{F} = c$.

For $U_{\calA_{j}}$, the following holds:
\begin{equation*}
U_{\calA_{j}} = \frac{|\calA_{j}|}{n} I - \frac{1}{n} \sum_{i\in\calA_{j}} X_{i} X_{i}^{\top}.
\end{equation*}
In the fixed summand under consideration, the multiplicity of $U_{\calA_{j}}$ is $r_{A}$. 
Now suppose that the term $ - \frac{1}{n} \sum_{i \in \calA_{j}} X_{i} X_{i}^{\top}$
has multiplicity $q_{A}$, while the term $\frac{|\calA_{j}|}{n} I$ has multiplicity $r_{A} - q_{A}$.

The scalar coefficients produced by these $r_A$ positions are bounded in absolute value by
\begin{equation*}
\Big( \frac{|\calA_{j}|}{n} \Big)^{r_{A} - q_{A}} \Big( \frac{1}{n} \Big)^{q_{A}} |\calA_{j}|^{q_{A}} \le \Big( \frac{C j}{n} \Big)^{r_{A}}.
\end{equation*}
Indeed, each occurrence of $\frac{|\calA_{j}|}{n} I$ contributes the scalar $|\calA_{j}| / n$, while each occurrence of $- \frac{1}{n} \sum\limits_{i \in \calA_{j}} X_{i} X_{i}^{\top}$ contributes the scalar $1 / n$ together with a finite summation over $\calA_{j}$, whose cardinality is $|\calA_{j}|$. The last inequality follows from $|\calA_{j}| \lesssim j$.

We next control the increase in the first Betti number caused by the $q_{A}$ positions where the matrix $X_{i} X_{i}^{\top}$ from $ - \frac{1}{n} \sum_{i \in \calA_{j}} X_{i} X_{i}^{\top}$ is selected. For each such position, the index $i$ belongs to $\calA_{j}$, and hence is already one of the vertices in the initial displayed path. Therefore, no new vertex outside the initial path is introduced.

At the graph level, inserting $X_{i} X_{i}^{\top}$ into a bilinear contraction replaces one edge by two adjacent edges passing through the already present vertex $i$. Thus the number of edges can increase by at most one, while the number of vertices and the number of connected components remain unchanged. Hence these $q_{A}$ positions can increase the graph first Betti number by at most $q_{A}$.

For $V_{\calA_{j}}$, let $u_{1}, \cdots, u_{r_{F}}$ be the corresponding sample indices, and let $\pi$ be the partition of $\{1, \cdots, r_{F}\}$ induced by the values of the indices. For example, if $u_{1} = u_{2}$, then they should belong to the same partition. Define
\begin{equation*}
b \coloneqq |\pi|, \quad d_{F} \coloneqq r_{F} - b.
\end{equation*}
If $\pi$ has a singleton element, the corresponding expectation is zero because $V_{\calA_{j}}$ is centered. Therefore, every element in the partition with nonzero expectation must have size at least two, and thus $b \leq \lfloor \frac{r_{F}}{2} \rfloor$ and $d_{F} \geq \lceil \frac{r_{F}}{2} \rceil$.

For a given partition $\pi$, the normalization $n^{-r_{F}}$ and the summation over its $b$ distinct indices contribute at most $n^{-r_{F}} n^{b} = n^{-d_{F}}$. 
For partitions $\pi$ with no singleton elements and satisfying $r_{F} - |\pi| = d_{F}$, we have
\begin{align*}
|\{\pi : \ r_{F} - |\pi| = d_{F}\}| \le \binom{r_{F}}{d_{F}} (r_{F} - d_{F})^{d_{F}} \le \Big( \frac{e r_{F}}{d_{F}} \Big)^{d_{F}} (r_{F} - d_{F})^{d_{F}} \leq (C r_{F})^{d_{F}},
\end{align*}
where the second inequality follows from $\binom{r_{F}}{d_{F}} \le \frac{r_{F}^{d_{F}}}{d_{F} !} \le \Big( \frac{e r_{F}}{d_{F}} \Big)^{d_{F}}$. The last inequality uses $r_{F} - d_{F} \le d_{F}$, which follows from the fact that only partitions without singleton elements give nonzero contributions, so $r_{F} \ge 2 |\pi| = 2 (r_{F} - d_{F})$. Thus $r_{F} - d_{F} \le d_{F}$.

Next, write $W_{i} = I - X_{i} X_{i}^{\top}$. For a given partition $\pi$, we further specify, at each position where a matrix $W_{i}$ appears, whether the term $I$ or the term $- X_{i} X_{i}^{\top}$ is selected. After this specification, we perform graph counting as in Lemma~\ref{lem:graph-counting}. More precisely, the resulting product of bilinear forms defines an undirected graph $\Gamma$: its vertices are the sample indices appearing in the bilinear forms, and each bilinear form $X_{a}^{\top} X_{b}$ gives an edge $(a, b)$. 

For each element $B_{\nu} \in \pi$ in the partition (by definition, sharing the same sample indices; denote it by $u_{B_{\nu}}$), set $b_{\nu} \coloneqq |B_{\nu}|$. Suppose that $t_{\nu}$ rank-one matrices $X X^{\top}$ are selected from $B_{\nu}$. Then $t_{\nu} \le b_{\nu}$. The contribution from $B_{\nu}$ increases the first Betti number by at most $\max \{t_{\nu} - 1, 0\}$. 

Indeed, if $t_{\nu} = 0$, no bilinear form involving $u_{B_\nu}$ is introduced and thus there is no increase in the first Betti number. If $t_{\nu} \ge 1$, the first use of $-X_{u_{B_{\nu}}} X_{u_{B_{\nu}}}^{\top}$ introduces the sample index $u_{B_{\nu}}$ into the initial displayed path. 
At the graph level, this insertion replaces one edge by two consecutive edges and introduces a new vertex:
\begin{equation*}
\begin{tikzpicture}[baseline=(current bounding box.center), line width=0.8pt]
  \node[circle, draw, inner sep=1.2pt] (a1) at (0,0) {$a$};
  \node[circle, draw, inner sep=1.2pt] (b1) at (1.8,0) {$b$};
  \draw (a1) -- (b1);

  \node at (3.1,0) {$\longrightarrow$};

  \node[circle, draw, inner sep=1.2pt] (a2) at (4.4,0) {$a$};
  \node[circle, draw, inner sep=1.2pt] (u)  at (6.0,0) {$u_{B_{\nu}}$};
  \node[circle, draw, inner sep=1.2pt] (b2) at (7.6,0) {$b$};
  \draw (a2) -- (u);
  \draw (u) -- (b2);
\end{tikzpicture}
\end{equation*}
Therefore, both the number of edges and the number of vertices increase by one, while the number of connected components remains unchanged. Hence the first Betti number $\frakr (\Gamma) = e (\Gamma) - v (\Gamma) + \kappa (\Gamma)$ does not increase.

Each of the remaining $t_{\nu} - 1$ selections of $- X_{u_{B_{\nu}}} X_{u_{B_{\nu}}}^{\top}$ uses the same index $u_{B_{\nu}}$ again. It can therefore add at most one edge without adding a new vertex, and hence can increase $\frakr (\Gamma)$ by at most one.
Consequently, the increase in first Betti number caused by $B_{\nu}$ is at most
\begin{equation*}
\max \{t_{\nu} - 1, 0\} \leq b_{\nu} - 1.
\end{equation*}

Summing over all possible elements of $\pi$, $V_{\calA_{j}}$ increases the first Betti number by at most
\begin{equation*}
\sum_{B_{\nu} \in \pi} (b_{\nu} - 1) = r_{F} - |\pi| = r_{F} - b = d_{F}.
\end{equation*}
Together with the previous analysis of $U_{\calA_{j}}$, the undirected graph $\Gamma$ associated with the resulting product of bilinear forms satisfies
\begin{equation*}
\frakr (\Gamma) = e (\Gamma) - v (\Gamma) + \kappa (\Gamma) \leq \frakr (\Gamma_{0}) + q_{A} + d_{F} = q_{A} + d_{F}.
\end{equation*}
where $\Gamma_{0}$ is the initial path graph defined above and $\frakr(\Gamma_{0}) = 0$.

For a fixed term in the above expansion, the integrand can be written as the product of the endpoint scalar weights $A_{m - 1}$ and $Y_{m}$ and a product of bilinear forms encoded by $\Gamma$. Lemma~\ref{lem:graph-counting} is applied to this product of bilinear forms, while the endpoint weights are controlled separately by \Holder{}'s inequality. It suffices for our purpose to use the following loose bound:
\begin{equation*}
\zeta_{A, Y} \coloneqq \|A\|_{2} \|Y\|_{2} + \|A\|_{\infty} \|Y\|_{2} + \|A\|_{2} \|Y\|_{\infty}.
\end{equation*}

For a given partition $\pi$ and the fixed combination between $I$ and rank-one matrices $X X^{\top}$, let $\calT_{\pi}$ denote the aggregate of the corresponding terms in $\calS_{j, \bm\ell}$. For this aggregate, the scalar coefficient from the part involving $U_{\calA_{j}}$ is bounded by $\Big( \frac{C j}{n} \Big)^{r_{A}}$, while the part involving $V_{\calA_j}$, for the given partition $\pi$, contributes $n^{-d_F}$. By the preceding bound on the first Betti number and Lemma~\ref{lem:graph-counting},
\begin{align*}
|\calT_{\pi}| \lesssim C^{c} \Big( \frac{C j}{n} \Big)^{r_{A}} n^{-d_{F}} k^{q_{A} + d_{F}} \zeta_{A, Y}.
\end{align*}

We now sum over all possible partitions of the indices generated by $V_{\calA_{j}}$ satisfying $r_{F} - |\pi| = d_{F}$. Using the counting bound for such partitions gives
\begin{align*}
\sum_{\pi: r_{F} - |\pi| = d_{F}} |\calT_{\pi}| \lesssim C^{c} \Big( \frac{C j}{n} \Big)^{r_{A}} n^{- d_{F}} (C r_{F})^{d_{F}} k^{q_{A} + d_{F}} \zeta_{A, Y} \lesssim C^{c} \Big( \frac{C (j \vee c) k}{n} \Big)^{r_{A} + d_{F}} \zeta_{A, Y},
\end{align*}
where we used $q_{A} \leq r_{A}$ and $r_{F} \leq c$.

If $r_{F} = 1$, the contribution is zero by centering. If $r_{F} = 0$, then $r_{A} = c$ and hence $r_{A} + d_{F} \geq s_{c}$. If $r_{F} \geq 2$, then $d_{F} \geq \lceil r_{F} / 2 \rceil$ and $r_{A} + r_{F} = c$, so
\begin{equation*}
r_{A} + d_{F} \geq r_{A} + \Big\lceil \frac{r_{F}}{2} \Big\rceil \geq \Big\lceil \frac{c}{2} \Big\rceil = s_{c}.
\end{equation*}
Therefore, using $C (j \vee c) k/n \leq \eta < 1$ and summing over the possible values of $d_{F}$,
\begin{equation*}
\sum_{\pi} |\calT_{\pi}| \lesssim C^{c} \zeta_{A, Y} \Big( \frac{C (j \vee c) k}{n} \Big)^{s_{c}}.
\end{equation*}

It remains to sum over the remaining choices not yet included. The $c$ occurrences of $\Delta_{n}$ in the product can be assigned to $U_{\calA_{j}}$ or $V_{\calA_{j}}$ in at most $2^{c}$ ways, and substituting $W_{i} = I - X_{i} X_{i}^{\top}$ for each matrix $W_{i}$ produces at most another $2^{c}$ terms. The summation over partitions $\pi$ of the indices associated with $V_{\calA_{j}}$ has already been counted through the factor $(C r_{F})^{d_{F}}$. Hence, the remaining summation contributes at most a factor of $C^{c}$.

Since $c \leq 2 s_{c}$, this factor can be absorbed by enlarging the constant $C$ in the base. Therefore,
\begin{equation*}
|\calS_{j, \bm\ell}| \lesssim \zeta_{A, Y} \Big( \frac{C (j \vee c)k}{n} \Big)^{s_{c}},
\end{equation*}
which proves \eqref{eq:bias-averaged-centered-product-bound}.
\end{proof}

Fix $\sfc_{m} \leq c \leq (m - 1)J$. For $1 \leq r \leq c$, define
\begin{equation*}
\sfp_{J} (c, r) \coloneqq |\{(\ell_{1}, \cdots, \ell_{r}) \in \{1, \cdots, J\}^{r}: \sum_{u = 1}^{r} \ell_{u} = c\}|.
\end{equation*}
For a fixed correction order $j$, the number of choices of the $r$ positions carrying positive powers of $\Delta_{n}$, together with
their power assignments, is
\begin{equation*}
N_{j, c, J} \coloneqq \sum_{r = 1}^{c \wedge j} \binom{j}{r} \sfp_{J}(c, r) \leq \sum_{r = 1}^{c \wedge j} \binom{j}{r} \binom{c - 1}{r - 1}
= \binom{j + c - 1}{c}.
\end{equation*}
For every admissible power assignment, Lemma~\ref{lem:bias-averaged-centered-product-bound} gives
\begin{equation*}
\Big| \bbE \Big\{ A_{m - 1} X_{m - 1}^{\top} \prod_{s = 0}^{j - 1} \Big( X_{s} X_{s}^{\top} \Delta_{n}^{\ell_{s}} \Big) X_{m} Y_{m} \Big\} \Big| \lesssim \zeta_{A, Y} \Big( \frac{C (j \vee c) k}{n} \Big)^{s_{c}}.
\end{equation*}
Therefore, by the definition of $\calM_{c}^{(J)}$,
\begin{align*}
|\calM_{c}^{(J)}| & \lesssim \zeta_{A, Y} \sum_{j = 1}^{m - 1} \binom{m - 1}{j} N_{j, c, J} \Big( \frac{C (j \vee c) k}{n} \Big)^{s_{c}} \\
& \leq \zeta_{A, Y}
\Big( \frac{C (m \vee c) k}{n} \Big)^{s_{c}} \sum_{j = 1}^{m - 1} \binom{m - 1}{j} \binom{j + c - 1}{c}.
\end{align*}
Since $c \geq \sfc_{m} = \lceil (m - 1) / 2 \rceil$, we have $m - 1 \leq 2 c$. Hence,
\begin{align*}
\sum_{j = 1}^{m - 1} \binom{m - 1}{j} \binom{j + c - 1}{c} \leq \sum_{j = 1}^{m - 1} \binom{m - 1}{j} 2^{j + c - 1} \leq 2^{c} 3^{m - 1} \leq C_{1}^{c}.
\end{align*}
Since $s_{c} = \lceil c / 2\rceil$, we have $c \leq 2 s_{c}$. Thus, we can always choose the constants appropriately for the following to hold:
\begin{equation*}
C_{1}^{c} \Big( \frac{C (m \vee c) k}{n} \Big)^{s_{c}} \leq \Big( \frac{C (m \vee c) k }{n} \Big)^{s_{c}}.
\end{equation*}
Now the proof is complete.
\end{proof}

We finish the proof of the bias bound by proving Lemma~\ref{lem:bias-neumann-remainder}.

\begin{proof}[Proof of Lemma~\ref{lem:bias-neumann-remainder}]
Recall that $\Delta_{n} = I - \hat{\Sigma}$, $\sfD_{J} \coloneqq \sum_{l = 1}^{J} \Delta_{n}^{l}$, $\sfR_{J} = \Delta_{n}^{J + 1} \hat{\Omega}$. Let $\calG_{n} \coloneqq \{\Vert \Delta_{n} \Vert_{\op} \leq r_{n}, \Vert \hat{\Omega} \Vert_{\op} \leq C\}$. By the matrix Bernstein inequality (Lemma~\ref{lem:matrix Bernstein} in Appendix~\ref{app:matrix lemma}),
\begin{equation*}
r_{n} \lesssim \Big( \frac{k \log n}{n} \Big)^{1 / 2},
\end{equation*}
and $\bbP (\calG_{n}^{c})$ can be made of order $o (n^{- 1 / 2})$. In the event $\calG_{n}$,
\begin{equation*}
\label{eq:bias-neumann-op-bound}
\|\sfR_{J}\|_{\op} \leq \|\Delta_{n}\|_{\op}^{J + 1} \|\hat{\Omega}\|_{\op} \lesssim r_{n}^{J + 1}.
\end{equation*}
Since $r_{n} < 1 / 2$ for all sufficiently large $n$, on $\calG_{n}$ we have
\begin{equation*}
\label{eq:bias-truncated-power-op-bound}
\|\sfD_{J}\|_{\op} \leq \sum_{l = 1}^{J} \|\Delta_{n}^{l}\|_{\op} \leq \sum_{l = 1}^{J} r_{n}^{l} \leq \frac{r_{n}}{1 - r_{n}} \lesssim r_{n}.
\end{equation*}
We first bound the contribution from the event $\calG_{n}$. By the definition of $\calR_{m, k, J}$, for fixed $j$, $\emptyset \neq S \subseteq [j - 1] \cup \{0\}$, and $\emptyset \neq T \subseteq S$, the corresponding integrand in the expectation appeared in $\calR_{m, k, J}$ has the form
\begin{equation}
\label{remainder integrand}
A_{m - 1} X_{m - 1}^{\top} \prod_{s = 0}^{j - 1} (X_{s} X_{s}^{\top} \sfD_{J}^{\mathbbm{1} \{s \in S \setminus T\}} \sfR_{J}^{\mathbbm{1} \{s \in T\}}) X_{m} Y_{m} .
\end{equation}
Here $T$ is the set of positions at which the Neumann remainder $\sfR_{J}$ is selected. Since $T \subseteq S$, the two indicators $\mathbbm{1} \{s \in S\setminus T\}$ and $\mathbbm{1} \{s \in T\}$ cannot simultaneously equal one. Furthermore, we observe that, on $\calG_{n}$, the following hold:
\begin{equation*}
\|\sfD_{J}^{\mathbbm{1} \{s \in S \setminus T\}} \sfR_{J}^{\mathbbm{1} \{s \in T\}}\|_{\op}
\lesssim \begin{cases}
r_{n}^{J + 1}, & s \in T, \\
r_{n} , & s \in S \setminus T, \\
1, & s \notin S.
\end{cases}
\end{equation*}

After applying these operator-norm bounds, we further observe that \eqref{remainder integrand} has the same path structure as $X_{m - 1}^{\top} (\prod_{s = 1}^{j - 1} X_{s} X_{s}^{\top}) X_{m}$. The associated graph is a path between endpoint indices $m - 1$ and $m$. Again, the first Betti number is zero. By Lemma~\ref{lem:graph-counting}, uniformly over $S$ and $T$,
\begin{align*}
\Big| \bbE \Big\{\mathbf{1}_{\calG_{n}}
A_{m - 1} X_{m - 1}^{\top}\prod_{s = 0}^{j - 1} \Big( X_{s} X_{s}^{\top} \sfD_{J}^{\mathbbm{1} \{s \in S \setminus T\}} \sfR_{J}^{\mathbbm{1}\{s \in T\}} \Big) X_{m} Y_{m} \Big\} \Big| \lesssim \zeta_{A, Y} C^{j} r_{n}^{|S| - |T|} (r_{n}^{J + 1})^{|T|}.
\end{align*}

For a fixed $j$, summing over all $S \subseteq\{0, \ldots, j - 1\}$ and all nonempty $T \subseteq S$ yields
\begin{align*}
\sum_{S \subseteq [j - 1] \cup \{0\}} \sum_{\emptyset \neq T \subseteq S} r_{n}^{|S| - |T|} (r_{n}^{J + 1})^{|T|} = (1 + r_{n} + r_{n}^{J + 1})^{j} - (1 + r_{n})^{j}.
\end{align*}
Indeed, for each of the $j$ positions, the inserted matrix has one of the following three possibilities: $I, \sfD_J, \sfR_{J} $. On the event $\calG_{n}$, their operator norms are bounded, up to a universal constant, by $1, r_{n}, r_{n}^{J + 1}$, respectively. Hence the total operator-norm weight over all choices with at least one occurrence of $\sfR_{J}$ is
\begin{equation*}
(1 + r_{n} + r_{n}^{J + 1})^{j} - (1 + r_{n})^{j}.
\end{equation*}
The subtraction removes the choices in which no position selects $\sfR_{J}$, that is, the choices involving only $I$ and $\sfD_{J}$.

By the mean value theorem,
\begin{equation*}
(1 + r_{n} + r_{n}^{J + 1})^{j} - (1 + r_{n})^{j} \le j r_{n}^{J + 1}(1 + r_{n} + r_{n}^{J + 1})^{j - 1}.
\end{equation*}
Therefore, on the event $\calG_{n}$,
\begin{align}
\label{eq:bias-rem-good-event}
|\calR_{m, k, J}| & \lesssim_{\eta} \zeta_{A, Y} \sum_{j = 1}^{m - 1} \binom{m - 1}{j} C^{j} j r_{n}^{J + 1} (1 + r_{n} + r_{n}^{J + 1})^{j - 1} \nonumber \\
& \lesssim \zeta_{A, Y} C^{m} m \exp (C m r_{n}) r_{n}^{J + 1}.
\end{align}
The last inequality follows from the binomial identity
\begin{equation*}
\sum_{j = 1}^{N} \binom {N}{j} j C^{j} b^{j - 1} = N C (1 + C b)^{N - 1},
\end{equation*}
with $N = m - 1$ and $b = 1 + r_{n} + r_{n}^{J + 1}$, together with $r_{n}^{J + 1} \le r_{n}$, $r_{n} = o(1)$ and $(1 + r_{n} + r_{n}^{J + 1})^{m} \le \exp(C m r_{n})$.

Recall that $\rho_{m} = \frac{m k}{n}$. Since $m \asymp \log n$, after enlarging the universal constant $C$ if necessary, $r_{n}^{2} \lesssim \frac{k \log n}{n} \lesssim \rho_{m}$. Moreover, $C m J k / n \leq \eta$ implies $\rho_{m} J \lesssim 1$. Hence, for some universal constant $C_{1}$ and all sufficiently large $n$,
\begin{equation*}
C^{m} m \exp (C m r_{n}) \leq \rho_{m}^{-C_{1} m}.
\end{equation*}
It follows that
\begin{equation*}
C^{m} m \exp (C m r_{n}) r_{n}^{J + 1} \lesssim \rho_{m}^{-C_{1} m} \rho_{m}^{(J + 1) / 2}.
\end{equation*}
Since $J = \lceil C_{0} \log n \rceil$, $m \asymp \log n$, and
$s_{\sfc_{m}} \leq m / 4 + 1$, choosing $C_{0}$ sufficiently large gives
\begin{equation*}
\frac{J + 1}{2} - C_{1} m \geq s_{\sfc_{m}}.
\end{equation*}
Combining this with \eqref{eq:bias-rem-good-event} yields $|\calR_{m, k, J}| \lesssim \zeta_{A, Y} \rho_{m}^{s_{\sfc_{m}}}$ in $\calG_{n}$. Finally, by Cauchy--Schwarz inequality, Assumption~\ref{as:cov}--\ref{as:outcomes}, and the tail bound for $\calG_{n}^{c}$, the contribution from $\calG_{n}^{c}$ is negligible relative to $\zeta_{A, Y} \rho_{m}^{s_{\sfc_{m}}}$. Therefore
\begin{equation*}
|\calR_{m, k, J}| \lesssim \zeta_{A, Y} \rho_{m}^{s_{\sfc_{m}}}.
\end{equation*}
\end{proof}

\subsection{Proof details of Section~\ref{sec:variance}}
\label{app:variance}

\subsubsection{Further decomposition of \texorpdfstring{$\bbU_{n,2+|\calB|}(K_{\calB})$}{} and technical results related to \textbf{Step ii}}
\label{app:mobius-block-to-chain}

In this section, we first prove Lemma~\ref{lem:mobius-block-to-chain}, which further decomposes $\bbU_{n, 2 + r} (K_{\calB})$ into $U$-statistics $T_{a, \ell, \gamma}$ with multiplicative-kernels.

\begin{proof}[Proof of Lemma~\ref{lem:mobius-block-to-chain}]
By the definitions of $\calE (\calB)$ and $d_{\calB, \varepsilon}$, the full expansion of all $H$- and $\calR$-factors in $K_{\calB}$ is indexed by $\varepsilon \in \calE(\calB)$. For each such $\varepsilon$, the scalar coefficient produced by the constant terms is precisely
\begin{equation*}
d_{\calB, \varepsilon} = (-1)^{N_{H} (\varepsilon)} (-2)^{N_{R} (\varepsilon)}.
\end{equation*}
The remaining, non-constant factors are exactly the terms $X_{u} X_{u}^{\top} \hat{\Omega}$ prescribed by the nonzero entries of $\varepsilon$.

For a fixed $\varepsilon \in \calE (\calB)$, let the positions with $\varepsilon_{l} \neq 0$ be ordered increasingly. This ordering is the same as the original ordering of the factors in the product defining $K_{\calB}$. Hence the non-identity factors selected by $\varepsilon$ form the following multiplicative-kernel
\begin{equation*}
A_{i_{1}} X_{i_{1}}^{\top}\hat{\Omega} \Big( \prod_{s = 1}^{\ell_{\varepsilon}} X_{\gamma_{\varepsilon} (s)} X_{\gamma_{\varepsilon} (s)}^{\top} \hat{\Omega} \Big) X_{i_{2}} Y_{i_{2}}.
\end{equation*}
Thus, before summing out the interior indices that do not appear in this kernel, the contribution of this $\varepsilon$ is
\begin{equation*}
d_{\calB, \varepsilon} \, \bbU_{n, 2 + r} \Big\{ A_{i_{1}} X_{i_{1}}^{\top} \hat{\Omega} \Big( \prod_{s = 1}^{\ell_{\varepsilon}} X_{\gamma_{\varepsilon} (s)} X_{\gamma_{\varepsilon} (s)}^{\top} \hat{\Omega} \Big) X_{i_{2}} Y_{i_{2}} \Big\}.
\end{equation*}

We now remove the unused interior indices. If $\nu \notin \calA_{\varepsilon}$, then the index $a_{\nu}$ does not appear in the displayed kernel above and can therefore be summed out exactly. Let $p = a_{\varepsilon} = 2 + b_{\varepsilon}$. For any kernel $f$ depending only on the $p$ displayed indices, the ordered $U$-statistic normalization gives
\begin{align*}
& \frac{(n - 2 - r)!}{n!} \sum_{i_{1} \neq i_{2} \neq a_{1} \neq \cdots \neq a_{r}} f \bigl(i_{1}, i_{2}, (a_{\nu})_{\nu \in \calA_{\varepsilon}}\bigr)  = \frac{(n - p)!}{n!} \sum_{i_{1} \neq i_{2} \neq (a_{\nu})_{\nu \in \calA_{\varepsilon}}} f \bigl(i_{1}, i_{2}, (a_{\nu})_{\nu \in \calA_{\varepsilon}}\bigr).
\end{align*}
Indeed, once the $p$ displayed indices are fixed, the remaining $r - b_{\varepsilon}$ indices can be chosen in $\frac{(n - p)!}{(n - 2 - r)!}$ ordered ways. Therefore the $\varepsilon$-term reduces to $d_{\calB, \varepsilon} T_{a_{\varepsilon}, \ell_{\varepsilon}, \gamma_{\varepsilon}}$. Summing over all $\varepsilon \in \calE (\calB)$ yields
\begin{equation*}
\bbU_{n, 2 + r} (K_{\calB}) = \sum_{\varepsilon \in \calE (\calB)} d_{\calB, \varepsilon} T_{a_{\varepsilon}, \ell_{\varepsilon}, \gamma_{\varepsilon}}.
\end{equation*}

It remains to prove the inequality $\ell_{\varepsilon} - b_{\varepsilon} \le \iota - r$. For $\nu = 1, \cdots, r$, define $t_{\nu} (\varepsilon) \coloneqq |\{l \in B_{\nu}: \varepsilon_{l} = \nu\}|$. Then $\nu \in \calA_{\varepsilon}$ if and only if $t_{\nu} (\varepsilon) \ge 1$. Also define $u_{\varepsilon} \coloneqq |\{l \notin \cup_{\nu = 1}^{r} B_{\nu} : \varepsilon_{l} \neq 0\}|$.

Since $\ell_{\varepsilon}$ counts all non-identity positions, $\ell_{\varepsilon} = \sum_{\nu \in \calA_{\varepsilon}} t_{\nu} (\varepsilon) + u_{\varepsilon}$. Hence
\begin{align*}
\ell_{\varepsilon} - b_{\varepsilon} = \sum_{\nu \in \calA_{\varepsilon}} \{t_{\nu} (\varepsilon) - 1\} + u_{\varepsilon} \le \sum_{\nu = 1}^{r} (|B_{\nu}| - 1) + \Big( \iota - \sum_{\nu = 1}^{r} |B_{\nu}| \Big) = \iota - r.
\end{align*}
If the resulting $U$-statistic is denoted by $T_{2 + b, q, \gamma}$, then $q = \ell_{\varepsilon}$ and $b = b_{\varepsilon}$, and therefore $ q - b \le \iota - r$.
\end{proof}

We next prove Lemma~\ref{lem:leaveout-graph-encoding}.

\begin{proof}[Proof of Lemma~\ref{lem:leaveout-graph-encoding}]
By definition,
\begin{equation*}
\hat{\Sigma} = \hat{\Sigma}_{-S} + \frac{1}{n} \sum_{r \in S} X_{r} X_{r}^{\top} = B_{S}^{-1} + U_{S} .
\end{equation*}
Hence the resolvent identity gives
\begin{equation*}
\hat{\Omega} = (B_{S}^{-1} + U_{S})^{-1}
= B_{S} - B_{S} U_{S} (I + B_{S} U_{S})^{-1} B_{S} = B_{S} - M_{S},
\end{equation*}
where
\begin{equation*}
M_S = B_{S} U_{S}(I + B_{S} U_{S})^{-1} B_{S} .
\end{equation*}
Using the Neumann expansion of $(I + B_{S} U_{S})^{-1}$, we obtain
\begin{align*}
M_{S} = B_{S} U_{S} \sum_{m = 0}^{\infty}
(-B_{S} U_{S})^{m} B_{S} = \sum_{q = 1}^{\infty}
\frac{(-1)^{q - 1}}{n^{q}} \sum_{r_{1}, \cdots, r_{q} \in S}
B_{S} X_{r_1} X_{r_1}^{\top} B_{S} X_{r_2} X_{r_2}^{\top} B_{S} \cdots X_{r_{q}} X_{r_{q}}^{\top} B_{S},
\end{align*}
which is the stated expansion.
\end{proof}

Next, we prove the variance decomposition of $T_{a, \ell, \gamma}$ as stated in \eqref{var bound decomposition} after Lemma~\ref{lem:mobius-block-to-chain}.

\begin{lemma}
\label{lem:chain-covariance-decomposition}
Let $T_{a, \ell, \gamma}$ be a $U$-statistic with a multiplicative-kernel of the form obtained in Lemma~\ref{lem:mobius-block-to-chain}, and write
\begin{equation*}
T_{a, \ell, \gamma} = \bbU_{n, a} \{k_{a, \ell, \gamma} (O_{\bfi})\},\quad \bfi \in \calI_{n, a}.
\end{equation*}
For $\bfi, \bfi' \in \calI_{n, a}$, define the overlap number
$ \alpha (\bfi, \bfi')
\coloneqq |\operatorname{ind} (\bfi) \cap \operatorname{ind} (\bfi')|$. For $\alpha = 0, 1, \cdots, a$, set
\begin{equation*}
V_{\alpha} \coloneqq \frac{(n - a) !^{2}}{(n !)^{2}}
\sum_{\substack{\bfi, \bfi' \in \calI_{n, a}\\
\alpha (\bfi, \bfi') = \alpha}}
\left| \cov\{k (\bfi), k (\bfi')\}
\right|.
\end{equation*}
Then
\begin{equation*}
\var (T_{a, \ell, \gamma}) \le \sum_{\alpha = 0}^{a} V_{\alpha}.
\end{equation*}
We also have:
$V_{0} \le V_{0}^{\mathrm{cross}} + V_{0}^{\mathrm{loc}}$, and consequently,
\begin{equation*}
\var (T_{a, \ell, \gamma}) \le \sum_{\alpha = 1}^{a} V_{\alpha} + V_{0}^{\mathrm{cross}} + V_{0}^{\mathrm{loc}}.
\end{equation*}
\end{lemma}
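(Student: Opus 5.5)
The first inequality is just the variance of a sum of kernels written out and bounded term by term. By definition,
\begin{equation*}
T_{a, \ell, \gamma} = \frac{(n - a)!}{n!} \sum_{\bfi \in \calI_{n, a}} k (\bfi).
\end{equation*}
Bilinearity of covariance then gives
\begin{equation*}
\var (T_{a, \ell, \gamma}) = \frac{(n - a)!^{2}}{(n!)^{2}} \sum_{\bfi, \bfi' \in \calI_{n, a}} \cov \{k (\bfi), k (\bfi')\}.
\end{equation*}
I would partition the pairs $(\bfi, \bfi')$ according to the value $\alpha (\bfi, \bfi') \in \{0, 1, \cdots, a\}$. This is a genuine partition, since the overlap of two $a$-subsets of $[n]$ lies in that range. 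Applying the triangle inequality inside each class yields $\var (T_{a, \ell, \gamma}) \le \sum_{\alpha = 0}^{a} V_{\alpha}$.

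Unlike the sample-split case, $\cov \{k (\bfi), k (\bfi')\}$ need not vanish when $\alpha = 0$, because both kernels depend on all of $\calD$ through $\hat{\Omega}$. This is why $V_{0}$ cannot be dropped and has to be split further.

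For the second claim I would fix a zero-overlap pair, set $S = \operatorname{ind} (\bfi) \cup \operatorname{ind} (\bfi')$, and substitute $\hat{\Omega} = B_{S} - M_{S}$ from Lemma~\ref{lem:leaveout-graph-encoding} into every occurrence of $\hat{\Omega}$ in both kernels. Expanding multilinearly writes each kernel as $k (\bfi) = k^{\circ} (\bfi) + k^{\mathrm{ins}} (\bfi)$, and similarly for $\bfi'$:
\begin{itemize}
\item $k^{\circ}$ uses $B_{S}$ everywhere;
\item $k^{\mathrm{ins}}$ collects every term containing at least one factor from $M_{S}$, that is, at least one inserted $n^{-1} X_{r} X_{r}^{\top}$ with $r \in S$.
\end{itemize}

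I would then define the two pieces as follows.
\begin{itemize}
\item $V_{0}^{\mathrm{cross}}$ is the normalized sum over $\alpha = 0$ pairs of $|\cov \{k (\bfi), k (\bfi')\} - \cov \{k^{\circ} (\bfi), k^{\circ} (\bfi')\}|$. This is the sum of the covariances involving at least one insertion term. These are exactly the terms in which an inserted $X_{r}$ with $r$ from the other tuple can connect the two graphs, as in Figure~\ref{fig:vargraph-b}.
\item $V_{0}^{\mathrm{loc}}$ is the normalized sum of $|\cov \{k^{\circ} (\bfi), k^{\circ} (\bfi')\}|$.
\end{itemize}
With these definitions, $V_{0} \le V_{0}^{\mathrm{cross}} + V_{0}^{\mathrm{loc}}$ follows directly from the triangle inequality.

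To match the later use of $V_{0}^{\mathrm{loc}}$, I would also record the conditional structure of its summands. Conditional on $B_{S}$, which is a function of $O_{[n] \setminus S}$ only, the kernels $k^{\circ} (\bfi)$ and $k^{\circ} (\bfi')$ depend on the disjoint independent blocks $O_{\operatorname{ind} (\bfi)}$ and $O_{\operatorname{ind} (\bfi')}$. Hence they are conditionally independent, and the law of total covariance gives
\begin{equation*}
\cov \{k^{\circ} (\bfi), k^{\circ} (\bfi')\} = \cov \big[ \bbE \{k^{\circ} (\bfi) \mid B_{S}\}, \bbE \{k^{\circ} (\bfi') \mid B_{S}\} \big].
\end{equation*}
This is the form to which the Efron--Stein inequality is subsequently applied.

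The only delicate points are, first, that the Neumann series defining $M_{S}$ must converge. For this I would work on the event where $\|n^{-1} B_{S} \sum_{r \in S} X_{r} X_{r}^{\top}\|_{\op} < 1$, which has overwhelming probability since $|S| \le 2a$ and $\|X\|_{2}^{2} \lesssim k = o (n)$, or simply keep the closed resolvent form of $M_{S}$. Second, integrability is needed to justify the expansions, which comes from boundedness of $A$ and $Y$ and invertibility of $\hat{\Sigma}_{-S}$. Neither point affects the purely algebraic decomposition, which holds as an exact identity.
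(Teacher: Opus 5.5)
Your derivation of $\var(T_{a,\ell,\gamma}) \le \sum_{\alpha=0}^{a} V_{\alpha}$ is the paper's argument. The gap is in how you split $V_{0}$.

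\textbf{What the paper does.} It puts into $\calW_{\mathrm{cross}}(\bfi,\bfi')$ only those expanded terms in which some insertion in one kernel uses an index of the \emph{other} tuple. Everything else goes into $\calW_{\mathrm{loc}}(\bfi,\bfi')$, including terms whose insertions $n^{-1}X_{r}X_{r}^{\top}$ all use indices $r$ from the kernel's \emph{own} tuple.

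\textbf{What goes wrong in yours.} You put every term with at least one insertion into the cross piece. Your claim that these are exactly the terms in which an inserted $X_{r}$ from the other tuple connects the two graphs is false. Suppose $k(\bfi)$ receives one insertion with $r \in \operatorname{ind}(\bfi)$ and $k(\bfi')$ receives none. The product graph then has $e = 2\ell+3$, $v = 2a$ and $\kappa = 2$. Lemma~\ref{lem:graph-counting} therefore gives only $n^{-1}k^{2\ell-2a+5} = k \cdot k^{2\ell-2a+4}/n$. This is a factor $k$ larger than the target $k^{2\ell-2a+4}/n$, and the zero-overlap normalization is of order one, so it does not help. The paper's count $(2a)^{\nu}-a^{\nu}$ of cross terms deliberately removes these $a^{\nu}$ own-tuple assignments. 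Such terms, like your pure-$B_{S}$ terms, are small only through the dependence carried by $B_{S}$. They must go through the conditional-independence/Efron--Stein route, which is why $\Gamma^{\mathrm{loc}}_{a,\ell,n}$ involves $(1-a\rho)^{-(\ell+1)}$ and the count $(a+1)^{\nu}-a^{\nu}$.

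So $V_{0}\le V_{0}^{\mathrm{cross}}+V_{0}^{\mathrm{loc}}$ holds trivially for your definitions, but your $V_{0}^{\mathrm{cross}}$ does not admit the bound the lemma is later used for.

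\textbf{The repair.} Replace $k^{\circ}(\bfi)$ by the sum of all expanded terms whose insertions use only indices in $\operatorname{ind}(\bfi)$. Re-summing the series, this is $k(\bfi)$ with $\hat{\Omega}$ replaced by $\hat{\Omega}_{-\operatorname{ind}(\bfi')}$. This is because $\hat{\Sigma}_{-S}+n^{-1}\sum_{r\in\operatorname{ind}(\bfi)}X_{r}X_{r}^{\top}=\hat{\Sigma}_{-\operatorname{ind}(\bfi')}$; do the same symmetrically for $\bfi'$. The two local kernels depend on $O_{\operatorname{ind}(\bfi)}$ and $O_{\operatorname{ind}(\bfi')}$ respectively, and otherwise only on $B_{S}$. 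Hence your law-of-total-covariance step goes through verbatim, and every term left in the cross piece now has a connected graph.
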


\begin{proof}
By definition,
\begin{equation*}
T_{a, \ell, \gamma} = \frac{(n - a) !}{n !} \sum_{\bfi \in \calI_{n, a}} k (\bfi).
\end{equation*}
Therefore,
\begin{align*}
\var (T_{a, \ell, \gamma}) = \frac{(n - a)!^{2}}{(n!)^{2}} \sum_{\bfi, \bfi' \in \calI_{n, a}} \cov \{k (\bfi), k (\bfi')\} \le \frac{(n - a)!^{2}}{(n!)^{2}} \sum_{\bfi, \bfi' \in \calI_{n, a}} |\cov \{k (\bfi), k (\bfi')\}|.
\end{align*}
Grouping the pairs $(\bfi, \bfi')$ according to the overlap number $\alpha (\bfi, \bfi')$ gives
\begin{equation*}
\var (T_{a, \ell, \gamma}) \le \sum_{\alpha = 0}^{a} V_{\alpha} .
\end{equation*}

It remains to split $V_{0}$. Fix
$(\bfi, \bfi')$ with $\alpha (\bfi, \bfi') = 0$. Let $\calW (\bfi, \bfi')$ denote the finite collection of summands obtained after substituting the leave-*-out expansion $\hat{\Omega} = B_{S} - M_{S}$ from Lemma~\ref{lem:leaveout-graph-encoding} into the $k (\bfi)$ and $k (\bfi')$ and then expanding the resulting covariance. Each element $W \in \calW (\bfi, \bfi')$ corresponds to one pair of expanded terms, one from $k (\bfi)$ and one from $k (\bfi')$. We split this collection as
\begin{equation*}
\calW (\bfi, \bfi') = \calW_{\mathrm{cross}} (\bfi, \bfi') \cup \calW_{\mathrm{loc}} (\bfi,\bfi').
\end{equation*}

Here, $\calW_{\mathrm{cross}}$ contains the terms in which an insertion in one kernel uses an index from the other kernel, while $\calW_{\mathrm{loc}}$ contains the remaining terms. Thus, by the triangle inequality,
\begin{equation*}
|\cov \{k (\bfi), k (\bfi')\}| \le \sum_{W \in \calW_{\mathrm{cross}} (\bfi, \bfi')} |\bbE W| + \sum_{W \in \calW_{\mathrm{loc}} (\bfi, \bfi')} |\bbE W|.
\end{equation*}

Summing this bound over all zero-overlap pairs with the normalization
$(n - a)!^{2} / (n!)^{2}$, and denoting the two resulting sums by
$V_{0}^{\mathrm{cross}}$ and $V_{0}^{\mathrm{loc}}$, gives $V_{0} \le V_{0}^{\mathrm{cross}} + V_{0}^{\mathrm{loc}}$. Combining this with $\var (T_{a, \ell, \gamma}) \le \sum_{\alpha = 0}^{a} V_{\alpha}$ yields
\begin{equation*}
\var (T_{a, \ell, \gamma}) \le \sum_{\alpha = 1}^{a} V_{\alpha} + V_{0}^{\mathrm{cross}} + V_{0}^{\mathrm{loc}}.
\end{equation*}
Hence the proof is complete.
\end{proof}

\begin{lemma}
\label{lem:uniform-chain-gamma}
Let $\Gamma_{a, \ell, n}$ be the factor appearing in Lemma~\ref{lemma:generic-chain-variance-explicit}, and define $ \overline \Gamma_{j, n} \coloneqq \max_{\substack{2 \le a \le j\\ 0 \le \ell \le j}} \Gamma_{a, \ell, n}$. Assume that $C_{0} j k/n \le \eta < 1$ and $ n \ge 2 j$. Then
\begin{equation*}
\overline \Gamma_{j, n} \lesssim
j^{2} \exp \Big( C_{\eta} j^{2} \frac{k}{n} + C \frac{j^{2}}{n} \Big),
\end{equation*}
where $C_{\eta}$ depends only on $\eta$.
\end{lemma}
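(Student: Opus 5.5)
The plan is to bound each of the three pieces of $\Gamma_{a,\ell,n} = \Gamma^{\mathrm{ov}}_{a,\ell,n} + \Gamma^{\mathrm{cross}}_{a,\ell,n} + \Gamma^{\mathrm{loc}}_{a,\ell,n}$ from Lemma~\ref{lemma:generic-chain-variance-explicit} separately. In each case the bound should be uniform over $2 \le a \le j$ and $0 \le \ell \le j$, and the maximum is then taken. Each piece has the same structure: a combinatorial prefactor counting overlap or insertion patterns, times a geometric series in the leave-$*$-out insertion depth $q$ from Lemma~\ref{lem:leaveout-graph-encoding}.

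\emph{Insertion sums.} Every insertion adds a factor $1/n$ and one edge. By Lemma~\ref{lem:graph-counting} the edge costs at most one factor $k$. The insertion index ranges over $S$, with $|S| \le 2a \le 2j$. Each of the at most $2(\ell+1) \lesssim j$ occurrences of $\hat{\Omega}$ in the kernel pair can be expanded. The total weight of depth-$q$ insertions at one occurrence is therefore at most $(C|S|k/n)^{q}$. Summing over $q$ gives a factor $(1 - Cjk/n)^{-1}$ per occurrence, which is finite because $C_{0} jk/n \le \eta$.

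\emph{From product to exponential.} The product over the $\lesssim j$ occurrences is
\begin{equation*}
(1 - Cjk/n)^{-Cj} \le \exp\{C_{\eta} j \cdot jk/n\},
\end{equation*}
using $-\log(1-x) \le x/(1-\eta)$ for $x \le \eta$. This is exactly the factor $\exp(C_{\eta} j^{2} k/n)$. Alternatively, one can sum over the total insertion count $Q$ with $\binom{Q + Cj}{Q}$ placements. This gives the same bound via $\sum_{Q} \binom{Q+N}{Q} x^{Q} = (1-x)^{-N-1}$.

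\emph{Normalization and polynomial prefactor.} The factor $\exp(Cj^{2}/n)$ comes from the $U$-statistic normalization. For pairs with a given overlap $\alpha$, the count $(n!/(n-a)!)^{2}$ relative to $n^{2a}$ involves ratios such as $\prod_{t<2a}(1 - t/n)^{-1} \le \exp(Cj^{2}/n)$ for $n \ge 2j$. The $j^{2}$ prefactor comes from two sources: the choice of the overlap count $\alpha \le a \le j$, and the position of the cross edge in $V_{0}^{\mathrm{cross}}$, or of the Efron--Stein replaced coordinate in $V_{0}^{\mathrm{loc}}$, among $\lesssim j$ vertices. Any extra polynomial factor in $j$ is either absorbed into the exponential via $j \le e^{j}$, or avoided by keeping the overlap sum as a geometric series in $k/n$.

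\emph{Main obstacle.} The difficult part is $\Gamma^{\mathrm{loc}}$. There, the Efron--Stein replacement of each coordinate outside $S$ must be shown to produce, after re-expanding $\hat{\Omega}$ around the perturbed leave-out inverse, the same geometric insertion structure. The obvious count would multiply by $n$ coordinates and break the bound. I would first try to show that the replaced vertex carries a $1/n^{2}$ from the two inserted rank-one terms. Combined with the one extra edge in the graph count, this should yield an $n \cdot n^{-2} k$ contribution that is already accounted for in the $k^{2\ell - 2a + 4}/n$ scaling, so that only the insertion geometric series, and hence the same exponential, remains.
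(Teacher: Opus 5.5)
Your proposal has a genuine gap, and it starts with a misreading of what must be proved. In this lemma $\Gamma_{a,\ell,n}=\Gamma^{\mathrm{ov}}_{a,\ell,n}+\Gamma^{\mathrm{cross}}_{a,\ell,n}+\Gamma^{\mathrm{loc}}_{a,\ell,n}$ is the explicit function of $(a,\ell,n,\rho)$ written down in Lemma~\ref{lemma:generic-chain-variance-explicit}. The leave-$*$-out insertions, the graph counting and the Efron--Stein step have already been carried out and summed into those closed forms; this includes the factor $n$ from summing over replaced coordinates. The statement is therefore a purely analytic estimate.

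The step you call the main obstacle, re-deriving the Efron--Stein bound for $\Gamma^{\mathrm{loc}}$, is not part of this lemma. You also leave it open (``I would first try to show\ldots''), so as written the proposal does not reach the bound. The paper's proof is a few lines of calculus on the formulas:
\begin{itemize}
\item $D_{\alpha,a,n}\le n^{a}(n-a)!/n!=\prod_{s<a}(1-s/n)^{-1}\le e^{Cj^{2}/n}$, using $n\ge 2a$;
\item $(1-y)^{-A}\le e^{C_\eta A y}$ for $y\le\eta$;
\item $\sum_{\alpha}\binom{a}{\alpha}^{2}\alpha!\,\rho^{\alpha-1}\le(e^{a^{2}\rho}-1)/\rho\le a^{2}e^{a^{2}\rho}$;
\item the mean value theorem for the two difference quotients, giving $\Gamma^{\mathrm{cross}}_{a,\ell,n}\le a(2\ell+2)(1-2a\rho)^{-(2\ell+3)}$ and $\Gamma^{\mathrm{loc}}_{a,\ell,n}\le(\ell+1)^{2}(1-(a+1)\rho)^{-(2\ell+4)}$.
\end{itemize}

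Measured against this, three of your steps fail as stated.

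\emph{The factor $1/\rho$.} $\Gamma^{\mathrm{cross}}$ and $\Gamma^{\mathrm{loc}}$ carry an explicit $1/\rho=n/k$, and nothing in your outline cancels it. A per-occurrence product bound $(1-Cj\rho)^{-Cj}$ controls only the larger term of each difference, which leaves $e^{C_\eta j^{2}\rho}/\rho$. The cancellation comes from the difference structure: one forced insertion, i.e.\ the derivative.

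\emph{The source of $j^{2}$.} In $\Gamma^{\mathrm{cross}}$ the $j^{2}$ is $a$ times the number of $\hat{\Omega}$-occurrences; in $\Gamma^{\mathrm{loc}}$ it is the squared number of occurrences. It does not come from ``the choice of the overlap count $\alpha$''. In $\Gamma^{\mathrm{ov}}$ it is the $\alpha=1$ coefficient $\binom{a}{1}^{2}=a^{2}$.

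\emph{The overlap sum.} This is not a geometric series with small ratio. Consecutive terms have ratio $(a-\alpha)^{2}\rho/(\alpha+1)$, and only $j\rho$, not $j^{2}\rho$, is assumed small. You need the exponential-series bound above, which is exactly where the $\exp(j^{2}k/n)$ in $\Gamma^{\mathrm{ov}}$ comes from.

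\emph{Absorbing polynomial factors.} You cannot absorb stray polynomial factors via $j\le e^{j}$. Where the lemma is used ($j\lesssim\log n$, $k\lesssim n/\log^{3}n$), the target $\exp(C_\eta j^{2}k/n+Cj^{2}/n)$ is $O(1)$. An extra $e^{j}$ would then be a polynomial loss in $n$.

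Since the closed forms are exactly the generating functions of the insertion sums you describe, an insertion-level argument could be made to work. It would have to reproduce precisely these points.
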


\begin{proof}
Recall that $\rho = k / n$. By Lemma~\ref{lemma:generic-chain-variance-explicit}, 
\begin{equation*}
\Gamma_{a, \ell, n} = \Gamma_{a, \ell, n}^{\mathrm{ov}} + \Gamma_{a, \ell, n}^{\mathrm{cross}} + \Gamma_{a, \ell, n}^{\mathrm{loc}},
\end{equation*}
where
\begin{align*}
\Gamma_{a, \ell, n}^{\mathrm{ov}} & = \sum_{\alpha = 1}^{a} \binom{a}{\alpha}^{2} \alpha! D_{\alpha, a, n} \frac{\rho^{\alpha - 1}}{\{1 - (2 a - \alpha) \rho\}^{2 \ell + 2}}, \\
\Gamma_{a, \ell, n}^{\mathrm{cross}} & = \frac{(1 - 2 a \rho)^{-(2 \ell + 2)} - (1 - a \rho)^{- (2 \ell + 2)}}{\rho}, \\
\Gamma_{a, \ell, n}^{\mathrm{loc}} & = \Big\{ \frac{(1 - (a + 1) \rho)^{- (\ell + 1)} - (1 - a \rho)^{- (\ell + 1)}}{\rho} \Big\}^{2}.
\end{align*}

By taking the universal constant $C_{0}$ sufficiently large, the condition $C_{0} j k/n \le \eta < 1$ implies that all arguments of the form $(2 a - \alpha) \rho, \ 2 a \rho, \ a \rho$, and $(a + 1) \rho$ appearing above are bounded by $\eta$, uniformly over $2 \le a\le j$ and $0 \le \ell \le j$. We repeatedly use
\begin{equation*}
(1 - y)^{-A} \le \exp (C_{\eta} A y), \ 0 \le y \le \eta.
\end{equation*}
We first bound $\Gamma_{a, \ell, n}^{\mathrm{ov}}$. For $1 \le \alpha \le a$,
\begin{align*}
D_{\alpha, a, n} = n^{\alpha} \frac{(n - a)!^{2}}{n! (n - 2 a + \alpha)!} \le n^{a} \frac{(n - a)!}{n!} = \prod_{s = 0}^{a - 1} \Big( 1 - \frac{s}{n} \Big)^{-1}.
\end{align*}

Since $n \ge 2 j \ge 2 a$, we have $s / n \le 1 / 2$ for all $0 \le s \le a - 1$. Hence, using $- \log(1 - u) \le 2 u$ for $0 \le u \le 1 / 2$,
\begin{align*}
D_{\alpha, a, n} \le \prod_{s = 0}^{a - 1} \Big( 1 - \frac{s}{n} \Big)^{-1} & = \exp \Big\{ \sum_{s = 0}^{a - 1} - \log \Big( 1 - \frac{s}{n} \Big) \Big\}  \\
& \le \exp \Big\{ 2 \sum_{s = 0}^{a - 1} \frac{s}{n} \Big\} \le \exp \Big( C \frac{a^{2}}{n} \Big) \le \exp \Big( C \frac{j^{2}}{n} \Big).
\end{align*}

Moreover, since $0 \le \ell \le j$ and $1 \le \alpha \le a \le j$, we have $(2 \ell + 2) (2 a- \alpha) \rho \lesssim j^{2} \rho$. Under
$C_{0} j \rho \le \eta < 1$, the quantity $(2 a - \alpha) \rho$ is bounded away from one. Hence
\begin{equation*}
\{1 - (2 a - \alpha) \rho\}^{-(2 \ell + 2)}
\le \exp \{C_{\eta}(2 \ell + 2)(2 a - \alpha) \rho\}
\le \exp (C_{\eta} j^{2} \rho).
\end{equation*}
Using $\binom{a}{\alpha}^{2}\alpha !
\le \frac{a^{2 \alpha}}{\alpha!}$, we obtain
\begin{align*}
\sum_{\alpha = 1}^{a} \binom{a}{\alpha}^{2} \alpha! \rho^{\alpha - 1} \le \sum_{\alpha = 1}^{\infty} \frac{a^{2 \alpha}}{\alpha!} \rho^{\alpha - 1} = \frac{\exp (a^{2} \rho) - 1}{\rho} \le a^{2} \exp (a^{2} \rho) \le j^{2} \exp (j^{2} \rho).
\end{align*}
Therefore,
\begin{equation*}
\Gamma_{a, \ell, n}^{\mathrm{ov}}
\lesssim j^{2} \exp \Big( C_{\eta} j^{2} \rho + C \frac{j^{2}}{n} \Big).
\end{equation*}
We next bound $\Gamma_{a, \ell, n}^{\mathrm{cross}}$. Let $A = 2 \ell + 2$ and $f (t) = (1 - t)^{-A}$. By the mean value theorem,
\begin{align*}
\Gamma_{a, \ell, n}^{\mathrm{cross}}
= \frac{f (2 a \rho) - f (a \rho)}{\rho} \le a A (1 - 2 a \rho)^{- A - 1} \lesssim j^{2} \exp (C_{\eta} j^{2} \rho).
\end{align*}
Finally, we bound $\Gamma_{a, \ell, n}^{\mathrm{loc}}$. Let $B = \ell + 1$ and $g (t) = (1 - t)^{- B}$. Again by the mean value theorem,
\begin{equation*}
\frac{g ((a + 1) \rho) - g (a \rho)}{\rho} \le B (1 - (a + 1) \rho)^{- B - 1}.
\end{equation*}
Hence
\begin{align*}
\Gamma_{a, \ell, n}^{\mathrm{loc}}
\le B^{2} (1 - (a + 1) \rho)^{-2 B - 2} \lesssim j^{2} \exp (C_{\eta} j^{2} \rho).
\end{align*}
Combining the three bounds and recalling $\rho = k/n$ yields
\begin{equation*}
\Gamma_{a, \ell, n} \lesssim j^{2} \exp \Big( C_{\eta} j^{2} \frac{k}{n} + C \frac{j^{2}}{n} \Big),
\end{equation*}
uniformly over $2 \le a \le j$ and $0 \le \ell \le j$. Taking the maximum over $(a, \ell)$ proves the lemma.
\end{proof}

\begin{lemma}
\label{lemma:generic-chain-variance-explicit}
Let $a \ge 2$ and $\ell \ge 0$ be integers, possibly depending on $n$. For pairwise distinct indices $i_{1}, \cdots, i_{a}$, let
\begin{equation*}
\calG_{a, \ell, \gamma} (i_{1}, \cdots, i_{a}) \coloneqq A_{i_{1}} X_{i_{1}}^{\top} \hat{\Omega} \Big\{ \prod_{s = 1}^{\ell} X_{i_{\gamma (s)}} X_{i_{\gamma (s)}}^{\top} \hat{\Omega} \Big\} X_{i_{2}} Y_{i_{2}},
\end{equation*}
where $\gamma: \{1, \cdots, \ell\} \to [a]$ is fixed for the given multiplicative kernel. Assume that
$\{1, 2, \gamma (1), \cdots, \gamma (\ell)\} = [a]$.
Equivalently, every index among $i_{1}, \cdots, i_{a}$ appears in the kernel either as one of the two endpoint indices $i_{1}, i_{2}$ or as one of
the indices $i_{\gamma (s)}$ selected by $\gamma$. 
Let
\begin{equation*}
T_{a, \ell, \gamma} \coloneqq \bbU_{n, a} \{\calG_{a, \ell, \gamma} (i_{1}, \cdots, i_{a})\}.
\end{equation*}
Further, assume that $n \geq 2 a$, $2 a \rho < 1$. Then we have:
\begin{equation*}
\var (T_{a, \ell, \gamma}) \lesssim \frac{k^{2 (\ell - a + 2)}}{n} \Gamma_{a, \ell, n},
\end{equation*}
where
\begin{align*}
\Gamma_{a, \ell, n} & \coloneqq \Gamma_{a, \ell, n}^{\mathrm{ov}} + \Gamma_{a, \ell, n}^{\mathrm{cross}} + \Gamma_{a, \ell, n}^{\mathrm{loc}}, \\
\Gamma_{a, \ell, n}^{\mathrm{ov}} & \coloneqq \sum_{\alpha = 1}^{a} \binom{a}{\alpha}^{2} \alpha! D_{\alpha, a, n} \frac{\rho^{\alpha - 1}}{\{1 - (2 a - \alpha) \rho\}^{2 (\ell + 1)}}, \\
\Gamma_{a, \ell, n}^{\mathrm{cross}} & \coloneqq \frac{(1 - 2 a \rho)^{- 2 (\ell + 1)} - (1 - a \rho)^{- 2 (\ell + 1)}}{\rho}, \\
\Gamma_{a, \ell, n}^{\mathrm{loc}} & \coloneqq \Big\{\frac{(1 - (a + 1) \rho)^{- (\ell + 1)} - (1 - a \rho)^{- (\ell + 1)}}{\rho} \Big\}^{2},
\end{align*}
and
\begin{equation*}
D_{\alpha, a, n} \coloneqq n^{\alpha} \frac{\{(n - a) !\}^{2}}{n ! (n - 2 a + \alpha)!}.
\end{equation*}
In particular, for each fixed pair $(a, \ell)$ and each fixed $\eta_{0} < 1$, if $ 2 a \rho \le \eta_{0}$, then $\Gamma_{a, \ell, n} \lesssim 1$, and hence
\begin{equation*}
\var (T_{a, \ell, \gamma}) \lesssim \frac{k^{2 (\ell - a + 2)}}{n}.
\end{equation*}
\end{lemma}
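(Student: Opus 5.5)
The plan is to follow the decomposition of Lemma~\ref{lem:chain-covariance-decomposition}: $\var (T_{a, \ell, \gamma}) \le \sum_{\alpha \ge 1} V_{\alpha} + V_{0}^{\mathrm{cross}} + V_{0}^{\mathrm{loc}}$. Throughout I take $\Sigma = I$ and work on the high-probability event $\calG_{n}$ on which $\|\hat{\Omega}\|_{\op}$ and all leave-$S$-out inverses $B_{S}$ are bounded. The complement contributes only a negligible amount, by matrix Bernstein together with boundedness of the kernel, since $|k| \lesssim k^{\ell+1}$. For a pair $(\bfi, \bfi')$ I set $S = \operatorname{ind}(\bfi) \cup \operatorname{ind}(\bfi')$, so $|S| = 2a - \alpha$. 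I then expand every one of the $2(\ell+1)$ occurrences of $\hat{\Omega}$ via Lemma~\ref{lem:leaveout-graph-encoding} as $B_{S} - M_{S}$, and $M_{S}$ as a series of insertions $n^{-1} X_{r} X_{r}^{\top} B_{S}$ with $r \in S$. Since $B_{S}$ is independent of $O_{S}$, each expanded summand is an endpoint-weighted product of bilinear forms $X_{u}^{\top} B_{S} X_{v}$ over a graph on $S$. Conditioning on $B_{S}$, Lemma~\ref{lem:graph-counting} applies (the endpoint scalars $A, Y$ are handled by boundedness and H\"older).

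\textbf{Overlap terms} ($\alpha \ge 1$). The base graph, with no insertions, has $2(\ell+1)$ edges, $2a - \alpha$ vertices, and is connected, so it gives $k^{2\ell + 3 - 2a + \alpha}$. Each insertion adds one edge but no vertex, hence at most one unit of Betti number, and costs a factor $n^{-1}$ times at most $|S|$ choices of $r$; this yields a factor $\le (2a-\alpha)\rho$ per insertion. Summing over insertion counts at each of the $2(\ell+1)$ slots produces the geometric factor $\{1-(2a-\alpha)\rho\}^{-2(\ell+1)}$. Counting the pairs with overlap $\alpha$ gives $\binom{a}{\alpha}^{2}\alpha!\, n^{2a-\alpha}$ ordered tuples, and multiplying by the normalization $(n-a)!^{2}/n!^{2}$ gives $D_{\alpha,a,n} n^{-\alpha}$. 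Multiplying through produces $k^{2(\ell-a+2)} n^{-1} \rho^{\alpha-1}$ times the stated weight, which is $\Gamma^{\mathrm{ov}}$. For the bound I can bound $|\cov| \le |\bbE k k'| + |\bbE k||\bbE k'|$; the product of means is handled by the same counting on each disconnected half, with $\kappa = 2$, and is smaller.

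\textbf{Zero-overlap terms.} Here the base graph has two components, so the Betti number drops by one relative to a connected graph. This is exactly why some connecting mechanism is needed to recover an $n^{-1}$.
\begin{itemize}
\item[(i)] \emph{Cross terms.} Among the insertions, at least one uses an index of the other kernel, and this merges the components at the cost of $n^{-1}$ times $a$ choices. The resulting sum is the difference of geometric series $(1-2a\rho)^{-2(\ell+1)} - (1-a\rho)^{-2(\ell+1)}$: all insertions drawn from $S$, minus those that stay local. Dividing by $\rho$ accounts for the one factor used to connect; this gives $\Gamma^{\mathrm{cross}}$.
\item[(ii)] \emph{Local terms.} All insertions stay within their own kernel's index set. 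Conditional on $B_{S}$, the two kernels are then functions of disjoint independent blocks, so $\cov = \cov\{\bbE(k \mid B_{S}), \bbE(k' \mid B_{S})\}$, where $B_{S}$ is built from $O_{[n]\setminus S}$. I bound this by Cauchy--Schwarz and Efron--Stein applied to $g(B_{S}) = \bbE(k \mid B_{S})$. Resampling one observation $r \notin S$ changes $B_{S}$ by a rank-two perturbation, which is again expandable by Lemma~\ref{lem:leaveout-graph-encoding} relative to $B_{S \cup \{r\}}$. Each difference therefore carries a vertex $r$ that must be inserted at least once, contributing $n^{-1}$ and producing the factor $\{(1-(a+1)\rho)^{-(\ell+1)} - (1-a\rho)^{-(\ell+1)}\}/\rho$ per kernel. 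Squaring, then summing $n$ Efron--Stein terms against the $n^{-2}$ from the squared differences, gives $\Gamma^{\mathrm{loc}}$ with the prefactor $k^{2(\ell-a+2)}/n$.
\end{itemize}

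\textbf{Main obstacle.} I expect the main obstacle to be this $V_{0}^{\mathrm{loc}}$ step. Two things need care: arranging the Efron--Stein difference so that the new vertex $r$ genuinely connects to the graph, so that Lemma~\ref{lem:graph-counting} yields the extra $k$ and not $k^{0}$ times an unmatched $n^{-1}$; and keeping $B_{S\cup\{r\}}$ independent of $O_{S \cup \{r\}}$ with bounded operator norm uniformly. Once the three bounds are in place, the final claim follows by observing that for fixed $(a, \ell)$ and $2a\rho \le \eta_{0}$, each geometric factor, together with its $\rho^{-1}$-divided difference, is $O(1)$ by the mean value theorem, and $D_{\alpha,a,n} = O(1)$.
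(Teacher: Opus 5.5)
Your proposal is correct and follows the paper's proof essentially step by step: the same three-way split, the leave-$S$-out expansion around $B_{S}$ with geometric insertion sums, the $(2a)^{\nu} - a^{\nu}$ connecting count for $V_{0}^{\mathrm{cross}}$, and Efron--Stein against $C_{r} = \hat{\Omega}_{-(S \cup \{r\})}$ for $V_{0}^{\mathrm{loc}}$. One small slip: the overlap-$\alpha$ pair count must use falling factorials, not $n^{2a-\alpha}$, to give $D_{\alpha,a,n}$ exactly.

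The one step you add beyond the paper, truncating to $\calG_{n}$, does not work as stated. Off $\calG_{n}$ the kernel is not $O(k^{\ell+1})$, because $\|\hat{\Omega}\|_{\op}$ is uncontrolled there. Moreover, an indicator depending on $O_{S}$ destroys the independence that Lemma~\ref{lem:graph-counting} requires. You should instead truncate on the $O_{[n]\setminus S}$-measurable event $\{\|B_{S}\|_{\op} \le C\}$, which forces $\|\hat{\Omega}\|_{\op} \le C$ since $\hat{\Sigma} \succeq \hat{\Sigma}_{-S}$. The complement of that event still needs a moment bound on $\hat{\Omega}$, a point the paper also leaves implicit.
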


\begin{proof}
By definition,
\begin{equation*}
T_{a, \ell, \gamma} = \bbU_{n, a} \Big\{ A_{i_{1}} X_{i_{1}}^{\top} \hat{\Omega} \Big( \prod_{s = 1}^{\ell} X_{i_{\gamma (s)}} X_{i_{\gamma (s)}}^{\top} \hat{\Omega} \Big) X_{i_{2}} Y_{i_{2}} \Big\}.
\end{equation*}
Recall that $\calI_{n, a}$ denotes the collection of ordered tuples $\bfi = (i_{1}, \cdots, i_{a})$ of pairwise distinct sample indices, and that
$\alpha (\bfi, \bfi')$ denotes the overlap number of the associated index sets.
Let $V_{\alpha}$ denote the contribution to the variance from pairs of ordered tuples satisfying $\alpha (\bfi, \bfi') = \alpha$, for $\alpha = 0, 1, \cdots, a$. By Lemma~\ref{lem:chain-covariance-decomposition},
\begin{equation*}
\var (T_{a, \ell, \gamma}) \le \sum_{\alpha = 1}^{a} V_{\alpha} + V_{0}^{\mathrm{cross}} + V_{0}^{\mathrm{loc}}.
\end{equation*}
We bound each of the three terms on the right hand side of the above display separately.

\paragraph{The analysis of $V_{\alpha}$ for $\alpha \geq 1$.}

Fix two ordered tuples $\bfi, \bfi' \in \calI_{n, a}$ with
$\alpha (\bfi, \bfi') = \alpha$, and set $ S = \operatorname{ind}(\bfi)\cup\operatorname{ind}(\bfi')$. By Lemma~\ref{lem:leaveout-graph-encoding}, after expanding every occurrence of $\hat{\Omega}$ around the leave-*-out inverse $B_{S} = \hat{\Omega}_{-S}$, each term can be represented by a weighted undirected graph, and the edge weights are independent of the displayed variables. If the term contains $\nu \ge 0$ explicit $X X^{\top}$-insertions, then it carries the coefficient $n^{-\nu}$ and the corresponding product graph has
\begin{equation*}
e = 2 (\ell + 1) + \nu, \ v = 2 a - \alpha, \ \kappa = 1.
\end{equation*}
Therefore, Lemma~\ref{lem:graph-counting} gives the bound
\begin{equation*}
n^{-\nu} k^{e - v + \kappa} = k^{2 \ell - 2 a + \alpha + 3}\rho^{\nu}.
\end{equation*}

There are $2 (\ell + 1)$ occurrences of $\hat{\Omega}$ in the product of the two kernels, so the number of allocations of the $\nu$ insertions is at most $ \binom{\nu + 2 \ell + 1}{2 \ell + 1}$. For each insertion, the inserted observation can be chosen from the $2 a - \alpha$ displayed vertices in $S$. Hence the covariance is bounded by
\begin{align*}
k^{2 \ell - 2 a + \alpha + 3} \sum_{\nu = 0}^{\infty} \binom{\nu + 2 \ell + 1}{2 \ell + 1} \{(2 a - \alpha) \rho\}^{\nu}  = \frac{k^{2 \ell - 2 a + \alpha + 3}} {\{1 -(2 a - \alpha) \rho\}^{2 \ell + 2}}.
\end{align*}

It remains to count ordered pairs of tuples according to the overlap of the sets of sample indices appearing in the two tuples. Fix the first ordered tuple $\bfi \in \calI_{n, a}$. To construct a second ordered tuple $\bfi' \in \calI_{n, a}$ with $\alpha (\bfi, \bfi') = \alpha$, we first choose the $\alpha$ indices of $\bfi$ that are shared, choose the $\alpha$ entries in $\bfi'$ occupied by these shared indices, and assign the shared indices to these entries in $\alpha!$ possible ways. The remaining $a - \alpha$ entries of $\bfi'$ are chosen as an ordered selection from the $n - a$ indices outside the first tuple. Hence, for each fixed $\bfi$, the number of such $\bfi'$ is $\binom{a}{\alpha}^{2} \alpha! \frac{(n - a) !}{(n - 2 a + \alpha)!}$.
Since the number of choices for the first ordered tuple $\bfi$ is $n ! / (n - a) !$, multiplying by the ordered $U$-statistic normalization
$\{(n - a) ! / n!\}^{2}$ gives the combinatorial prefactor $\binom{a}{\alpha}^{2} \alpha ! \frac{((n - a) !)^{2}}{n ! (n - 2 a + \alpha) !}$.

Consequently,
\begin{align*}
\calV_{\alpha} & \lesssim \binom{a}{\alpha}^{2} \alpha! \frac{((n - a)!)^{2}}{n! (n - 2 a + \alpha)!} \frac{k^{2 \ell - 2 a + \alpha + 3}}{\{1 - (2 a - \alpha) \rho\}^{2 \ell + 2}} \\
& = \frac{k^{2 \ell - 2 a + 4}}{n} \binom{a}{\alpha}^{2} \alpha! D_{\alpha, a, n} \frac{\rho^{\alpha - 1}}{\{1 - (2 a - \alpha) \rho\}^{2 \ell + 2}}.
\end{align*}
Summing over $\alpha = 1, \cdots, a$ gives
\begin{equation*}
\sum_{\alpha = 1}^{a} V_{\alpha} \lesssim \frac{k^{2 \ell - 2 a + 4}}{n} \Gamma_{a, \ell, n}^{\mathrm{ov}}.
\end{equation*}

\paragraph{The analysis of $V_{0}^{\mathrm{cross}}$.}

Now consider $V_{0}$. Let $S_{L}$ and $S_{R}$ be the two disjoint sets of displayed indices in the two kernels, with $|S_{L}| = |S_{R}| = a$, and set $S = S_{L} \cup S_{R}$. Thus $|S| = 2 a$. We apply the leave-*-out expansion $\hat{\Omega} = B_{S} - M_{S}$ from Lemma~\ref{lem:leaveout-graph-encoding}. $V_{0}$ is then split into two parts, according to whether 
the explicit matrices $X_{r} X_{r}^{\top}$ introduced by the expansion create a connection between the two graphs or remain within each graph separately.

To ease exposition, a ``cross term'' is referred to as an expanded term in which at least one insertion in one kernel uses an index from the other. Suppose that the total number of insertions in the product of the two kernels is $\nu \ge 1$. For a fixed cross term, the coefficient contributes $n^{-\nu}$, and the associated product graph is connected. It has
\begin{equation*}
e = 2(\ell + 1) + \nu, \quad v = 2 a, \quad \kappa = 1.
\end{equation*}
Therefore, Lemma~\ref{lem:graph-counting} yields
\begin{equation*}
n^{-\nu} k^{2 (\ell + 1) + \nu - 2 a + 1} = k^{2 \ell - 2 a + 3} \rho^{\nu}.
\end{equation*}
There are $2 (\ell + 1)$ occurrences of $\hat{\Omega}$, giving rise to at most $\binom{\nu + 2 \ell + 1}{2 \ell + 1}$ possible allocations. For a fixed allocation, assigning the inserted indices to arbitrary vertices in $S$ gives $(2 a)^{\nu}$ terms, while assignments that remain within the two copies give $a^{\nu}$ terms. Hence, the number of cross terms is bounded by $(2 a)^{\nu} - a^{\nu}$. Since the non-overlap pair-counting prefactor is bounded by one, we obtain
\begin{align*}
V_{0}^{\mathrm{cross}} & \lesssim k^{2 \ell - 2 a + 3} \sum_{\nu = 1}^{\infty} \binom{\nu + 2 \ell + 1}{2 \ell + 1} \{(2 a)^{\nu} - a^{\nu} \} \rho^{\nu} \\
& = k^{2 \ell - 2 a + 3} \Big\{ \frac{1}{(1 - 2 a \rho)^{2 \ell + 2}} - \frac{1}{(1 - a \rho)^{2 \ell + 2}} \Big\} \\
& = \frac{k^{2 \ell - 2 a + 4}}{n}
\Gamma_{a, \ell, n}^{\mathrm{cross}}.
\end{align*}

\paragraph{The analysis of $V_{0}^{\mathrm{loc}}$.}

It remains to control $V_{0}^{\mathrm{loc}}$. Write $B \coloneqq B_{S}$ for notational simplicity. Conditional on $B$, the two kernels are independent. Thus, $V_{0}^{\mathrm{loc}}$ is bounded by the variance of their corresponding conditional means. Let
\begin{equation*}
g_{L}^{\mathrm{loc}} (B) \coloneqq \bbE_{S_{L}} \{\calG_{a, \ell, \gamma}^{\mathrm{loc}} (S_{L} ; B) \mid B\}.
\end{equation*}
It suffices to control $\var \{g_{L}^{\mathrm{loc}} (B)\}$.

To apply the Efron--Stein inequality (Lemma~\ref{lem:Efron-Stein} in Appendix~\ref{app:concentration}), write $g (B) \coloneqq g_{L}^{\mathrm{loc}}(B)$. For $r \notin S$, let $B^{(r)}$ be the version of $B$ obtained by replacing the observation indexed by $r$ by an independent copy, and set $C_{r} \coloneqq \hat{\Omega}_{-(S \cup \{r\})}$. Then
\begin{equation*}
\var \{g (B)\} \lesssim \sum_{r \notin S} \bbE [\{g (B) - g (B^{(r)})\}^{2}].
\end{equation*}
By the triangle inequality in $L^{2}$ and exchangeability,
\begin{align*}
\|g (B) - g (B^{(r)})\|_{2} & \leq \|g (B) - g (C_{r})\|_{2} + \|g (B^{(r)}) - g (C_{r})\|_{2} \\
& \lesssim \|g (B) - g (C_{r})\|_{2}.
\end{align*}
Therefore, it is enough to bound $\|g (B) - g (C_{r})\|_{2}^{2}$. Since $\hat{\Sigma}_{-S} = \hat{\Sigma}_{-(S \cup \{r\})} + \frac{1}{n} X_{r} X_{r}^{\top}$,
we have
\begin{equation*}
B = \Big( C_{r}^{-1} + \frac{1}{n} X_{r} X_{r}^{\top} \Big)^{-1} = C_{r} - M_{r}, \ \text{where} \ M_{r} = \sum_{q = 0}^{\infty} \frac{(-1)^{q}}{n^{q + 1}} C_{r} X_{r} X_{r}^{\top} (C_{r} X_{r} X_{r}^{\top})^{q} C_{r}.
\end{equation*}

Hence $g (B) - g (C_{r})$ is represented by the local expanded terms that contain at least one occurrence of $X_{r} X_{r}^{\top}$. Consider one such local term with total insertion order $\nu \ge 1$. Its coefficient contributes $n^{-\nu}$. To bound its squared $L^2$ norm, we introduce an independent copy of the local variables in $S_{L}$, while the background vertex $r$ is shared by the two graphs. The resulting graph is connected and has
\begin{equation*}
e = 2(\ell + 1 + \nu), \ v = 2 a + 1, \ \kappa = 1.
\end{equation*}
Hence, Lemma~\ref{lem:graph-counting} gives
\begin{equation*}
n^{-2 \nu} k^{2 (\ell + 1 + \nu) - (2 a + 1) + 1} = \frac{k^{2 \ell - 2 a + 4}}{n^{2}} \rho^{2 \nu - 2}.
\end{equation*}
Taking square roots, the contribution of this term is bounded by $\frac{k^{\ell - a + 2}}{n}\rho^{\nu - 1}$.

For total order $\nu$, the number of allocations among the $\ell + 1$ occurrences of $\hat{\Omega}$ is at most $\binom{\nu + \ell}{\ell}$. For a fixed allocation, the number of choices containing at least one $X_{r} X_{r}^{\top}$ is bounded by $(a + 1)^{\nu} - a^{\nu}$. By Minkowski's inequality in $L^{2}$,
\begin{align*}
\|g_{L}^{\mathrm{loc}} (B) - g_{L}^{\mathrm{loc}} (C_{r})\|_{2} & \lesssim \frac{k^{\ell - a + 2}}{n} \sum_{\nu = 1}^{\infty} \binom{\nu + \ell}{\ell} \{(a + 1)^{\nu} - a^{\nu}\} \rho^{\nu - 1} \\
& = \frac{k^{\ell - a + 2}}{n} \frac{(1 - (a + 1) \rho)^{-(\ell + 1)} - (1 - a \rho)^{-(\ell + 1)}}{\rho}.
\end{align*}
Squaring and applying the Efron--Stein inequality (Lemma~\ref{lem:Efron-Stein} in Appendix~\ref{app:concentration}) yields
\begin{align*}
\calV_{0}^{\mathrm{loc}}
& \lesssim n \|g_{L}^{\mathrm{loc}} (B) - g_{L}^{\mathrm{loc}} (C_{r})\|_{2}^{2} \\
& \lesssim \frac{k^{2 \ell - 2 a + 4}}{n} \Big\{ \frac{(1 - (a + 1) \rho)^{- (\ell + 1)} - (1 - a \rho)^{- (\ell + 1)}}{\rho} \Big\}^{2} \\
& = \frac{k^{2 (\ell - a + 2)}}{n} \Gamma_{a, \ell, n}^{\mathrm{loc}}.
\end{align*}

Finally, combining the above analysis gives
\begin{align*}
\var \{T_{a, \ell, \gamma}\} \leq \sum_{\alpha = 0}^{a} \calV_{\alpha} \lesssim \frac{k^{2 (\ell - a + 2)}}{n} (\Gamma_{a, \ell, n}^{\mathrm{ov}} + \Gamma_{a, \ell, n}^{\mathrm{cross}} + \Gamma_{a, \ell, n}^{\mathrm{loc}})
= \frac{k^{2 (\ell - a + 2)}}{n} \Gamma_{a, \ell, n}.
\end{align*}
\end{proof}

\subsubsection{Results related to \textbf{Step iii}}
\label{app:var iii}

Before proving Lemma~\ref{lem:mobius-r-variance}, we first establish an intermediate result.

\begin{lemma}
\label{lem:weighted-block-counting}
Fix $\iota \ge 1$ and $0 \le r \le \lfloor\iota / 2\rfloor$. Define
\begin{equation*}
\bbB_{\iota, r} \coloneqq
\left\{ \calB = \{B_{1}, \cdots, B_{r}\}: B_{\nu} \subseteq [\iota],\
|B_{\nu}| \ge 2,\ B_{\nu} \cap B_{\nu'} = \emptyset
\text{ for } \nu \ne \nu' \right\}.
\end{equation*}
When $r = 0$, the collection $\bbB_{\iota, 0}$ contains only the empty collection. For $\calB \in \bbB_{\iota, r}$, define $ D (\calB) \coloneqq \sum_{\nu = 1}^{r} |B_{\nu}|$, with the convention $D (\emptyset) = 0$.

Define
\begin{equation*}
w_{\iota, r} \coloneqq \sum_{\calB \in \bbB_{\iota, r}} \Big\{ \prod_{B \in \calB}(|B| - 1) \Big\} 2^{D (\calB)} 4^{\iota - D (\calB)}.
\end{equation*}
Assume $n \ge \iota + 2$. For each $\calB \in \bbB_{\iota, r}$, write the expansion obtained from Lemma~\ref{lem:mobius-block-to-chain} as
\begin{equation*}
c_{\calB, n} \bbU_{n, 2 + r} (K_{\calB}) = \sum_{\varepsilon \in \calE (\calB)} a_{\calB, \varepsilon, n} T_{2 + b_{\varepsilon}, q_{\varepsilon}, \gamma_{\varepsilon}},
\end{equation*}
Then
\begin{equation*}
\sum_{\calB \in \bbB_{\iota, r}} \sum_{\varepsilon \in \calE (\calB)} |a_{\calB, \varepsilon, n}| \le w_{\iota, r} \frac{(n - 2 - \iota) !}{(n - 2 - r) !}.
\end{equation*}
In particular, if $n \ge 2 (\iota + 2)$, then
\begin{equation*}
\sum_{\calB \in \bbB_{\iota, r}} \sum_{\varepsilon \in \calE (\calB)} |a_{\calB, \varepsilon, n}| \le 2^{\iota - r} n^{- (\iota - r)} w_{\iota, r}.
\end{equation*}
Moreover, $ w_{\iota, r} \le 4^{\iota} \frac{(r + 1)^{\iota}}{r !}$.
\end{lemma}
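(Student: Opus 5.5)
The plan is to compute the coefficients $a_{\calB, \varepsilon, n}$ explicitly and then sum their absolute values. By Lemma~\ref{lem:mobius-block-to-chain}, $a_{\calB, \varepsilon, n} = c_{\calB, n} d_{\calB, \varepsilon}$. From Lemma~\ref{lem:mobius}, with $j = \iota + 2$ and $|\calB| = r$,
\[
|c_{\calB, n}| = \Big\{ \prod_{B \in \calB} (|B| - 1) \Big\} \frac{(n - 2 - \iota)!}{(n - 2 - r)!},
\]
and by definition $|d_{\calB, \varepsilon}| = 2^{N_{R}(\varepsilon)}$. The whole argument therefore reduces to the elementary bound
\[
\sum_{\varepsilon \in \calE(\calB)} |d_{\calB, \varepsilon}| \le 2^{D(\calB)} 4^{\iota - D(\calB)},
\]
after which summing over $\calB \in \bbB_{\iota, r}$ gives the first claim.

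For this factorized sum I use that $\calE(\calB) = \prod_{l} \calE_{l}(\calB)$ is a product set. The sum of $|d|$ therefore factorizes over positions $l \in [\iota]$. If $l$ lies in some $B_{\nu}$, the choices are $\varepsilon_{l} \in \{0, \nu\}$, each with weight $1$, so the factor is $2$. If $l$ is uncovered, the choices are $\{0, 1, 2\}$ with weights $2, 1, 1$, so the factor is $4$. There are $D(\calB)$ covered positions and $\iota - D(\calB)$ uncovered ones, which gives exactly $2^{D} 4^{\iota - D}$. One subtlety: distinct $\varepsilon$ may produce the same $T$. That is harmless, because the claimed inequality is about the sum over $\varepsilon$ of $|a_{\calB, \varepsilon, n}|$ as written.

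For the second claim I bound the factorial ratio, which is a product of $\iota - r$ factors:
\[
\frac{(n - 2 - \iota)!}{(n - 2 - r)!} = \prod_{t = r + 1}^{\iota} (n - 2 - t)^{-1}.
\]
Each factor satisfies $n - 2 - t \ge n - 2 - \iota \ge n/2$ when $n \ge 2(\iota + 2)$. Hence the ratio is at most $(2/n)^{\iota - r}$, which is the stated bound.

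For the bound on $w_{\iota, r}$, I start from $2^{D} 4^{\iota - D} \le 4^{\iota}$. It then suffices to show $\sum_{\calB \in \bbB_{\iota, r}} \prod_{B} (|B| - 1) \le (r + 1)^{\iota}/r!$.
\begin{itemize}
\item I use $|B| - 1 \le 2^{|B| - 1}$ only if needed; the more direct route is an injection-type count.
\item Map each $\calB$ with an ordering of its blocks, together with a choice of a distinguished non-minimal element in each block, to a function $[\iota] \to \{0, 1, \dots, r\}$. Position $l$ is sent to $\nu$ if $l \in B_{\nu}$, and to $0$ if uncovered.
\item The weight $\prod (|B| - 1)$ counts the choices of distinguished elements. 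The plain functions number only $(r + 1)^{\iota}$, so the distinguished choices must be absorbed.
\end{itemize}

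This absorption is the main obstacle. My first attempt is the crude bound $|B| - 1 \le 2^{|B|}/2$. That yields $\prod (|B| - 1) \le 2^{D}$, which combines with $2^{D} 4^{\iota - D}$ to give $4^{D} 4^{\iota - D} = 4^{\iota}$. After that, counting ordered $\calB$ gives at most $(r + 1)^{\iota}$ labelings $[\iota] \to \{0, \dots, r\}$, and dividing by $r!$ for the unordered blocks finishes. So I would prove the stronger combined estimate
\[
\prod_{B} (|B| - 1)\, 2^{D} 4^{\iota - D} \le 4^{\iota},
\]
using $|B| - 1 \le 2^{|B| - 1} \le 2^{|B|}$. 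Each labeled ordered $\calB$ corresponds to a distinct function $[\iota] \to \{0, \dots, r\}$ whose fibres $1, \dots, r$ are nonempty, giving $w_{\iota, r} \le 4^{\iota} (r + 1)^{\iota}/r!$.
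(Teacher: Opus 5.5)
Your proof is correct and follows the paper's argument step for step: the product-set factorization $\sum_{\varepsilon}|d_{\calB,\varepsilon}| = 2^{D(\calB)}4^{\iota-D(\calB)}$; the bound of the falling factorial by $(n/2)^{\iota-r}$; the absorption $\prod_{B}(|B|-1)\,2^{D}4^{\iota-D}\le 4^{\iota}$ via $|B|-1\le 2^{|B|}$; and the count $|\bbB_{\iota,r}|\le (r+1)^{\iota}/r!$, where your injection of labeled collections into maps $[\iota]\to\{0,\ldots,r\}$ is the combinatorial reading of the paper's multinomial sum. Two small slips do not affect the result: the intermediate reduction to $\sum_{\calB}\prod_{B}(|B|-1)\le (r+1)^{\iota}/r!$ is false (for $r=1$, $\iota=5$ the left side is $49>32$), so only the combined estimate you settle on works; and the factorial ratio is $\prod_{t=r}^{\iota-1}(n-2-t)^{-1}$ rather than $\prod_{t=r+1}^{\iota}$, though its factors are still at least $n-1-\iota\ge n/2$.
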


\begin{proof}
Fix $\calB \in \bbB_{\iota, r}$. By Lemma~\ref{lem:mobius}, the absolute value of the coefficient attached to $\bbU_{n, 2 + r} (K_{\calB})$ is
\begin{equation*}
|c_{\calB, n}| = \Big\{ \prod_{B \in \calB} (|B| - 1) \Big\} \frac{(n - 2 - \iota)!}{(n - 2 - r)!}.
\end{equation*}
We next account for the coefficients produced by expanding the kernel $K_{\calB}$. By Lemma~\ref{lem:mobius-block-to-chain}, we may write
\begin{equation*}
\bbU_{n, 2 + r} (K_{\calB})= \sum_{\varepsilon \in \calE (\calB)} d_{\calB, \varepsilon} T_{a_{\varepsilon}, \ell_{\varepsilon}, \gamma_{\varepsilon}}.
\end{equation*}
Equivalently,
\begin{equation*}
c_{\calB, n} \bbU_{n, 2 + r} (K_{\calB}) = \sum_{\varepsilon \in \calE (\calB)} a_{\calB, \varepsilon, n} T_{2 + b_{\varepsilon}, q_{\varepsilon}, \gamma_{\varepsilon}}, \quad a_{\calB, \varepsilon, n} = c_{\calB, n} d_{\calB, \varepsilon}.
\end{equation*}

For each position covered by some $B \in \calB$, the corresponding factor is $H_{u} = X_{u} X_{u}^{\top} \hat{\Omega} - I$. The sum of the absolute values of the scalar coefficients in this expansion is $1 + 1 = 2$. Hence, the $D (\calB)$ covered positions contribute the total absolute weight $2^{D (\calB)}$.

For each uncovered position, the corresponding factor is $\calR_{i_{1} i_{2}} = X_{i_{1}} X_{i_{1}}^{\top} \hat{\Omega} + X_{i_{2}} X_{i_{2}}^{\top} \hat{\Omega} - 2 I$. The sum of the absolute values of the scalar coefficients in this expansion is $1 + 1 + 2 = 4$. Hence, the $\iota - D (\calB)$ uncovered positions contribute a total absolute weight $4^{\iota - D (\calB)}$. Therefore
\begin{equation*}
\sum_{\varepsilon \in \calE (\calB)} |d_{\calB, \varepsilon}| \le 2^{D (\calB)} 4^{\iota - D (\calB)}.
\end{equation*}

Combining the M\"obius coefficient with the expansion coefficients, we obtain
\begin{align*}
\sum_{\varepsilon \in \calE (\calB)} |a_{\calB, \varepsilon, n}| \le |c_{\calB, n}| \sum_{\varepsilon \in \calE (\calB)} |d_{\calB, \varepsilon}| \le \Big\{ \prod_{B \in \calB} (|B| - 1) \Big\} 2^{D (\calB)} 4^{\iota - D (\calB)} \frac{(n - 2 - \iota)!}{(n - 2 - r)!}.
\end{align*}
Summing over $\calB \in \bbB_{\iota, r}$ gives
\begin{align*}
\sum_{\calB \in \bbB_{\iota, r}} \sum_{\varepsilon \in \calE (\calB)} |a_{\calB, \varepsilon, n}| \le \sum_{\calB \in \bbB_{\iota, r}} \Big\{ \prod_{B \in \calB} (|B| - 1) \Big\} 2^{D (\calB)} 4^{\iota - D (\calB)} \frac{(n - 2 - \iota)!}{(n - 2 - r)!} = w_{\iota, r} \frac{(n - 2 - \iota)!}{(n - 2 - r)!}.
\end{align*}

We next prove the simplified bound under $n \ge 2 (\iota + 2)$. Since
\begin{equation*}
\frac{(n - 2 - r)!}{(n - 2 - \iota)!} = \prod_{s = 0}^{\iota - r - 1} (n - 2 - r - s) \ \text{ and } \ n - 2 - r - s \ge n - 1 - \iota \ge \frac{n}{2},
\end{equation*}
for all $ s =  0, \cdots, \iota - r - 1$, we have $\frac{(n - 2 - r)!}{(n - 2 - \iota)!} \ge \Big( \frac{n}{2} \Big)^{\iota - r}$.  Equivalently, $\frac{(n - 2 - \iota)!}{(n - 2 - r)!} \le 2^{\iota - r} n^{-(\iota - r)}$. 
Thus
\begin{equation*}
\sum_{\calB \in \bbB_{\iota, r}} \sum_{\varepsilon \in \calE (\calB)} |a_{\calB, \varepsilon, n}| \le 2^{\iota - r} n^{- (\iota - r)} w_{\iota, r}.
\end{equation*}

It remains to bound $w_{\iota, r}$. Since $d - 1 \le 2^{d}$ for every integer $d \ge 2$, we have $(|B| - 1) 2^{-|B|} \le 1$. Thus, for every $\calB \in \bbB_{\iota, r}$,
\begin{align*}
\Big\{ \prod_{B \in \calB}(|B| - 1) \Big\} 2^{D (\calB)} 4^{\iota - D (\calB)} = 4^{\iota} \prod_{B \in \calB} \Big\{ (|B| - 1) 2^{-|B|} \Big\} \le 4^{\iota}.
\end{align*}
Therefore
\begin{equation*}
w_{\iota, r} \le 4^{\iota} |\bbB_{\iota, r}|.
\end{equation*}

Finally, we bound $|\bbB_{\iota, r}|$. When $r = 0$, $|\bbB_{\iota, 0}| = 1$, and hence $|\bbB_{\iota, 0}| = 1 = \frac{(0 + 1)^{\iota}}{0!}$. Now assume $r \ge 1$. Let $S_{\ge 2} (t, r)$ denote the number of partitions of a $t$-element set into $r$ nonempty subsets, each size at least two. Then
\begin{equation*}
|\bbB_{\iota, r}| = \sum_{t = 2 r}^{\iota} \binom{\iota}{t} S_{\ge 2}(t, r), \ \text{ and } \ S_{\ge 2}(t, r) = \frac{1}{r!} \sum_{\substack{b_{1}, \cdots, b_{r} \ge 2 \\
b_{1} + \cdots + b_{r} = t}} \frac{t!}{b_{1}! \cdots b_{r}!}.
\end{equation*}
Therefore,
\begin{align*}
|\bbB_{\iota, r}| & = \sum_{t = 2 r}^{\iota} \binom{\iota}{t} \frac{1}{r!} \sum_{\substack{b_{1}, \cdots, b_{r} \ge 2 \\
b_{1} + \cdots + b_{r} = t}} \frac{t!}{b_{1}! \cdots b_{r}!} \\
& = \frac{\iota!}{r!} \sum_{\substack{b_{1}, \cdots, b_{r} \ge 2 \\
b_{1} + \cdots + b_{r} \le \iota}} \frac{1}{(\iota - b_{1} - \cdots - b_{r})! b_{1}! \cdots b_{r}!}.
\end{align*}
Relaxing the constraints $b_{1}, \cdots, b_{r} \ge 2$ to $b_{1}, \cdots, b_{r} \ge 0$, and writing $b_{0} = \iota - b_{1} - \cdots - b_{r}$ for the uncovered positions, we obtain
\begin{align*}
|\bbB_{\iota, r}| & \le \frac{\iota!}{r!} \sum_{\substack{b_{0}, b_{1}, \cdots, b_{r} \ge 0 \\
b_{0} + b_{1} + \cdots + b_{r} = \iota}} \frac{1}{b_{0} ! b_{1} ! \cdots b_{r} !}  
= \frac{1}{r !} \sum_{\substack{b_{0}, b_{1}, \cdots, b_{r} \ge 0 \\
b_{0} + b_{1} + \cdots + b_{r} = \iota}}\frac{\iota!}{b_{0}! b_{1}! \cdots b_{r}!} \\
& = \frac{(1 + \cdots + 1)^{\iota}}{r!} = \frac{(r + 1)^{\iota}}{r!}.
\end{align*}
Combining this with $w_{\iota, r} \le 4^{\iota} |\bbB_{\iota, r}|$ yields
\begin{equation*}
w_{\iota, r} \le 4^{\iota} \frac{(r + 1)^{\iota}}{r!}.
\end{equation*}
\end{proof}

Armed with Lemma~\ref{lem:weighted-block-counting}, we are ready to prove Lemma~\ref{lem:mobius-r-variance}.

\begin{proof}[Proof of Lemma~\ref{lem:mobius-r-variance}]
Fix $j \ge 3$, set $\iota = j - 2$, and fix $0 \le r \le r_{\iota}^{*}$. By Lemma~\ref{lem:mobius-block-to-chain}, for each $\calB \in \bbB_{\iota,r}$, after expanding $H_{i} = X_{i} X_{i}^{\top} \hat{\Omega} - I$ and $\calR_{i_{1} i_{2}} = X_{i_{1}} X_{i_{1}}^{\top} \hat{\Omega} + X_{i_{2}} X_{i_{2}}^{\top} \hat{\Omega} - 2 I$, the term $\bbU_{n, 2 + r} (K_{\calB})$ can be written as a finite linear combination of multiplicative-kernel $U$-statistics $T_{2 + b, q, \gamma}$. Thus we may write
\begin{equation*}
c_{\calB, n} \bbU_{n, 2 + r} (K_{\calB}) = \sum_{\varepsilon \in \calE (\calB)} a_{\calB, \varepsilon, n} T_{2 + b_{\varepsilon}, q_{\varepsilon}, \gamma_{\varepsilon}},
\end{equation*}
where the coefficients $a_{\calB, \varepsilon, n}$ include the M\"obius coefficient, the $U$-statistic normalization, and the numerical coefficients arising from the expansion of the $H$- and $\calR$-factors. Moreover, Lemma~\ref{lem:mobius-block-to-chain} gives 
\begin{equation*}
q_{\varepsilon} - b_{\varepsilon} \le \iota - r.
\end{equation*}
In this notation, the generic multiplicative-kernel bound in Lemma~\ref{lemma:generic-chain-variance-explicit} gives
\begin{equation*}
\var (T_{2 + b_{\varepsilon}, q_{\varepsilon}, \gamma_{\varepsilon}}) \lesssim \frac{k^{2 (q_{\varepsilon} - b_{\varepsilon})}}{n} \Gamma_{2 + b_{\varepsilon}, q_{\varepsilon}, n}.
\end{equation*}
By the definition of $\Gamma_{\iota, r, n}$ and the preceding inequality $q_{\varepsilon} - b_{\varepsilon} \le \iota - r$, this implies
\begin{equation*}
\var^{1 / 2} (T_{2 + b_{\varepsilon}, q_{\varepsilon}, \gamma_{\varepsilon}}) \lesssim \sqrt{\Gamma_{\iota, r, n}} \frac{k^{\iota - r}}{\sqrt{n}}. 
\end{equation*}

It remains to sum the coefficients. By Lemma~\ref{lem:weighted-block-counting}, under $n \ge 2 (\iota + 2)$,
\begin{equation*}
\sum_{\calB \in \bbB_{\iota, r}}
\sum_{\varepsilon \in \calE (\calB)}
|a_{\calB, \varepsilon, n}| \le 2^{\iota - r} w_{\iota, r} n^{- (\iota - r)}.
\end{equation*}
Finally, combining the coefficient bound above with the generic multiplicative-kernel variance bound, and applying Minkowski's inequality in $L^{2}$, we obtain the level-$r$ estimate
\begin{equation*}
\var^{1 / 2} (Z_{\iota, r}) \lesssim 2^{\iota -r} w_{\iota, r} \sqrt{\Gamma_{\iota, r, n}}\,
\frac{k^{\iota - r}}{n^{\iota - r + 1 / 2}}.
\end{equation*}
Equivalently,
\begin{equation*}
\var (Z_{\iota, r}) \lesssim 2^{2 (\iota - r)} w_{\iota, r}^{2} \Gamma_{\iota, r, n} \frac{k^{2 (\iota - r)}}{n^{2 (\iota - r) + 1}}.  
\end{equation*}
\end{proof}

\begin{lemma}
\label{lem:block-level-weighted-tail}
Let $\iota \ge 1, \ r_{\iota}^{*} = \lfloor \iota / 2 \rfloor$, and
$h_{\iota} = \iota - r_{\iota}^{*}$.  Let $\rho = k / n$, and let
$w_{\iota, r}$ be the weighted combinatorial quantity defined in
Lemma~\ref{lem:weighted-block-counting}. There exists a universal constant
$C > 0$ such that, if $C \iota \rho < 1$, then
\begin{equation*}
\sum_{r = 0}^{r_{\iota}^{*}} 2^{\iota - r} w_{\iota, r} \rho^{r_{\iota}^{*} - r}
\lesssim \frac{(C \iota)^{h_{\iota}}}{1 - C \iota \rho}.
\end{equation*}
Consequently, with $\iota = j - 2$ and
$h_{j} = \iota - r_{\iota}^{*} = \lfloor (j - 1) / 2 \rfloor$,
\begin{equation*}
\sum_{r = 0}^{r_{\iota}^{*}} 2^{\iota - r} w_{\iota, r}
\Big(\frac{k}{n}\Big)^{r_{\iota}^{*} - r}
\lesssim \frac{(C j)^{h_{j}}}{1 - C j k/n}.
\end{equation*}
\end{lemma}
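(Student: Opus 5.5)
We bound the sum term by term, using the crude estimate from Lemma~\ref{lem:weighted-block-counting}, namely $w_{\iota, r} \le 4^{\iota} (r+1)^{\iota}/r!$. Writing $t \coloneqq r_{\iota}^{*} - r \in \{0, \cdots, r_{\iota}^{*}\}$, the summand is
\begin{equation*}
2^{\iota - r} w_{\iota, r} \rho^{t} \le 8^{\iota} \frac{(r+1)^{\iota}}{r!} \rho^{t}.
\end{equation*}
The factor $8^{\iota}$ is harmless only if it can be absorbed into $C^{h_{\iota}}$. This works because $h_{\iota} = \iota - \lfloor \iota/2 \rfloor \ge \iota/2$, so $8^{\iota} \le 64^{h_{\iota}}$.

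The core step is to show that $(r+1)^{\iota}/r! \lesssim (C\iota)^{h_{\iota}} (C\iota)^{t}$, uniformly in $r$. Since $\iota - r = h_{\iota} + t$, this amounts to bounding $(r+1)^{\iota}/r!$ by $(C\iota)^{\iota - r}$.

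We would argue this via Stirling. First, $r! \ge (r/e)^{r}$. Second, $(r+1)^{\iota} = (r+1)^{r} (r+1)^{\iota - r}$, and $(r+1)^{r} \le e\, r^{r}$. Together these give
\begin{equation*}
\frac{(r+1)^{\iota}}{r!} \le e^{r+1} (r+1)^{\iota - r}.
\end{equation*}
Because $r \le \iota/2 \le \iota - r$, the factor $e^{r+1}$ is at most $e \cdot e^{\iota - r}$. Also $r + 1 \le \iota$ for $\iota \ge 2$; the case $\iota = 1$ is checked directly. Hence
\begin{equation*}
\frac{(r+1)^{\iota}}{r!} \le e\,(e\iota)^{\iota - r} = e\,(e\iota)^{h_{\iota}} (e\iota)^{t}.
\end{equation*}

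With this in hand, the whole sum is bounded by
\begin{equation*}
e\,(C'\iota)^{h_{\iota}} \sum_{t = 0}^{r_{\iota}^{*}} (C'\iota\rho)^{t},
\end{equation*}
where $C' = 64e$ or similar. Under $C\iota\rho < 1$ with $C \ge C'$, the geometric series is at most $1/(1 - C\iota\rho)$, which gives the first display.

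For the consequence, substitute $\iota = j - 2$. Then $h_{\iota} = (j-2) - \lfloor (j-2)/2 \rfloor = \lceil (j-2)/2 \rceil = \lfloor (j-1)/2 \rfloor = h_j$. Since $\iota \le j$, we have $(C\iota)^{h} \le (Cj)^{h}$ and $1 - C\iota\rho \ge 1 - Cjk/n$, so the second display follows.

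The main obstacle is the uniform-in-$r$ bound on $(r+1)^{\iota}/r!$. The point that makes it work is that $r \le \iota/2$ guarantees $\iota - r \ge r$, so all the $r$-dependent exponential losses can be charged against $\iota - r$. If the constants resist this route, the fallback is to bound $w_{\iota, r}$ directly by weighting $(|B|-1)$ per block and counting block families via generating functions.
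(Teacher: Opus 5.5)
Your proof is correct and uses the same ingredients as the paper's: the crude bound $w_{\iota,r}\le 4^{\iota}(r+1)^{\iota}/r!$ from Lemma~\ref{lem:weighted-block-counting}, a Stirling-type estimate, absorbing $8^{\iota}\le 64^{h_{\iota}}$, and a geometric series in $\iota\rho$. The difference is only in organization: the paper gets the geometric decay from the exact ratio $A_{r-1}/A_{r}=\rho\, r\,(r/(r+1))^{\iota}\le \iota\rho$ and applies Stirling only at the top level $r=r_{\iota}^{*}$, whereas you apply $(r+1)^{\iota}/r!\le e\,(e\iota)^{\iota-r}$ uniformly in $r$, which gives ratio $e\iota\rho$ instead of $\iota\rho$, an extra factor of $e$ that is absorbed into $C$.
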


\begin{proof}
By Lemma~\ref{lem:weighted-block-counting},
\begin{equation*}
w_{\iota, r} \le 4^{\iota} \frac{(r + 1)^{\iota}}{r !}.
\end{equation*}
Hence
\begin{equation*}
2^{\iota - r} w_{\iota, r} \le
8^{\iota} \frac{(r + 1)^{\iota}}{r !}
\eqqcolon \tilde{w}_{\iota, r}.
\end{equation*}
For $1 \le r \le r_{\iota}^{*}$, set $ A_{r}
\coloneqq \tilde{w}_{\iota, r} \rho^{r_{\iota}^{*} - r}$.
Then
\begin{align*}
\frac{A_{r - 1}}{A_{r}} = \rho \frac{\tilde{w}_{\iota, r - 1}}{\tilde{w}_{\iota, r}} = \rho r \Big(\frac{r}{r + 1}\Big)^{\iota} \le \rho r \le \iota \rho.
\end{align*}
Since $A_{r - 1} / A_{r} \le \iota \rho$ for $1 \le r \le r_{\iota}^{*}$, iterating the ratio bound gives
$ A_{r_{\iota}^{*} - s} \le (\iota \rho)^{s} A_{r_{\iota}^{*}},
\ 0 \le s \le r_{\iota}^{*}$. Thus, under the assumption $C \iota \rho < 1$ with $C$ sufficiently large,
\begin{equation*}
\sum_{r = 0}^{r_{\iota}^{*}} A_{r} \le A_{r_{\iota}^{*}} \sum_{s = 0}^{r_{\iota}^{*}} (\iota \rho)^{s}
\le \frac{A_{r_{\iota}^{*}}}{1 - \iota \rho}\lesssim \frac{\tilde{w}_{\iota, r_{\iota}^{*}}}{1 - C \iota \rho}.
\end{equation*}

It remains to bound the terminal weight $\tilde{w}_{\iota, r_{\iota}^{*}}$ at the maximal level. Let $r_{*}\coloneqq r_{\iota}^{*}$ and $h_{\iota} = \iota - r_{*}$. If $r_{*} = 0$, then $\iota = 1$ and the desired bound follows after increasing $C$. We therefore assume $r_{*} \ge 1$. By Stirling's lower
bound,
\begin{equation*}
r_{*} ! \ge c \Big(\frac{r_{*}}{e}\Big)^{r_{*}},
\end{equation*}
and hence
\begin{align*}
\tilde{w}_{\iota, r_{*}} = 8^{\iota} \frac{(r_{*} + 1)^{\iota}}{r_{*} !}\le C 8^{\iota} e^{r_{*}}
\frac{(r_{*} + 1)^{\iota}}{r_{*}^{r_{*}}} = C 8^{\iota} e^{r_{*}}
(r_{*} + 1)^{\iota - r_{*}} \Big(\frac{r_{*} + 1}{r_{*}}\Big)^{r_{*}}.
\end{align*}
Since $\Big( 1 +\frac1{r_{*}}\Big)^{r_{*}} \le e,
\ r_{*} + 1 \le \iota + 1 \le 2 \iota$, we get
$ \tilde{w}_{\iota, r_{*}}
\le C 8^{\iota} e^{r_{*}} (2 \iota)^{h_{\iota}}$.
Moreover, since $r_{*} = \lfloor \iota / 2 \rfloor$ and
$h_{\iota} = \iota - r_{*}$, we have $ \iota \le 2 h_{\iota}, \ r_{*} \le h_{\iota}$. Therefore $ 8^{\iota} e^{r_{*}} \le 8^{2 h_{\iota}} e^{h_{\iota}} = (64 e)^{h_{\iota}}$. Consequently,
\begin{equation*}
\tilde{w}_{\iota, r_{\iota}^{*}}
\le (C \iota)^{h_{\iota}}.
\end{equation*}
Combining this with the preceding geometric-tail estimate gives
\begin{equation*}
\sum_{r = 0}^{r_{\iota}^{*}}
2^{\iota - r} w_{\iota, r} \rho^{r_{\iota}^{*} - r}
\lesssim \frac{(C \iota)^{h_{\iota}}}{1 - C \iota \rho}.
\end{equation*}
The stated result follows by taking $\iota = j - 2$ and using $\iota \le j$ and $h_{\iota} = h_{j} = \lfloor (j - 1) / 2\rfloor$.
\end{proof}

\begin{proof}[Proof of Lemma~\ref{lem:fixed-j-ifhat-variance}]
Fix $j \ge 3$ and set $\iota = j - 2$. Recall that $r_{\iota}^{*} = \lfloor \frac{\iota}{2} \rfloor$ and $h_{j} = \iota - r_{\iota}^{*} = \lfloor \frac{j - 1}{2} \rfloor$. By the definition of $Z_{\iota, r}$, the M\"obius-inversion expansion becomes $ \hat{\IIFF}_{j, j, k} (\hat{\Omega}) = \sum_{r = 0}^{r_{\iota}^{*}} Z_{\iota, r}$.

Therefore, by Minkowski's inequality and Lemma~\ref{lem:mobius-r-variance},
\begin{equation*}
\var \{\hat{\IIFF}_{j, j, k}(\hat{\Omega})\} \lesssim \frac{1}{n} \Big\{ \sum_{r = 0}^{r_{\iota}^{*}} 2^{\iota -r} w_{\iota, r} \sqrt{\Gamma_{\iota, r, n}}
\Big( \frac{k}{n} \Big)^{\iota - r} \Big\}^2.
\end{equation*}
Equivalently, since $\iota - r = h_{j} + (r_{\iota}^{*} - r)$,
\begin{equation*}
\var \{\hat{\IIFF}_{j, j, k} (\hat{\Omega})\} \lesssim \frac{k^{2 h_{j}}}{n^{2 h_{j} + 1}} \Big\{ \sum_{r = 0}^{r_{\iota}^{*}} 2^{\iota - r} w_{\iota, r} \sqrt{\Gamma_{\iota, r, n}} \Big( \frac{k}{n} \Big)^{r_{\iota}^{*} - r} \Big\}^2.
\end{equation*}
It remains to control the summation over $r$. Since every resulting multiplicative-kernel arising at order $j$ satisfies $2 + b\le j$ and $q \le j$, Lemma~\ref{lem:uniform-chain-gamma} gives, uniformly over $r = 0, \cdots, r_{\iota}^{*}$,
\begin{equation*}
\Gamma_{\iota, r, n} \le \overline \Gamma_{j, n}
\lesssim j^{2} \exp \Big(C_{\eta} j^{2} \frac{k}{n} + C \frac{j^2}{n}\Big).
\end{equation*}
Therefore,
\begin{align*}
\Big\{ \sum_{r = 0}^{r_{\iota}^{*}} 2^{\iota -r} w_{\iota, r} \sqrt{\Gamma_{\iota, r, n}} \Big(\frac{k}{n}\Big)^{r_{\iota}^{*} - r} \Big\}^{2} & \le
\overline \Gamma_{j, n} \Big\{\sum_{r = 0}^{r_{\iota}^{*}}
2^{\iota - r} w_{\iota, r} \Big(\frac{k}{n}\Big)^{r_{\iota}^{*} - r}
\Big\}^2 \\
&\lesssim j^{2} \exp \Big( C_{\eta} j^{2}\frac{k}{n} + C \frac{j^{2}}{n} \Big) \frac{(C j)^{2 h_{j}}}{(1 - C j k/n)^2},
\end{align*}
where the last inequality follows from
Lemma~\ref{lem:block-level-weighted-tail}. Consequently,
\begin{equation*}
\var \{\hat{\IIFF}_{j, j, k} (\hat{\Omega})\}
\lesssim \frac{j^{2}}{n} \frac{
\exp (C_{\eta} j^{2} k/n + C j^{2}/n)}
{(1 - C j k/n)^2} \Big(C j \frac{k}{n}\Big)^{2 h_{j}}.
\end{equation*}
Equivalently, since $h_{j} = \lfloor (j - 1) / 2 \rfloor$,
\begin{equation*}
\var \{\hat{\IIFF}_{j, j, k} (\hat{\Omega})\}
\lesssim \frac{j^{2}}{n} \frac{
\exp (C_{\eta} j^{2} k/n + C j^{2}/n)}
{(1 - C j k/n)^2} \Big(C j \frac{k}{n}\Big)^{2 \lfloor (j - 1) / 2\rfloor}.
\end{equation*}
This proves the lemma.
\end{proof}

\begin{lemma}
\label{lem:second-order-correction-variance}
Assume that $C \rho \le \eta < 1$ for a sufficiently large universal constant $C$. Then
\begin{equation*}
\var \{\hat{\IIFF}_{2, 2, k} (\hat{\Omega})\} \lesssim \frac{1}{n} + \frac{k}{n^{2}}.
\end{equation*}
\end{lemma}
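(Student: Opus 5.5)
Since $\hat{\IIFF}_{2,2,k}(\hat{\Omega}) = \bbU_{n,2}(A_{1} X_{1}^{\top} \hat{\Omega} X_{2} Y_{2})$ is exactly the multiplicative-kernel $U$-statistic $T_{2,0,\gamma}$ with $a = 2$, $\ell = 0$ and empty $\gamma$, the plan is to apply Lemma~\ref{lemma:generic-chain-variance-explicit} directly with these parameters. Its hypotheses are $n \geq 4$ and $2a\rho = 4\rho < 1$. Both hold once $C \rho \le \eta < 1$ with $C \ge 4$ and $n$ is large.

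The lemma then gives $\var(T_{2,0,\gamma}) \lesssim \frac{k^{2(0-2+2)}}{n}\Gamma_{2,0,n} = \Gamma_{2,0,n}/n$. It remains to check that $\Gamma_{2,0,n} \lesssim 1 + \rho$ uniformly, which would give the stated bound $1/n + k/n^2$. I will bound the three components separately, using $\rho \le \eta/C$.
\begin{itemize}
\item[] $\Gamma^{\mathrm{ov}}_{2,0,n} = \sum_{\alpha=1}^{2}\binom{2}{\alpha}^2\alpha!\,D_{\alpha,2,n}\rho^{\alpha-1}\{1-(4-\alpha)\rho\}^{-2}$. The computation in Lemma~\ref{lem:uniform-chain-gamma} shows $D_{\alpha,2,n} \le \exp(C/n) \lesssim 1$, so $\Gamma^{\mathrm{ov}}_{2,0,n} \lesssim 4 + 8\rho$.
\item[] By the mean value theorem, $\Gamma^{\mathrm{cross}}_{2,0,n} = \{(1-4\rho)^{-2}-(1-2\rho)^{-2}\}/\rho \lesssim 1$.
\item[] Likewise $\Gamma^{\mathrm{loc}}_{2,0,n} = [\{(1-3\rho)^{-1}-(1-2\rho)^{-1}\}/\rho]^2 \lesssim 1$.
\end{itemize}
Summing these bounds gives $\Gamma_{2,0,n} \lesssim 1 + \rho$.

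The $k/n^2$ term is genuinely present: it is the $\alpha = 2$ overlap term, where $k^{2\ell-2a+\alpha+3}=k$ is divided by $n^2$. I want to make sure this order is not lost. So I will double-check the $\alpha = 2$ graph count: two copies of the path $1\!-\!2$ with shared vertices form a two-edge cycle with $\frakr = 1$, giving the bound $k \cdot n^{-2}$. This is consistent with the oracle second-order variance $\frac{1}{n} + \frac{k}{n^2}$.

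I expect the main obstacle to be rigor in the leave-*-out $V_0^{\mathrm{loc}}$ step inherited from the generic lemma. There one needs $\hat{\Omega}_{-S}$ to have bounded operator norm, so that Lemma~\ref{lem:graph-counting} applies with edge matrices $B_e$ independent of the displayed $X$'s. This only holds on a high-probability event $\calG_n$, by matrix Bernstein, Lemma~\ref{lem:matrix Bernstein}. I would therefore restrict to $\calG_n$ and handle $\calG_n^c$ as in the proof of Lemma~\ref{lem:bias-neumann-remainder}. Because $\bbP(\calG_n^c)$ is polynomially small, with a power of $n$ as large as desired, the contribution of $\calG_n^c$ to the second moment is $o(1/n)$ and is absorbed into the bound.
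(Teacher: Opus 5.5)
Your proposal is correct and is the paper's argument: specialize Lemma~\ref{lemma:generic-chain-variance-explicit} to $a = 2$, $\ell = 0$ and show $\Gamma_{2,0,n} \lesssim 1 + \rho$ term by term. The paper evaluates $D_{1,2,n} = (n-2)/(n-1)$ and $D_{2,2,n} = n/(n-1)$ exactly rather than citing the bound from Lemma~\ref{lem:uniform-chain-gamma}, and your extra good-event patch goes beyond what the paper does; as sketched, that patch is incomplete, since the event must be measurable with respect to the observations outside $S$ (e.g.\ $\{\|\hat{\Omega}_{-S}\|_{\op} \le C\}$) for Lemma~\ref{lem:graph-counting} to still apply, and the complement needs a moment bound on $\hat{\Omega}$ that Assumptions~\ref{as:cov}--\ref{as:outcomes} do not supply, a point the paper's generic lemma also leaves unaddressed.
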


\begin{proof}
Recall that
\begin{equation*}
\hat{\IIFF}_{2, 2, k} (\hat{\Omega}) = \bbU_{n, 2} (A_{1} X_{1}^{\top} \hat{\Omega} X_{2} Y_{2}).
\end{equation*}
By Lemma~\ref{lemma:generic-chain-variance-explicit},
\begin{equation*}
\var \{\hat{\IIFF}_{2, 2, k} (\hat{\Omega})\} \lesssim \frac{1}{n} \Gamma_{2, 0, n}.
\end{equation*}
It remains to bound $\Gamma_{2, 0, n}$. From the explicit expression in Lemma~\ref{lemma:generic-chain-variance-explicit},
\begin{align*}
\Gamma_{2, 0, n}^{\mathrm{ov}}
& = 4 D_{1, 2, n} \frac{1}{(1 - 3 \rho)^{2}} + 2 D_{2, 2, n} \frac{\rho}{(1 - 2 \rho)^{2}} \\
& = \frac{n - 2}{n - 1} \frac{4}{(1 - 3 \rho)^{2}} + \frac{n}{n - 1} \frac{2 \rho}{(1 - 2 \rho)^{2}} \lesssim 1 + \rho.
\end{align*}
Moreover,
\begin{equation*}
\Gamma_{2, 0, n}^{\mathrm{cross}} = \frac{(1 - 4 \rho)^{-2} - (1 - 2 \rho)^{-2}}{\rho} \lesssim 1, \ \text{and} \ \Gamma_{2, 0, n}^{\mathrm{loc}} = \Big\{ \frac{(1 - 3 \rho)^{-1} - (1 - 2 \rho)^{-1}}{\rho} \Big\}^2 \lesssim 1.
\end{equation*}
The last two bounds follow from the mean value theorem and the assumption $C k / n \le \eta < 1$. Hence
\begin{equation*}
\Gamma_{2, 0, n} \lesssim 1 + \rho.
\end{equation*}
Consequently,
\begin{equation*}
\var \{\hat{\IIFF}_{2, 2, k} (\hat{\Omega})\} \lesssim \frac{1}{n} \Big( 1 + \frac{k}{n} \Big) = \frac{1}{n} + \frac{k}{n^{2}}.
\end{equation*}
\end{proof}
\section{Technical Lemma}
\label{app:lemma}

\subsection{Technical lemma related to concentration inequalities}
\label{app:concentration}

In the variance analysis (Appendix~\ref{app:variance}), we frequently invoke the Efron--Stein inequality for exchangeable pairs \citep{efron1981jackknife, steele1986efron, o2005every, chatterjee2007stein}, which we record below.

\begin{lemma}[Efron--Stein inequality]
\label{lem:Efron-Stein}
Given $n$ i.i.d. observations $\{Z_{i} \in \calZ\}_{i = 1}^{n}$ and a function $g: \calZ^{n} \to \bbR$, let $W \coloneqq g (Z_{1}, \cdots, Z_{n})$. The variance of $W$ can be bounded as follows:
\begin{equation}
\label{Efron-Stein}
\var (W) \leq \sum_{i = 1}^{n} \bbE [\{W - \bbE (W \mid Z_{1}, \cdots, Z_{i - 1}, Z_{i + 1}, \cdots, Z_{n})\}^{2}].
\end{equation}
Equivalently, let $Z_i'$ be an independent copy of $Z_i$, independent of
$(Z_1,\cdots,Z_n)$, and define
\begin{equation*}
Z^{(i)} \coloneqq
(Z_{1}, \cdots, Z_{i - 1}, Z_{i'}, Z_{i + 1}, \cdots, Z_{n}), \quad W^{(i)} \coloneqq
g (Z^{(i)}).
\end{equation*}
Then the replacement form of the Efron--Stein inequality gives
\begin{equation*}
\var (W) \le \frac{1}{2}
\sum_{i = 1}^{n}
\bbE \{W - W^{(i)}\}^{2}
\lesssim \sum_{i = 1}^{n}
\bbE \{W - W^{(i)}\}^{2}.
\end{equation*}
\end{lemma}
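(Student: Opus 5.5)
The statement is the Efron--Stein inequality (Lemma~\ref{lem:Efron-Stein}). I will prove it via the martingale (Doob) decomposition of $W - \bbE W$ along the filtration generated by $Z_{1}, \cdots, Z_{n}$.

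\emph{Martingale differences.} Set $\calF_{i} \coloneqq \sigma(Z_{1}, \cdots, Z_{i})$ and $\Delta_{i} \coloneqq \bbE (W \mid \calF_{i}) - \bbE (W \mid \calF_{i-1})$, so that $W - \bbE W = \sum_{i=1}^{n} \Delta_{i}$. The increments are orthogonal in $L^{2}$, which gives $\var (W) = \sum_{i} \bbE \Delta_{i}^{2}$. Throughout I assume $\bbE W^{2} < \infty$; otherwise there is nothing to prove.

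\emph{Key step.} Write $\bbE^{(i)}$ for conditional expectation given all coordinates except $Z_{i}$. By independence of the coordinates, $\bbE (W \mid \calF_{i-1}) = \bbE \{ \bbE^{(i)} W \mid \calF_{i} \}$. Integrating out $Z_{i}$ given $Z_{1}, \cdots, Z_{i}$ with the later coordinates still free is the same as integrating $Z_{i}$ first and then $Z_{i+1}, \cdots, Z_{n}$; this is where independence is used. Hence
\begin{equation*}
\Delta_{i} = \bbE \{ W - \bbE^{(i)} W \mid \calF_{i} \}.
\end{equation*}
Conditional Jensen then gives $\bbE \Delta_{i}^{2} \le \bbE \{ (W - \bbE^{(i)} W)^{2} \}$. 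Summing over $i$ yields the first displayed inequality.

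\emph{Replacement form.} Conditionally on $Z^{-i}$ (all coordinates except $Z_{i}$), $W$ and $W^{(i)}$ are i.i.d. with common conditional mean $\bbE^{(i)} W$. Therefore
\begin{equation*}
\bbE^{(i)} (W - W^{(i)})^{2} = 2 \var^{(i)} (W) = 2 \, \bbE^{(i)} (W - \bbE^{(i)} W)^{2}.
\end{equation*}
Taking expectations and summing over $i$ gives $\var (W) \le \frac{1}{2} \sum_{i} \bbE (W - W^{(i)})^{2}$. The final $\lesssim$ is trivial.

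\emph{Main obstacle.} The only delicate point is the commutation identity for $\Delta_{i}$. I would verify it via Fubini, writing both sides as integrals against product measures.
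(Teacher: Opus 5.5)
Your argument is correct and is the standard Doob-martingale proof of the Efron--Stein inequality (as in Steele, 1986, one of the sources the paper cites). The paper itself gives no proof of this lemma and simply records it as a classical result with citations, so your route is the one it implicitly relies on. Your proof uses only independence of the coordinates, and your identity relating the conditional-variance form to the replacement form (the factor $\tfrac{1}{2}$) is exactly what justifies the word ``Equivalently'' in the statement.
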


Another central technical tool in our proof is matrix concentration inequalities, in particular the matrix Bernstein's inequality \citep{rudelson1999random, tropp2015introduction, bandeira2023matrix, bandeira2025matrix}.

\begin{lemma}[Matrix Bernstein's inequality]
\label{lem:matrix Bernstein}
Given a sequence $\{W_{i}\}_{i = 1}^{n}$ of independent and symmetric random matrices with dimension $k$. Assume that each matrix satisfies:
\begin{align*}
\bbE W_{i} = 0, \quad \lambda_{\max} (W_{i}) \lesssim k \text{ almost surely.}
\end{align*}
Let $S_{n} \coloneqq \sum_{i = 1}^{n} W_{i}$. Then for all $t \geq 0$,
\begin{align*}
\bbP (\lambda_{\max} (S_{n}) \geq t) \leq k \cdot \exp \Big( - \frac{t^{2} / 2}{\nu^{2} + k t / 3} \Big), \text{ where } \nu^{2} = \Big\Vert \sum_{i = 1}^{n} \bbE [W_{i}^{2}] \Big\Vert_{\op}.
\end{align*}
In particular, the following also hold:
\begin{align*}
\Big\Vert \frac{1}{n} S_{n} \Big\Vert_{\op} = O_{\bbP} \Big( \sqrt{\frac{k \log k}{n}} \Big) \quad \text{and} \quad \bbE \Big( \Big\Vert \frac{1}{n} S_{n} \Big\Vert_{\op} \Big) = O \Big( \sqrt{\frac{k \log k}{n}} \Big).
\end{align*}
\end{lemma}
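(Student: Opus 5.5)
The plan is to follow the standard matrix Laplace-transform route of \citet{tropp2015introduction}, and then to specialise to $W_{i} = X_{i} X_{i}^{\top} - \Sigma$ for the two ``in particular'' claims. Throughout, write $R$ for the almost-sure constant with $\lambda_{\max}(W_{i}) \le R$, where $R \lesssim k$.

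\textbf{Step 1 (matrix Chernoff bound).} For every $\theta > 0$, Markov's inequality applied to $\operatorname{tr} e^{\theta S_{n}}$ gives
\begin{equation*}
\bbP(\lambda_{\max}(S_{n}) \ge t) \le e^{-\theta t}\, \bbE \operatorname{tr} \exp(\theta S_{n}).
\end{equation*}
This uses the monotonicity $e^{\theta \lambda_{\max}(S_{n})} = \lambda_{\max}(e^{\theta S_{n}}) \le \operatorname{tr} e^{\theta S_{n}}$.

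\textbf{Step 2 (subadditivity of matrix cumulant generating functions).} Conditioning on $W_{1}, \cdots, W_{n-1}$, I apply Lieb's theorem (concavity of $A \mapsto \operatorname{tr}\exp(H + \log A)$) together with Jensen's inequality, and then iterate over $i$. This yields
\begin{equation*}
\bbE \operatorname{tr} e^{\theta S_{n}} \le \operatorname{tr} \exp\Big(\sum_{i} \log \bbE e^{\theta W_{i}}\Big).
\end{equation*}
I expect this to be the only genuinely nontrivial ingredient. I will cite Lieb's theorem rather than reprove it.

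\textbf{Step 3 (Bernstein-type mgf bound).} Since $\bbE W_{i} = 0$ and $\lambda_{\max}(W_{i}) \le R$, the scalar inequality $e^{x} \le 1 + x + x^{2} g(\theta)$ for $x \le \theta R$, with $g(\theta) = (\theta^{2}/2)/(1 - R\theta/3)$, transfers to matrices through the transfer rule. This gives
\begin{equation*}
\bbE e^{\theta W_{i}} \preceq I + g(\theta)\, \bbE W_{i}^{2} \preceq \exp\{g(\theta)\, \bbE W_{i}^{2}\}.
\end{equation*}
By operator monotonicity of the logarithm, $\log \bbE e^{\theta W_{i}} \preceq g(\theta)\, \bbE W_{i}^{2}$. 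Monotonicity of $\operatorname{tr}\exp$ then gives $\bbE \operatorname{tr} e^{\theta S_{n}} \le k\, e^{g(\theta)\nu^{2}}$. Choosing $\theta = t/(\nu^{2} + Rt/3)$ produces the displayed tail bound, with $R$ in place of $k$.

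\textbf{Step 4 (the ``in particular'' claims).} Take $W_{i} = X_{i} X_{i}^{\top} - \Sigma$ and apply the bound to both $S_{n}$ and $-S_{n}$, which controls $\|S_{n}\|_{\op}$ at the cost of a factor $2k$. Both $\lambda_{\max}(\pm W_{i})$ are $\lesssim k$ by Assumption~\ref{as:cov}. For the variance proxy, $\bbE W_{i}^{2} \preceq \bbE(\|X\|^{2} X X^{\top}) \preceq Ck\,\Sigma$, so $\nu^{2} \lesssim nk$.

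Setting $t \asymp \sqrt{nk\log k} + k\log k$ makes the tail small, so
\begin{equation*}
\|S_{n}/n\|_{\op} = O_{\bbP}\big(\sqrt{k\log k/n} + k\log k/n\big).
\end{equation*}
In the regime $k\log k \lesssim n$, the second term is dominated by the first. For the expectation, I integrate the tail bound:
\begin{equation*}
\bbE\|S_{n}\|_{\op} \lesssim \sqrt{\nu^{2}\log(2k)} + R\log(2k).
\end{equation*}
Dividing by $n$ gives the stated order. The same computation with $\log n$ in place of $\log k$ gives the version used for the event $\calG_{n}$ in Lemma~\ref{lem:bias-neumann-remainder}.
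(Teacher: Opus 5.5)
The paper gives no proof of this lemma and only records it with a pointer to Tropp's monograph; your argument (Markov on $\operatorname{tr} e^{\theta S_{n}}$, Lieb's theorem for subadditivity of the matrix cumulant generating functions, then the Bernstein moment-generating-function bound) is exactly the proof given there. Your reading of the statement is also the right repair: the a.s. bound $R \lesssim k$ belongs in the denominator in place of $k$, and the ``in particular'' claims hold only after specialising to $W_{i} = X_{i} X_{i}^{\top} - \Sigma$, using the two-sided spectral bound, $\nu^{2} \lesssim n k$ from Assumption~\ref{as:cov}, and $k \log k \lesssim n$, none of which follow from the lemma's displayed hypotheses alone.

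The only slip is in Step~3: the scalar inequality should read $e^{\theta \lambda} \le 1 + \theta \lambda + g(\theta) \lambda^{2}$ for $\lambda \le R$ and $0 < \theta < 3/R$. This follows from monotonicity of $y \mapsto (e^{y} - 1 - y)/y^{2}$ together with $(e^{y} - 1 - y)/y^{2} \le \tfrac{1/2}{1 - y/3}$ on $(0, 3)$. As you wrote it, $e^{x} \le 1 + x + x^{2} g(\theta)$ is off by a factor $\theta^{2}$, although the matrix bound $\bbE e^{\theta W_{i}} \preceq I + g(\theta) \bbE W_{i}^{2}$ that you derive from it is correct.
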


\subsection{Technical lemma related to matrix expansions}
\label{app:matrix lemma}

In this section, we present a frequently used result in this article, the Neumann series expansion of the inverse of a square symmetric matrix.

\begin{lemma}
\label{lem:Neumann series}
Given two square, symmetric, and invertible matrices $A$ and $B$, the following identity holds 
\begin{align}
\label{resolvent identity}
A^{-1} - B^{-1} = \sum_{j = 1}^{J} B^{-1} \{(B - A) B^{-1}\}^{j} + \{B^{-1} (B - A)\}^{J + 1} A^{-1}.
\end{align}
Furthermore, suppose that
$\|A^{-1}\|_{\op} \vee \|B^{-1}\|_{\op} \lesssim 1$ and that there exists a diminishing sequence $\{r_{n}\}$ as $n$ increases such that $\Vert A - B \Vert_{\op} \lesssim r_{n}$. If $J$ is chosen such that $r_{n}^{J} = o (n^{-1 / 2})$, then
\begin{align}
\label{resolvent inequality}
A^{-1} - B^{-1} = \sum_{j = 1}^{J} B^{-1} \{(B - A) B^{-1}\}^{j} + o (n^{-1 / 2}).
\end{align}
\end{lemma}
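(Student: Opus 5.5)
The plan is to derive the exact identity \eqref{resolvent identity} by iterating a one-step resolvent identity, and then to bound the remainder term in operator norm.

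\textbf{Exact identity.} The starting point is the elementary identity
\begin{equation*}
A^{-1} = B^{-1} + B^{-1} (B - A) A^{-1},
\end{equation*}
which one checks by multiplying on the left by $B$: the right-hand side becomes $A^{-1} + (B - A) A^{-1} = B A^{-1}$, matching $B A^{-1}$ on the left. Write $P \coloneqq B^{-1} (B - A)$. Substituting the identity into its own last factor $A^{-1}$ repeatedly gives, by induction on $J$,
\begin{equation*}
A^{-1} = \sum_{j = 0}^{J} P^{j} B^{-1} + P^{J + 1} A^{-1}.
\end{equation*}
The inductive step replaces $A^{-1}$ in the term $P^{J + 1} A^{-1}$ by $B^{-1} + P A^{-1}$. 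Next, the associativity rearrangement
\begin{equation*}
\{B^{-1} (B - A)\}^{j} B^{-1} = B^{-1} \{(B - A) B^{-1}\}^{j}
\end{equation*}
converts each summand into the stated form. Subtracting the $j = 0$ term $B^{-1}$ then yields \eqref{resolvent identity}. Symmetry of $A$ and $B$ is not needed for this part; only invertibility is used.

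\textbf{Remainder bound.} By submultiplicativity of the operator norm,
\begin{equation*}
\big\| \{B^{-1} (B - A)\}^{J + 1} A^{-1} \big\|_{\op} \le \|B^{-1}\|_{\op}^{J + 1} \, \|A - B\|_{\op}^{J + 1} \, \|A^{-1}\|_{\op} \le C (C r_{n})^{J + 1}.
\end{equation*}
Showing this is $o(n^{-1/2})$ is the step I expect to need care. The hypotheses give constants $C$ with $\|A^{-1}\|_{\op}, \|B^{-1}\|_{\op} \le C$ and $\|A - B\|_{\op} \le C r_{n}$, but the factor $C^{J + 1}$ may diverge when $J$ grows, as it does in Remark~\ref{rem:J} where $J \asymp \log n$.

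I will handle this by absorbing the constant into the rate: set $\tilde{r}_{n} \coloneqq C^{2} r_{n}$, which is still diminishing. I then read the hypothesis ``$r_{n}^{J} = o(n^{-1/2})$'' as applying to this normalized rate, which is how it is used in Remark~\ref{rem:J}: there $r_{n} \asymp (k \log n / n)^{1/2}$ and $J = \lceil C_{0} \log n \rceil$ with $C_{0}$ large, so any fixed multiplicative constant raised to the power $J$ is beaten by $r_{n}^{J}$. With this convention the remainder is at most $C \tilde{r}_{n}^{J + 1} \le C \tilde{r}_{n} \cdot \tilde{r}_{n}^{J} = o(n^{-1/2})$ for all large $n$, since $\tilde{r}_{n} < 1$ eventually. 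This gives \eqref{resolvent inequality}, with the $o(n^{-1/2})$ understood in operator norm.
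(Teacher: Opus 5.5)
Your derivation of \eqref{resolvent identity} is the same as the paper's: iterate $A^{-1}-B^{-1}=B^{-1}(B-A)A^{-1}$ on the trailing $A^{-1}$, then regroup using $\{B^{-1}(B-A)\}^{j}B^{-1}=B^{-1}\{(B-A)B^{-1}\}^{j}$. There are two harmless slips. In your left-multiplication check, $BB^{-1}=I$, not $A^{-1}$, so the right-hand side is $I+(B-A)A^{-1}=BA^{-1}$. Also, your operator-norm display should read $C(C^{2}r_n)^{J+1}$, which matches your later choice $\tilde r_n=C^{2}r_n$.

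The paper's proof stops at the identity and never argues \eqref{resolvent inequality}, so your remainder step is an addition. Your caution there is justified: when $J=J_n\to\infty$, the literal hypothesis $r_n^{J}=o(n^{-1/2})$, with unnormalized implicit constants, genuinely does not suffice. Here is a counterexample.
\begin{itemize}
\item Take $k=1$, $B=1$, $A=1-e\,r_n$ with $e\,r_n=n^{-1/\{2(J+1)\}}$ and $J=\lceil\sqrt{\log n}\,\rceil$.
\item Then $r_n\to0$, $\|A-B\|_{\op}=e\,r_n$, and $\|A^{-1}\|_{\op}\le 2$ eventually.
\item Moreover $r_n^{J}=n^{-1/2}\exp\{-J+\log n/(2J+2)\}\le n^{-1/2}e^{-J/2}=o(n^{-1/2})$.
\item Yet the remainder equals $(e\,r_n)^{J+1}/(1-e\,r_n)\ge n^{-1/2}$.
\end{itemize}
So some repair of the hypothesis is necessary, not merely convenient. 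Three repairs work:
\begin{itemize}
\item yours, which imposes the rate condition on $C^{2}r_n$;
\item keeping $J$ fixed;
\item the stronger condition $r_n^{J}=o(n^{-1})$. This needs no renormalization: once $r_n\le C^{-4}$ one has $C^{2}r_n\le r_n^{1/2}$, so the remainder is at most $C r_n^{(J+1)/2}\le C(r_n^{J})^{1/2}$.
\end{itemize}
None of this affects the paper's results. The paper only uses the exact identity \eqref{main Neumann}, and in Lemma~\ref{lem:bias-neumann-remainder} it bounds $\sfR_J$ directly with $B=I$. There $\|\Delta_n\|_{\op}\le r_n$ holds with constant one on $\calG_n$, so no $C^{J+1}$ factor arises.
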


\begin{proof}
The proof begins with the following elementary identity:
\begin{equation}
\label{elementary identity}
A^{-1} - B^{-1} = B^{-1} (B - A) A^{-1}.
\end{equation}
Armed with \eqref{elementary identity}, we can keep expanding the last $A^{-1}$ on the RHS of \eqref{elementary identity} to obtain: for any $J \geq 1$,
\begin{align*}
A^{-1} - B^{-1} = \sum_{j = 1}^{J} B^{-1} \{(B - A) B^{-1}\}^{j} + \{B^{-1} (B - A) \}^{J + 1} A^{-1},
\end{align*}
which completes the proof.
\end{proof}

\subsection{Technical lemma related to enumerative combinatorics}
\label{app:combinatorics}

Various useful results on enumerative combinatorics \citep{stanley2011enumerative} and combinatorial identities \citep{nica2006lectures} will be frequently invoked in the proofs of our theoretical results and are collected in this section. 

\begin{lemma}[M\"obius inversion on partition lattices]
\label{lem:mobius-inversion expansion}
Let $\iota \ge 1$ and let $\Pi_{\iota}$ denote the lattice of partitions of $\{1, \cdots, \iota\}$, ordered by refinement. 
Let $\calC$ be a finite index set, and let
$\{Z_{c} : c \in \calC\}$ be square matrices of the same dimension. More generally, the argument only requires that the $Z_{c}$'s belong to an associative algebra, so that the ordered products below are well-defined. The matrices $Z_{c}$ are not assumed to commute. The product below is always ordered in increasing $l$. For $\frakm \in \Pi_{\iota}$, write $B_{\frakm}(l)$ for the element of $\frakm$ containing $l$, and define
\begin{equation*}
\mu_{\iota} (\frakm) \coloneqq (-1)^{\iota - |\frakm|}
\prod_{B \in \frakm} (|B| - 1) !.
\end{equation*}
Then
\begin{equation}
\label{eq:mobius-partition-distinct-sum}
\sum_{\substack{(\ell_{1}, \cdots, \ell_{\iota}) \in \calC^{\iota}\\
\ell_{1} \neq \cdots \neq \ell_{\iota}}}
Z_{\ell_{1}} \cdots Z_{\ell_{\iota}}
= \sum_{\frakm \in \Pi_{\iota}} \mu_{\iota} (\frakm)
\sum_{\{b_{B}\}_{B \in \frakm} \in \calC^{|\frakm|}}
\prod_{l = 1}^{\iota} Z_{b_{B_{\frakm} (l)}}.
\end{equation}
In the inner summation on the right hand side of \eqref{eq:mobius-partition-distinct-sum}, the variables $b_{B}$ and $b_{B'}$ are allowed to be equal even when $B \ne B'$.
\end{lemma}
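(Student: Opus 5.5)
We prove the identity by Möbius inversion on $\Pi_{\iota}$, applied to sums over tuples grouped by their equality pattern. Since each tuple contributes its own ordered product and we only add these products with scalar coefficients, noncommutativity never enters; only linearity in the associative algebra is used.

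\textbf{Step 1: group tuples by their equality pattern.} For a tuple $\bm\ell = (\ell_{1}, \cdots, \ell_{\iota}) \in \calC^{\iota}$, let $\ker(\bm\ell) \in \Pi_{\iota}$ be the partition whose blocks are the level sets $\{l : \ell_{l} = c\}$. For $\sigma \in \Pi_{\iota}$ define
\begin{equation*}
f(\sigma) \coloneqq \sum_{\bm\ell:\, \ker(\bm\ell) = \sigma} Z_{\ell_{1}} \cdots Z_{\ell_{\iota}}, \qquad g(\pi) \coloneqq \sum_{\bm\ell:\, \ker(\bm\ell) \ge \pi} Z_{\ell_{1}} \cdots Z_{\ell_{\iota}},
\end{equation*}
where $\ge$ means "coarser than" in the refinement order.

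\textbf{Step 2: identify both sides of the claim.} The condition $\ker(\bm\ell) \ge \pi$ says exactly that $\bm\ell$ is constant on every block of $\pi$. Such tuples are in bijection with the assignments $\{b_{B}\}_{B \in \pi} \in \calC^{|\pi|}$ via $\ell_{l} = b_{B_{\pi}(l)}$, and different blocks may receive equal values. Hence $g(\pi)$ is precisely the inner sum on the right-hand side of \eqref{eq:mobius-partition-distinct-sum}. On the other side, the left-hand side is $f(\hat 0)$, where $\hat 0$ is the partition into singletons. By construction $g(\pi) = \sum_{\sigma \ge \pi} f(\sigma)$, a finite sum.

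\textbf{Step 3: invert.} Möbius inversion on the finite poset $\Pi_{\iota}$, in its "from above" form, holds verbatim for functions valued in any abelian group, and in particular in our algebra. It gives $f(\pi) = \sum_{\sigma \ge \pi} \mu(\pi, \sigma)\, g(\sigma)$. Taking $\pi = \hat 0$ yields the claim once we show
\begin{equation*}
\mu(\hat 0, \sigma) = \mu_{\iota}(\sigma) = (-1)^{\iota - |\sigma|} \prod_{B \in \sigma} (|B| - 1)!.
\end{equation*}

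\textbf{Step 4: the Möbius function (main obstacle).} This evaluation is the only nontrivial ingredient. The interval $[\hat 0, \sigma]$ is isomorphic to $\prod_{B \in \sigma} \Pi_{|B|}$, because refining $\sigma$ amounts to partitioning each block independently. The Möbius function is multiplicative over products of posets, so it suffices to show $\mu_{\Pi_{d}}(\hat 0, \hat 1) = (-1)^{d-1} (d-1)!$.

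We would prove this by induction using Weisner's theorem. Fix the element $d$. A partition $\tau \ne \hat 1$ satisfies $\tau \wedge \{\{d\}, [d-1]\} = \hat 0$ exactly when $\tau$ consists of one pair $\{i, d\}$ and singletons elsewhere. Weisner's theorem then gives $\mu(\Pi_{d}) = -(d-1)\,\mu(\Pi_{d-1})$, and with $\mu(\Pi_{1}) = 1$ this yields $(-1)^{d-1}(d-1)!$.

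As an alternative route that avoids Weisner, take $Z_{c}$ to be commuting scalars and compare exponential generating functions; this is the same computation as the $S_{d}$ calculation used in the proof of Lemma~\ref{lem:mobius}. Combining Steps 1–4 proves \eqref{eq:mobius-partition-distinct-sum}.
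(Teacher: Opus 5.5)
Your proof is correct, and its skeleton matches the paper's. The paper also groups tuples by their equality partition, with $G(\sigma)$ the sum over tuples of exact pattern $\sigma$ and $F(\frakm)=\sum_{\sigma\succeq\frakm}G(\sigma)$. It likewise identifies the left-hand side with $G(\pi^{\dagger})$.

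The difference lies in how the coefficients are obtained. The paper never invokes the general inversion theorem or computes the M\"obius function of $\Pi_{\iota}$. Instead it substitutes $F$ into $\sum_{\frakm}\mu_{\iota}(\frakm)F(\frakm)$ and swaps the sums. It then factors the coefficient $\sum_{\frakm\preceq\sigma}\mu_{\iota}(\frakm)$ over the blocks of $\sigma$, using the same product structure of $[\hat{0},\sigma]$ that you use. Each block factor is evaluated by the exponential formula, $\sum_{d}a_{d}z^{d}/d!=\exp\{\log(1+z)\}=1+z$, so the coefficient is $1$ when $\sigma=\pi^{\dagger}$ and $0$ otherwise.

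In effect, the paper checks directly that the explicit candidate $\mu_{\iota}$ satisfies the defining recursion of $\mu(\hat{0},\cdot)$. That is self-contained and parallels the $S_{d}$ computation in the proof of Lemma~\ref{lem:mobius}. Your route imports more machinery: the inversion theorem, multiplicativity of $\mu$ over products of posets, and Weisner's theorem. In exchange, it derives $\mu(\hat{0},\sigma)$ rather than verifying a given formula, and it makes explicit that $\mu_{\iota}$ is the M\"obius function of the partition lattice. Your scalar/EGF alternative is close in spirit to the paper's computation.

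There is one small slip in Step 4: the qualifier should be $\tau\neq\hat{0}$, not $\tau\neq\hat{1}$. The partition $\hat{0}$ also has trivial meet with $\{\{d\},[d-1]\}$. It is exactly the term that contributes $\mu(\hat{0},\hat{1})$ in Weisner's identity $\sum_{x\wedge a=\hat{0}}\mu(x,\hat{1})=0$. Conversely, for $d=2$ the unique pair partition is $\hat{1}$ itself, and it must be kept, with $\mu(\hat{1},\hat{1})=1$. With that fixed, and using $[\tau_{i},\hat{1}]\cong\Pi_{d-1}$ for the $d-1$ pair partitions $\tau_{i}$, your recursion $\mu(\Pi_{d})=-(d-1)\mu(\Pi_{d-1})$ is correct.
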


\begin{proof}
Let $\pi^{\dag}$ denote the discrete partition of $[\iota]$. For a tuple $\bm{\ell} = (\ell_{1}, \cdots, \ell_{\iota}) \in \calC^{\iota}$, let $\pi_{\bm{\ell}} \in \Pi_{\iota}$ denote its equality partition: two positions $s$ and $t$ belong to the same element of $\pi_{\bm{\ell}}$ if and only if $\ell_{s} = \ell_{t}$.

For $\sigma \in \Pi_{\iota}$, define
\begin{equation*}
G (\sigma) \coloneqq \sum_{\substack{\bm{\ell} \in \calC^{\iota} \\ 
\pi_{\bm{\ell}} = \sigma}} Z_{\ell_{1}} \cdots Z_{\ell_{\iota}}.
\end{equation*}
Thus the left-hand side of \eqref{eq:mobius-partition-distinct-sum} is $G (\pi^{\dag})$.

For $\frakm \in \Pi_{\iota}$, define
\begin{equation*}
F (\frakm) \coloneqq \sum_{\{b_{B}\}_{B \in \frakm} \in \calC^{|\frakm|}} \prod_{l = 1}^{\iota} Z_{b_{B_{\frakm} (l)}}.
\end{equation*}
This is the sum over all tuples that are constant on every element of $\frakm$. Equivalently, their equality partition is coarser than $\frakm$. Hence, with the refinement order,
\begin{equation*}
F (\frakm) = \sum_{\sigma \in \Pi_{\iota}: \frakm \preceq \sigma} G (\sigma).
\end{equation*}
Using the preceding relation between $F$ and $G$, we obtain
\begin{align*}
\sum_{\frakm \in \Pi_{\iota}} \mu_{\iota} (\frakm) F (\frakm) = \sum_{\frakm \in \Pi_{\iota}} \mu_{\iota} (\frakm) \sum_{\sigma \in \Pi_{\iota} : \frakm \preceq \sigma} G (\sigma) = \sum_{\sigma \in \Pi_{\iota}} \Big\{ \sum_{\frakm \in \Pi_{\iota} : \frakm \preceq \sigma} \mu_{\iota} (\frakm) \Big\} G (\sigma).
\end{align*}
We evaluate the inner coefficient. Fix $\sigma \in \Pi_{\iota}$. Every
refinement $\frakm \preceq \sigma$ is obtained by partitioning each element $D \in \sigma$ independently. Therefore, by the definition of $\mu_{\iota}$,
\begin{align*}
\sum_{\frakm \preceq \sigma} \mu_{\iota} (\frakm) = \prod_{D \in \sigma} \Big\{ \sum_{\pi \in \Pi (D)} (-1)^{|D| - |\pi|} \prod_{A \in \pi} (|A| - 1)! \Big\}.
\end{align*}
For a finite set $D$ with $|D| = d$, define
\begin{equation*}
a_{d} \coloneqq \sum_{\pi \in \Pi (D)} (-1)^{d - |\pi|} \prod_{A \in \pi} (|A| - 1)!.
\end{equation*}
The value of $a_{d}$ depends only on $d$. By the exponential formula, 
\begin{align*}
\sum_{d = 0}^{\infty} a_{d} \frac{z^{d}}{d !}
& = \exp \Big\{ \sum_{s = 1}^{\infty} (-1)^{s - 1} (s -1) !\frac{z^{s}}{s!} \Big\} = \exp \Big\{ \sum_{s = 1}^{\infty} (-1)^{s - 1} \frac{z^{s}}{s} \Big\} = \exp \{\log (1 + z)\} = 1 + z.
\end{align*}
Thus
\begin{equation*}
a_{0} = 1,\ a_{1} = 1,\ a_{d} = 0\quad\text{for all }d \ge 2.
\end{equation*}
Consequently,
\begin{equation*}
\sum_{\frakm \preceq \sigma} \mu_{\iota} (\frakm) = \prod_{D \in \sigma} a_{|D|} = \begin{cases} 
1, & \sigma = \pi^{\dag},\\
0, & \sigma \neq \pi^{\dag}.
\end{cases}
\end{equation*}
Indeed, the product equals one if and only if every element of $\sigma$ is a singleton, namely $\sigma = \pi^{\dag}$; otherwise at least one element $D \in \sigma$ has cardinality at least two, and the corresponding factor $a_{|D|}$ is zero.

Substituting this coefficient identity into the previous expansion gives
\begin{equation*}
\sum_{\frakm \in \Pi_{\iota}} \mu_{\iota} (\frakm) F (\frakm) = G (\pi^{\dag}).
\end{equation*}
Recalling the definitions of $G (\pi^{\dag})$ and $F (\frakm)$, this is exactly
\begin{equation*}
\sum_{\substack{(\ell_{1}, \cdots, \ell_{\iota}) \in \calC^{\iota} \\ 
\ell_{1} \neq \cdots \neq \ell_{\iota}}} Z_{\ell_{1}} \cdots Z_{\ell_{\iota}} = \sum_{\frakm \in \Pi_{\iota}} \mu_{\iota} (\frakm) \sum_{\{b_{B}\}_{B \in \frakm}\in \calC^{|\frakm|}} \prod_{l = 1}^{\iota} Z_{b_{B_{\frakm} (l)}}.
\end{equation*}
\end{proof}

\begin{lemma}
\label{lem:comb_pre}
For any $u \in \bbR$ and any integers $y, \ell \geq 0$ with $y \geq \ell$,
\begin{align*}
\sum_{k = 0}^{y} \binom{y}{k} \binom{k}{\ell} u^{k - \ell} = \binom{y}{\ell} (1 + u)^{y - \ell}.
\end{align*}
\end{lemma}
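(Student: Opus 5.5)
The plan is to prove the identity by evaluating the generating function in $u$ on both sides, using nothing beyond the binomial theorem. The idea is to differentiate $(1+u)^{y}$ exactly $\ell$ times. Equivalently, one can count pairs of nested subsets.

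\emph{Double counting.} First take $u$ to be a nonnegative integer, and count triples $(L, K, f)$ with the following properties:
\begin{itemize}
\item $L \subseteq K \subseteq [y]$ and $|L| = \ell$;
\item $f$ assigns one of $u$ colors to each element of $K \setminus L$.
\end{itemize}
Summing first over $K$ with $|K| = k$ gives the left-hand side, since such a $K$ admits $\binom{k}{\ell}$ choices of $L$ and $u^{k-\ell}$ colorings. Summing first over $L$ gives the right-hand side: there are $\binom{y}{\ell}$ choices of $L$, and then each of the $y - \ell$ remaining elements independently either stays outside $K$ or enters $K$ with one of $u$ colors, which contributes $(1+u)^{y-\ell}$.

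\emph{Passing to all real $u$.} Both sides are polynomials in $u$ of degree at most $y - \ell$. They agree at infinitely many points, namely every nonnegative integer $u$, so they agree identically on $\bbR$.

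\emph{Algebraic alternative.} One can instead use the identity $\binom{y}{k}\binom{k}{\ell} = \binom{y}{\ell}\binom{y-\ell}{k-\ell}$. Substituting it and reindexing with $i = k - \ell$, the left-hand side becomes
\[
\binom{y}{\ell}\sum_{i=0}^{y-\ell}\binom{y-\ell}{i}u^{i}.
\]
The binomial theorem gives $(1+u)^{y-\ell}$ for the sum. The terms with $k < \ell$ contribute nothing because $\binom{k}{\ell} = 0$ there.

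\emph{Main obstacle.} The only step needing care is the convention at the boundary $y = \ell$ with $u = 0$, where $0^0$ appears; it should be read as $1$ so that both sides equal $1$. I would present the algebraic route as the proof, since it is two lines long.
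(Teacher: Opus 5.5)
Your proof is correct, but it does not follow the paper's route, even though your opening sentence announces that route.

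\emph{The paper's route.} The paper differentiates the binomial identity $(1+u)^{y}=\sum_{k=0}^{y}\binom{y}{k}u^{k}$ exactly $\ell$ times and divides by $\ell!$. The left side becomes $\binom{y}{\ell}(1+u)^{y-\ell}$. Each term $\binom{y}{k}u^{k}$ becomes $\binom{y}{k}\binom{k}{\ell}u^{k-\ell}$. The terms with $k<\ell$ are killed by the differentiation, so negative powers of $u$ never appear.

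\emph{Your route.} You never carry out the differentiation. Instead you give two valid arguments:
\begin{itemize}
\item a double-counting argument, extended from nonnegative integer $u$ to all real $u$ by the polynomial identity principle;
\item an algebraic argument using $\binom{y}{k}\binom{k}{\ell}=\binom{y}{\ell}\binom{y-\ell}{k-\ell}$ followed by the binomial theorem.
\end{itemize}

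\emph{What each buys.} The paper's route is the shortest once the binomial theorem is available, and it disposes of the $k<\ell$ terms automatically. Your algebraic route avoids calculus and holds verbatim over any commutative ring. The cost is invoking the subset-of-a-subset identity and discarding the $k<\ell$ terms by hand, which you do. The double-counting version adds a combinatorial interpretation but needs the extra polynomial-extension step, so you are right to write up the algebraic one.

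\emph{A correction to your closing remark.} The convention $0^{0}=1$ is needed whenever $u=0$, not only when $y=\ell$, because the $k=\ell$ term on the left is $u^{0}$. The case that matters for the paper is $u=-1$ with $y=\ell$, where the right side is $0^{0}$. That is exactly the evaluation used in the proof of Lemma~\ref{lem:combinatorial}. Reading both sides as polynomial identities in $u$ settles all these cases at once.
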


\begin{proof}
The statement follows by taking the $\ell$-th derivative with respect to $u$ on both sides of the binomial identity
$$
(1 + u)^{y} \equiv \sum_{k = 0}^{y} \binom{y}{k} u^{k}.
$$
\end{proof}

\begin{lemma}
\label{lem:combinatorial}
For any non-negative integers $r, x, y \geq 0$,
\begin{equation}
\label{comb1}
\sum_{k = 0}^{y} (-1)^{k} \binom{y}{k} \binom{x + k}{r} \equiv (-1)^{y} \binom{x}{r - y}.
\end{equation}
\end{lemma}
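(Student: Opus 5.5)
We plan to reduce the identity to Lemma~\ref{lem:comb_pre} through a Vandermonde expansion. Throughout we use the convention $\binom{a}{b} = 0$ whenever $b < 0$ or $b > a$, so that $\binom{x}{r - y} = 0$ when $r < y$. This convention is exactly what the right-hand side of \eqref{comb1} requires.

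\textbf{Step 1 (Vandermonde).} For each $k \ge 0$ we write
\begin{equation*}
\binom{x + k}{r} = \sum_{i = 0}^{r} \binom{k}{i} \binom{x}{r - i}.
\end{equation*}
This is Vandermonde's convolution: choose $r$ objects from a pool of $x + k$, and let $i$ be the number taken from the block of size $k$. Substituting this into the left-hand side of \eqref{comb1} and interchanging the two finite sums gives
\begin{equation*}
\sum_{i = 0}^{r} \binom{x}{r - i} \sum_{k = 0}^{y} (-1)^{k} \binom{y}{k} \binom{k}{i}.
\end{equation*}

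\textbf{Step 2 (inner sum).} The inner sum vanishes unless $i \le y$, because $\binom{k}{i} = 0$ for $k < i \le \cdots$ and $k$ never exceeds $y$. For $i \le y$ we factor out $(-1)^{i}$ and apply Lemma~\ref{lem:comb_pre} with $u = -1$ and $\ell = i$:
\begin{equation*}
\sum_{k = 0}^{y} (-1)^{k} \binom{y}{k} \binom{k}{i} = (-1)^{i} \binom{y}{i} (1 - 1)^{y - i} = (-1)^{y} \mathbbm{1}\{i = y\},
\end{equation*}
where we use the convention $0^{0} = 1$.

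\textbf{Step 3 (collapse).} Only the term $i = y$ survives, and it is present only if $y \le r$. This leaves $(-1)^{y} \binom{x}{r - y}$. When $y > r$, no index $i \le r$ equals $y$, so the sum is $0$, which agrees with the convention $\binom{x}{r - y} = 0$.

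The only real care needed is the edge bookkeeping: the cases $y > r$ and $i = y$ (the $0^{0}$ convention), and the range conventions for binomial coefficients. Everything else is mechanical.

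An alternative route, if one prefers, is to read the left-hand side as $(-1)^{y}$ times the $y$-th forward difference of the polynomial $k \mapsto \binom{x + k}{r}$, evaluated at $k = 0$. By Pascal's rule, each forward difference lowers the bottom index by one, so $\Delta^{y} \binom{x + k}{r} = \binom{x + k}{r - y}$, and setting $k = 0$ gives the claim at once.
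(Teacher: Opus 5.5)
Your proof is correct and is essentially the paper's argument: expand $\binom{x+k}{r}$ by Vandermonde's convolution, interchange the finite sums, and apply Lemma~\ref{lem:comb_pre} at $u = -1$. The only cosmetic difference is that the paper keeps $u$ as a free variable and substitutes $u = -1$ at the end, whereas you substitute immediately and make explicit the $i > y$ case and the $0^{0}$ convention, which the paper leaves implicit.
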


\begin{proof}
We introduce an auxiliary variable $u \in \bbR$. We then have
\begin{align*}
& \ \sum_{k = 0}^{y} \binom{y}{k} \binom{x + k}{r} u^{k} \\
= & \ \sum_{k = 0}^{y} \binom{y}{k} u^{k} \sum_{\ell = 0}^{r} \binom{k}{\ell} \binom{x}{r - \ell} \\
= & \ \sum_{\ell = 0}^{r} \binom{x}{r - \ell} \sum_{k = 0}^{y} \binom{y}{k} \binom{k}{\ell} u^{k} \\
= & \ \sum_{\ell = 0}^{r} \binom{x}{r - \ell} \binom{y}{\ell} u^{\ell} (1 + u)^{y - \ell},
\end{align*}
where the second line follows from a simple counting argument and the last line is due to Lemma~\ref{lem:comb_pre}. The conclusion follows readily by taking $u = -1$.
\end{proof}

\putbib[Master_appendix.bib]
\end{bibunit}


\begin{thebibliography}{94}
\providecommand{\natexlab}[1]{#1}
\providecommand{\url}[1]{\texttt{#1}}
\expandafter\ifx\csname urlstyle\endcsname\relax
  \providecommand{\doi}[1]{doi: #1}\else
  \providecommand{\doi}{doi: \begingroup \urlstyle{rm}\Url}\fi

\bibitem[Abadie and Cattaneo(2018)]{abadie2018econometric}
Alberto Abadie and Matias~D Cattaneo.
\newblock Econometric methods for program evaluation.
\newblock \emph{Annual Review of Economics}, 10\penalty0 (1):\penalty0
  465--503, 2018.

\bibitem[Arora and Barak(2009)]{arora2009computational}
Sanjeev Arora and Boaz Barak.
\newblock \emph{Computational Complexity: A Modern Approach}.
\newblock Cambridge University Press, 2009.

\bibitem[Balakrishnan et~al.(2026)Balakrishnan, Kennedy, and
  Wasserman]{balakrishnan2026fundamental}
Sivaraman Balakrishnan, Edward~H Kennedy, and Larry Wasserman.
\newblock The fundamental limits of structure-agnostic functional estimation.
\newblock \emph{Statistical Science}, 2026.

\bibitem[Bhattacharya and Ghosh(1992)]{bhattacharya1992class}
Rabi~N Bhattacharya and Jayanta~K Ghosh.
\newblock A class of ${U}$-statistics and asymptotic normality of the number of
  $k$-clusters.
\newblock \emph{Journal of Multivariate Analysis}, 43\penalty0 (2):\penalty0
  300--330, 1992.

\bibitem[Bickel and Ritov(1988)]{bickel1988estimating}
Peter~J Bickel and Ya'acov Ritov.
\newblock Estimating integrated squared density derivatives: Sharp best order
  of convergence estimates.
\newblock \emph{Sankhy{\=a}: The Indian Journal of Statistics, Series A},
  50\penalty0 (3):\penalty0 381--393, 1988.

\bibitem[Bickel et~al.(1998)Bickel, Klaassen, Ritov, and
  Wellner]{bickel1998efficient}
Peter~J Bickel, Chris A~J Klaassen, Ya'acov Ritov, and Jon~A Wellner.
\newblock \emph{Efficient and Adaptive Estimation for Semiparametric Models}.
\newblock Johns Hopkins Series in the Mathematical Sciences. Springer New York,
  1998.

\bibitem[Bobkov(2024)]{bobkov2024fisher}
Sergey~G Bobkov.
\newblock Fisher-type information involving higher order derivatives.
\newblock \emph{arXiv preprint arXiv:2412.10200}, 2024.

\bibitem[Bobkov et~al.(2019)Bobkov, G{\"o}tze, and Sambale]{bobkov2019higher}
Sergey~G Bobkov, Friedrich G{\"o}tze, and Holger Sambale.
\newblock Higher order concentration of measure.
\newblock \emph{Communications in Contemporary Mathematics}, 21\penalty0
  (03):\penalty0 1850043, 2019.

\bibitem[Bonhomme et~al.(2026)Bonhomme, Jochmans, Newey, and
  Weidner]{bonhomme2026higher}
St{\'e}phane Bonhomme, Koen Jochmans, Whitney~K Newey, and Martin Weidner.
\newblock Higher-order {N}eyman orthogonality in moment-condition models.
\newblock \emph{arXiv preprint arXiv:2605.10842}, 2026.

\bibitem[Bonvini and Kennedy(2022)]{bonvini2022fast}
Matteo Bonvini and Edward~H Kennedy.
\newblock Fast convergence rates for dose-response estimation.
\newblock \emph{arXiv preprint arXiv:2207.11825}, 2022.

\bibitem[Bonvini et~al.(2024)Bonvini, Kennedy, Dukes, and
  Balakrishnan]{bonvini2024doubly}
Matteo Bonvini, Edward~H Kennedy, Oliver Dukes, and Sivaraman Balakrishnan.
\newblock Doubly-robust inference and optimality in structure-agnostic models
  with smoothness.
\newblock \emph{arXiv preprint arXiv:2405.08525}, 2024.

\bibitem[Breunig and Chen(2024)]{breunig2024adaptive}
Christoph Breunig and Xiaohong Chen.
\newblock Adaptive, rate-optimal hypothesis testing in nonparametric {IV}
  models.
\newblock \emph{Econometrica}, 92\penalty0 (6):\penalty0 2027--2067, 2024.

\bibitem[Breunig et~al.(2025)Breunig, Liu, and Yu]{breunig2025double}
Christoph Breunig, Ruixuan Liu, and Zhengfei Yu.
\newblock Double robust {B}ayesian inference on average treatment effects.
\newblock \emph{Econometrica}, 93\penalty0 (2):\penalty0 539--568, 2025.

\bibitem[Bruns-Smith et~al.(2026)Bruns-Smith, Dukes, Feller, and
  Ogburn]{bruns2026augmented}
David Bruns-Smith, Oliver Dukes, Avi Feller, and Elizabeth~L Ogburn.
\newblock Augmented balancing weights as linear regression.
\newblock \emph{Journal of the Royal Statistical Society Series B: Statistical
  Methodology}, 2026.

\bibitem[Cattaneo and Jansson(2018)]{cattaneo2018kernel}
Matias~D Cattaneo and Michael Jansson.
\newblock Kernel-based semiparametric estimators: Small bandwidth asymptotics
  and bootstrap consistency.
\newblock \emph{Econometrica}, 86\penalty0 (3):\penalty0 955--995, 2018.

\bibitem[Cattaneo et~al.(2018)Cattaneo, Jansson, and
  Newey]{cattaneo2018inference}
Matias~D Cattaneo, Michael Jansson, and Whitney~K Newey.
\newblock Inference in linear regression models with many covariates and
  heteroscedasticity.
\newblock \emph{Journal of the American Statistical Association}, 113\penalty0
  (523):\penalty0 1350--1361, 2018.

\bibitem[Cattaneo et~al.(2019)Cattaneo, Jansson, and Ma]{cattaneo2019two}
Matias~D Cattaneo, Michael Jansson, and Xinwei Ma.
\newblock Two-step estimation and inference with possibly many included
  covariates.
\newblock \emph{The Review of Economic Studies}, 86\penalty0 (3):\penalty0
  1095--1122, 2019.

\bibitem[Cavaliere et~al.(2024)Cavaliere, Gon{\c{c}}alves, Nielsen, and
  Zanelli]{cavaliere2024bootstrap}
Giuseppe Cavaliere, S{\'\i}lvia Gon{\c{c}}alves, Morten~{\O}rregaard Nielsen,
  and Edoardo Zanelli.
\newblock Bootstrap inference in the presence of bias.
\newblock \emph{Journal of the American Statistical Association}, 119\penalty0
  (548):\penalty0 2908--2918, 2024.

\bibitem[Chakrabortty and Kuchibhotla(2025)]{chakrabortty2025tail}
Abhishek Chakrabortty and Arun~K Kuchibhotla.
\newblock Tail bounds for canonical ${U}$-statistics and ${U}$-processes with
  unbounded kernels.
\newblock \emph{arXiv preprint arXiv:2504.01318}, 2025.

\bibitem[Chen(2010)]{chen2010mobius}
Nanxian Chen.
\newblock \emph{M{\"o}bius Inversion in Physics}.
\newblock World Scientific, 2010.

\bibitem[Chen et~al.(2024)Chen, Liu, and Mukherjee]{chen2024method}
Xingyu Chen, Lin Liu, and Rajarshi Mukherjee.
\newblock Method-of-moments inference for {GLM}s and doubly-robust functionals
  under proportional asymptotics.
\newblock \emph{arXiv preprint arXiv:2408.06103}, 2024.

\bibitem[Chen et~al.(2025)Chen, Liu, and Zhang]{chen2025computing}
Xingyu Chen, Lin Liu, and Ruiqi Zhang.
\newblock On computing and the complexity of computing higher-order
  ${U}$-statistics, exactly.
\newblock \emph{arXiv preprint arXiv:2508.12627}, 2025.

\bibitem[Cheng and Montanari(2024)]{cheng2024dimension}
Chen Cheng and Andrea Montanari.
\newblock Dimension free ridge regression.
\newblock \emph{The Annals of Statistics}, 52\penalty0 (6):\penalty0
  2879--2912, 2024.

\bibitem[Chernozhukov et~al.(2018)Chernozhukov, Chetverikov, Demirer, Duflo,
  Hansen, Newey, and Robins]{chernozhukov2018double}
Victor Chernozhukov, Denis Chetverikov, Mert Demirer, Esther Duflo, Christian
  Hansen, Whitney Newey, and James Robins.
\newblock Double/debiased machine learning for treatment and structural
  parameters.
\newblock \emph{The Econometrics Journal}, 21\penalty0 (1):\penalty0 C1--C68,
  2018.

\bibitem[Chernozhukov et~al.(2022)Chernozhukov, Escanciano, Ichimura, Newey,
  and Robins]{chernozhukov2022locally}
Victor Chernozhukov, Juan~Carlos Escanciano, Hidehiko Ichimura, Whitney~K
  Newey, and James~M Robins.
\newblock Locally robust semiparametric estimation.
\newblock \emph{Econometrica}, 90\penalty0 (4):\penalty0 1501--1535, 2022.

\bibitem[Damian et~al.(2025)Damian, Lee, and Bruna]{damian2025generative}
Alex Damian, Jason~D Lee, and Joan Bruna.
\newblock The generative leap: Tight sample complexity for efficiently learning
  {G}aussian multi-index models.
\newblock In \emph{Proceedings of The Thirty-ninth Annual Conference on Neural
  Information Processing Systems}, pages 28276--28311, 2025.

\bibitem[Diaz et~al.(2016)Diaz, Carone, and van~der Laan]{diaz2016second}
Ivan Diaz, Marco Carone, and Mark~J van~der Laan.
\newblock Second-order inference for the mean of a variable missing at random.
\newblock \emph{The International Journal of Biostatistics}, 12\penalty0
  (1):\penalty0 333--349, 2016.

\bibitem[D{\"o}bler et~al.(2022)D{\"o}bler, Kasprzak, and
  Peccati]{dobler2022functional}
Christian D{\"o}bler, Miko{\l}aj~J Kasprzak, and Giovanni Peccati.
\newblock Functional convergence of sequential ${U}$-processes with
  size-dependent kernels.
\newblock \emph{The Annals of Applied Probability}, 32\penalty0 (1):\penalty0
  551--601, 2022.

\bibitem[Efron and Stein(1981)]{efron1981jackknife}
Bradley Efron and Charles Stein.
\newblock The jackknife estimate of variance.
\newblock \emph{The Annals of Statistics}, 9\penalty0 (3):\penalty0 586--596,
  1981.

\bibitem[Fisher and Kennedy(2021)]{fisher2021visually}
Aaron Fisher and Edward~H Kennedy.
\newblock Visually communicating and teaching intuition for influence
  functions.
\newblock \emph{The American Statistician}, 75\penalty0 (2):\penalty0 162--172,
  2021.

\bibitem[G{\"o}tze(1984)]{gotze1984expansions}
Friedrich G{\"o}tze.
\newblock Expansions for von {M}ises functionals.
\newblock \emph{Zeitschrift f{\"u}r Wahrscheinlichkeitstheorie und verwandte
  Gebiete}, 65:\penalty0 599--625, 1984.

\bibitem[Hahn(1998)]{hahn1998role}
Jinyong Hahn.
\newblock On the role of the propensity score in efficient semiparametric
  estimation of average treatment effects.
\newblock \emph{Econometrica}, 66\penalty0 (2):\penalty0 315--331, 1998.

\bibitem[Hahn(2004)]{hahn2004functional}
Jinyong Hahn.
\newblock Functional restriction and efficiency in causal inference.
\newblock \emph{The Review of Economics and Statistics}, 86\penalty0
  (1):\penalty0 73--76, 2004.

\bibitem[Hines et~al.(2022)Hines, Dukes, Diaz-Ordaz, and
  Vansteelandt]{hines2022demystifying}
Oliver Hines, Oliver Dukes, Karla Diaz-Ordaz, and Stijn Vansteelandt.
\newblock Demystifying statistical learning based on efficient influence
  functions.
\newblock \emph{The American Statistician}, 76\penalty0 (3):\penalty0 292--304,
  2022.

\bibitem[Joshi et~al.(2026)Joshi, Koubbi, Misiakiewicz, and
  Srebro]{joshi2026learning}
Nirmit Joshi, Hugo Koubbi, Theodor Misiakiewicz, and Nati Srebro.
\newblock Learning single index models via harmonic decomposition.
\newblock In \emph{Proceedings of the Thirty-ninth Annual Conference on Neural
  Information Processing Systems}, pages 45052--45127, 2026.

\bibitem[Kandasamy et~al.(2015)Kandasamy, Krishnamurthy, P{\'o}czos, Wasserman,
  and Robins]{kandasamy2015nonparametric}
Kirthevasan Kandasamy, Akshay Krishnamurthy, Barnab{\"y}s P{\'o}czos, Larry
  Wasserman, and James~M Robins.
\newblock Nonparametric von {M}ises estimators for entropies, divergences and
  mutual informations.
\newblock In \emph{Proceedings of the 29th International Conference on Neural
  Information Processing Systems-Volume 1}, pages 397--405, 2015.

\bibitem[Kennedy(2023)]{kennedy2023towards}
Edward~H Kennedy.
\newblock Towards optimal doubly robust estimation of heterogeneous causal
  effects.
\newblock \emph{Electronic Journal of Statistics}, 17\penalty0 (2):\penalty0
  3008--3049, 2023.

\bibitem[Kennedy et~al.(2024)Kennedy, Balakrishnan, Robins, and
  Wasserman]{kennedy2024minimax}
Edward~H Kennedy, Sivaraman Balakrishnan, James~M Robins, and Larry Wasserman.
\newblock Minimax rates for heterogeneous causal effect estimation.
\newblock \emph{The Annals of Statistics}, 52\penalty0 (2):\penalty0 793--816,
  2024.

\bibitem[Koltchinskii(2022)]{koltchinskii2022bootstrap}
Vladimir Koltchinskii.
\newblock Estimation of smooth functionals in high-dimensional models:
  Bootstrap chains and {G}aussian approximation.
\newblock \emph{The Annals of Statistics}, 50\penalty0 (4):\penalty0
  2386--2415, 2022.

\bibitem[Koltchinskii(2025)]{koltchinskii2025estimation}
Vladimir Koltchinskii.
\newblock Estimation of smooth functionals of covariance operators: Jackknife
  bias reduction and bounds in terms of effective rank.
\newblock \emph{Annales de l'Institut Henri Poincare (B) Probabilites et
  statistiques}, 61\penalty0 (1):\penalty0 665--712, 2025.

\bibitem[Kong and Valiant(2018)]{kong2018estimating}
Weihao Kong and Gregory Valiant.
\newblock Estimating learnability in the sublinear data regime.
\newblock In \emph{Proceedings of the 32nd International Conference on Neural
  Information Processing Systems}, pages 5460--5469, 2018.

\bibitem[Lasserre(2024)]{lasserre2024moment}
Jean~B Lasserre.
\newblock The {M}oment-{SOS} hierarchy: Applications and related topics.
\newblock \emph{Acta Numerica}, 33:\penalty0 841--908, 2024.

\bibitem[Lauritzen(1996)]{lauritzen1996graphical}
Steffen~L Lauritzen.
\newblock \emph{Graphical Models}, volume~17.
\newblock Clarendon Press, 1996.

\bibitem[Ledoit and Wolf(2012)]{ledoit2012nonlinear}
Olivier Ledoit and Michael Wolf.
\newblock Nonlinear shrinkage estimation of large-dimensional covariance
  matrices.
\newblock \emph{The Annals of Statistics}, 40\penalty0 (2):\penalty0
  1024--1060, 2012.

\bibitem[Ledoit and Wolf(2020)]{ledoit2020analytical}
Olivier Ledoit and Michael Wolf.
\newblock Analytical nonlinear shrinkage of large-dimensional covariance
  matrices.
\newblock \emph{The Annals of Statistics}, 48\penalty0 (5):\penalty0
  3043--3065, 2020.

\bibitem[Lin et~al.(2024)Lin, Su, Mou, Ding, and Wainwright]{lin2024worthwhile}
Licong Lin, Fangzhou Su, Wenlong Mou, Peng Ding, and Martin Wainwright.
\newblock When is it worthwhile to jackknife? {B}reaking the quadratic barrier
  for ${Z}$-estimators.
\newblock \emph{arXiv preprint arXiv:2411.02909}, 2024.

\bibitem[Liu and Li(2023)]{liu2023hoif}
Lin Liu and Chang Li.
\newblock New $\sqrt{n}$-consistent, numerically stable empirical higher-order
  influence function estimators.
\newblock \emph{arXiv preprint arXiv:2302.08097}, 2023.

\bibitem[Liu et~al.(2017)Liu, Mukherjee, Newey, and
  Robins]{liu2017semiparametric}
Lin Liu, Rajarshi Mukherjee, Whitney~K Newey, and James~M Robins.
\newblock Semiparametric efficient empirical higher order influence function
  estimators.
\newblock \emph{arXiv preprint arXiv:1705.07577}, 2017.

\bibitem[Liu et~al.(2020)Liu, Mukherjee, and Robins]{liu2020nearly}
Lin Liu, Rajarshi Mukherjee, and James~M Robins.
\newblock On nearly assumption-free tests of nominal confidence interval
  coverage for causal parameters estimated by machine learning.
\newblock \emph{Statistical Science}, 35\penalty0 (3):\penalty0 518--539, 2020.

\bibitem[Liu et~al.(2023)Liu, Wang, and Wang]{liu2023root}
Lin Liu, Xinbo Wang, and Yuhao Wang.
\newblock Root-n consistent semiparametric learning with high-dimensional
  nuisance functions under minimal sparsity.
\newblock \emph{arXiv preprint arXiv:2305.04174}, 2023.

\bibitem[Liu et~al.(2024)Liu, Mukherjee, and Robins]{liu2024assumption}
Lin Liu, Rajarshi Mukherjee, and James~M Robins.
\newblock Assumption-lean falsification tests of rate double-robustness of
  double-machine-learning estimators.
\newblock \emph{Journal of Econometrics}, 240\penalty0 (2):\penalty0 105500,
  2024.

\bibitem[Liu et~al.(2026)Liu, Mukherjee, and Robins]{liu2026asymptotic}
Lin Liu, Rajarshi Mukherjee, and James~M Robins.
\newblock On the asymptotic inadmissibility of double machine learning
  estimators under structure-agnostic models.
\newblock \emph{arXiv preprint arXiv:2606.22391}, 2026.

\bibitem[Liu et~al.(2025)Liu, Minervini, Patel, and Wilde]{liu2025quantum}
Nana Liu, Michele Minervini, Dhrumil Patel, and Mark~M Wilde.
\newblock Quantum thermodynamics and semi-definite optimization.
\newblock \emph{arXiv preprint arXiv:2505.04514}, 2025.

\bibitem[McClean et~al.(2026)McClean, Balakrishnan, Kennedy, and
  Wasserman]{mcclean2026double}
Alec McClean, Sivaraman Balakrishnan, Edward~H Kennedy, and Larry Wasserman.
\newblock Double cross-fit doubly robust estimators: Beyond series regression.
\newblock \emph{Journal of the Royal Statistical Society Series B: Statistical
  Methodology}, 2026.

\bibitem[McCullagh(2018)]{mccullagh2018tensor}
Peter McCullagh.
\newblock \emph{Tensor Methods in Statistics}.
\newblock Monographs on Statistics and Applied Probability. Chapman and
  Hall/CRC, 2018.

\bibitem[McGrath and Mukherjee(2026)]{mcgrath2026nuisance}
Sean McGrath and Rajarshi Mukherjee.
\newblock Nuisance function tuning and sample splitting for optimally
  estimating a doubly robust functional.
\newblock \emph{The Annals of Statistics}, 2026.

\bibitem[Newey and Robins(2018)]{newey2018cross}
Whitney~K Newey and James~M Robins.
\newblock Cross-fitting and fast remainder rates for semiparametric estimation.
\newblock \emph{arXiv preprint arXiv:1801.09138}, 2018.

\bibitem[Newey et~al.(2004)Newey, Hsieh, and Robins]{newey2004twicing}
Whitney~K Newey, Fushing Hsieh, and James~M Robins.
\newblock Twicing kernels and a small bias property of semiparametric
  estimators.
\newblock \emph{Econometrica}, 72\penalty0 (3):\penalty0 947--962, 2004.

\bibitem[Niu et~al.(2024)Niu, Chakraborty, Dukes, and
  Katsevich]{niu2024reconciling}
Ziang Niu, Abhinav Chakraborty, Oliver Dukes, and Eugene Katsevich.
\newblock Reconciling model-{X} and doubly robust approaches to conditional
  independence testing.
\newblock \emph{The Annals of Statistics}, 52\penalty0 (3):\penalty0 895--921,
  2024.

\bibitem[Pfanzagl(1983)]{pfanzagl1983asymptotic}
Johann Pfanzagl.
\newblock \emph{Asymptotic Expansions for General Statistical Models},
  volume~31 of \emph{Lecture Notes in Statistics}.
\newblock Springer Science \& Business Media, 1983.

\bibitem[Pfanzagl(1990)]{pfanzagl1990estimation}
Johann Pfanzagl.
\newblock \emph{Estimation in Semiparametric Models: Some Recent Developments},
  volume~63 of \emph{Lecture Notes in Statistics}.
\newblock Springer Science \& Business Media, 1990.

\bibitem[Pfanzagl(2011)]{pfanzagl2011parametric}
Johann Pfanzagl.
\newblock \emph{Parametric Statistical Theory}.
\newblock Walter de Gruyter, 2011.

\bibitem[Rajendran and Tulsiani(2023)]{rajendran2023concentration}
Goutham Rajendran and Madhur Tulsiani.
\newblock Concentration of polynomial random matrices via {E}fron--{S}tein
  inequalities.
\newblock In \emph{Proceedings of the 2023 Annual ACM-SIAM Symposium on
  Discrete Algorithms (SODA)}, pages 3614--3653. SIAM, 2023.

\bibitem[Ray and van~der Vaart(2020)]{ray2020semiparametric}
Kolyan Ray and Aad van~der Vaart.
\newblock Semiparametric {B}ayesian causal inference.
\newblock \emph{The Annals of Statistics}, 48\penalty0 (5):\penalty0
  2999--3020, 2020.

\bibitem[Richardson et~al.(2023)Richardson, Evans, Robins, and
  Shpitser]{richardson2023nested}
Thomas~S Richardson, Robin~J Evans, James~M Robins, and Ilya Shpitser.
\newblock Nested {M}arkov properties for acyclic directed mixed graphs.
\newblock \emph{The Annals of Statistics}, 51\penalty0 (1):\penalty0 334--361,
  2023.

\bibitem[Ritov and Bickel(1990)]{ritov1990achieving}
Ya'acov Ritov and Peter~J Bickel.
\newblock Achieving information bounds in non and semiparametric models.
\newblock \emph{The Annals of Statistics}, 18\penalty0 (2):\penalty0 925--938,
  1990.

\bibitem[Robins et~al.(2007)Robins, Sued, Lei-Gomez, and
  Rotnitzky]{robins2007comment}
James Robins, Mariela Sued, Quanhong Lei-Gomez, and Andrea Rotnitzky.
\newblock Comment: Performance of double-robust estimators when ``inverse
  probability'' weights are highly variable.
\newblock \emph{Statistical Science}, 22\penalty0 (4):\penalty0 544--559, 2007.

\bibitem[Robins et~al.(2008)Robins, Li, Tchetgen~Tchetgen, and van~der
  Vaart]{robins2008higher}
James Robins, Lingling Li, Eric Tchetgen~Tchetgen, and Aad van~der Vaart.
\newblock Higher order influence functions and minimax estimation of nonlinear
  functionals.
\newblock In \emph{Probability and Statistics: Essays in Honor of David A.
  Freedman}, pages 335--421. Institute of Mathematical Statistics, 2008.

\bibitem[Robins et~al.(2009{\natexlab{a}})Robins, Li, Tchetgen~Tchetgen, and
  van~der Vaart]{robins2009quadratic}
James Robins, Lingling Li, Eric Tchetgen~Tchetgen, and Aad~W van~der Vaart.
\newblock Quadratic semiparametric von {M}ises calculus.
\newblock \emph{Metrika}, 69:\penalty0 227--247, 2009{\natexlab{a}}.

\bibitem[Robins et~al.(2009{\natexlab{b}})Robins, Tchetgen~Tchetgen, Li, and
  van~der Vaart]{robins2009semiparametric}
James Robins, Eric Tchetgen~Tchetgen, Lingling Li, and Aad van~der Vaart.
\newblock Semiparametric minimax rates.
\newblock \emph{Electronic Journal of Statistics}, 3:\penalty0 1305--1321,
  2009{\natexlab{b}}.

\bibitem[Robins et~al.(2016)Robins, Li, Tchetgen~Tchetgen, and van~der
  Vaart]{robins2016technical}
James Robins, Lingling Li, Eric Tchetgen~Tchetgen, and Aad van~der Vaart.
\newblock Technical report: Higher order influence functions and minimax
  estimation of nonlinear functionals.
\newblock \emph{arXiv preprint arXiv:1601.05820}, 2016.

\bibitem[Robins et~al.(1994)Robins, Rotnitzky, and Zhao]{robins1994estimation}
James~M Robins, Andrea Rotnitzky, and Lue~Ping Zhao.
\newblock Estimation of regression coefficients when some regressors are not
  always observed.
\newblock \emph{Journal of the American Statistical Association}, 89\penalty0
  (427):\penalty0 846--866, 1994.

\bibitem[Robins et~al.(2017)Robins, Li, Mukherjee, Tchetgen~Tchetgen, and
  van~der Vaart]{robins2017minimax}
James~M Robins, Lingling Li, Rajarshi Mukherjee, Eric Tchetgen~Tchetgen, and
  Aad van~der Vaart.
\newblock Minimax estimation of a functional on a structured high-dimensional
  model.
\newblock \emph{The Annals of Statistics}, 45\penalty0 (5):\penalty0
  1951--1987, 2017.

\bibitem[Robins et~al.(2023)Robins, Li, Liu, Mukherjee, Tchetgen~Tchetgen, and
  van~der Vaart]{robins2023minimax}
James~M Robins, Lingling Li, Lin Liu, Rajarshi Mukherjee, Eric
  Tchetgen~Tchetgen, and Aad van~der Vaart.
\newblock Minimax estimation of a functional on a structured high-dimensional
  model ({C}orrected version).
\newblock \emph{arXiv preprint arXiv:1512.02174}, 2023.

\bibitem[Rotnitzky et~al.(2021)Rotnitzky, Smucler, and
  Robins]{rotnitzky2021characterization}
Andrea Rotnitzky, Ezequiel Smucler, and James~M Robins.
\newblock Characterization of parameters with a mixed bias property.
\newblock \emph{Biometrika}, 108\penalty0 (1):\penalty0 231--238, 2021.

\bibitem[Rotnitzky et~al.(2026)Rotnitzky, Smucler, and
  Robins]{rotnitzky2026note}
Andrea Rotnitzky, Ezequiel Smucler, and James~M Robins.
\newblock A note on the relation between one–step, outcome regression and
  {IPW}–type estimators of parameters with the mixed bias property.
\newblock \emph{Statistics \& Probability Letters}, 236\penalty0 (110796),
  2026.

\bibitem[Sch{\"a}fer(2026)]{schafer2026mobius}
Florian Sch{\"a}fer.
\newblock M{\"o}bius inversion and the iterated bootstrap.
\newblock \emph{SIAM Journal on Mathematics of Data Science}, 8\penalty0
  (2):\penalty0 362--381, 2026.

\bibitem[Scharfstein et~al.(1999)Scharfstein, Rotnitzky, and
  Robins]{scharfstein1999adjusting}
Daniel~O Scharfstein, Andrea Rotnitzky, and James~M Robins.
\newblock Adjusting for nonignorable drop-out using semiparametric nonresponse
  models.
\newblock \emph{Journal of the American Statistical Association}, 94\penalty0
  (448):\penalty0 1096--1120, 1999.

\bibitem[Shah and Peters(2020)]{shah2020hardness}
Rajen~D Shah and Jonas Peters.
\newblock The hardness of conditional independence testing and the generalised
  covariance measure.
\newblock \emph{The Annals of Statistics}, 48\penalty0 (3):\penalty0
  1514--1538, 2020.

\bibitem[Shpitser et~al.(2011)Shpitser, Richardson, and
  Robins]{shpitser2011efficient}
Ilya Shpitser, Thomas~S Richardson, and James~M Robins.
\newblock An efficient algorithm for computing interventional distributions in
  latent variable causal models.
\newblock In \emph{Proceedings of the Twenty-Seventh Conference on Uncertainty
  in Artificial Intelligence}, pages 661--670, 2011.

\bibitem[Small and McLeish(1989)]{small1989projection}
Christopher~G Small and Don~L McLeish.
\newblock Projection as a method for increasing sensitivity and eliminating
  nuisance parameters.
\newblock \emph{Biometrika}, 76\penalty0 (4):\penalty0 693--703, 1989.

\bibitem[Stanley(2011)]{stanley2011enumerative}
Richard~P Stanley.
\newblock \emph{Enumerative Combinatorics}, volume~1.
\newblock Cambridge University Press, 2011.

\bibitem[van~der Laan et~al.(2021)van~der Laan, Wang, and van~der
  Laan]{van2021higher}
Mark van~der Laan, Zeyi Wang, and Lars van~der Laan.
\newblock Higher order targeted maximum likelihood estimation.
\newblock \emph{arXiv preprint arXiv:2101.06290}, 2021.

\bibitem[van~der Laan and Rubin(2006)]{van2006targeted}
Mark~J van~der Laan and Daniel Rubin.
\newblock Targeted maximum likelihood learning.
\newblock \emph{The International Journal of Biostatistics}, 2\penalty0
  (1):\penalty0 11, 2006.

\bibitem[van~der Vaart(1991)]{van1991differentiable}
Aad van~der Vaart.
\newblock On differentiable functionals.
\newblock \emph{The Annals of Statistics}, 19\penalty0 (1):\penalty0 178--204,
  1991.

\bibitem[van~der Vaart(2014)]{van2014higher}
Aad van~der Vaart.
\newblock Higher order tangent spaces and influence functions.
\newblock \emph{Statistical Science}, 29\penalty0 (4):\penalty0 679--686, 2014.

\bibitem[Vansteelandt(2025)]{vansteelandt2025towards}
Stijn Vansteelandt.
\newblock Towards efficient and interpretable assumption-lean generalized
  linear modeling of continuous exposure effects.
\newblock \emph{Biometrics}, 81\penalty0 (2):\penalty0 ujaf071, 2025.

\bibitem[Vansteelandt and Dukes(2022)]{vansteelandt2022assumption}
Stijn Vansteelandt and Oliver Dukes.
\newblock Assumption-lean inference for generalised linear model parameters.
\newblock \emph{Journal of the Royal Statistical Society Series B: Statistical
  Methodology}, 84\penalty0 (3):\penalty0 657--685, 2022.

\bibitem[Verzelen and Gassiat(2018)]{verzelen2018adaptive}
Nicolas Verzelen and Elisabeth Gassiat.
\newblock Adaptive estimation of high-dimensional signal-to-noise ratios.
\newblock \emph{Bernoulli}, 24\penalty0 (4B):\penalty0 3683--3710, 2018.

\bibitem[Villani(2025)]{villani2025fisher}
C{\'e}dric Villani.
\newblock Fisher information in kinetic theory.
\newblock \emph{arXiv preprint arXiv:2501.00925}, 2025.

\bibitem[von Mises(1947)]{mises1947asymptotic}
Richard von Mises.
\newblock On the asymptotic distribution of differentiable statistical
  functions.
\newblock \emph{The Annals of Mathematical Statistics}, 18\penalty0
  (3):\penalty0 309--348, 1947.

\bibitem[Waterman and Lindsay(1996)]{waterman1996projected}
Richard~P Waterman and Bruce~G Lindsay.
\newblock Projected score methods for approximating conditional scores.
\newblock \emph{Biometrika}, 83\penalty0 (1):\penalty0 1--13, 1996.

\bibitem[Wein et~al.(2019)Wein, El~Alaoui, and Moore]{wein2019kikuchi}
Alexander~S Wein, Ahmed El~Alaoui, and Cristopher Moore.
\newblock The {K}ikuchi hierarchy and tensor {PCA}.
\newblock In \emph{2019 IEEE 60th Annual Symposium on Foundations of Computer
  Science (FOCS)}, pages 1446--1468. IEEE, 2019.

\bibitem[Zhang et~al.(2026)Zhang, Liu, and Zhang]{zhang2026higher}
Yulin Zhang, Lin Liu, and Zheng Zhang.
\newblock Higher-order debiased estimators for general treatment models.
\newblock \emph{Econometric Theory}, 2026.

\end{thebibliography}


\begin{thebibliography}{11}
\providecommand{\natexlab}[1]{#1}
\providecommand{\url}[1]{\texttt{#1}}
\expandafter\ifx\csname urlstyle\endcsname\relax
  \providecommand{\doi}[1]{doi: #1}\else
  \providecommand{\doi}{doi: \begingroup \urlstyle{rm}\Url}\fi

\bibitem[Bandeira et~al.(2023)Bandeira, Boedihardjo, and van
  Handel]{bandeira2023matrix}
Afonso~S Bandeira, March~T Boedihardjo, and Ramon van Handel.
\newblock Matrix concentration inequalities and free probability.
\newblock \emph{Inventiones mathematicae}, 234:\penalty0 419--487, 2023.

\bibitem[Bandeira et~al.(2025)Bandeira, Lucca, Nizic-Nikolac, and van
  Handel]{bandeira2025matrix}
Afonso~S Bandeira, Kevin Lucca, Petar Nizic-Nikolac, and Ramon van Handel.
\newblock Matrix chaos inequalities and chaos of combinatorial type.
\newblock In \emph{Proceedings of the 57th Annual ACM Symposium on Theory of
  Computing}, pages 795--805, 2025.

\bibitem[Chatterjee(2007)]{chatterjee2007stein}
Sourav Chatterjee.
\newblock Stein's method for concentration inequalities.
\newblock \emph{Probability Theory and Related Fields}, 138\penalty0
  (1):\penalty0 305--321, 2007.

\bibitem[Efron and Stein(1981)]{efron1981jackknife}
Bradley Efron and Charles Stein.
\newblock The jackknife estimate of variance.
\newblock \emph{The Annals of Statistics}, 9\penalty0 (3):\penalty0 586--596,
  1981.

\bibitem[Lauritzen(1996)]{lauritzen1996graphical}
Steffen~L Lauritzen.
\newblock \emph{Graphical Models}, volume~17.
\newblock Clarendon Press, 1996.

\bibitem[Nica and Speicher(2006)]{nica2006lectures}
Alexandru Nica and Roland Speicher.
\newblock \emph{Lectures on the Combinatorics of Free Probability}, volume~13.
\newblock Cambridge University Press, 2006.

\bibitem[O'Donnell et~al.(2005)O'Donnell, Saks, Schramm, and
  Servedio]{o2005every}
Ryan O'Donnell, Michael Saks, Oded Schramm, and Rocco~A Servedio.
\newblock Every decision tree has an influential variable.
\newblock In \emph{Proceedings of the 46th Annual IEEE Symposium on Foundations
  of Computer Science (FOCS'05)}, pages 31--39, 2005.

\bibitem[Rudelson(1999)]{rudelson1999random}
Mark Rudelson.
\newblock Random vectors in the isotropic position.
\newblock \emph{Journal of Functional Analysis}, 164\penalty0 (1):\penalty0
  60--72, 1999.

\bibitem[Stanley(2011)]{stanley2011enumerative}
Richard~P Stanley.
\newblock \emph{Enumerative Combinatorics}, volume~1.
\newblock Cambridge University Press, 2011.

\bibitem[Steele(1986)]{steele1986efron}
J~Michael Steele.
\newblock An {E}fron--{S}tein inequality for nonsymmetric statistics.
\newblock \emph{The Annals of Statistics}, 14\penalty0 (2):\penalty0 753--758,
  1986.

\bibitem[Tropp(2015)]{tropp2015introduction}
Joel~A Tropp.
\newblock An introduction to matrix concentration inequalities.
\newblock \emph{Foundations and Trends{\textregistered} in Machine Learning},
  8\penalty0 (1-2):\penalty0 1--230, 2015.

\end{thebibliography}
\end{document}